\def\R{\mathbb{R}}
\def\N{\mathbb{N}}
\def\P{\mathbb{P}}
\def\E{\mathbb{E}}
\def\L{\mathbb{L}}
\def\R{\mathbb{R}}
\def\Z{\mathbb{Z}}
\def\Lip{\mbox{Lip}}
\def\1{\mbox{I\hspace{-.6em}1}} % fonction carctï¿œristique
\def\cov{\mbox{Cov}\,}
\def\Lip{\mbox{Lip}\,}
\def\1{\mbox{\hspace{.2em}I\hspace{-.6em}1}} % fonction carctï¿œristique
\def\limiteasn{\renewcommand{\arraystretch}{0.5}
\begin{array}[t]{c}\stackrel{a.s.}{\longrightarrow} \\
{\scriptstyle
n\rightarrow+\infty}\end{array}\renewcommand{\arraystretch}{1}}
\def\limiten{\renewcommand{\arraystretch}{0.5}
\begin{array}[t]{c}\stackrel{}{\longrightarrow} \\
{\scriptstyle
n\rightarrow+\infty}\end{array}\renewcommand{\arraystretch}{1}}
\def\limiteloin{\renewcommand{\arraystretch}{0.5}
\begin{array}[t]{c}\stackrel{{\cal L}}{\longrightarrow} \\
{\scriptstyle
n\rightarrow+\infty}\end{array}\renewcommand{\arraystretch}{1}}
\def\egaleloi{\renewcommand{\arraystretch}{0.5}
\begin{array}[t]{c}\stackrel{{\cal L}}{\sim } \\
{}\end{array}\renewcommand{\arraystretch}{1}} 
\def\limiteproban{\renewcommand{\arraystretch}{0.5}
\begin{array}[t]{c}\stackrel{{\P}}{\longrightarrow} \\
{\scriptstyle
n\rightarrow+\infty}\end{array}\renewcommand{\arraystretch}{1}}
\def\limiteL1{\renewcommand{\arraystretch}{0.5}
\begin{array}[t]{c}\stackrel{{\L^1}}{\longrightarrow} \\
{\scriptstyle n\rightarrow+\infty}\end{array}\renewcommand{\arraystretch}{1}}
\def\limitel{\renewcommand{\arraystretch}{0.5}
\begin{array}[t]{c}\stackrel{}{\longrightarrow} \\
{\scriptstyle \ell \rightarrow+\infty}\end{array}\renewcommand{\arraystretch}{1}}
  \newtheorem{prop}{Proposition}[section]
  \newtheorem{cor}{Corollary}[section]
\newtheorem{theo}{Theorem}[section]
 \newtheorem{lem}{Lemma}[section]
  \newtheorem{Rem}{Remark}[section]
\newtheorem{Def}{Definition}[section]
\begin{document}
\begin{frontmatter}
\title{Contrast estimation of general locally stationary processes using coupling}
\runtitle{Contrast estimation of locally stationary processes}
\begin{aug}
\author[A]{\fnms{Jean-Marc} \snm{Bardet} \ead[label=e1]{bardet@univ-paris1.fr}},
\author[B]{\fnms{Paul} \snm{Doukhan} \ead[label=e2]{paul.doukhan@u-cergy.fr}}
\and
\author[C]{\fnms{Olivier} \snm{Wintenberger} \ead[label=e3]{olivier.wintenberger@upmc.fr}}
\address[A]{University Paris 1 Panth\'eon-Sorbonne, SAMM, France, \printead{e1}}
\address[B]{University Cergy-Pontoise, AGM, France, \printead{e2}}
\address[C]{Sorbonne University, LPSM, France, \printead{e3}}
\end{aug}
\begin{abstract}
This paper aims at providing statistical guarantees for a kernel based  estimation of time varying parameters driving the dynamic of very generals classes of local stationary processes.  We consider coupling arguments in order to extend the results of Dahlhaus {\it et al.} \cite{DRW} to the local stationary version of the  infinite memory processes in Doukhan and Wintenberger \cite{DW}. The estimators are computed as localized M-estimators of any contrast satisfying appropriate regularity conditions. % as in Bardet and Wintenberger \cite{BW}. 
We prove the uniform consistency and pointwise asymptotic normality of such kernel based estimators. We apply our results to usual contrasts such as least-square, least absolute value, or quasi-maximum likelihood contrasts. Various local-stationary  processes as ARMA, AR$(\infty$), GARCH, ARCH$(\infty)$, ARMA-GARCH, LARCH$(\infty)$, \dots, and  integer valued processes are also considered. Numerical experiments  demonstrate the efficiency of the estimators on both simulated and  real data sets.
\end{abstract}
\begin{keyword}[class=MSC2010]
\kwd[Primary ]{62G05}
\kwd{
62G20}
\kwd[; secondary ]{62M05}
\end{keyword}
\end{frontmatter}
%\tableofcontents

\section{Introduction}
Following the seminal paper of Dahlhaus \cite{dahl96} local-stationarity is considered as a natural set of conditions  for introducing non-stationarity in times series. The chapter of Dahlhaus \cite{dah} yields an exhaustive survey about new results obtained between 1992 and 2012 on this topics.  Dahlhaus and its co-authors have developed a consistent framework studying definitions and properties of local stationary time-varying models (See  \cite{dsr06}, \cite{DP06} and \cite{pd} for instance) as well as associated statistical issues such as  identification and estimation (See \cite{dahl97}, \cite{dahl00}, \cite{dsr06}, \cite{DP06}, \cite{dahl2009} and  \cite{pd}). Note that, except in Dahlhaus and Subba-Rao \cite{dsr06}, the estimators introduced in the previous papers are based on a spectral approximation of the Gaussian likelihood, {\it i.e.} Whittle type estimators. Moreover models considered in this early literature are linear filters of independent inputs.
More recently a  general approach based on derivative processes has been developed in the important  paper of Dahlhaus et al. \cite{DRW}, allowing to  get rid off the linearity condition on the models. Time non-homogeneous  Markovian observations are the considered non-linear time varying models. Under some contraction condition, the observations forget their own past exponentially fast. Anyway many important models like GARCH-type models are not taken in consideration in the above considered literature. The reason is that the Markov representation, adjoining the volatility process, is not observed and, like Hidden Markov Models, do not fall into the setting considered by Dahlhaus et al. \cite{DRW}. Indeed such models may be considered as infinite memory processes, a general class that also models longer memories than the exponential decaying ones.
%Such result are strict mathematical extension  of the precise literature which applies  e.g. to the very popular deep recurrent networks (see e.g. \cite{bsf}); note that such recurrent networks indeed write as infinite memory models, in this case the models admit values in a large discrete set driven by an alphabet usually with size 60,000; anyway no decisive advances have been provided yet in the statistics of such models. Beyond such models other hidden Markov models such as GARCH models may also be seen as infinite memory models even if a bivariate corresponding model is still Markov.
\noindent 
~\\
The current paper aims at   extending the work of Dahlhaus et al. \cite{DRW} to such an infinite memory setting. 
We first extend the concept of local stationarity beyond the Markovian case. More precisely, 
 the stationary  models introduced by Doukhan and Wintenberger in \cite{DW} are extended to time varying infinite memory causal processes defined as $(X_t^{(n)})_{1\le t\le n}$ a recursive solution to the equation 
\begin{equation*}
X_t^{(n)}=F_{{\boldsymbol{\theta}}_t^{(n)}}\big ((X_{t-k}^{(n)})_{k\in \N^*}; \xi_{t}\big ),\qquad  1\le t\le n,\qquad n \in \N^*=\N\setminus \{0\}
\end{equation*}
where\, $({\boldsymbol{\theta}}_t^{(n)})_{0\leq t \leq n,\, n\in \N^*}$ is a deterministic family with ${\boldsymbol{\theta}}_t^{(n)} \in \Theta \subset \R^d$ for any $0\leq t \leq n$ and $n \in \N^*$, $F_{\boldsymbol{\theta}}$ is a known real-valued function and the innovations $\xi_t$ constitute an independent and identically distributed (i.i.d.)  sequence. For ease of writing we will consider $X_t^{(n)}=0$ for $t\leq 0$, but the arbitrary choice of any deterministic initial values does not change the asymptotic behavior. 
In order to make it tractable, a secondary aim of the paper is to keep the conditions as simple as possible. For instance, in our setting local-stationarity consists simply in the existence of a function ${\boldsymbol{\theta}}^*$ such as for $u\in (0,1)$,  ${\boldsymbol{\theta}}_t^{(n)} \simeq {\boldsymbol{\theta}^*}(u)$ when $t/n \simeq u$ (see the more precise condition \eqref{convtheta} in the so-called Assumption (LS$(\rho)$)). Under this assumption, we define a kernel based estimator $\widehat {\boldsymbol{\theta}}(u)$ of ${\boldsymbol{\theta}^*}(u)$ obtained by the minimization of a localized sum of contrast $\Phi$ (see its definition in \eqref{contrastEst}). We then establish the uniform consistency and the asymptotic normality of this estimator, which is minimax rate optimal, under sharp and general  conditions. 
~\\
The generality of the setting and the relative simplicity of the conditions allow us to recover  existing results on several classes of examples and extend them to infinite memory processes. Clearly  any Markov process with contractive kernel is an infinite memory model (with memory one if the observations are Markovian, with exponential decaying memory if there is a hidden state). The infinite memory representation for GARCH processes is quite appealing since it holds on the observations whereas the Markovian representation holds by adjoining the volatility process.  Processes with infinite memory may provide sharp conditions of convergence of estimation procedures in such models. Indeed, as any contrast is a function of the observations only, the contrast itself has infinite memory (see Bardet and Wintenberger \cite{BW} for a detailed discussion in the stationary setting). We first consider least squares and least absolute values contrasts  time-varying LARCH$(\infty)$ processes and we notably obtained an efficient asymptotic estimation for these infinite memory chains. Considering quasi  log-likelihood contrast also offers numerous estimation results for time-varying finite or infinite memory processes: we obtain the uniform consistency and the asymptotic normality for time varying AR$(\infty)$ and  ARCH$(\infty)$. For finite memory time varying ARMA$(p,q)$ or GARCH$(p,q)$,  our results recover previous ones of Dahlhaus and co-authors  \cite{dahl97}, \cite{dsr06} and \cite{DRW}, valid only for ARMA and ARCH models, and extend them to the important GARCH model class.
Dahlhaus et al. \cite{DRW} used functional dependence conditions, new coupling arguments developed from Dedecker and Prieur \cite{dp} are at the basis of the work, since no strong mixing condition may be expected and longer memories than exponential decaying ones have to be tackled. One advantage of our approach is that it goes around the derivative process, a drawback is that the rates of convergence in the asymptotic normality are sub-optimal. Finally we also apply our strategy to prove estimation convergence for time varying ARMA-GARCH processes and time varying integer valued Poisson-GLM type processes. 
%In \cite{Truq19}, Truquet adapted  the local stationarity to such integer valued time-inhomogeneous Markov chains, providing also an efficient procedure of estimation. He also proved that mixing properties may arise for such time non homogeneous Markov chains. Doukhan and Neumann \cite{DN19} also proved mixing conditions for semi-contractive time non homogeneous GARCH-type models. However let us recall that most of the infinite memory models used in this work do not satisfy such mixing conditions, see \cite{DW}.\\
~\\
Numerical studies are also proposed. Firstly, Monte-Carlo experiments show the accuracy of the estimator in several cases of time varying processes. However, these simulations also exhibit that such non-parametric estimate requires sufficient large sample sizes (at least one thousand in many cases). Secondly, an application to  financial data (the S\&P500 data from October 1990 to October 2020) demonstrates  the evolution of the parameters in case a typical GARCH$(1,1)$-model is used. \\
~\\
The forthcoming Section \ref{nonstat} is devoted to the definition and existence of new non-stationary models. In Section \ref{Mesti}, the definition of the nonparametric estimator as well as its uniform consistency and asymptotic normality are stated, while Section \ref{Examples} reviews several important cases. Numerical experiments are proposed in Section \ref{simu} and  proofs are postponed in Sections \ref{proofs1} and \ref{proofs2}.

\section{Preliminaries}\label{nonstat}
\subsection{Notation}
Some standard notation is used:
\begin{itemize}
\item The symbol $0$ denotes any null vector in any vector space;
\item If $V$ is a vector space then $V^{\infty}=\{(x_n)_{n\in \N}\in V^{\N};\exists N \in \N, \, x_k=0,~\mbox{for all}~k>N\}$;
\item The symbol $\| \cdot \|$ denotes the usual Euclidean norm
of a vector or the associated norm of a matrix;
\item For $p\geq 1$ and $Z$ a random vector in $\R^m$, denote:
$\|Z \|_p =  \big [\E (\|Z\|^p) \big ] ^{1/p};$  %\textcolor{red}{j'ai remodifi\'e cette def}
\item For the measurable
vector- or matrix-valued function $g$ defined on some set $U$, $\| g
\|_U=\sup_{u\in U} \| g(u) \|$;
\item From now on $\Theta$ denotes a subset of $\R^d$, and $\stackrel{\circ}{\Theta}$ is the interior of $\Theta$. If $V$ is a Banach space then ${\cal C}(\Theta,V)$ denotes the
Banach space of $V$-valued continuous functions on $\Theta$
equipped with the uniform norm $\| \cdot \|_\Theta$ and $\L^p({\cal
C}(\Theta,V))$ ($p\ge 1$) denotes the Banach space of random a.e. continuous
functions $f$ such that $\E\big [\|f\|_\Theta^p\big ]<\infty$.
\item For $\boldsymbol{\theta}\in \Theta$ and $\Psi_{\boldsymbol{\theta}}:~\R^\infty \to V$ a Borel function with values in a finite dimensional vector space $V$,
 $\partial_{\boldsymbol{\theta}}^k\Psi_\theta(x)$ denotes respectively for $k=0,1,2$, in case they exist, $\Psi_{\boldsymbol{\theta}}(x)$, $\displaystyle %\frac 
 {\partial \Psi_{\boldsymbol{\theta}}(x)}/{\partial {\boldsymbol{\theta}}}$ and $\displaystyle %\frac
  {\partial^2 \Psi_{\boldsymbol{\theta}}(x)}/{\partial {\boldsymbol{\theta}}^2}$ for $x \in \R^\infty$. 
\end{itemize}

\subsection{Stationary infinite memory processes}
In all the sequel, we will consider a given real number $p \geq 1$. \\
~\\
Set $\boldsymbol{\theta} \in \R^d$ and let a function $F_{\boldsymbol{\theta}}$ be defined as follows
\begin{equation}
F_{\boldsymbol{\theta}}: ~(x,y)\in \R^\infty\times \R \mapsto F_{\boldsymbol{\theta}}(x,y) \in \R.
\end{equation}  
Doukhan and Wintenberger proved in \cite{DW} the existence and uniqueness of the stationary solution of the recurrence  equation
\begin{equation}
\label{chaineinf}
X_t=F_{\boldsymbol{\theta}}\big ((X_{t-k})_{k\in \N^*}, \xi_{t}\big ), \qquad  \mbox{for all}~ t\in\Z,
\end{equation}
where $(X_t)$ is a process with values in $\R$ and where  $(\xi_t)_{t\in \Z}$ is a sequence of i.i.d. random variables (r.v.). This framework  provides a parametric representation of models such as nonlinear autoregressive or conditionally heteroskedastic time series for instance.\\

The existence of a stationary solution in $\L^p$ of the above equation relies on a contraction argument on the function $F_{\boldsymbol{\theta}}$. As a consequence, we define the following family of assumptions {\bf(A$_k$($\Theta$))} for $k=0,1,2$ and some compact subset $\Theta$ of $\R^d$:\\
\vspace{0.2cm}
\newline {\bf(A$_k$($\Theta$))} {\em For ${\boldsymbol{\theta}}\in \Theta$, we assume that the functions 
$\partial_{\boldsymbol{\theta}}^k F_{\boldsymbol{\theta}}$ exist on $\R^\infty \times \R$ for $k=0,1,2$. Moreover there exists a sequence
$\big(b^{(k)}_j(\Theta)\big)_{j}$ of nonnegative numbers such that  for all $ x$, $y\in\R^{\infty}$
\begin{eqnarray}
&\bullet & C_k(\Theta)= \Big \|\sup_{{\boldsymbol{\theta}}\in\Theta}\big \|\partial_{\boldsymbol{\theta}}^k F_{\boldsymbol{\theta}}(0 , \xi_{0})\big \| \Big \|_p <\infty\\
&\bullet  &\Big \|\sup_{{\boldsymbol{\theta}}\in \Theta} \big\|\partial_{\boldsymbol{\theta}}^k F_{\boldsymbol{\theta}}(x,\xi_0) -\partial_{\boldsymbol{\theta}}^k F_{\boldsymbol{\theta}}(y,\xi_0)\big\| \Big \|_p \le \displaystyle  \sum_{j=1}^\infty b^{(k)}_j(\Theta) \, \|x_j-y_j\|_p,
\end{eqnarray} }
%\\ 
%\nonumber
%&&
$\displaystyle \qquad \qquad \qquad \qquad \qquad \qquad \qquad \qquad \qquad \mbox{with}~B_k(\Theta)=\sum_{j=1}^\infty b^{(k)}_j(\Theta)<\infty.$
%\end{eqnarray} }
\newline 
\noindent Thus, from \cite{DW}, under the uniform contraction conditions {\bf(A$_0$($\Theta$))} with $B_0(\Theta)<1$, there exists a unique stationary  solution of \eqref{chaineinf} in $\L^p$ (defined  almost surely).
\subsection{Local stationary infinite memory process}
If we replace now ${\boldsymbol{\theta}}$ by the time-varying ${\boldsymbol{\theta}}^{(n)}_t$ such that ${\boldsymbol{\theta}}^{(n)}_t \in \Theta$, then the uniform contraction conditions {\bf(A$_0$($\Theta$))} with $B_0(\Theta)<1$ ensure the existence of a non-stationary $\L^p$-process. More precisely, we define the triangular array $(X_t^{(n)})_{1\leq t \leq n,\,n\in \N^*}$ such as:
\begin{equation}  
\label{chaineinfloc} 
X_t^{(n)}=F_{{\boldsymbol{\theta}}_t^{(n)}}\big ((X_{t-k}^{(n)})_{k\in \N^*}; \xi_{t}\big ),\qquad  1\le t\le n,\, n \in \N^*\,,
\end{equation}
where $({\boldsymbol{\theta}}_t^{(n)})_{0\leq t \leq n,\, n\in \N^*}$ is a family of real numbers   ${\boldsymbol{\theta}}_t^{(n)} \in \Theta \subset \R^d$ for any $0\leq t \leq n$ and $n \in \N^*$. \\ 
In order to make this recursion possible we also set initial conditions
\begin{equation}\label{init}
X_t^{(n)}=0,\quad\mbox{for $t\le 0$}.
\end{equation}
The solution of the above equations is no longer stationary. However, we can establish the following result (its proof as well as all the other ones are postponed in the Sections \ref{proofs1} and \ref{proofs2}):
\begin{lem}\label{momm} 
Let $\Theta \subset \R^d$ such that {\bf(A$_0$($\Theta$))} holds with $B_0(\Theta)<1$. Then, under the assumption \eqref{init}, the nonstationary triangular array $(X_t^{(n)})_{0\leq t \leq n, \, n\in \N^*} $, solution of \eqref{chaineinfloc}, remains in $\L^p$ and it satisfies
$$
\sup_{\ \ n \in \N^*, ~0\le s\le n}\|X_s^{(n)}\|_p\le\frac  {C_0(\Theta)}{1-B_0(\Theta)}.
$$ 
\end{lem}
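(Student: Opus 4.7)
The plan is to proceed by induction on $s$ for each fixed $n$, propagating a uniform $\L^p$ bound forward in time. Since $X_t^{(n)}=0$ for $t\leq 0$, the base case is trivial; the induction step must convert the structural estimate in assumption \textbf{(A$_0$($\Theta$))}, which is stated for deterministic $x,y\in\R^\infty$, into a bound involving the random past $(X_{s-k}^{(n)})_{k\geq 1}$.

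First, I would apply the triangle inequality in $\L^p$ to split
\begin{equation*}
\|X_s^{(n)}\|_p \leq \|F_{{\boldsymbol{\theta}}_s^{(n)}}(0,\xi_s)\|_p + \bigl\|F_{{\boldsymbol{\theta}}_s^{(n)}}\bigl((X_{s-k}^{(n)})_{k\in\N^*},\xi_s\bigr)-F_{{\boldsymbol{\theta}}_s^{(n)}}(0,\xi_s)\bigr\|_p.
\end{equation*}
Since ${\boldsymbol{\theta}}_s^{(n)}\in\Theta$ and $\xi_s\overset{\mathcal L}{=}\xi_0$, the first term is bounded pointwise by $\sup_{{\boldsymbol{\theta}}\in\Theta}\|F_{\boldsymbol{\theta}}(0,\xi_s)\|$, so its $\L^p$ norm is at most $C_0(\Theta)$.

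For the second term, the key observation is that $\xi_s$ is independent of $\mathcal F_{s-1}:=\sigma(\xi_t,\,t\leq s-1)$, which contains the random sequence $X:=(X_{s-k}^{(n)})_{k\geq 1}$. I would condition on $\mathcal F_{s-1}$ and apply the Lipschitz inequality of \textbf{(A$_0$($\Theta$))} to the deterministic realization of $X$ (bounding the supremum over ${\boldsymbol{\theta}}\in\Theta$ dominates the fixed value at ${\boldsymbol{\theta}}_s^{(n)}$), giving
\begin{equation*}
\Bigl(\E\bigl[\bigl\|F_{{\boldsymbol{\theta}}_s^{(n)}}(X,\xi_s)-F_{{\boldsymbol{\theta}}_s^{(n)}}(0,\xi_s)\bigr\|^p \,\big|\, \mathcal F_{s-1}\bigr]\Bigr)^{1/p} \leq \sum_{j=1}^{\infty} b^{(0)}_j(\Theta)\, |X_{s-j}^{(n)}|
\end{equation*}
almost surely. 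Taking $\L^p$ norms of both sides and invoking Minkowski's inequality for the series on the right yields
\begin{equation*}
\bigl\|F_{{\boldsymbol{\theta}}_s^{(n)}}(X,\xi_s)-F_{{\boldsymbol{\theta}}_s^{(n)}}(0,\xi_s)\bigr\|_p \leq \sum_{j=1}^{\infty} b^{(0)}_j(\Theta)\,\|X_{s-j}^{(n)}\|_p.
\end{equation*}

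Combining the two estimates and using the initial condition $\|X_{s-j}^{(n)}\|_p=0$ for $j\geq s$, the inequality becomes $\|X_s^{(n)}\|_p \leq C_0(\Theta) + \sum_{j=1}^{s-1} b^{(0)}_j(\Theta)\,\|X_{s-j}^{(n)}\|_p$. Setting $M:=C_0(\Theta)/(1-B_0(\Theta))$ and assuming inductively that $\|X_t^{(n)}\|_p\leq M$ for $0\leq t\leq s-1$, the right-hand side is at most $C_0(\Theta) + M\, B_0(\Theta) = M$, closing the induction uniformly in $n$ and $s$. The main subtlety in this argument, more than an obstacle, is the passage from the deterministic Lipschitz bound of \textbf{(A$_0$($\Theta$))} to a bound involving the random past, which hinges on the independence of $\xi_s$ from $\mathcal F_{s-1}$ and on Minkowski's inequality to interchange the series and the $\L^p$ norm.
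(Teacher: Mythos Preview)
Your proof is correct and follows essentially the same route as the paper's: split via the triangle inequality, bound the first term by $C_0(\Theta)$, use the Lipschitz part of \textbf{(A$_0$($\Theta$))} to control the second term by $\sum_j b^{(0)}_j(\Theta)\,\|X_{s-j}^{(n)}\|_p$, and close with an induction (the paper phrases it equivalently as a recursion on $M_t=\max_{j\le t}\|X_j^{(n)}\|_p$). Your conditioning on $\mathcal F_{s-1}$ to justify the passage from the deterministic Lipschitz bound to the random past is a point the paper leaves implicit; it is the right way to make that step rigorous.
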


\subsection{The stationary version}
We introduce a function $u\mapsto {\boldsymbol{\theta}^*}(u)$ on $[0,1]$; this is a continuous time approximation of the triangular array of parameters $({\boldsymbol{\theta}}_t^{(n)})_{0\leq t \leq n,\, n\in \N^*}$. We consider $\rho \in (0,1]$ and assume the following local-stationarity assumption: \\
~\\
{\bf Assumption (LS$(\rho)$)}: {\em There exist $K_{\boldsymbol{\theta}}>0$ and a continuous function $\boldsymbol{\theta}^*:~u \in [0,1] \mapsto \boldsymbol{\theta}^*(u) \in \R^d$, such as
\begin{equation}\label{convtheta}
\big \| \boldsymbol{\theta}^{(n)}_t -\boldsymbol{\theta}^*(u) \big \| \leq K_{\boldsymbol{\theta}} \,\Big |u-\frac tn\Big |^\rho,\quad \mbox{for any \ $n \in \N^*$ and $1\leq t \leq n$}.
\end{equation}}
\newline 
\noindent This condition describes an H\"older type behavior for the approximation of  $\boldsymbol{\theta}^{(n)}_t$  by the function $\boldsymbol{\theta}^*$. When $t/n \limiten u $ then $\boldsymbol{\theta}^{(n)}_t \limiten  \boldsymbol{\theta}^*(u)$ and the behavior of $(X_{t}^{(n)})$ is similar to its so-called stationary version.
\begin{Def}
If it exists, we define $(\widetilde X_{t}(u))_{ t \in \Z} $ as any solution of the recursion
\begin{equation}\label{eq:statu}
\widetilde X_{t}(u)= F_{\boldsymbol{\theta}^*(u)}\big ((\widetilde X_{t-k}(u))_{k\geq 1}, \xi_t\big ),\qquad t\in \Z\,,
\end{equation}
and we call it the stationary version of $(X_{t}^{(n)})$  at $ u \in [0,1]$.
\end{Def}
Note that from \cite{DW} the existence of the stationary version is implied by contraction assumptions. Namely, if the function   $\boldsymbol{\theta}^*$ satisfies  $\boldsymbol{\theta}^*(u)\in\Theta \subset \R^d$ for each $u\in[0,1]$ and is such that {\bf(A$_0$($\Theta$))} holds with $B_0(\Theta)<1$, then there exists a.s. a unique stationary stationary version $(\widetilde X_{t}(u))_{t\in \Z}$ satisfying \eqref{eq:statu}  and  
$$
\sup_{t\in \Z}\|\widetilde X_{t}(u)\|_p\le \frac {C_0(\Theta)} {1-B_0(\Theta)}\,,\qquad u\in[0,1]\,.
$$

\section{M-estimation for infinite memory processes}\label{Mesti}
\subsection{The stationary case}
We recall the framework of contrast estimation for infinite memory chain as  in Bardet and Wintenberger \cite{BW}. Let $(X_t)$ be a stationary solution of the infinite memory model \eqref{chaineinf} with parameter $\boldsymbol{\theta}^* \in \Theta \subset \R^d$  such that {\bf(A$_0$($\Theta$))} holds with $B_0(\Theta)<1$. We estimate  $\boldsymbol{\theta}^* $ using an M-estimator based on an observed path $(X_1,\ldots,X_n)$. \\
 
We define a contrast function $\Phi(x,\boldsymbol{\theta}) $ that satisfies a set of regularity assumptions combined in the definition of the space $\Lip_{p}(\Theta)$ for $\Theta \subset \R^d$ (always with $1\leq p$):\\
~\\ 
{\bf Space} $\Lip_{p}(\Theta)$: {\em A Borel function $h:~\R^\infty \times \Theta \to \R$ belongs to $\Lip_{p}(\Theta)$ if there exist a sequence of non-negative numbers $(\alpha_i(h,\Theta))_{i\in \N}$ where $\sum_{s=1}^\infty \alpha_s(h, \Theta)<\infty$ and a function $g:[0,\infty)^2\to [0,\infty)$ such as for any sequences $U=(U_i)_{i\in \N^*}\in (\L^p)^\infty$ and $V=(V_i)_{i\in \N^*}\in (\L^p)^\infty$ satisfying $\sup_{s\geq 1} \{ \|U_s\|_p \vee \|V_s \|_p\} <\infty$, one obtains:
\begin{equation}\label{LipL}
\left \{ \begin{array}{l}
\E \big [ \sup_{\boldsymbol{\theta} \in \Theta}  \big |h(0,\boldsymbol{\theta})\big | \big ]<\infty; \\ \displaystyle
\E \big [ \sup_{\boldsymbol{\theta} \in \Theta} \big |h(U,\boldsymbol{\theta}) -h(V,\boldsymbol{\theta})\big |\big ] \leq g \big (\sup_{s\geq 1} \big \{\|U_s\|_p\vee\|V_s \|_p \big \} \big )\; \sum_{s=1}^\infty \alpha_s(h, \Theta) \, \| U_s-V_s\|_p.
\end{array} \right .
\end{equation}} 
\newline
\noindent Note that if $h \in \Lip_{p'}(\Theta)$ then $h \in \Lip_{p}(\Theta)$ when $p \leq p'$ thanks to Jensen's inequality. It is possible (see below the general case for non-stationary models) to prove that if $\Phi \in \Lip_{p}(\Theta)$ and if  the stationary solution $(X_t)$ admits finite $p$ moments, then $\Phi \big ((X_{-t})_{t \in \N},\boldsymbol{\theta} \big )$ exists in $\L^1$ for any $\boldsymbol{\theta} \in \Theta$. The existence of first order moments is crucial for ensuring that $\Phi$ is a proper score function which is implied by the following condition:\\
\newline {\bf Assumption} ${\bf (Co}(\Phi,\Theta))$: {\em The function $\Phi \in \Lip_{p}(\Theta)$ for $p\ge 1$ is such that for   $(X_t)_{t\in \Z}$ satisfying the infinite memory model \eqref{chaineinf} with parameter $\boldsymbol{\theta}^* \in \stackrel{\circ}{\Theta}$ and with ${\cal F}_0=\sigma\big ((X_{-k})_{k\in \N} \big )$,
\begin{eqnarray} 
%\hspace{1cm}
\mbox{$\boldsymbol{\theta}^*$ is the unique minimum of the function } \boldsymbol{\theta} \in \Theta \mapsto \E \big [\Phi \big ((X_{1-k})_{k\in \N},\boldsymbol{\theta} \big ) | {\cal F}_0\big ]\mbox{ in } \stackrel{\circ}{\Theta}.\label{min} 
\end{eqnarray}}
\newline
\noindent As a consequence, this condition is depending on the function $\boldsymbol{\theta} \in \Theta \mapsto F_{\boldsymbol{\theta}}(\cdot)$ driving the infinite memory model \eqref{chaineinf}. Therefore the contrast function $\Phi$ will be chosen with respect to this function $F_{\boldsymbol{\theta}}$.  This  is thus natural to define the M-estimator of $\boldsymbol{\theta}$ by
$$
\widehat{\boldsymbol{\theta}}_n=\mbox{Arg}\! \min_{\! \! \! \! \boldsymbol{\theta} \in \Theta}  \frac1 n \, \sum_{t=1}^{n} \Phi \big ((X_{t-i})_{i\in \N},\boldsymbol{\theta}\big ).
$$
Note that the factor $1/n$ aims  at establishing a Law of Large Numbers. Indeed, if the almost sure convergence holds, {\it i.e.}  
\begin{equation*}
\sup_{\boldsymbol{\theta} \in \Theta} \Big | \frac1n \, \sum_{t=1}^{n} \Phi((X_{t-i})_{i\in \N},\boldsymbol{\theta})-\E \big [\Phi((X_{-i})_{i\in \N},\boldsymbol{\theta}) \big ]\Big |  \limiteasn 0,
\end{equation*}
then usual arguments  imply\ $\widehat {\boldsymbol{\theta}}_n \limiteasn \boldsymbol{\theta}^*$. 

\subsection{The local stationary case}
We extend  the notion of contrast function $\Phi$  to the non-stationary process $\left(X_t^{(n)}\right)_{t \in \Z}$ for time-varying parameters $ \boldsymbol{\theta}_t^{(n)}$. The first step below is to prove the integrability.
\begin{lem}\label{exiphi}
Let $(X_t^{(n)})_{t \in \Z}$ satisfy the non-stationary infinite memory model \eqref{chaineinfloc} under condition {\bf(A$_0$($\Theta$))} with $B_0(\Theta)<1$ and let $\Phi \in \Lip_{p}(\Theta)$ with $ p\ge1$. Then for any $\boldsymbol{\theta} \in \Theta$, the sequence of contrasts $\big (\Phi \big ( (X_{t-k}^{(n)})_{k\in \N}, \boldsymbol{\theta}   \big ) \big)_{t\in \Z}$ exists in $\L^1$. Moreover, under Assumption {\bf (LS($\rho$))},  and with $(\widetilde X_{t}(u))_{t\in \Z}$ the stationary version defined in \eqref{eq:statu}, $\big (\Phi \big ( (\widetilde X_{t-k}(u))_{k\in \N}, \boldsymbol{\theta}   \big ) \big)_{t\in \Z}$ is a stationary ergodic process.
\end{lem}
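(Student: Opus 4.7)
The strategy is to reduce both claims to the uniform $\L^p$-bound on the state obtained in Lemma \ref{momm}, combined with the Lipschitz-type inequality \eqref{LipL} built into the definition of $\Lip_{p}(\Theta)$.

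For the $\L^1$-integrability statement, I would fix $n \in \N^*$, $t \in \Z$ and $\boldsymbol{\theta}\in\Theta$, and set $M := C_0(\Theta)/(1-B_0(\Theta))$. By Lemma \ref{momm} together with the initial condition \eqref{init}, one has $\|X_{t-s}^{(n)}\|_p \leq M$ for every $s\geq 1$. Applying \eqref{LipL} to $U=(X_{t-s}^{(n)})_{s\geq 1}$ and $V=0$ yields
$$
\E \bigl| \Phi\bigl((X_{t-k}^{(n)})_{k\in \N}, \boldsymbol{\theta}\bigr) - \Phi(0,\boldsymbol{\theta}) \bigr| \leq g(M) \sum_{s=1}^\infty \alpha_s(\Phi,\Theta)\, \|X_{t-s}^{(n)}\|_p \leq g(M)\, M \sum_{s=1}^\infty \alpha_s(\Phi,\Theta) < \infty,
$$
the last inequality using the summability of the $\alpha_s(\Phi,\Theta)$. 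Combined with the first line of \eqref{LipL}, namely $\E[\sup_{\boldsymbol{\theta}\in\Theta}|\Phi(0,\boldsymbol{\theta})|]<\infty$, this gives the desired $\L^1$ bound. The same computation carried out for $U=(\widetilde X_{t-s}(u))_{s\geq 1}$, using the uniform bound $\sup_t\|\widetilde X_t(u)\|_p \leq M$ recalled after \eqref{eq:statu}, yields integrability of $\Phi\bigl((\widetilde X_{t-k}(u))_{k\in\N}, \boldsymbol{\theta}\bigr)$ as well.

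For the stationary-ergodic part, I would invoke the construction of \cite{DW}: under \textbf{(A$_0$($\Theta$))} with $B_0(\Theta)<1$, the unique stationary solution of \eqref{eq:statu} arises as the $\L^p$-limit of a Picard iteration initialised at $0$; each iterate is a measurable function of finitely many past innovations, so on passage to the limit there exists a Borel map $H_u:\R^\N\to\R$ with
$$
\widetilde X_t(u) = H_u\bigl((\xi_{t-k})_{k\geq 0}\bigr) \quad \text{a.s.}
$$
Consequently $(\widetilde X_t(u))_{t\in\Z}$ is a Bernoulli-shift functional of the i.i.d.\ sequence $(\xi_t)$, whose shift is ergodic; the image process is therefore stationary and ergodic, and composing with the measurable map $(x_k)_{k\in\N}\mapsto \Phi((x_k)_{k\in\N},\boldsymbol{\theta})$ preserves both properties.

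The only mildly delicate point is that the Picard limit must be shown measurable with respect to the past of $(\xi_t)$ alone; this follows because each iterate is such a function and $\L^p$-convergence preserves measurability up to almost sure equality (extract an a.s.\ convergent subsequence). Assumption \textbf{(LS($\rho$))} plays no role in either integrability or ergodicity: it intervenes only by producing the function $\boldsymbol{\theta}^*$ and ensuring $\boldsymbol{\theta}^*(u)\in \Theta$, so that \textbf{(A$_0$($\Theta$))} transfers to the stationary version and the bounds above can be applied with the same constants $C_0(\Theta)$ and $B_0(\Theta)$.
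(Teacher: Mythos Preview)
Your argument for the non-stationary part is correct and in fact slightly more direct than the paper's: because of the initial condition \eqref{init}, the sequence $(X_{t-k}^{(n)})_{k\in\N}$ has finite support, so it genuinely lies in $\R^\infty$ (recall the paper's convention that $V^\infty$ denotes finitely supported sequences) and one may apply \eqref{LipL} in a single step with $V=0$. The paper instead builds the truncations $\phi_{t,m}=\Phi(X_t^{(n)},\ldots,X_{t-m}^{(n)},0,0,\ldots;\boldsymbol{\theta})$ and shows they form a Cauchy sequence in $\L^1$; for the triangular array this is an unnecessary detour since $\phi_{t,m}$ stabilises for $m\geq t$.

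There is however a genuine gap when you turn to the stationary version. The sequence $(\widetilde X_{t-k}(u))_{k\in\N}$ is \emph{not} finitely supported, so neither the expression $\Phi\bigl((\widetilde X_{t-k}(u))_{k\in\N},\boldsymbol{\theta}\bigr)$ nor the ``measurable map $(x_k)_{k\in\N}\mapsto \Phi((x_k)_{k\in\N},\boldsymbol{\theta})$'' you invoke at the end is defined a priori: by the paper's conventions $\Phi$ has domain $\R^\infty\times\Theta$, and \eqref{LipL} is stated only for $U,V\in(\L^p)^\infty$. This is precisely what the paper's Cauchy argument buys: it first \emph{defines} $\Phi$ on the infinite past as the $\L^1$-limit of the truncated contrasts, and integrability comes for free as a by-product. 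Your sentence ``the same computation carried out for $U=(\widetilde X_{t-s}(u))_{s\geq 1}$'' therefore needs that intermediate step. Once the extension is in place, your Bernoulli-shift route to ergodicity is a perfectly valid alternative to the paper's citation of Straumann (which simply asserts that measurable functionals of a stationary ergodic sequence remain stationary ergodic).
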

Under the local stationary assumption {\bf (LS($\rho$))}, we can expect estimating $\boldsymbol{\theta}^*(u)$ with $0<u<1$ defined in \eqref{convtheta} thanks to a M-estimator based on the observations $X_t^{(n)} $ for $t/n\simeq u$. 
The previous M-estimator has to be localized around $t$ such as $t \simeq nu$ using a convolution kernel $K$ with a compact support (for simplicity):
\begin{Def}
Let a kernel function $K:\R \to \R$ be such as:
\begin{itemize}
\item $K$ has a compact support, {\it i.e.} there exists $c>0$ such as $K(x)=0$ for $|x|\geq c$;
\item $K:\R\to\R $ is piecewise differentiable %real function
 with  $\int_\R K(x)dx=1$, $C_K=\sup_{x \in \R} |K(x)|<\infty$.
\end{itemize}
Then, with a bandwidth sequence $(h_n)_{n\in \N}$ of positive numbers, we define the kernel based estimator of $\theta^*(u)$ as
\begin{equation}
\label{contrastEst}
\widehat{\boldsymbol{\theta}}(u)= \arg \!  \min_{\! \! \! \! \!  \! \!\!\boldsymbol{\theta}\in \Theta}\frac 1 {nh_n} \, \sum_{j=1}^n \Phi\big ((X^{(n)}_{j-i})_{i\in \N},\boldsymbol{\theta }\big ) \, K\Big(\frac{\frac jn-u}{h_n}\Big )\,,\qquad u\in(0,1).
\end{equation}
\end{Def}
Under   weak conditions, this estimator is uniformly consistent:
\begin{theo} \label{theo1}
Let $(X_t^{(n)})_{t\in \N}$ be the solution of the non-stationary infinite memory model \eqref{chaineinfloc} which satisfies
 Assumption {\bf(A$_0$($\Theta$))} with $B_0(\Theta)<1$ and $\sum_{t=2}^\infty t\log(t) b_t(\Theta)<\infty$ and  Assumption {\bf(A$_1$($\Theta$))}, with also Assumption   {\bf (LS($\rho$))}. If for $p\geq 1$, $\Phi \in \Lip_{p}(\Theta)$  with  $\sum_{s\ge 0}s \, \alpha_s(\Phi, \Theta)<\infty$ satisfying Assumption ${\bf (Co}(\Phi,\Theta))$ then, for any $u\in (0,1)$, $\widehat{\boldsymbol{\theta}}(u)$ consistently estimates $\boldsymbol{\theta}^*(u)$:
$$
\widehat{\boldsymbol{\theta}}(u)\limiteproban\boldsymbol{\theta}^*(u),\quad\mbox{if}\quad h_n \limiten 0\quad\mbox{and}\quad nh_n \limiten \infty.$$ 
Moreover, if $p>1$ and $n^{1-1/p} h_n \limiten \infty$ then for any $\varepsilon>0$ then $\widehat{\boldsymbol{\theta}}$ uniformly consistently estimates $\boldsymbol{\theta}^*$:
\begin{equation}\label{convtheta2}
\sup_{u\in [\varepsilon, 1-\varepsilon]}\big\|\widehat{\boldsymbol{\theta}}(u)- \boldsymbol{\theta}^*(u)\big\| \limiteproban 0\,.
\end{equation}
\end{theo}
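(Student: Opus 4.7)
Set
$$L_n(u,\boldsymbol{\theta})=\frac{1}{nh_n}\sum_{j=1}^n \Phi\big((X^{(n)}_{j-i})_{i\in\N},\boldsymbol{\theta}\big)\,K\!\left(\frac{j/n-u}{h_n}\right),\qquad L^*(u,\boldsymbol{\theta})=\E\big[\Phi((\widetilde X_{-k}(u))_{k\in\N},\boldsymbol{\theta})\big],$$
so that by definition $\widehat{\boldsymbol{\theta}}(u)$ is any minimizer of $L_n(u,\cdot)$. I would follow the classical M-estimator scheme: prove (i) uniform convergence $\sup_{\boldsymbol{\theta}\in\Theta}|L_n(u,\boldsymbol{\theta})-L^*(u,\boldsymbol{\theta})|\to 0$ in probability, and (ii) conclude via identifiability. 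Point (ii) is furnished directly by Assumption ${\bf (Co}(\Phi,\Theta))$ applied to the stationary version $(\widetilde X_t(u))_t$, together with compactness of $\Theta$ and the continuity of $L^*(u,\cdot)$ guaranteed by the $\Lip_p(\Theta)$ hypothesis, so the real work lies in (i).

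For (i) I would use the decomposition $L_n-L^*=(L_n-\widetilde L_n)+(\widetilde L_n-L^*)$, where
$$\widetilde L_n(u,\boldsymbol{\theta})=\frac{1}{nh_n}\sum_{j=1}^n \Phi\big((\widetilde X_{j-i}(u))_{i\in\N},\boldsymbol{\theta}\big)\,K\!\left(\frac{j/n-u}{h_n}\right).$$
The first difference is controlled by a coupling estimate: the $\Lip_p(\Theta)$ property of $\Phi$ together with Lemma~\ref{momm} (which bounds every involved $\L^p$-norm by $C_0(\Theta)/(1-B_0(\Theta))$) yields
$$\E\sup_{\boldsymbol{\theta}\in\Theta}\big|\Phi((X^{(n)}_{j-i})_i,\boldsymbol{\theta})-\Phi((\widetilde X_{j-i}(u))_i,\boldsymbol{\theta})\big|\lesssim \sum_{s\ge 1}\alpha_s(\Phi,\Theta)\,\|X^{(n)}_{j-s}-\widetilde X_{j-s}(u)\|_p.$$
Iterating the defining recursions of $X^{(n)}$ and $\widetilde X(u)$ and exploiting Assumption \textbf{(LS($\rho$))} with the contraction $B_0(\Theta)<1$ produces $\|X^{(n)}_{j-s}-\widetilde X_{j-s}(u)\|_p\le C(h_n^\rho+(s/n)^\rho)$ for indices with $|j/n-u|\le c h_n$, up to an exponentially decaying initialization term. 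The joint summability $\sum_s s\log(s)\,b_s(\Theta)<\infty$ and $\sum_s \alpha_s(\Phi,\Theta)<\infty$ is precisely what keeps the infinite-memory double sum finite and gives $\E\sup_{\boldsymbol{\theta}\in\Theta}|L_n-\widetilde L_n|\to 0$. For the second difference, Lemma~\ref{exiphi} tells us that $(\Phi((\widetilde X_{j-i}(u))_i,\boldsymbol{\theta}))_j$ is stationary ergodic with mean $L^*(u,\boldsymbol{\theta})$, so the Riemann-type weighted ergodic sum against $K$ converges to $L^*(u,\boldsymbol{\theta})\int K=L^*(u,\boldsymbol{\theta})$ as soon as $h_n\to 0$ and $nh_n\to\infty$.

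Uniformity in $\boldsymbol{\theta}\in\Theta$ is obtained by an equicontinuity argument: Assumption \textbf{(A$_1$($\Theta$))} combined with $\sum_s s\,\alpha_s(\Phi,\Theta)<\infty$ furnishes a Lipschitz-in-$\boldsymbol{\theta}$ bound for both $L_n(u,\cdot)$ and $L^*(u,\cdot)$, and compactness of $\Theta$ reduces the uniform statement to a finite net of pointwise convergences. For the uniform-in-$u$ conclusion \eqref{convtheta2}, I would discretize $[\varepsilon,1-\varepsilon]$ into $O(1/h_n)$ points; since $L_n(\cdot,\boldsymbol{\theta})$ has $u$-modulus of order $h_n^{-1}$ (inherited from the kernel), the maximum over the net is controlled by a Markov inequality in $\L^p$, and it is this step that produces the sharper requirement $n^{1-1/p}h_n\to\infty$. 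The main obstacle is the coupling estimate: both processes have infinite memory and their parameter sequences drift away from $\boldsymbol{\theta}^*(u)$ throughout a window of $nh_n$ indices, so the contraction $B_0(\Theta)<1$ must be iterated in a way that absorbs the $h_n^\rho$ local-stationarity error on \emph{all} past coordinates simultaneously, which is exactly what the logarithmic weight in $\sum_t t\log(t)b_t(\Theta)<\infty$ is there to ensure.
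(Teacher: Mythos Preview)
Your overall plan---the decomposition $L_n-L^*=(L_n-\widetilde L_n)+(\widetilde L_n-L^*)$, the $\Lip_p(\Theta)$ bound for the first piece, and a kernel-weighted ergodic theorem for the second---is exactly the paper's route. Two points need correction.

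\emph{The coupling bound.} Your claimed estimate $\|X^{(n)}_{j-s}-\widetilde X_{j-s}(u)\|_p\le C(h_n^\rho+(s/n)^\rho)$ ``up to an exponentially decaying initialization term'' is not right: the memory term is \emph{not} exponentially small in general, and the $(s/n)^\rho$ term does not arise. The paper separates drift and memory by inserting an intermediate process $X^*_t(u)$ that equals $X^{(n)}_t$ before the window $[i_n(u),j_n(u)]$ and evolves with the frozen parameter $\boldsymbol{\theta}^*(u)$ inside it. This yields, for $t=i_n(u)+s$ with $0\le s\le 2cnh_n$,
\[
\|X^{(n)}_t-\widetilde X_t(u)\|_p\ \lesssim\ h_n^\rho+\lambda_s,\qquad \lambda_s=\inf_{1\le r\le s}\Big(B_0(\Theta)^{s/r}+\sum_{k>r}b^{(0)}_k(\Theta)\Big).
\]
The hypothesis $\sum_t t\log(t)\,b^{(0)}_t(\Theta)<\infty$ is used precisely to guarantee $\sum_s\lambda_s<\infty$ (choose $r\asymp s/\log s$); that summability, together with $\sum_s s\,\alpha_s(\Phi,\Theta)<\infty$, is what closes the double sum and gives $\E\sup_{\boldsymbol{\theta}}|L_n-\widetilde L_n|=O((nh_n)^{-1})$ for fixed $u$.

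\emph{Uniformity in $u$.} The paper does not discretize and union-bound. It pushes the supremum over $u$ \emph{inside} the $\L^p$-norm and controls $\big\|\sup_{u}|X^{(n)}_{i_n(u)+s}-\widetilde X_{i_n(u)+s}(u)|\big\|_p$ by a chaining argument over an $O(n)$-point grid; this is where the factor $n^{1/p}$ originates, and plugging it into the previous bound gives $\E\sup_u\sup_{\boldsymbol{\theta}}|L_n-\widetilde L_n|=O(n^{1/p}/(nh_n))$, hence the condition $n^{1-1/p}h_n\to\infty$. Your discretization over $O(h_n^{-1})$ points with only $\L^1$ control on each would cost a factor $h_n^{-1}$ that is not absorbed. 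Finally, note that Assumption {\bf(A$_1(\Theta)$)} is not what furnishes equicontinuity in $\boldsymbol{\theta}$ (the $\sup_{\boldsymbol{\theta}}$ is already built into $\Lip_p(\Theta)$); it is used to bound $\partial_{\boldsymbol{\theta}}F_{\boldsymbol{\theta}}$ in the coupling lemmas and to prove the $u$-equicontinuity $\|\widetilde X_t(u)-\widetilde X_t(u')\|_p\lesssim\|\boldsymbol{\theta}^*(u)-\boldsymbol{\theta}^*(u')\|$ needed for the stationary piece.
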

\begin{Rem}We notice that the uniform consistency of the kernel estimator was already obtained for Markov processes by Dahlhaus {\it et al.} in \cite{DRW} under a different set of assumptions. Our extra condition on the bandwidth $n^{1-1/p} h_n \to \infty$, $n\to\infty$,  corresponds to the extra condition in $(ii)$ of Theorem 5.2 of \cite{DRW} with $M=0$. We notice also that our extra condition implicitly  requires that $p>1$.
\end{Rem}
\textcolor{black}{
\begin{Rem}
Consistency of $\widehat{\boldsymbol{\theta}}(u)$ for any $u\in (0,1)$ can be achieved relaxing the uniform contraction condition $B_0(\{\Theta)<1$ of Theorem \ref{theo1}. Instead assume the pointwise contraction 
\begin{equation}\label{cond:weakcontr}
B_0(\{\boldsymbol{\theta}^*(u)\})<1\,,\qquad u\in (0,1)\,.
\end{equation}
Then a continuity argument under {\bf(A$_0$($\Theta$))} yeilds for any $u\in (0,1)$ the existence of $\varepsilon>0$ such that 
$$
B_0(\{\boldsymbol{\theta}^ \in \Theta: \, \|\boldsymbol{\theta}-\boldsymbol{\theta}^*(u)\|\le \varepsilon\})<1\,.
$$ 
The M-estimator defined in \eqref{contrastEst} with $\{\boldsymbol{\theta} \in \Theta: \, \|\boldsymbol{\theta}-\boldsymbol{\theta}^*(u)\|\le \varepsilon\}$ in place of $\Theta$ consistently estimates $\theta^*(u)$ by an application of Theorem \ref{theo1}. That this M-estimator coincides with $\widehat{\boldsymbol{\theta}}(u)$ for $n$ sufficiently large follows by a compacity argument of Pfanzagl \cite{Pf}; The compact set $\Theta$ is  covered by finitely many compact sets of diameter $\varepsilon$ among which only one contains $\boldsymbol{\theta}^* (u)$. Applying the SLLN of Lemma \ref{TLCK} 1., the uniqueness in Condition ${\bf (Co}(\Phi,\Theta))$ implies that the minimizer of \eqref{contrastEst} belongs to the unique compact set containing $\boldsymbol{\theta}^* (u)$ for $n$ large enough. See \cite{BW} for details. Note that we were not able to extend the uniform consistency of $\widehat{\boldsymbol{\theta}}$ under pointwise contraction, which is consistent with \cite{DRW} that worked under uniform contraction as well. Uniform contraction seems neceassary for proving  uniform moments   in Lemma \ref{mommbis}.
\end{Rem}}
For establishing the asymptotic normality of $\widehat{\boldsymbol{\theta}}(u)$, we analogously need extra assumptions on the differentiability of the contrast $\Phi$ and the integrability of its derivatives. 
We have:
\begin{theo} \label{theo3}
Let the assumptions of Theorem \ref{theo1} hold with $\Theta $ a compact set. Assume also that for any $x \in \R^\infty$, 
$\boldsymbol{\theta} \in \Theta \mapsto \Phi(x,\boldsymbol{\theta})$ is a ${\cal C}^2(\Theta)$ function such as $\partial  _{\boldsymbol{\theta}}\Phi \in \Lip_p(\Theta)$ with $ \sum_{s=1}^\infty s \, \alpha_s(\partial _{\boldsymbol{\theta}} \Phi, \Theta)<\infty$,
%$\partial^2  _{\boldsymbol{\theta}^2}\Phi \in \Lip_p(\Theta)$ 
and for any $u \in (0,1)$, with $\rho \in (0,1]$ defined in Assumption   {\bf (LS($\rho$))},
\begin{itemize}
\item  
$\E \big [\big \| \partial _{\boldsymbol{\theta}} \Phi\big ((\widetilde X_k(u))_{k\leq 0}, \boldsymbol{\theta}^*(u) \big )\big \|^2 \big ]<\infty$ and $\Sigma\big (\boldsymbol{\theta}^*(u)\big )$ is a definite positive matrix with 
\begin{equation*}\label{SigmaMart}
\Sigma\big (\boldsymbol{\theta}^*(u)\big )= \int_\R K^2(x)dx \cdot %\, 
\E\Big [\partial _{\boldsymbol{\theta}} \Phi \big ((\widetilde X_{-k}(u))_{k\in \N},\boldsymbol{\theta}^*(u)\big ) \, \big (\partial _{\boldsymbol{\theta}} \Phi \big ((\widetilde X_{-k}(u))_{k\in \N},\boldsymbol{\theta}^*(u)\big ) \big )^\top \Big ] ;
\end{equation*}
\item $\Gamma(\boldsymbol{\theta}^*(u)) =\E \big [ \partial^2 _{\boldsymbol{\theta}^2} \Phi\big ((\widetilde X_{-k}(u))_{k\in \N}, \boldsymbol{\theta}^*(u) \big )\big ]$ is a  positive definite  matrix.
\end{itemize}
If $(h_n)_n$ is a sequence of positive numbers such that 
\begin{equation}\label{condbn}
nh_n \limiten \infty\quad\mbox{and} \quad nh_n^{1+2\rho}\limiten 0,
\end{equation}
then, for any $u\in (0,1)$,
\begin{equation}\label{tlctheta}
 \sqrt{n h_n}\, \big ( \widehat{\boldsymbol{\theta}}(u)- \boldsymbol{\theta}^*(u) \big ) \limiteloin  {\cal N}_d \Big ( 0 \, , \, \Gamma^{-1}(\boldsymbol{\theta}^*(u))  \Sigma\big (\boldsymbol{\theta}^*(u)\big )\Gamma^{-1}(\boldsymbol{\theta}^*(u)) \Big ).
\end{equation}
\end{theo}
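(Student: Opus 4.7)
The plan is to follow the standard M-estimator route: first-order condition, Taylor expansion, then separate analysis of the score and Hessian. Since $\widehat{\boldsymbol{\theta}}(u) \in \stackrel{\circ}{\Theta}$ eventually (by the consistency of Theorem \ref{theo1} and $\boldsymbol{\theta}^*(u)\in\stackrel{\circ}{\Theta}$), the gradient vanishes and a mean value expansion gives
\begin{equation*}
\sqrt{nh_n}\bigl(\widehat{\boldsymbol{\theta}}(u)-\boldsymbol{\theta}^*(u)\bigr) = -\widehat{\Gamma}_n(\widetilde{\boldsymbol{\theta}}_n)^{-1}\, \sqrt{nh_n}\, \widehat{S}_n(\boldsymbol{\theta}^*(u)),
\end{equation*}
where $\widehat{S}_n(\boldsymbol{\theta})=\frac{1}{nh_n}\sum_{j=1}^n \partial_{\boldsymbol{\theta}}\Phi\bigl((X^{(n)}_{j-i})_{i\in\N},\boldsymbol{\theta}\bigr)K\bigl(\frac{j/n-u}{h_n}\bigr)$ is the normalized score and $\widehat{\Gamma}_n$ is the analogous Hessian sum evaluated at some intermediate $\widetilde{\boldsymbol{\theta}}_n$ between $\widehat{\boldsymbol{\theta}}(u)$ and $\boldsymbol{\theta}^*(u)$. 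It remains to show (i) $\widehat{\Gamma}_n(\widetilde{\boldsymbol{\theta}}_n) \to \Gamma(\boldsymbol{\theta}^*(u))$ in probability, and (ii) $\sqrt{nh_n}\,\widehat{S}_n(\boldsymbol{\theta}^*(u)) \xrightarrow{\mathcal{L}} \mathcal{N}_d(0,\Sigma(\boldsymbol{\theta}^*(u)))$.

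For step (i), I would use the consistency of $\widehat{\boldsymbol{\theta}}(u)$ together with a uniform law of large numbers for the localized sum of the Hessian. The argument has two ingredients: replacing the non-stationary $X^{(n)}$ by the stationary version $\widetilde{X}(u)$ via the coupling bound afforded by Assumption \textbf{(LS($\rho$))} and the contraction \textbf{(A$_0$($\Theta$))} — the contribution of indices in the kernel window $|j/n-u|\leq c h_n$ is controlled by $\|\boldsymbol{\theta}_j^{(n)}-\boldsymbol{\theta}^*(u)\|\leq K_{\boldsymbol{\theta}}(ch_n)^{\rho}\to 0$ — and then the ergodic theorem (invoked via the already available kernel LLN, referred to as Lemma \ref{TLCK}) for the stationary sequence $\partial^2_{\boldsymbol{\theta}^2}\Phi((\widetilde{X}_{j-k}(u))_k,\boldsymbol{\theta})$, which belongs to $\L^1({\cal C}(\Theta,\cdot))$ thanks to the added regularity on $\partial_{\boldsymbol{\theta}}\Phi\in\Lip_p$.

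Step (ii) is the heart of the argument. The key structural observation is that Condition \textbf{(Co}(\Phi,\Theta)\textbf{)} at $\boldsymbol{\theta}^*(u)$ yields $\E[\partial_{\boldsymbol{\theta}}\Phi((\widetilde{X}_{t-k}(u))_{k\in\N},\boldsymbol{\theta}^*(u))\mid\mathcal{F}_{t-1}]=0$, so the score evaluated at the stationary version is a stationary ergodic martingale difference sequence. I would decompose
\begin{equation*}
\sqrt{nh_n}\,\widehat{S}_n(\boldsymbol{\theta}^*(u)) = \underbrace{\tfrac{1}{\sqrt{nh_n}}\sum_{j=1}^n M_j^{(u)} K_{n,j}}_{T_n^{\rm stat}} + \underbrace{\tfrac{1}{\sqrt{nh_n}}\sum_{j=1}^n (D_j^{(n)}-M_j^{(u)}) K_{n,j}}_{R_n} + \underbrace{\tfrac{1}{\sqrt{nh_n}}\sum_{j=1}^n m_j^{(n)} K_{n,j}}_{B_n},
\end{equation*}
where $K_{n,j}=K((j/n-u)/h_n)$, $M_j^{(u)}=\partial_{\boldsymbol{\theta}}\Phi((\widetilde{X}_{j-k}(u))_k,\boldsymbol{\theta}^*(u))$, $D_j^{(n)}$ is the same quantity with $X^{(n)}$ and with zero conditional mean removed, and $B_n$ collects the conditional-mean bias caused by $\boldsymbol{\theta}_j^{(n)}\neq\boldsymbol{\theta}^*(u)$. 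The bias term $B_n$ has size $\sqrt{nh_n}\cdot h_n^{\rho}=\sqrt{nh_n^{1+2\rho}}\to 0$ by the bandwidth condition \eqref{condbn}, which is precisely why that condition appears. The coupling remainder $R_n$ is controlled by the Lipschitz assumption $\partial_{\boldsymbol{\theta}}\Phi\in\Lip_p$ together with a telescoping coupling of $X^{(n)}$ and $\widetilde{X}(u)$ over the kernel window, using the H\"older rate $\rho$ and the summability $\sum_s s\,\alpha_s(\partial_{\boldsymbol{\theta}}\Phi,\Theta)<\infty$.

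Finally I would apply the martingale central limit theorem (Brown, or the array version) to $T_n^{\rm stat}$, which is itself a sum of martingale differences $a_{n,j}M_j^{(u)}$ with weights $a_{n,j}=K_{n,j}/\sqrt{nh_n}$. The conditional variance satisfies, by stationarity and ergodicity,
\begin{equation*}
\sum_{j=1}^n a_{n,j}^2\,\E[M_j^{(u)}(M_j^{(u)})^{\top}\mid\mathcal{F}_{j-1}] \;\xrightarrow{\P}\; \Bigl(\tfrac{1}{nh_n}\sum_j K_{n,j}^2\Bigr)\,\E[M_0^{(u)}(M_0^{(u)})^{\top}] \to \Sigma(\boldsymbol{\theta}^*(u)),
\end{equation*}
via a Riemann-sum approximation $\frac{1}{nh_n}\sum_j K^2_{n,j}\to\int K^2$ and the $\L^2$ hypothesis on the score. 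The Lindeberg condition reduces to $\max_j\|a_{n,j}M_j^{(u)}\|^2\to 0$ in probability, and follows from $\|K\|_\infty<\infty$, $nh_n\to\infty$ and the $\L^2$-integrability. The main obstacle throughout is the simultaneous management of three approximations — non-stationary to stationary (via coupling), time-varying parameter to the limiting $\boldsymbol{\theta}^*(u)$ (via \textbf{(LS($\rho$))}), and finite-sample score to its conditional expectation (via the martingale structure) — and checking that all three remainders are negligible on the kernel scale $\sqrt{nh_n}$; the balancing condition $nh_n^{1+2\rho}\to 0$ emerges precisely to kill the bias created by the first two.
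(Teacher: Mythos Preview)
Your overall route --- first-order condition, Taylor expansion, Hessian via a kernel LLN, score via a martingale CLT on the stationary version plus a coupling remainder --- is the same as the paper's, with two presentational differences. First, you expand the \emph{observed} score $\widehat S_n$ around $\boldsymbol{\theta}^*(u)$ and couple at $\boldsymbol{\theta}^*(u)$; the paper instead performs the Taylor expansion on the \emph{stationary} score $\frac1{nh_n}\sum\partial_{\boldsymbol\theta}\Phi((\widetilde X_{t-k}(u))_k,\cdot)K$ and couples at $\widehat{\boldsymbol\theta}(u)$ (showing the stationary score at $\widehat{\boldsymbol\theta}(u)$ is $o_P((nh_n)^{-1/2})$ by comparing with the observed score, which vanishes). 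Second, for the kernel-weighted CLT you invoke the martingale array CLT directly on $a_{n,j}M_j^{(u)}$; the paper proves a transfer lemma (Lemma~\ref{TLCK}, part 2) that deduces the kernel CLT from the ordinary CLT by piecewise-constant approximation of $K$. Both variants are legitimate.

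There is one genuine soft spot in your decomposition of step (ii). Splitting the observed score into $T_n^{\rm stat}+R_n+B_n$ forces you to bound the ``conditional-mean bias'' $m_j^{(n)}=\E[\partial_{\boldsymbol\theta}\Phi((X^{(n)}_{j-k})_k,\boldsymbol\theta^*(u))\mid\mathcal F_{j-1}]$ directly. Your claim $m_j^{(n)}=O(h_n^\rho)$ implicitly uses that this conditional expectation vanishes when $\boldsymbol\theta_j^{(n)}=\boldsymbol\theta^*(u)$, but Assumption {\bf Co}$(\Phi,\Theta)$ only guarantees this \emph{a.s.\ with respect to the stationary past} $(\widetilde X_{-k}(u))_{k\ge1}$, not for the non-stationary past $(X^{(n)}_{j-k})_{k\ge1}$, whose law differs. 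In concrete models the conditional score identity holds for every past sequence and your argument goes through, but under the abstract hypothesis it does not. The clean fix --- and what the paper does --- is to drop the $R_n/B_n$ split and bound the single remainder $\frac1{\sqrt{nh_n}}\sum[\partial_{\boldsymbol\theta}\Phi((X^{(n)}_{t-k})_k,\boldsymbol\theta^*(u))-\partial_{\boldsymbol\theta}\Phi((\widetilde X_{t-k}(u))_k,\boldsymbol\theta^*(u))]K_{n,t}$ in $\L^1$ via $\partial_{\boldsymbol\theta}\Phi\in\Lip_p(\Theta)$ and the pointwise coupling estimate $\|X^{(n)}_{t}-\widetilde X_t(u)\|_p\le C(h_n^\rho+\lambda_{t-i_n})$; summing with $\sum_s s\,\alpha_s(\partial_{\boldsymbol\theta}\Phi,\Theta)<\infty$ and $\sum_s\lambda_s<\infty$ yields the bound $C\sqrt{nh_n}\,(h_n^\rho+(nh_n)^{-1})=\sqrt{nh_n^{1+2\rho}}+o(1)\to 0$, exactly as you anticipated for $B_n$ but now rigorously.
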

\begin{Rem}
A first consequence of this result is that the  convergence rate of $ \widehat{\boldsymbol{\theta}}(u)$ is $o(n^{-\rho/(2\rho +1)})$, which is just below the classical minimax convergence rate in a non parametric framework  for any $\rho \in (0,1]$. Then the optimal choice of the bandwidth satisfies $h_n=o(n^{-1/(2\rho +1)})$ and the estimator is also uniformly consistent whenever $p>(2\rho +1)/(2\rho)$. %Note that the compactness of the set $\Theta\in {\mathbb R}^d$  with $\Phi(x,\cdot)\in {\cal C}^2(\Theta)$  for each $x\in\R^\infty$ entails that the continuous function $\partial^2_{\boldsymbol{\theta}^2}\Phi $ is   uniformly continuous over $\Theta$ from  Heine-Cantor theorem.
%\\
Under additional conditions, Rosenblatt \cite{ros1} and Dahlhaus {\em et al.} \cite{DRW} derive expressions for an equivalent of the bias; in this case {\it i.e.} $ nh_n^{1+2\rho}\limiten \ell\ne0$, one may use the classical minimax bandwidth and, then,  a non-centred Gaussian limit theorem   occurs.
\end{Rem}
\textcolor{black}{\begin{Rem}\label{rem:weakcontr}
A precise inspection of the proof of Theorem \ref{theo3} shows that only the consistency of $\widehat{\boldsymbol{\theta}}(u)$, $u\in (0,1)$, is used and not the uniform consistency of $\widehat{\boldsymbol{\theta}}$. Thus the asymptotic normality \eqref{tlctheta} can be extending under the pointwise contraction \eqref{cond:weakcontr}.
\end{Rem}}
\begin{Rem}
Considering $(u_1,\ldots,u_m)$ instead of $u$, a multidimensional central limit theorem could also be obtained extending \eqref{tlctheta}. Such a result could be interesting for testing the goodness-of-fit ($H_0:~{\boldsymbol{\theta}}^*={\boldsymbol{\theta}}_0$) or the stationarity ($H_0:~{\boldsymbol{\theta}}^*=C_0 \in \R^d$) of the process. This will be the subject of a forthcoming paper.
\end{Rem}

\section{Examples} \label{Examples}  \label{subExamples}
Here we develop several examples of locally stationary  infinite memory models with contrast functions $\Phi \in \Lip_{p}(\Theta)$ for which the Assumption {\bf (Co}($\Phi,\Theta)$) is satisfied. We also check the conditions of Theorems \ref{theo1} and \ref{theo3} in order to assert the uniform consistency and the asymptotic normality  of the localized M-estimator.

\subsection{Time varying AR$(1)$ processes}
In the case of time varying AR$(1)$ (denoted further as tvAR$(1)$) processes defined by
\begin{equation}\label{tvAR1}
X^{(n)}_t=\theta^{(n)}_{t}\,X^{(n)}_{t-1}+ \xi_t, \quad\mbox{for $1\leq t \leq n$, $n\in \N^*$},
\end{equation}
with $X^{(n)}_t=0$ for any $t\leq 0$, and  $ \Theta=[-r,r]$ with $0<r<1$.

\subsubsection{Least Square contrast}
When $\Phi_{LS}$ is the Least Square (LS) contrast defined as
\begin{equation}\label{phiAR1}
\Phi_{LS}(x,\theta)=(x_1-\theta \, x_2)^2\,,
\end{equation}
we obtain the usual Yule-Walker (or Least Square) estimator of $\theta^* (u)$ if the stationary version $(\widetilde X_t(u))$ were observed. Clearly Assumption ($Co(\Phi,\Theta)$) holds and using H\"older Inequality, we obtain than $\Phi_{LS} \in \Lip_{p}(\Theta)$ with $p=2$, 
\begin{multline*}
\E \big [| \sup_{\theta \in \Theta}\! \big |\Phi_{LS}\!(U,\theta) -\Phi_{LS}\!(V,\theta)\big | \big ]%\\
 \leq (1\!+\!r)\max_{1\leq s \leq 2}\!\! \big \{ \big \|U_i\|_{2}\vee\big \|V_i\|_{2} \}   \big ( \|U_1\!-\!V_1\|_{2}+r  \|U_2\!-\!V_2 \|_{2} \big ),
\end{multline*}
and therefore $\alpha_1(\Phi_{LS}, \Theta)=1$, $\alpha_2(\Phi_{LS}, \Theta)=r$ and $\alpha_j(\Phi_{LS}, \Theta)=0$ for $j\geq 3$.
From basic calculation we also have
\begin{equation*}
\partial_\theta \Phi_{LS}(x,\theta)=2x_2 (\theta x_2 -x_1)\quad \mbox{and} \quad \partial^2_{\theta^2} \Phi_{LS}(x,\theta)=2x_2^2.
\end{equation*}
After elementary algebra, we obtain:
\begin{multline*}
\E \Big [\sup_{\theta \in [-r,r]}\big | \partial_\theta \Phi_{LS}(U,\theta)-\partial_\theta \Phi_{LS}(U,\theta) \big | \Big ]
\\
\leq  4 \, \big ( \|U_1-V_1\|_2 + \|U_2-V_2\|_2 \big ) \, \big ( \|U_1\|_2+ \|V_1\|_2+ \|U_2\|_2+ \|V_2\|_2 \big )
\end{multline*}
from H\"older's inequality. Analogously, 
\begin{equation*}
\E \Big [\sup_{\theta \in [-r,r]} \big | \partial^2_{\theta^2} \Phi_{LS}(U,\theta)-\partial^2_{\theta^2} \Phi_{LS}(V,\theta)\big | \Big ]\leq 2 \, \big (  \|U_2-V_2\|_2 \big ) \, \big ( \|U_2\|_2+ \|V_2\|_2 \big ),
\end{equation*}
ensuring that  $\partial_\theta \Phi_{LS}$ and $\partial^2_{\theta^2} \Phi_{LS}$ are included in $\Lip_2 \big ([-r,r]\big )$. If $\E [\xi_0^2 ]<\infty$, both the matrix $\Sigma\big ({\theta}^*(u)\big )= 4 \, \Big ( \int_\R K^2(x)dx \Big ) \, \sigma_\xi^4 (1-\theta^*(u)^2)^{-1} $ and $\Gamma({\theta}^*(u))=2 \sigma_\xi^2 (1-\theta^*(u)^2)^{-1}$ are positive definite.  Then, by an application of Theorem \ref{theo3} we obtain:
\begin{cor} \label{LSAR1}
If $\E [\xi_0^2 ]<\infty$ and if $\theta_t^{(n)}\in \Theta=[-r,r]$ satisfies Assumption {\bf (LS($\rho$))}, 
the localized least square estimator is asymptotically normal when the sequence $(h_n)_n$ satisfies \eqref{condbn} and we obtain for any $u\in (0,1)$
$$
\sqrt{n\, h_n}\, \big ( \widehat{\theta}(u)- {\theta}^*(u) \big ) \limiteloin  {\cal N} \Big ( 0 \, , \, \big (1-\theta^*(u)^2\big ) \int_\R K^2(x)dx \Big ).
$$
\end{cor}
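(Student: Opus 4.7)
The plan is to verify each hypothesis of Theorem \ref{theo3} for the tvAR$(1)$ model with the LS contrast, since the corollary essentially specializes the general asymptotic normality result. Most of the work has already been done in the paragraphs immediately preceding the corollary, so my proof would be a checklist.

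First, I would identify the recursion function as $F_\theta(x,\xi)=\theta x_1+\xi$. This makes Assumptions \textbf{(A$_k$($\Theta$))} transparent for $k=0,1,2$: the coefficient sequences have $b^{(0)}_1(\Theta)=r$, $b^{(1)}_1(\Theta)=1$, all higher-order $b^{(k)}_j$ vanish, so $B_0(\Theta)=r<1$ and $\sum_{t\ge 2} t\log(t)\,b_t(\Theta)=0<\infty$. The integrability constants $C_k(\Theta)$ are finite thanks to $\E[\xi_0^2]<\infty$. Hence the structural conditions of Theorem \ref{theo1} are in force, and $(X_t^{(n)})$ lies in $\L^2$ by Lemma \ref{momm}.

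Next I would leverage the explicit Lipschitz bounds already established in the paper for $\Phi_{LS}$, $\partial_\theta\Phi_{LS}$, and $\partial^2_{\theta^2}\Phi_{LS}$. The coefficients $\alpha_1(\Phi_{LS},\Theta)=1$, $\alpha_2(\Phi_{LS},\Theta)=r$ and $\alpha_s=0$ for $s\ge 3$ make the summability $\sum_s s\,\alpha_s(\Phi_{LS},\Theta)<\infty$ and the analogous condition for $\partial_\theta\Phi_{LS}$ automatic. The contrast condition \textbf{(Co}($\Phi_{LS},\Theta$)) is verified by a direct computation: for the stationary version at $u$,
\begin{equation*}
\E\bigl[\Phi_{LS}\bigl((\widetilde X_{1-k}(u))_{k\in\N},\theta\bigr)\mid\mathcal F_0\bigr]=\sigma_\xi^2+(\theta^*(u)-\theta)^2\,\widetilde X_0(u)^2,
\end{equation*}
which admits the unique minimum $\theta=\theta^*(u)$ on $\stackrel{\circ}{\Theta}$ provided $\P(\widetilde X_0(u)\ne 0)>0$, a fact that follows from the nondegeneracy of $\xi_0$ required by $\E[\xi_0^2]<\infty$ and $\Sigma(\theta^*(u))$ being positive definite.

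For the asymptotic covariance structure I would exploit the AR$(1)$ identity $\widetilde X_0(u)=\theta^*(u)\widetilde X_{-1}(u)+\xi_0$, which yields $\partial_\theta\Phi_{LS}\bigl((\widetilde X_{-k}(u))_{k\in\N},\theta^*(u)\bigr)=-2\widetilde X_{-1}(u)\xi_0$. Since $\widetilde X_{-1}(u)$ is $\mathcal F_{-1}$-measurable and independent of $\xi_0$, and $\v(\widetilde X_{-1}(u))=\sigma_\xi^2/(1-\theta^*(u)^2)$, I obtain
\begin{equation*}
\Sigma(\theta^*(u))=\frac{4\sigma_\xi^4}{1-\theta^*(u)^2}\int_\R K^2(x)\,dx,\qquad \Gamma(\theta^*(u))=\frac{2\sigma_\xi^2}{1-\theta^*(u)^2},
\end{equation*}
both strictly positive. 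Plugging these into the sandwich formula $\Gamma^{-1}\Sigma\Gamma^{-1}$ gives exactly $(1-\theta^*(u)^2)\int_\R K^2(x)\,dx$. Theorem \ref{theo3} then delivers the claim. There is no genuine obstacle in this corollary; the most delicate point is simply bookkeeping the index convention between $\Phi_{LS}(x,\theta)=(x_1-\theta x_2)^2$ and the shifted arguments $(X^{(n)}_{j-i})_i$ used in \eqref{contrastEst}, and confirming that the uniform moment bounds required by Theorem \ref{theo3} translate, for this linear model, into nothing more than $\E[\xi_0^2]<\infty$.
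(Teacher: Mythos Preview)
Your proposal is correct and follows exactly the route the paper takes: the text immediately preceding the corollary already records the $\Lip_2$ bounds for $\Phi_{LS}$, $\partial_\theta\Phi_{LS}$, $\partial^2_{\theta^2}\Phi_{LS}$ and the explicit values of $\Sigma(\theta^*(u))$ and $\Gamma(\theta^*(u))$, after which the corollary is stated as a direct application of Theorem~\ref{theo3}. Your write-up simply makes the checklist more explicit (verifying \textbf{(A$_k$($\Theta$))}, the summability of $\alpha_s$, and the sandwich computation), which is entirely appropriate.
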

Here, we recover for $0<\rho \le1$ the results on tvAR$(1)$ models obtained by Bardet and Doukhan in \cite{bd2017}, which are also valid for functions $u \in (0,1) \to \widehat{\theta}(u)$ with H\"olderian derivatives. 
\subsubsection{Least Absolute Value contrast}
In the framework of tvAR$(1)$ processes  \eqref{tvAR1}  a classical  alternative of the LS contrast, known for its robustness, is the Least Absolute Values (LAV) contrast defined as follows on $\theta \in \Theta=[-r,r]$ with $0<r<1$, 
\begin{equation}\label{phiLAV}
\Phi_{LAV}(x,\theta)=\big |x_1-\theta \, x_2 \big |.
\end{equation}
If the stationary version $(\widetilde X_t(u))$ were observed, we obtain the usual  estimator of $\theta$ and Assumption ($Co(\Phi_{LAV},\Theta)$) holds. In such a case, $\Phi_{LAV} \in \Lip_{p}(\Theta)$ for any $1\leq p$, and we obtain 
$$
\E \big [ \sup_{\theta \in \Theta} \big |\Phi_{LAV}(U,\theta) -\Phi_{LAV}(V,\theta)\big |\big ]\leq \|U_1-V_1\|_p+ r \, \|U_2-V_2\|_p.
$$
implying $\alpha_1(\Phi_{LAV}, \Theta)=1$ and $\alpha_2(\Phi_{LAV}, \Theta)=r$ and $\alpha_j(\Phi_{LAV}, \Theta)=0$ for $j\geq 3$. Since $\Phi_{LAV}$ is not a differentiable function, we will restrict our purpose to the uniform consistency of $\widehat \theta(u)$. We obtain the following result from Theorem \ref{theo1}:
\begin{cor}\label{LAV}
If $\|\xi_0\|_1<\infty$ and if $(\theta_t^{(n)})\in \Theta=[-r,r]$ satisfies Assumption {\bf (LS($\rho$))}, then 
the localized LAV estimators $\widehat \theta_n$ satisfies \eqref{convtheta2}.
\end{cor}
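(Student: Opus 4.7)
I would cast the tvAR$(1)$ equation \eqref{tvAR1} into the general framework \eqref{chaineinfloc} by taking $F_{\theta}(x,y)=\theta x_1 + y$, and then verify, one by one, the hypotheses of Theorem \ref{theo1} for this $F_\theta$ and the contrast $\Phi_{LAV}$ on $\Theta=[-r,r]$.

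First I would check the contraction assumptions {\bf(A$_0$($\Theta$))} and {\bf(A$_1$($\Theta$))}. Since $F_\theta$ depends only on $x_1$ and is affine in both $x$ and $\theta$, only the coefficients of index $j=1$ are nonzero: one may take $b^{(0)}_1(\Theta)=r$ and $b^{(1)}_1(\Theta)=1$, and all other $b^{(k)}_j(\Theta)$ vanish. This yields $B_0(\Theta)=r<1$ and renders the weighted tail condition $\sum_{t\ge 2} t\log(t)\,b^{(0)}_t(\Theta)<\infty$ trivial. The constants $C_0(\Theta)=\|\xi_0\|_p$ and $C_1(\Theta)=0$ are finite as soon as $\xi_0\in\L^p$. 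Assumption {\bf(LS($\rho$))} is part of the hypotheses.

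Next, the membership $\Phi_{LAV}\in\Lip_p(\Theta)$ with $\alpha_1(\Phi_{LAV},\Theta)=1$, $\alpha_2(\Phi_{LAV},\Theta)=r$, $\alpha_j(\Phi_{LAV},\Theta)=0$ for $j\ge 3$ and $g\equiv 1$ has already been established in the text, and the summability $\sum_{s\ge 1} s\,\alpha_s(\Phi_{LAV},\Theta)=1+2r<\infty$ is immediate.

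The main obstacle is the identifiability condition $(\mathbf{Co}(\Phi_{LAV},\Theta))$, since it is the only condition relying on a property of the innovation law beyond integrability. Using the stationary-version decomposition $\widetilde{X}_1(u)=\theta^*(u)\widetilde{X}_0(u)+\xi_1$ together with the independence of $\xi_1$ and ${\cal F}_0=\sigma((\widetilde{X}_{-k}(u))_{k\in\N})$, one obtains
\[
\E\bigl[|\widetilde{X}_1(u)-\theta\,\widetilde{X}_0(u)|\,\big|\,{\cal F}_0\bigr] = G\bigl((\theta^*(u)-\theta)\,\widetilde{X}_0(u)\bigr), \qquad G(c):=\E|\xi_1+c|.
\]
Under the natural (implicit) assumption that $\xi_1$ admits $0$ as its unique median, the convex function $G$ is uniquely minimized at $c=0$, and the non-degeneracy of $\widetilde{X}_0(u)$ identifies $\theta=\theta^*(u)$ as the unique minimizer in $\stackrel{\circ}{\Theta}$.

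With every hypothesis verified, Theorem \ref{theo1} applies directly: for any bandwidth sequence satisfying $h_n\to 0$, $nh_n\to\infty$ and, for the uniform statement, $n^{1-1/p}h_n\to\infty$ for some $p>1$ with $\xi_0\in\L^p$, the convergence \eqref{convtheta2} follows. The non-differentiability of $\Phi_{LAV}$ is what precludes a direct passage to Theorem \ref{theo3}, as already indicated in the text.
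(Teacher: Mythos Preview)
Your proposal is correct and follows exactly the paper's approach, which is simply to invoke Theorem \ref{theo1} after the preparatory verifications made in the text. You are in fact more careful than the paper on two points: you make explicit the implicit median condition on $\xi_0$ needed for $(\mathbf{Co}(\Phi_{LAV},\Theta))$, and you correctly observe that the uniform conclusion \eqref{convtheta2} requires $\xi_0\in\L^p$ for some $p>1$ (via the condition $n^{1-1/p}h_n\to\infty$), which is slightly stronger than the stated $\|\xi_0\|_1<\infty$.
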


\subsection{Causal affine processes and Gaussian QMLE}  
We consider the general class of causal affine processes $(X_t)$ defined by Bardet and Wintenberger in \cite{BW} as
\begin{equation}\label{causal}
X_t=M_{\boldsymbol{\theta} }\big ((X_{t-i})_{i\geq 1}\big )\, \xi_t + f_{\boldsymbol{\theta} }\big ((X_{t-i})_{i\geq 1}\big ), \qquad \mbox{ for any } t\in \Z, \boldsymbol{\theta}  \in \Theta
\end{equation}
with $\Theta$ a compact subset of $\R^d$. We assume the existence of Lipschitz coefficient sequences $\big ( \beta_i(f,\Theta)\big )_{i\in \N}$ and $\big ( \beta_i(M,\Theta)\big )_{i\in \N}$ such as for $K_{\boldsymbol{\theta} }=f_{\boldsymbol{\theta} }$ or $M_{\boldsymbol{\theta} }$,
\begin{equation} \label{lipsh}
\sup_{\boldsymbol{\theta} \in \Theta} \big |K_{\boldsymbol{\theta} }(x)-K_{\boldsymbol{\theta} }(y) \big | \leq \sum_{i=1}^\infty \beta_i(K,\Theta) \, \big |x_i-y_i\big |,
\end{equation}
for any $x, \, y \in \R^\infty$. Then, $(X_t)$ satisfies the infinite memory model \eqref{chaineinf} with $F_{\boldsymbol{\theta} }(x,\xi_0)=f_{\boldsymbol{\theta} }(x)+ \xi_0 \, M_{\boldsymbol{\theta} }(x)$ and {\bf(A$_0$($\Theta$))} holds when $\sum _j \beta_j(f,\Theta)<\infty$ and $\sum _j \beta_j(M,\Theta)<\infty$ since we have
\begin{equation} \label{lipab}
b^{(0)}_j(\Theta)\leq \beta_j(f,\Theta)+ \| \xi_0 \|_p \, \beta_j(M,\Theta)\quad \mbox{for any $j \in \N^*$}.
\end{equation}
Therefore, $(X_t)$ is a stationary and $\L^p$ solution of the causal affine model \eqref{causal} when 
\begin{equation} \label{causalstat}
\sum _{j=1}^\infty \beta_j(f,\Theta)+ \| \xi_0 \|_p \, \beta_j(M,\Theta)<1. 
\end{equation}
In such a case this is interesting to consider $\Phi$ as $(-2)$ times the Gaussian conditional log-density, inducing
\begin{equation}\label{phiGQMLE}
\Phi_G(x,\boldsymbol{\theta} )=\log \big (M_{\boldsymbol{\theta} }^2\big ((x_i)_{i\geq 2}\big )\big )+\frac {\big (x_1-f_{\boldsymbol{\theta} }\big ((x_i)_{i\geq 2}\big ) \big )^2}{M_{\boldsymbol{\theta} }^2\big ((x_i)_{i\geq 2}\big )}. 
\end{equation}
The M-estimator resulting from this contrast is the Gaussian Quasi-Maximum Likelihood estimator (QMLE), notably used for estimating the parameters of GARCH processes, but also  for ARMA, APARCH, ARMA-GARCH processes\dots\\
As this was already done in \cite{BW} (proof of Theorem 1), under identifiability conditions on $f_{\boldsymbol{\theta} }$ and $M_{\boldsymbol{\theta} }$ (see below), Assumption ($Co(\Phi_{G},\Theta)$) holds. 
Moreover,  using Bardet {\it et al.} \cite{BKW} (proof of Lemma 6.3), we obtain with  $p=3$ assuming the existence of $\underline M>0$ such as $M_{\boldsymbol{\theta} } \geq \underline M$ and with $C$ a constant real number
\begin{multline}
\nonumber \sup_{\theta \in \Theta} \big |\Phi_G(U,\boldsymbol{\theta} ) -\Phi_G(V,\boldsymbol{\theta} )\big | \leq \\
C \, \big (1 +|U_1|^2+|V_1|^2+f^2_{\boldsymbol{\theta} }\big ((U_i)_{i\geq 2}\big )+f^2_{\boldsymbol{\theta} }\big ((V_i)_{i\geq 2}\big )  \big ) \\
\nonumber \hspace{-1cm}  \times  \Big ( |U_1-V_1|+\big | f_{\boldsymbol{\theta} }\big ((U_i)_{i\geq 2}\big ) -f_{\boldsymbol{\theta} }\big ((V_i)_{i\geq 2}\big ) \big | +\big | M_{\boldsymbol{\theta} }\big ((U_i)_{i\geq 2}\big ) -M_{\boldsymbol{\theta} }\big ((V_i)_{i\geq 2}\big ) \big | \Big ) \end{multline}
hence, there exists a function $g$ such that
\begin{multline}
\label{lipPhi}   \E \big [ \sup_{\theta \in \Theta} \big |\Phi_G(U,\boldsymbol{\theta} ) -\Phi_G(V,\boldsymbol{\theta} )\big |\big ]\\
%\nonumber
 \leq g\big (\sup_{i \geq 1} \big \{ \big \|U_i\|_3 \vee\big \|V_i\|_3 \}\big ) \, \Big (\|U_1-V_1\|_3 +\sum_{i=2}^\infty b_k^{(0)}(\Theta) \, \|U_i-V_i\|_3 \Big ),
\end{multline}
using H\"older inequality and with $b_k^{(0)}(\Theta)=\beta_k(f_{\boldsymbol{\theta}},\Theta)+\| \xi_0\|_p \, \beta_k(M_{\boldsymbol{\theta}},\Theta)$ the Lipschitz coefficients of the function $F_{\boldsymbol{\theta} }$ given in {\bf(A$_0$($\Theta$))}. Therefore, according to \eqref{LipL} with $\alpha_k(\Phi_G, \Theta)= b_k^{(0)}(\Theta) $ for $k\geq 2$ and $\alpha_1(\Phi_G, \Theta)=1 $, we check that $\Phi_G \in \Lip_3(\Theta)$ since {\bf(A$_0$($\Theta$))} holds and $B_0(\Theta)=\sum_k b_k^{(0)}(\Theta)<\infty$.

Now we consider a time varying causal affine processes, that is the local stationary extension of causal affine processes defined in \eqref{causal}, {\it i.e.} 
\begin{equation}\label{causal2}
X^{(n)}_t=M_{\boldsymbol{\theta^{(n)}_t} }\big ((X^{(n)}_{t-i})_{1 \leq i}\big )\, \xi_t + f_{\boldsymbol{\theta^{(n)}_t} }\big ((X^{(n)}_{t-i})_{1 \leq i}\big ), \qquad \mbox{ for any } t\in \Z, 
\end{equation}
with $\boldsymbol{\theta^{(n)}_t} \in \Theta$ a compact set of $\R^d$ and $X^{(n)}_t=0$ for $t \leq 0$. \\
~\\
In the sequel, we will provide general sufficient  conditions for the asymptotic normality of $\widehat{\boldsymbol{\theta}}(u)$ in terms of  the functions $f_{\boldsymbol{\theta}}$ and $M_{\boldsymbol{\theta}}$ and of their derivatives.
\begin{prop}\label{propcausal}
Let $(X^{(n)}_t)$ satisfy \eqref{causal2} where $f_{\boldsymbol{\theta}}$, $M_{\boldsymbol{\theta}}$, $\partial_{\boldsymbol{\theta}}f_{\boldsymbol{\theta} }$, $\partial_{\boldsymbol{\theta}}M_{\boldsymbol{\theta} }$, $\partial^2_{\boldsymbol{\theta}^2}f_{\boldsymbol{\theta} }$ and $\partial^2_{\boldsymbol{\theta}^2}M_{\boldsymbol{\theta} }$ satisfy Lipschitz inequalities \eqref{lipsh} and under Assumption  {\bf (LS($\rho$))}. Assume also:
\begin{enumerate}
\item $\|\xi_0\|_4<\infty$ where the probability distribution of $\xi_0$ is absolutely continuous with respect to the Lebesgue measure and $\Theta$ is a bounded set included in $ \big\{ \boldsymbol{\theta} \in \R^d, ~\sum_{j=1}^\infty \big (\beta_j(f_{\boldsymbol{\theta} },\{ \boldsymbol{\theta} \}) + \|\xi_0\|_4 \,\beta_j(M_{\boldsymbol{\theta} },\{ \boldsymbol{\theta} \})\big ) <1 \big \}$;
\item  there exists $\underline M>0$ such as $M_{\boldsymbol{\theta} } \geq \underline M$ for any $\boldsymbol{\theta} \in \Theta$;
\item For all $\boldsymbol{\theta}, \,\boldsymbol{\theta'} \in \Theta$,
\begin{equation} \label{ident}
\big (f_{\boldsymbol{\theta}}=f_{\boldsymbol{\theta'}}\quad \mbox{and} \quad M_{\boldsymbol{\theta}}=M_{\boldsymbol{\theta'}} \big ) \implies \boldsymbol{\theta}=\boldsymbol{\theta'};
\end{equation}
\item We have
\begin{multline}\label{definite} 
\Big ( \sum_{j=1}^d \mu_j \, \frac {\partial}{\partial {\boldsymbol{\theta}}_j} f_{\boldsymbol{\theta}^*(u)}\big ((\widetilde X_{-k}(u))_{k\in \N} \big )= 0\quad a.s.~\implies ~\mu_j=0, ~j=1,\ldots,d \Big ) ,\\
\mbox{or}\qquad \Big ( \sum_{j=1}^d \mu_j \, \frac {\partial}{\partial {\boldsymbol{\theta}}_j} M_{\boldsymbol{\theta}^*(u)}\big ((\widetilde X_{-k}(u))_{k\in \N} \big )= 0\quad a.s.~\implies ~\mu_j=0, ~j=1,\ldots,d \Big ).
\end{multline}
\end{enumerate}
Consider $\Phi=\Phi_G$ as $(-2)$ times the Gaussian conditional log-density  \eqref{phiGQMLE}. Then, if
\begin{multline}\label{conddPhi}
\sum_{j=1}^\infty j \left(\log j \, \big (\beta_j(f,\Theta)+\beta_j(M,\Theta) \big ) + j \, \big (\beta_j(\partial _{\boldsymbol{\theta}}  f,\Theta)  \right)
\\
+\sum_{j=1}^\infty  \left(\beta_j(\partial _{\boldsymbol{\theta}}  M,\Theta) \big)+\beta_j(\partial^2 _{\boldsymbol{\theta}^2}  f,\Theta) +\beta_j(\partial^2 _{\boldsymbol{\theta}^2}  M,\Theta)\right) <\infty,
\end{multline}
with $\beta_j(\cdot ,\Theta)$ defined in \eqref{lipsh}, the localized QMLE
$\widehat{\boldsymbol{\theta}}(u)$ satisfies the central limit theorem \eqref{tlctheta}.
\end{prop}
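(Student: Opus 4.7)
The plan is to verify the hypotheses of Theorem \ref{theo3} for the contrast $\Phi = \Phi_G$, working throughout in $\L^p$ with $p=4$ to match condition (1) of the proposition. The checklist consists of: the regularity assumptions (A$_k$($\Theta$)) for $k=0,1,2$ on $F_{\boldsymbol{\theta}}$ with the strong summability $\sum_t t\log t \cdot b_t^{(0)}(\Theta)<\infty$ required by Theorem \ref{theo1}; the Lipschitz conditions $\Phi_G \in \Lip_4(\Theta)$ and $\partial_{\boldsymbol{\theta}} \Phi_G \in \Lip_4(\Theta)$ with appropriately summable coefficients; the identifiability $(\mathbf{Co}(\Phi_G,\Theta))$; the square-integrability of $\partial_{\boldsymbol{\theta}} \Phi_G$ at $\boldsymbol{\theta}^*(u)$; and positive definiteness of $\Gamma$ and $\Sigma$.

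First I would extend \eqref{lipab} to the derivatives of $F_{\boldsymbol{\theta}}(x,\xi_0) = f_{\boldsymbol{\theta}}(x) + \xi_0 M_{\boldsymbol{\theta}}(x)$: since $\partial_{\boldsymbol{\theta}}^k F_{\boldsymbol{\theta}}(x,\xi_0) = \partial_{\boldsymbol{\theta}}^k f_{\boldsymbol{\theta}}(x) + \xi_0 \partial_{\boldsymbol{\theta}}^k M_{\boldsymbol{\theta}}(x)$, the Lipschitz inequality \eqref{lipsh} applied to $f,M$ and their first and second derivatives yields $b_j^{(k)}(\Theta) \leq \beta_j(\partial_{\boldsymbol{\theta}}^k f, \Theta) + \|\xi_0\|_4 \beta_j(\partial_{\boldsymbol{\theta}}^k M, \Theta)$. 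Condition (1) then forces $B_0(\Theta)<1$, so Lemma \ref{momm} produces an $\L^4$-bounded triangular array $(X_t^{(n)})$ and (applied to the constant parameter $\boldsymbol{\theta}^*(u) \in \Theta$) a unique stationary version $(\widetilde X_t(u))_{t\in\Z}$. Condition \eqref{conddPhi} delivers $B_1(\Theta),B_2(\Theta)<\infty$ together with $\sum_t t\log t \cdot b_t^{(0)}(\Theta) < \infty$.

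Second, $\Phi_G \in \Lip_4(\Theta)$ with $\sum s \alpha_s(\Phi_G, \Theta) < \infty$ is obtained by transferring the bound \eqref{lipPhi} of \cite{BKW} (Lemma 6.3) from $p=3$ to $p=4$, which is legitimate thanks to $\|\xi_0\|_4 < \infty$ and $M_{\boldsymbol{\theta}} \geq \underline M$. For the derivative I would start from the explicit expression
\[
\partial_{\boldsymbol{\theta}} \Phi_G(x,\boldsymbol{\theta}) = \frac{2\,\partial_{\boldsymbol{\theta}} M_{\boldsymbol{\theta}}}{M_{\boldsymbol{\theta}}} - \frac{2(x_1 - f_{\boldsymbol{\theta}})\,\partial_{\boldsymbol{\theta}} f_{\boldsymbol{\theta}}}{M_{\boldsymbol{\theta}}^2} - \frac{2(x_1-f_{\boldsymbol{\theta}})^2\,\partial_{\boldsymbol{\theta}} M_{\boldsymbol{\theta}}}{M_{\boldsymbol{\theta}}^3},
\]
so that the difference $\partial_{\boldsymbol{\theta}} \Phi_G(U,\boldsymbol{\theta}) - \partial_{\boldsymbol{\theta}} \Phi_G(V,\boldsymbol{\theta})$ expands into sums of products containing at most two $X$-factors and one difference factor. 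Hölder's inequality with exponents $(4,4,2)$ combined with the denominator bound $M_{\boldsymbol{\theta}} \geq \underline M$ then produces an estimate of the form \eqref{LipL} with $\alpha_s(\partial_{\boldsymbol{\theta}} \Phi_G, \Theta) \leq C[\beta_s(f,\Theta) + \beta_s(M,\Theta) + \beta_s(\partial_{\boldsymbol{\theta}} f,\Theta) + \beta_s(\partial_{\boldsymbol{\theta}} M,\Theta)]$, so that \eqref{conddPhi} gives $\sum_s s\,\alpha_s(\partial_{\boldsymbol{\theta}} \Phi_G, \Theta) < \infty$. Square-integrability of $\partial_{\boldsymbol{\theta}} \Phi_G((\widetilde X_{-k}(u))_{k\in\N},\boldsymbol{\theta}^*(u))$ follows from the same display and $\|\widetilde X_t(u)\|_4 < \infty$.

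Finally, $(\mathbf{Co}(\Phi_G,\Theta))$ is obtained via the standard argument of \cite{BW}: the map $\boldsymbol{\theta} \mapsto \E[\Phi_G((\widetilde X_{1-k}(u))_{k\in\N}, \boldsymbol{\theta}) \mid \mathcal{F}_0]$ is, after conditioning and using $\E[\xi_0]=0$, $\E[\xi_0^2]=1$, uniquely minimized where $f_{\boldsymbol{\theta}}$ and $M_{\boldsymbol{\theta}}$ coincide a.s.\ with $f_{\boldsymbol{\theta}^*(u)}$ and $M_{\boldsymbol{\theta}^*(u)}$, and the injectivity (3) then forces $\boldsymbol{\theta} = \boldsymbol{\theta}^*(u)$. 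For $\Gamma$ and $\Sigma$, a direct computation of $\partial_{\boldsymbol{\theta}}^2 \Phi_G$ and $\partial_{\boldsymbol{\theta}}\Phi_G (\partial_{\boldsymbol{\theta}}\Phi_G)^\top$ at $\boldsymbol{\theta}^*(u)$ shows, again via conditioning on $\mathcal{F}_0$ and $\E[\xi_0]=0$, that off-diagonal cross-terms vanish and one is left with nonnegative quadratic forms in $\partial_{\boldsymbol{\theta}} f_{\boldsymbol{\theta}^*(u)}$ and $\partial_{\boldsymbol{\theta}} M_{\boldsymbol{\theta}^*(u)}$ whose null directions are precisely excluded by assumption (4). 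Theorem \ref{theo3} then applies and yields \eqref{tlctheta}. The main technical obstacle I expect is the Lipschitz bookkeeping for $\partial_{\boldsymbol{\theta}} \Phi_G$: each summand must be split into factor pairs to which Hölder's inequality applies coherently with a \emph{single} $\L^4$-norm, while the resulting coefficient sequence remains dominated by the terms appearing in \eqref{conddPhi}.
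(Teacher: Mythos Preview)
Your proposal is correct and follows essentially the same route as the paper: both verify the hypotheses of Theorem~\ref{theo3} by writing out $\partial_{\boldsymbol{\theta}}\Phi_G$ explicitly, bounding its increments via H\"older to obtain $\partial_{\boldsymbol{\theta}}\Phi_G\in\Lip_4(\Theta)$ with coefficients $\alpha_s\le C\big(\beta_s(f,\Theta)+\beta_s(M,\Theta)+\beta_s(\partial_{\boldsymbol{\theta}}f,\Theta)+\beta_s(\partial_{\boldsymbol{\theta}}M,\Theta)\big)$, and then invoking \eqref{ident} and \eqref{definite} (together with the arguments of \cite{BW}) for identifiability and positive definiteness of $\Gamma,\Sigma$. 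The only items you leave implicit that the paper spells out are the bound on $\E\big[\|\partial^2_{\boldsymbol{\theta}^2}\Phi_G\|\big]$ and the use of the independence of $\xi_0$ from $\mathcal F_{-1}$ to control $\E\big[\|\partial_{\boldsymbol{\theta}}\Phi_G\|^2\big]$, but these follow by the same mechanism you already describe.
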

Note that the QML contrast $\Phi$ depends on $f_{\boldsymbol{\theta} }$ and $M_{\boldsymbol{\theta} }$. This explains why the asymptotic normality can be obtained from conditions on $f_{\boldsymbol{\theta} }$ and $M_{\boldsymbol{\theta} }$ and their derivatives. Note also that the conditions required in Proposition \ref{propcausal}  are essentially the same than those requested in Theorem 2 of \cite{BW} in the stationary framework. The asymptotic normality of the (localized) QMLE holds under natural conditions, the main difference here is the convergence rates which is  $\sqrt n$ in the stationary case but $\sqrt{nh_n}$  in the non-stationary one, which  follows from localisation. The minimax  rate is $o(n^{1/3})$ is obtained for $\rho=1$ for local stationary causal affine models; it  is  smaller than the usual parametric rate $O(\sqrt n)$ achieved by the QMLE in the stationary case.\\
~\\
In the sequel we will state with details the assumptions for three important specific models, tvAR($\infty$), tvARCH($\infty$) and tvARMA-GARCH models.
\subsubsection{Time varying AR$(\infty)$ and time varying ARMA$(p,q)$ processes} 
In such the case of time varying AR$(\infty)$ (or tvAR$(\infty)$) or time varying (invertible) ARMA$(p,q)$ processes, we have $M_{\boldsymbol{\theta} }=\sigma(\boldsymbol{\theta})>\underline \sigma>0$ and $f_{\boldsymbol{\theta} }((x_i)_{i\geq 1})=\sum_{j=1}^\infty a_j(\boldsymbol{\theta})\, x_j$, where $(a_j(\boldsymbol{\theta}))_{j\geq 1}$ is a sequence of real numbers, implying
\begin{equation}\label{tvAR}
X^{(n)}_t= \sigma(\boldsymbol{\theta}_t^{(n)})\,\xi_t + \sum_{j=1}^\infty a_j(\boldsymbol{\theta}_t^{(n)})\,X^{(n)}_{t-j},\quad\mbox{for $1\leq t \leq n$, $n\in \N^*$},
\end{equation}
with $X^{(n)}_t=0$ for any $t\leq 0$.
Thus, the Lipschitz coefficients satisfy $\beta_j(f,\Theta)=\sup_{\boldsymbol{\theta} \in \Theta}|a_j(\boldsymbol{\theta})|$ and $\beta_j(M,\Theta)=0$. Then we obtain the asymptotic normality of $\widehat{\boldsymbol{\theta}}(u)$ from primitive conditions on functions $a_j$ and $\sigma$ by an application of Proposition \ref{propcausal}:
\begin{cor}
Let $(X_t^{(n)})$ be a tvAR$(\infty)$ process defined in \eqref{tvAR}. 
If $\| \xi_0\|_4<\infty$, let $\Theta$ be a bounded subset of $\R^d$ included in $\big \{\boldsymbol{\theta} \in \R^d, ~  \sum_{j=1}^\infty a_j(\boldsymbol{\theta} )<1 \big \}$. If for each $j \in \N^*$ the functions $\boldsymbol{\theta} \in \Theta \mapsto a_j(\boldsymbol{\theta} ) \in \R$ and $\boldsymbol{\theta} \in \Theta \mapsto \sigma(\boldsymbol{\theta} )\in [\underline \sigma,\infty)$ are ${\cal C}^2(\Theta)$ functions such as $\big ( a_j(\boldsymbol{\theta} ) =a_j(\boldsymbol{\theta}' ),~\forall j\in \N^*$ and $\sigma(\boldsymbol{\theta} )=\sigma(\boldsymbol{\theta}' ) \big )$ imply $(\boldsymbol{\theta} =\boldsymbol{\theta} ')$, if $\boldsymbol{\theta}_t^{(n)}$ satisfies the assumption of local stationarity  {\bf (LS($\rho$))}, and if 
\[ 
\sum_{j=1}^\infty  j \, \log j \, \sup_{\boldsymbol{\theta}\in \Theta}\big |a_j(\boldsymbol{\theta})\big |+ j \, \sup_{\boldsymbol{\theta}\in \Theta} \big |\partial_{\boldsymbol{\theta}} a_j(\boldsymbol{\theta})\big |+ \sup_{\boldsymbol{\theta}\in \Theta} \big |\partial^2_{\boldsymbol{\theta}^2} a_j(\boldsymbol{\theta})\big |<\infty,
\]
then the central limit \eqref{tlctheta} holds for any $u\in (0,1)$ under condition \eqref{condbn}.
\end{cor}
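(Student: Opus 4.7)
The plan is to deduce the corollary by applying Proposition \ref{propcausal} directly to the tvAR$(\infty)$ recursion \eqref{tvAR}, regarded as the causal affine model \eqref{causal2} with $f_{\boldsymbol{\theta}}((x_i)_{i\geq 1})=\sum_{j\geq 1}a_j(\boldsymbol{\theta})\,x_j$ and $M_{\boldsymbol{\theta}}=\sigma(\boldsymbol{\theta})$. First I would translate all Lipschitz quantities of \eqref{lipsh} into quantities involving the $a_j$ alone. Since $f_{\boldsymbol{\theta}}$ is linear in $x$ and $M_{\boldsymbol{\theta}}$ does not depend on $x$, one obtains $\beta_j(f,\Theta)=\sup_{\boldsymbol{\theta}\in\Theta}|a_j(\boldsymbol{\theta})|$, $\beta_j(\partial_{\boldsymbol{\theta}}f,\Theta)=\sup_{\boldsymbol{\theta}\in\Theta}\|\partial_{\boldsymbol{\theta}}a_j(\boldsymbol{\theta})\|$, $\beta_j(\partial^2_{\boldsymbol{\theta}^2}f,\Theta)=\sup_{\boldsymbol{\theta}\in\Theta}\|\partial^2_{\boldsymbol{\theta}^2}a_j(\boldsymbol{\theta})\|$, while the Lipschitz coefficients of $M$ and its $\boldsymbol{\theta}$-derivatives all vanish. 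With these identifications, the summability hypothesis \eqref{conddPhi} of Proposition \ref{propcausal} collapses precisely to the summability condition stated in the corollary.

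Next I would verify assumptions (1)--(3) of Proposition \ref{propcausal}. Assumption (1), requiring $\sum_j\beta_j(f,\{\boldsymbol{\theta}\})+\|\xi_0\|_4\,\beta_j(M,\{\boldsymbol{\theta}\})<1$, reduces (since $\beta_j(M,\cdot)=0$) to the bound $\sum_j|a_j(\boldsymbol{\theta})|<1$ built into the stated choice of $\Theta$, together with $\|\xi_0\|_4<\infty$. Assumption (2) is the lower bound $\sigma(\boldsymbol{\theta})\geq\underline\sigma>0$. Assumption (3) is the identifiability of $((a_j(\boldsymbol{\theta}))_j,\sigma(\boldsymbol{\theta}))$ directly stated in the corollary.

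The delicate step is assumption (4), the non-degeneracy condition \eqref{definite}. Since $M_{\boldsymbol{\theta}}=\sigma(\boldsymbol{\theta})$ is deterministic in $x$, the second alternative of \eqref{definite} becomes a single scalar equation and cannot force $\mu=0$ for $d\geq 2$; one must therefore invoke the first alternative and show that
\[
\sum_{j\geq 1}\Big(\sum_{k=1}^d\mu_k\,\partial_{\boldsymbol{\theta}_k}a_j(\boldsymbol{\theta}^*(u))\Big)\widetilde X_{-j}(u)=0 \text{ a.s.} \Longrightarrow \mu=0.
\]
Because $\xi_0$ has an absolutely continuous distribution and the contraction $\sum_j|a_j(\boldsymbol{\theta}^*(u))|<1$ holds, $(\widetilde X_t(u))$ admits an invertible MA$(\infty)$ representation driven by the i.i.d. innovations, and the past variables $(\widetilde X_{-j}(u))_{j\geq 1}$ are linearly independent in $\L^2$. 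The a.s. identity then forces $\sum_k\mu_k\partial_{\boldsymbol{\theta}_k}a_j(\boldsymbol{\theta}^*(u))=0$ for every $j\geq 1$, i.e., $\mu$ lies in the kernel of the infinite Jacobian $(\partial_{\boldsymbol{\theta}_k}a_j(\boldsymbol{\theta}^*(u)))_{j,k}$; combining this with the global injectivity of $\boldsymbol{\theta}\mapsto(a_j(\boldsymbol{\theta}))_j$ and the $\mathcal{C}^2$-regularity of the $a_j$, I would conclude that the Jacobian has full column rank $d$, hence $\mu=0$.

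Once all four assumptions of Proposition \ref{propcausal} are in place, the central limit theorem \eqref{tlctheta} under the bandwidth condition \eqref{condbn} follows immediately. The main obstacle I anticipate is closing the last step of condition (4): passing from global injectivity of a smooth map to full rank of its differential at $\boldsymbol{\theta}^*(u)$ is not automatic (witness $\theta\mapsto\theta^3$), so rigorously justifying the rank statement may require either an additional local identifiability assumption implicit in the usual parametrizations of AR$(\infty)$ models (fractional or geometrically decaying coefficients), or a compactness argument as in Pfanzagl \cite{Pf} already invoked elsewhere in the paper.
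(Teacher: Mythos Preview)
Your approach is exactly the paper's: the corollary is stated as a direct application of Proposition \ref{propcausal}, with no separate proof given, so your translation of the Lipschitz coefficients ($\beta_j(f,\Theta)=\sup_{\boldsymbol{\theta}}|a_j(\boldsymbol{\theta})|$, $\beta_j(M,\Theta)=0$, etc.) and your verification of assumptions (1)--(3) are precisely what the paper intends.

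Your caution about condition (4) is well placed and in fact sharper than the paper itself. The corollary as stated only assumes the global identifiability condition (3) of Proposition \ref{propcausal}; it does not explicitly impose the Jacobian-rank condition \eqref{definite}, and the paper offers no argument deriving (4) from (3). Your observation that global injectivity of a $\mathcal C^2$ map does not imply full rank of its differential (the $\theta\mapsto\theta^3$ example) is correct, so strictly speaking the corollary requires an implicit extra hypothesis --- either that $(\partial_{\boldsymbol{\theta}_k}a_j(\boldsymbol{\theta}^*(u)))_{j,k}$ has rank $d$, or one of the structural parametrizations you mention where this holds automatically. The paper defers such matters to \cite{BW}; you have identified a genuine unstated assumption rather than a flaw in your own reasoning.
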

This result is new, essentially because it deals with two difficulties: an infinite memory and also with a non exponential decrease memory. As an illustrative example consider $\boldsymbol{\theta}=(\mu,\kappa,\sigma)'$ and $a_j(\boldsymbol{\theta})=\mu \, j^{-\kappa}$ for any $j\geq 1$, with $\kappa\geq \underline \kappa>2$, $\mu\leq (\sum_{j=1}^\infty j^{-\underline \kappa})^{-1}$ and $\sigma\geq \underline \sigma>0$. Then the previous corollary implies the asymptotic normality \eqref{tlctheta} of $\big (\widehat \mu(u),\widehat \kappa(u),\widehat \sigma(u) \big )$ under condition \eqref{condbn} when $\big (\mu_t^{(n)}, \kappa_t^{(n)},\sigma_t^{(n)} \big )$ satisfies Assumption {\bf (LS($\rho$))}.\\

An important subclass of tvAR$(\infty)$ models is the one of  an invertible tvARMA$(p,q)$ models defined as
\begin{equation}\label{TVARMA}
X^{(n)}_t+\phi^{(n)}_{1,t}\,X^{(n)}_{t-1}+ \cdots + \phi^{(n)}_{p,t}\,X^{(n)}_{t-p}= \sigma^{(n)}_{t}\xi_t +\psi^{(n)}_{1,t}\, \xi_{t-1}+ \cdots +  \psi^{(n)}_{q,t}\, \xi_{t-q}
\end{equation}
(for $1\leq t \leq n$, $n\in \N^*$),
with $X^{(n)}_t=0$ for any $t\leq 0$, as it was introduced in \cite{dahl96}.  We consider the set of parameters $\boldsymbol{\theta}_t^{(n)}=\big ( \phi^{(n)}_{1,t},\ldots,\phi^{(n)}_{p,t},\psi^{(n)}_{1,t},\ldots,\psi^{(n)}_{q,t}, \sigma^{(n)}_{t} \big )'$ and the subset $\Theta_{ARMA}^{(p,q)}$ of $\R^d$ with $d=p+q+1$ defined by:
\begin{multline*}
\Theta_{ARMA}^{(p,q)}=\Big \{ (\phi_1,\ldots,\phi_p,\psi_1,\ldots,\psi_q,\sigma) \in \R^{p+q+1},\\ 1+\phi_1\,  z+\cdots + \phi_p\,  z^p\neq 0~\mbox{and}~1+\psi_1\,  z+\cdots + \psi_q\,  z^q\neq 0~\mbox{for all $|z|\leq 1$} \Big \}.
\end{multline*}
Then if $\boldsymbol{\theta}_t^{(n)} \in \Theta$ for any $1\leq t\leq n,\, n\in \N^*$ and $\Theta$ a compact subset of $\Theta_{ARMA}^{(p,q)}$, then $\sup_{n,t} \|X^{(n)}_t \|_p<\infty$ for any $p\geq 1$ when $\|\xi_0\|_p<\infty$ since $X^{(n)}_t$ can be written as a tvAR$(\infty)$ process \eqref{tvAR} with finite sum of absolute values of coefficients. 
Moreover, from classical analytic arguments it is well known that the corresponding Lipschitz coefficients $\beta_j(f,\Theta)$, $\beta_j(\partial _{\boldsymbol{\theta}}  f,\Theta)$ and $\beta_j(\partial^2 _{\boldsymbol{\theta}^2}  f,\Theta)$  decrease exponentially fast so that the condition \eqref{conddPhi} is automatically satisfied.\\ 

As a consequence of Proposition \ref{propcausal} we obtain:
\begin{cor}
If $(X_t^{(n)})$ is a tvARMA$(p,q)$ process defined in \eqref{TVARMA}, $\Theta$ is a bounded subset of $\Theta_{ARMA}^{(p,q)}$, if $\| \xi_0\|_4<\infty$ and $\boldsymbol{\theta}_t^{(n)}$ satisfies the assumption of local stationarity  {\bf (LS($\rho$))}, which is implied by the local stationarity {\bf (LS($\rho$))} of all functions  
$\phi^{(n)}_{1,t},\ldots,\phi^{(n)}_{p,t}$, $\psi^{(n)}_{1,t},\ldots,\psi^{(n)}_{q,t}, \sigma^{(n)}_{t}$, then the central limit \eqref{tlctheta} holds for any $u\in (0,1)$ under condition \eqref{condbn}.
\end{cor}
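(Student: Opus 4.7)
The plan is to reduce this corollary to an application of Proposition \ref{propcausal} by realizing a tvARMA$(p,q)$ process as a particular tvAR$(\infty)$ model of the form \eqref{tvAR}. For any $\boldsymbol{\theta}=(\phi_1,\ldots,\phi_p,\psi_1,\ldots,\psi_q,\sigma)\in\Theta_{ARMA}^{(p,q)}$, invertibility of the MA polynomial $\Psi(z;\boldsymbol{\theta})=1+\psi_1 z+\cdots+\psi_q z^q$ on the closed unit disk allows one to solve \eqref{TVARMA} and write $X_t^{(n)}=\sigma_t^{(n)}\xi_t+\sum_{j\geq 1}a_j(\boldsymbol{\theta}_t^{(n)})X_{t-j}^{(n)}$. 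This places the model in the affine framework \eqref{causal2} with $M_{\boldsymbol{\theta}}=\sigma$ and $f_{\boldsymbol{\theta}}(x)=\sum_{j\geq 1}a_j(\boldsymbol{\theta})\,x_j$, giving $\beta_j(M,\Theta)=0$ and $\beta_j(f,\Theta)=\sup_{\boldsymbol{\theta}\in\Theta}|a_j(\boldsymbol{\theta})|$, and analogously for the first and second derivatives.

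As indicated in the paragraph preceding the corollary, the key quantitative step is the uniform exponential decay
\begin{equation*}
\sup_{\boldsymbol{\theta}\in\Theta}\bigl(|a_j(\boldsymbol{\theta})|+\|\partial_{\boldsymbol{\theta}}a_j(\boldsymbol{\theta})\|+\|\partial^2_{\boldsymbol{\theta}^2}a_j(\boldsymbol{\theta})\|\bigr)\leq C\, r^j,\qquad j\geq 1,
\end{equation*}
for some $r=r(\Theta)\in(0,1)$ and $C=C(\Theta)>0$. This follows from Cauchy's integral formula applied to the rational generating function $z\mapsto \Psi(z;\boldsymbol{\theta})^{-1}\Phi(z;\boldsymbol{\theta})$: by continuity of the roots of $\Psi(\cdot;\boldsymbol{\theta})$ in $\boldsymbol{\theta}$ and compactness of $\Theta\subset\Theta_{ARMA}^{(p,q)}$, a contour radius strictly between $1$ and the minimum modulus of the roots of $\Psi(\cdot;\boldsymbol{\theta})$ over $\boldsymbol{\theta}\in\Theta$ can be chosen uniformly; differentiation under the contour integral yields the same uniform exponential bound for $\partial_{\boldsymbol{\theta}}a_j$ and $\partial^2_{\boldsymbol{\theta}^2}a_j$. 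Consequently the summability condition \eqref{conddPhi} of Proposition \ref{propcausal} is trivially satisfied.

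The remaining hypotheses of Proposition \ref{propcausal} reduce to classical ARMA facts. Hypothesis~(1) on contractivity of $\Theta$ holds after taking $\Theta$ small enough inside $\Theta_{ARMA}^{(p,q)}$, using the exponential decay together with $\|\xi_0\|_4\cdot \beta_j(M,\Theta)=0$. Hypothesis~(2), $M_{\boldsymbol{\theta}}=\sigma\geq \underline M>0$, is immediate on any compact $\Theta\subset\{\sigma>0\}$. For hypothesis~(3) (identifiability \eqref{ident}), if $f_{\boldsymbol{\theta}}=f_{\boldsymbol{\theta}'}$ then the rational transfer functions $\Phi(z;\boldsymbol{\theta})/\Psi(z;\boldsymbol{\theta})$ and $\Phi(z;\boldsymbol{\theta}')/\Psi(z;\boldsymbol{\theta}')$ coincide, which under the standard coprimality of $(\Phi,\Psi)$ on $\Theta_{ARMA}^{(p,q)}$ forces $(\phi_i)=(\phi'_i)$ and $(\psi_j)=(\psi'_j)$; then $M_{\boldsymbol{\theta}}=M_{\boldsymbol{\theta}'}$ yields $\sigma=\sigma'$. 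Hypothesis~(4) in its $f$-form is the classical non-singularity of the ARMA Fisher information at the stationary version $(\widetilde X_t(u))_{t\in\Z}$, following from absolute continuity and non-degeneracy of the innovation law combined with the linear independence of the score functions arising from the distinct lag structures of $\Phi$ and $\Psi$.

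The main obstacle I expect is the careful uniformization over $\Theta$ of these analytic bounds on $a_j(\boldsymbol{\theta})$ and its derivatives, together with the explicit verification of \eqref{definite} at $\boldsymbol{\theta}^*(u)$. Both steps are standard but somewhat tedious; once they are in hand, the local stationarity of $\boldsymbol{\theta}_t^{(n)}$ follows from that of each of its components (the Euclidean norm of the concatenation is controlled componentwise), and Proposition \ref{propcausal} delivers the central limit \eqref{tlctheta} for any $u\in(0,1)$ under the bandwidth conditions \eqref{condbn}.
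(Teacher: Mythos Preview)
Your approach is essentially the one the paper takes: the corollary is stated as a direct consequence of Proposition \ref{propcausal}, and the only substantive ingredient is the uniform exponential decay of $\sup_{\boldsymbol{\theta}\in\Theta}|a_j(\boldsymbol{\theta})|$ and of its first and second $\boldsymbol{\theta}$-derivatives, which the paper simply attributes to ``classical analytic arguments'' in the paragraph preceding the corollary. Your use of Cauchy's integral formula with a uniform contour radius is exactly the standard way to make this precise, and your verification of the remaining hypotheses (identifiability via coprimality, lower bound on $M_{\boldsymbol{\theta}}=\sigma$, non-degeneracy of the score) supplies details the paper leaves implicit.

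One small point: your phrase ``after taking $\Theta$ small enough inside $\Theta_{ARMA}^{(p,q)}$'' for hypothesis (1) is not quite faithful to the corollary's statement, which only assumes $\Theta$ bounded in $\Theta_{ARMA}^{(p,q)}$. In fact the contraction condition $\sum_j|a_j(\boldsymbol{\theta})|<1$ can fail on parts of $\Theta_{ARMA}^{(p,q)}$ (e.g.\ an MA$(1)$ with $|\psi_1|>1/2$). The paper glosses over this too, so you are not introducing a new gap; but if you want a clean argument you should either invoke Remarks \ref{rem:weakcontr} and the preceding remark on pointwise contraction, or state explicitly that $\Theta$ must also satisfy the contractivity constraint of Proposition \ref{propcausal}.
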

This result is a QMLE version of the results obtained by Dahlhaus in \cite{dahl00} for Gaussian tvARMA processes (using Whittle likelihood approximation) and by Azrak and M\'elard in \cite{am2006}; we use quasi likelihood contrasts similarly as those authors.

\subsubsection{Time varying ARCH$(\infty)$ and time varying GARCH$(p,q)$ processes} 
Time varying ARCH$(\infty)$ (tvARCH$(\infty)$) or time varying GARCH$(p,q)$ (tvGARCH$(p,q)$) processes correspond to $f_{\boldsymbol{\theta} }((x_i)_{i\geq 1})=0$ and $M_{\boldsymbol{\theta} }((x_i)_{i\geq 1})=\big ( a_0(\boldsymbol{\theta})+\sum_{j=1}^\infty a_j(\boldsymbol{\theta})\, x_j^2 \big )^{1/2}$, where $(a_j(\boldsymbol{\theta}))_{j\geq 0}$ is a sequence of non negative real numbers and $a_0(\cdot) \geq \underline a>0$ implying
\begin{equation}\label{tvARCH}
X^{(n)}_t=\xi_t \, \Big ( a_0(\boldsymbol{\theta}^{(n)}_t ) + \sum_{i\geq 1} a_i(\boldsymbol{\theta}^{(n)}_t )\big (X^{(n)}_{t-i} \big )^2 \Big )^{1/2} \quad\mbox{for $1\leq t \leq n$, $n\in \N^*$},
\end{equation}
with $X^{(n)}_t=0$ for any $t\leq 0$, $\boldsymbol{\theta}^{(n)}_t \in \R^d$ for any $1\leq t \leq n$, $n\in \N^*$ satisfying Assumption {\bf (LS($\rho$))}. For more details about stationary ARCH$(\infty)$ or GARCH$(p,q)$ processes, or for the transition from GARCH$(p,q)$ to ARCH$(\infty)$, see \cite{fz}. We are going to specify again the conditions of Proposition \ref{propcausal} in such a case. Firstly,  we consider Lipschitz properties on $\big ((X^{(n)}_t)^2\big )_t$ rather than $(X^{(n)}_t)_t$ as in \cite{BW} in order to recover natural constraint on the parameters set $\Theta$ (similar as the moments condition of \cite{gkl})
\begin{equation}\label{ThetaARCH}
\Theta~\mbox{is a compact subset of}~ \Big \{\boldsymbol{\theta}\in \R^d,~\| \xi_0\|^2_4 \, \sum_{j=1}^\infty a_j(\boldsymbol{\theta})<1 \Big \}\,.
\end{equation}
Secondly, the Lipschitz coefficients of $\Phi$, $\partial_{\boldsymbol{\theta}}  \Phi$ and $\partial^2_{\boldsymbol{\theta}^2}  \Phi$ can be expressed in terms of $|U_i^2-V_i^2|$ following the same computations than in \eqref{lipPhi} and in the proof of Proposition \ref{propcausal}. But since  $|U_i^2-V_i^2|=|U_i-V_i| \, |U_i+V_i|$ and each time $\big (M_{\boldsymbol{\theta} }((U_i)_{i\geq 1}) \times M_{\boldsymbol{\theta} }((V_i)_{i\geq 1})\big )^{-1}$ appears in the function $g$, we deduce that $\Phi$, $\partial_{\boldsymbol{\theta}}  \Phi$ and $\partial^2_{\boldsymbol{\theta}^2}  \Phi$ are respectively included in $\Lip_3(\Theta)$, $\Lip_4(\Theta)$ and $\Lip_4(\Theta)$ with coefficients $\alpha_s(\cdot,\Theta)$ defined in \eqref{LipL} satisfying for $s\geq 2$,
$$
\left\{ \begin{array}{lcl}
\alpha_s(\Phi,\Theta)&=&\sup_{\boldsymbol{\theta} \in \Theta} \big |a_s(\boldsymbol{\theta}) \big |\\
\alpha_s(\partial_{\boldsymbol{\theta}} \Phi,\Theta)&=&\sup_{\boldsymbol{\theta} \in \Theta} \big ( \big |a_s(\boldsymbol{\theta}) \big |+\big |\partial_{\boldsymbol{\theta}} a_s(\boldsymbol{\theta}) \big |\big ) \\
\alpha_s(\partial^2_{\boldsymbol{\theta}^2} \Phi,\Theta)&=&\sup_{\boldsymbol{\theta} \in \Theta} \big ( \big |a_s(\boldsymbol{\theta}) \big |+\big |\partial_{\boldsymbol{\theta}} a_s(\boldsymbol{\theta}) \big |+\big |\partial^2_{\boldsymbol{\theta}^2} a_s(\boldsymbol{\theta}) \big |\big )
\end{array}
\right . .
$$
From an application of Proposition \ref{propcausal} we obtain the asymptotic normality of $\widehat{\boldsymbol{\theta}}(u)$ from primitive conditions on functions $a_j$:
\begin{cor}
Let $(X_t^{(n)})$ be a tvARCH$(\infty)$ process defined in \eqref{tvARCH}. We assume that $\| \xi_0\|_4<\infty$ 
and we consider $\Theta$ a compact subset of  $\Big \{\boldsymbol{\theta}\in \R^d,~\| \xi_0\|^2_4 \, \sum_{j=1}^\infty a_j(\boldsymbol{\theta})<1 \Big \}$. 
If for $j \in \N^*$ the functions $\boldsymbol{\theta} \in \Theta \mapsto a_j(\boldsymbol{\theta} ) \in [0,\infty)$ and
 $\boldsymbol{\theta} \in \Theta \mapsto a_0(\boldsymbol{\theta} )\in [\underline a,\infty)$ are ${\cal C}^2(\Theta)$ functions such that $\big ( a_j(\boldsymbol{\theta} ) =a_j(\boldsymbol{\theta}' ),~\forall j\in \N \big )$ implies $(\boldsymbol{\theta} =\boldsymbol{\theta} ')$, if $\boldsymbol{\theta}_t^{(n)}$ satisfies the assumption of local stationarity  {\bf (LS($\rho$))}, and if 
$$
\sum_{j=1}^\infty  j \, \log j \, \sup_{\boldsymbol{\theta}\in \Theta}\big |a_j(\boldsymbol{\theta})\big |+
 j \, \sup_{\boldsymbol{\theta}\in \Theta} \big |\partial_{\boldsymbol{\theta}} a_j(\boldsymbol{\theta})\big |+
  \sup_{\boldsymbol{\theta}\in \Theta} \big |\partial^2_{\boldsymbol{\theta}^2} a_j(\boldsymbol{\theta})\big |<\infty,
$$
then the localized QMLE $\widehat{\boldsymbol{\theta}}(u)$ is asymptotically normal and \eqref{tlctheta} holds for any $u\in (0,1)$ and any $(h_n)$ satisfying  \eqref{condbn}.
\end{cor}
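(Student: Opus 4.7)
The corollary is an application of Proposition \ref{propcausal} to the causal affine model with $f_{\boldsymbol{\theta}}\equiv 0$ and $M_{\boldsymbol{\theta}}((x_i)_{i\geq 1}) = \big(a_0(\boldsymbol{\theta}) + \sum_{j\geq 1} a_j(\boldsymbol{\theta})\, x_j^2\big)^{1/2}$. The strategy is to verify, in turn, the Lipschitz estimates \eqref{lipsh} on $M_{\boldsymbol{\theta}}$ and its first two derivatives in $\boldsymbol{\theta}$, the associated contraction/moment condition defining $\Theta$ (which also yields the $\L^4$ uniform bound on $(X_t^{(n)})$ via Lemma \ref{momm}), the uniform lower bound $M_{\boldsymbol{\theta}}\geq \sqrt{\underline a}$, the identifiability \eqref{ident}, the non-degeneracy \eqref{definite}, and the summability \eqref{conddPhi}.

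The Lipschitz step is technical because $M_{\boldsymbol{\theta}}$ is not uniformly Lipschitz in each coordinate $x_j \in \R$. Following the discussion preceding the corollary, one uses
$$
|M_{\boldsymbol{\theta}}(x) - M_{\boldsymbol{\theta}}(y)| \;\leq\; \frac{1}{2\sqrt{\underline a}}\sum_{j\geq 1}\sup_\Theta|a_j(\boldsymbol{\theta})|\;|x_j - y_j|\,(|x_j| + |y_j|),
$$
together with analogous bounds for $\partial_{\boldsymbol{\theta}} M_{\boldsymbol{\theta}}$ and $\partial^2_{\boldsymbol{\theta}^2} M_{\boldsymbol{\theta}}$, the factor $(2\sqrt{\underline a})^{-1}$ coming from $M_{\boldsymbol{\theta}}\geq \sqrt{\underline a}$. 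A Cauchy--Schwarz inequality absorbs $(|x_j|+|y_j|)$ into a uniformly bounded $\L^2$-norm of the lag variables at the cost of raising the effective moment order from $p=2$ to $p=4$; the contraction condition $\sum_j\|\xi_0\|_4\,\beta_j(M,\{\boldsymbol{\theta}\})<1$ of Proposition \ref{propcausal} is thereby equivalent to the $\L^2$ contraction of the squared chain, $\|\xi_0\|_4^2 \sum_j a_j(\boldsymbol{\theta}) < 1$, which is precisely the set defining $\Theta$. The resulting $\beta_j(M,\Theta)$, $\beta_j(\partial_{\boldsymbol{\theta}} M,\Theta)$ and $\beta_j(\partial^2_{\boldsymbol{\theta}^2} M,\Theta)$ are dominated by $\sup_\Theta|a_j|$, $\sup_\Theta|\partial_{\boldsymbol{\theta}} a_j|$ and $\sup_\Theta|\partial^2_{\boldsymbol{\theta}^2} a_j|$ respectively, so that \eqref{conddPhi} reduces exactly to the summability stated in the corollary. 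The identifiability \eqref{ident} is immediate because $f_{\boldsymbol{\theta}}=f_{\boldsymbol{\theta}'}$ is automatic while $M_{\boldsymbol{\theta}}^2 = M_{\boldsymbol{\theta}'}^2$ forces $a_j(\boldsymbol{\theta})=a_j(\boldsymbol{\theta}')$ for every $j\in\N$, and then the assumed injectivity of $\boldsymbol{\theta}\mapsto(a_j(\boldsymbol{\theta}))_{j\in\N}$ concludes.

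I expect the main obstacle to be the non-degeneracy \eqref{definite}. Since $f_{\boldsymbol{\theta}}\equiv 0$, only the alternative on $\partial_{\boldsymbol{\theta}} M_{\boldsymbol{\theta}}$ is available. Differentiating $M_{\boldsymbol{\theta}}^2 = a_0(\boldsymbol{\theta}) + \sum_k a_k(\boldsymbol{\theta})\,x_k^2$ yields
$$
2\,M_{\boldsymbol{\theta}}\,\partial_{\boldsymbol{\theta}_j} M_{\boldsymbol{\theta}} \;=\; \partial_{\boldsymbol{\theta}_j} a_0(\boldsymbol{\theta}) + \sum_{k\geq 1}\partial_{\boldsymbol{\theta}_j} a_k(\boldsymbol{\theta})\,x_k^2,
$$
so a relation $\sum_{j=1}^d \mu_j\,\partial_{\boldsymbol{\theta}_j} M_{\boldsymbol{\theta}^*(u)}\big((\widetilde X_{-k}(u))_{k\in\N}\big)=0$ a.s.\ reduces to an affine functional $A_0 + \sum_{k\geq 1} A_k\,\widetilde X_{-k}^2(u)=0$ a.s., with $A_k=\sum_j\mu_j\,\partial_{\boldsymbol{\theta}_j} a_k(\boldsymbol{\theta}^*(u))$. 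The absolute continuity of the law of $\xi_0$ transfers through the ARCH recursion to a non-atomic conditional law of $\widetilde X_{-k}(u)^2$ given the deeper past, which rules out any non-trivial affine relation among $\big(1,(\widetilde X_{-k}^2(u))_{k\geq 1}\big)$; hence $A_k = 0$ for all $k\geq 0$. Reading the injectivity of $\boldsymbol{\theta}\mapsto(a_j(\boldsymbol{\theta}))_{j\in\N}$ at $\boldsymbol{\theta}^*(u)$ at the level of the associated Jacobian forces $\mu=0$. This closes all the hypotheses of Proposition \ref{propcausal} and delivers the CLT \eqref{tlctheta}.
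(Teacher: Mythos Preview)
Your overall strategy---reducing the corollary to Proposition \ref{propcausal} with $f_{\boldsymbol{\theta}}\equiv 0$ and the square-root volatility $M_{\boldsymbol{\theta}}$, and handling the Lipschitz estimates through $|U_i^2-V_i^2|=|U_i-V_i|\,|U_i+V_i|$---is exactly the paper's route. The paper gives no separate proof of the corollary; the discussion preceding it computes the coefficients $\alpha_s(\Phi,\Theta)$, $\alpha_s(\partial_{\boldsymbol{\theta}}\Phi,\Theta)$, $\alpha_s(\partial^2_{\boldsymbol{\theta}^2}\Phi,\Theta)$ in terms of $\sup_\Theta|a_s|$, $\sup_\Theta|\partial_{\boldsymbol{\theta}}a_s|$, $\sup_\Theta|\partial^2_{\boldsymbol{\theta}^2}a_s|$ and then invokes Proposition \ref{propcausal}. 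Your write-up reproduces this correctly, including the passage from the contraction on $(X_t^{(n)})$ in $\L^4$ to the condition $\|\xi_0\|_4^2\sum_j a_j(\boldsymbol{\theta})<1$ on the squared chain.

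There is, however, a genuine gap in your treatment of \eqref{definite}. After reducing $\sum_j \mu_j\,\partial_{\boldsymbol{\theta}_j} M_{\boldsymbol{\theta}^*(u)}=0$ a.s.\ to $A_k=\sum_j\mu_j\,\partial_{\boldsymbol{\theta}_j}a_k(\boldsymbol{\theta}^*(u))=0$ for all $k\geq 0$, you conclude $\mu=0$ by ``reading the injectivity of $\boldsymbol{\theta}\mapsto(a_j(\boldsymbol{\theta}))_{j\in\N}$ at the level of the associated Jacobian''. This inference is invalid: injectivity of a $\mathcal C^2$ map does \emph{not} force its Jacobian to have full rank at a given point (take $d=1$, $a_0(\theta)=\theta^3+1$, $a_j\equiv 0$ for $j\geq 1$, and $\theta^*(u)=0$). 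What is actually needed is that the family $\big(\partial_{\boldsymbol{\theta}}a_k(\boldsymbol{\theta}^*(u))\big)_{k\geq 0}$ spans $\R^d$, i.e.\ a rank condition on the Jacobian, not mere injectivity. You should either state this as an additional standing hypothesis (it is a standard identifiability assumption in the QMLE literature, cf.\ \cite{BW}) or restrict to parametrisations where it follows automatically. Note also that your linear-independence argument for $\big(1,\widetilde X_{-k}^2(u)\big)_{k\geq 1}$ uses the absolute continuity of the law of $\xi_0$, which is assumed in Proposition \ref{propcausal} but not restated in the corollary; you are tacitly importing it, which is fine, but should be acknowledged.
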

To the best of our knowledge, this result is new. In  \cite{dsr06}  the existence of tvARCH$(\infty)$ processes has been studied and the asymptotic normality has been obtained for tvARCH$(p)$ processes.\\

We specialized the previous result to the cases where  $(X^{(n)}_t)$ is a tvGARCH$(p,q)$ process. We assume that $\|\xi_0\|_4<\infty$ and we consider the model
\begin{equation}\label{TVGARCH}
\left \{ \begin{array}{ccl}
X^{(n)}_t&=& \sigma^{(n)}_{t} \,\xi_t \\
\big (\sigma^{(n)}_{t} \big )^2 & = & c^{(n)}_{0,t}+ c^{(n)}_{1,t}\,\big (X^{(n)}_{t-1}\big )^2 + \cdots + c^{(n)}_{p,t}\,\big (X^{(n)}_{t-p}\big )^2+  d^{(n)}_{1,t}\,\big (\sigma^{(n)}_{t-1}\big )^2 + \cdots + d^{(n)}_{q,t}\,\big (\sigma^{(n)}_{t-q}\big )^2
\end{array} \right .
\end{equation}
where $(c^{(n)}_{i,t})_{0\leq i \leq p}$ and $(d^{(n)}_{j,t})_{1\leq j \leq q}$ are non negative real number for any $1\leq t \leq n$, $n\in \N^*$, with $X^{(n)}_t=0$ for any $t\leq 0$.  Consider $\boldsymbol{\theta}_t^{(n)}=\big (c^{(n)}_{0,t}, c^{(n)}_{1,t},\ldots,c^{(n)}_{p,t},d^{(n)}_{1,t},\ldots,d^{(n)}_{q,t} \big )'$ with $c^{(n)}_{0,t}\geq \underline c >0$ and the subset $\Theta_{GARCH}^{(p,q)}$ of $\R^{p+q+1}$ defined by:
\begin{equation*}
\Theta_{GARCH}^{(p,q)}=\Big \{ (c_0,c_1,\ldots,c_p,d_1,\ldots,d_q) \in \R^{p+q+1},~ \sum_{j=1}^q d_j+ \| \xi_0 \|^2_4 \,\sum_{i=1}^p c_i<1  \Big \}.
\end{equation*}
If $\boldsymbol{\theta}_t^{(n)} \in \Theta$ for any $1\leq t\leq n,\, n\in \N^*$ with $\Theta$ a bounded set included in $\Theta_{GARCH}^{(p,q)}$ then we have $\sup_{t,n}\|X_{t}^{(n)}\|_4 <\infty$. Moreover, $(X_{t}^{(n)})_t$ can be written as a tvARCH$(\infty)$ (see \cite{fz}). Moreover the coefficients $a_i(\boldsymbol{\theta}^{(n)}_t )$ in the tvARCH$(\infty)$  decrease exponentially fast. As $\alpha_s(\Phi,\Theta)$, $
\alpha_s(\partial_{\boldsymbol{\theta}} \Phi,\Theta)$ and $\alpha_s(\partial^2_{\boldsymbol{\theta}^2} \Phi,\Theta)$ can be expressed from  $a_s(\cdot )$ and their derivatives, which are also exponentially decreasing, this implies the following corollary:
\begin{cor}
Let $(X_t^{(n)})$ be a tvGARCH$(p,q)$ process defined in \eqref{TVGARCH} and $\Theta$ be a bounded set included in $\Theta_{GARCH}^{(p,q)}$ where $\| \xi_0\|_4<\infty$. If $\boldsymbol{\theta}_t^{(n)}$ satisfies the assumption of local stationarity  {\bf (LS($\rho$))}, which is implied by the local stationarity {\bf (LS($\rho$))} on all functions  $c^{(n)}_{0,t},\ldots,c^{(n)}_{p,t},d^{(n)}_{1,t},\ldots,d^{(n)}_{q,t}$, then the central limit \eqref{tlctheta} holds for any $u\in (0,1)$ under condition \eqref{condbn}.
\end{cor}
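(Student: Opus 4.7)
The plan is to reduce the tvGARCH$(p,q)$ to a tvARCH$(\infty)$ representation and then invoke the preceding corollary. First, fix any $\boldsymbol{\theta}=(c_0,c_1,\dots,c_p,d_1,\dots,d_q)\in \Theta_{GARCH}^{(p,q)}$ and write formally
\[
\sigma_t^2 = c_0 + \sum_{i=1}^p c_i X_{t-i}^2 + \sum_{j=1}^q d_j \sigma_{t-j}^2.
\]
Iterating out the $\sigma^2$ recursion using the lag polynomial $D(z)=1-\sum_{j=1}^q d_j z^j$ (which has no root in the closed unit disk on the compact set $\Theta\subset \Theta_{GARCH}^{(p,q)}$ since $\sum_j d_j<1$) yields $\sigma_t^2 = a_0(\boldsymbol{\theta}) + \sum_{i\ge 1} a_i(\boldsymbol{\theta}) X_{t-i}^2$, where the coefficients $a_i(\boldsymbol{\theta})$ are obtained from the Taylor expansion of $C(z)/D(z)$ with $C(z)=\sum_{i=1}^p c_i z^i$. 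Thus $(X_t^{(n)})$ admits the tvARCH$(\infty)$ form \eqref{tvARCH} with these coefficients evaluated at $\boldsymbol{\theta}_t^{(n)}$, and $a_0(\boldsymbol{\theta})\ge c_0\ge \underline c>0$ on $\Theta$.

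Next I would verify the hypotheses of the previous tvARCH$(\infty)$ corollary. The constraint $\sum_j d_j+\|\xi_0\|_4^2\sum_i c_i<1$ on $\Theta_{GARCH}^{(p,q)}$ translates, via $\sum_{i\ge1} a_i(\boldsymbol{\theta}) = C(1)/D(1) = (\sum_i c_i)/(1-\sum_j d_j)$, into $\|\xi_0\|_4^2 \sum_{i\ge 1} a_i(\boldsymbol{\theta})<1$ for all $\boldsymbol{\theta}\in\Theta$, which is exactly \eqref{ThetaARCH}. The maps $\boldsymbol{\theta}\mapsto a_i(\boldsymbol{\theta})$ are rational in $\boldsymbol{\theta}$ and hence $\mathcal{C}^2$ on $\Theta$; identifiability $a_i(\boldsymbol{\theta})=a_i(\boldsymbol{\theta}')$ for all $i\ge 0$ implies $C/D = C'/D'$ as formal series, hence after clearing denominators $\boldsymbol{\theta}=\boldsymbol{\theta}'$ provided $C,D$ share no common factor (which is a generic identifiability condition typically included in $\Theta_{GARCH}^{(p,q)}$). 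For the local-stationarity hypothesis, since $\boldsymbol{\theta}\mapsto a_i(\boldsymbol{\theta})$ and its derivatives are smooth on the compact $\Theta$, LS$(\rho)$ of $\boldsymbol{\theta}_t^{(n)}$ immediately yields LS$(\rho)$ of the induced ARCH$(\infty)$ parameter sequence (with a modified H\"older constant coming from the Lipschitz norms of $a_i$ on $\Theta$).

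The main technical point is the exponential decay of $\sup_{\boldsymbol{\theta}\in\Theta}|a_j(\boldsymbol{\theta})|$, $\sup_{\boldsymbol{\theta}\in\Theta}\|\partial_{\boldsymbol{\theta}} a_j(\boldsymbol{\theta})\|$, and $\sup_{\boldsymbol{\theta}\in\Theta}\|\partial^2_{\boldsymbol{\theta}^2} a_j(\boldsymbol{\theta})\|$, which makes the summability condition of the tvARCH$(\infty)$ corollary automatic. This follows from the fact that $C/D$ is holomorphic on a neighbourhood of the closed unit disk uniformly in $\boldsymbol{\theta}\in\Theta$: on the compact $\Theta\subset\Theta_{GARCH}^{(p,q)}$, the roots of $D_{\boldsymbol{\theta}}$ are bounded away from $|z|\le 1$, so there exists $r>1$ with $D_{\boldsymbol{\theta}}(z)\neq 0$ for $|z|\le r$ uniformly in $\boldsymbol{\theta}\in\Theta$. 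By Cauchy's inequalities applied on the circle of radius $r$ to the analytic map $(\boldsymbol{\theta},z)\mapsto C_{\boldsymbol{\theta}}(z)/D_{\boldsymbol{\theta}}(z)$ one gets $|a_j(\boldsymbol{\theta})|\le M r^{-j}$, and the same Cauchy bound applied to the $\boldsymbol{\theta}$-derivatives (commuting $\partial_{\boldsymbol{\theta}}$ with the contour integral) produces the analogous $O(r^{-j})$ bounds for $\partial_{\boldsymbol{\theta}} a_j$ and $\partial^2_{\boldsymbol{\theta}^2} a_j$.

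I expect this Cauchy-estimates step — propagating the joint smoothness in $(\boldsymbol{\theta},z)$ to uniform exponential decay of all derivatives of $a_j$ — to be the one place where a bit of care is needed; once it is in hand, summability of $\sum_j j\log j\, r^{-j}$ (and the corresponding series for the derivatives) is immediate, and the tvARCH$(\infty)$ corollary delivers the central limit theorem \eqref{tlctheta}.
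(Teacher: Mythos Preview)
Your proposal is correct and follows essentially the same route as the paper: rewrite the tvGARCH$(p,q)$ as a tvARCH$(\infty)$ via the rational transfer function $C(z)/D(z)$, observe that the resulting coefficients $a_j(\boldsymbol{\theta})$ and their $\boldsymbol{\theta}$-derivatives decay exponentially uniformly on the compact $\Theta\subset\Theta_{GARCH}^{(p,q)}$, and then invoke the tvARCH$(\infty)$ corollary. The paper asserts the exponential decay by reference to \cite{fz}, whereas you spell out the Cauchy-estimate mechanism; your remark on the no-common-factor identifiability condition is also a point the paper leaves implicit.
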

This result can be compared for instance with those of \cite{dsr06} for tvARCH$(p)$, which are obtained under the same procedure but under the condition  $\| \xi_0\|_{4(1+\delta)}<\infty$, or those of  \cite{ro} for tvGARCH$(p,q)$, which are obtained from a local polynomial estimation and under the condition  $\| \xi_0\|_8<\infty$. Note that  \cite{truq17} also obtained asymptotic normality under very sharp conditions in a special case of tvARCH$(p)$ process.

\subsubsection{Time varying ARMA$(p,q)$-GARCH$(p',q')$ processes}
This model was introduced in the stationary framework by \cite{Ding1993} and developed in \cite{Ling2003}. The model consists on a tvARMA$(p,q)$ where the pure white noise $(\xi_t)$ is replaced by a weak white noise $(\varepsilon^{(n)}_t)$ that is a tvGARCH$(p',q')$ process, {\it i.e.} $(X^{(n)}_t)$ is defined by
\begin{equation}\label{TVARMAGARCH}
\left \{ \begin{array}{ccl}
X^{(n)}_t& = & -\phi^{(n)}_{1,t}\,X^{(n)}_{t-1}- \cdots -\phi^{(n)}_{p,t}\,X^{(n)}_{t-p}+ \varepsilon^{(n)}_t +\psi^{(n)}_{1,t}\, \varepsilon^{(n)}_{t-1}+ \cdots +  \psi^{(n)}_{q,t}\, \varepsilon^{(n)}_{t-q} \\
\varepsilon^{(n)}_t&=& \sigma^{(n)}_{t} \,\xi_t \\
\big (\sigma^{(n)}_{t} \big )^2 & = & c^{(n)}_{0,t}+ c^{(n)}_{1,t}\,\big (\varepsilon^{(n)}_{t-1}\big )^2 + \cdots + c^{(n)}_{p',t}\,\big (\varepsilon^{(n)}_{t-p'}\big )^2+  d^{(n)}_{1,t}\,\big (\sigma^{(n)}_{t-1}\big )^2 + \cdots + d^{(n)}_{q',t}\,\big (\sigma^{(n)}_{t-q'}\big )^2
\end{array} \right .
\end{equation} 
for any $1\leq t \leq n$, $n\in \N^*$, with $X^{(n)}_t=0$ for any $t\leq 0$. As previously, $c^{(n)}_{0,t} \geq \underline c >0$ and $(c^{(n)}_{j,t})$ is a family of non-negative real numbers. Consider
$$
\boldsymbol{\theta}_t^{(n)}=\big (\phi^{(n)}_{1,t},\ldots,\phi^{(n)}_{p,t},\psi^{(n)}_{1,t},\ldots,\psi^{(n)}_{q,t} , c^{(n)}_{0,t}, c^{(n)}_{1,t},\ldots,c^{(n)}_{p',t},d^{(n)}_{1,t},\ldots,d^{(n)}_{q',t} \big )'.
$$
If $\| \xi_0\|_4<\infty$, define the subset $\Theta_{ARMAGARCH}^{(p,q,p',q')}$ of $\R^d$ with $d={p+q+p'+q'+1}$ by 
\begin{multline*}
\Theta_{ARMAGARCH}^{(p,q,p',q')}=\Big \{\boldsymbol{\theta}\in \R^d,~\sum_{j=1}^{q'} d_j+\| \xi_0|^2_4 \, \sum_{j=1}^{p'} c_j<1,\\ ~\mbox{and }\ ~\big (1+\sum_{j=1}^{p} \phi_j z^j\big ) \, \big (1+\sum_{j=1}^{q} \psi_j z^j\big ) \neq 0~\mbox{for all $|z|\leq 1$} \Big \}.
\end{multline*}
Then, if $\Theta$ is a bounded set included in $\Theta_{ARMAGARCH}^{(p,q,p',q')}$ then the GARCH$(p',q')$ process $(\varepsilon^{(n)}_t)_t$ satisfies $\sup_{t,n} \| \varepsilon^{(n)}_t \|_4<\infty$ (see previously) when $\boldsymbol{\theta}_t^{(n)}\in \Theta$ for any $1\leq t \leq n, \, n \in \N^*$. Moreover, $X_t^{(n)}$ can be written as a linear filter of $(\varepsilon^{(n)}_t)_t$ when the ARMA coefficients satisfies the condition required in $\Theta_{ARMAGARCH}^{(p,q,p',q')}$, and these coefficients decrease exponentially fast. Therefore, when $\boldsymbol{\theta}_t^{(n)}\in \Theta$ for any $1\leq t \leq n, \, n \in \N^*$ with $\Theta$ a bounded set included in $\Theta_{ARMAGARCH}^{(p,q,p',q')}$, then $\sup_{t,n} \|X_t^{(n)}\|_4 <\infty$. \\
Moreover, following Lemma 2.1. of \cite{bb2017}, we know that a stationary ARMA$(p,q)$-GARCH$(p',q')$ process is a stationary affine causal process with functions $f_{\boldsymbol{\theta}}$ and $M_{\boldsymbol{\theta}}$ satisfying the Lipschitz condition \eqref{lipsh} with Lipschitz  coefficients decreasing exponentially fast, as well as their derivatives. This is also the same case for a time varying ARMA$(p,q)$-GARCH$(p',q')$ process. Therefore, we obtain the following result:
\begin{cor}
Let $(X_t^{(n)})$ be a time varying ARMA$(p,q)$-GARCH$(p',q')$ process defined in \eqref{TVARMAGARCH} with $\|\xi_0\|_4<\infty$ and $\Theta$ be a bounded set included in $\Theta_{ARMAGARCH}^{(p,q,p',q')}$. 
Moreover, if $\boldsymbol{\theta}_t^{(n)}$ satisfies the assumption of local stationarity  {\bf (LS($\rho$))}, which is implied by the same local stationarity property satisfied by $\phi^{(n)}_{1,t},\ldots,\phi^{(n)}_{p,t}$, $\psi^{(n)}_{1,t},\ldots,\psi^{(n)}_{q,t} $, $ c^{(n)}_{0,t}, c^{(n)}_{1,t},\ldots,c^{(n)}_{p',t}$, $d^{(n)}_{1,t},\ldots,d^{(n)}_{q',t}$, 
then the localized QMLE is asymptotically normal as \eqref{tlctheta} holds for any $u\in (0,1)$  and $(h_n)$ satisfying \eqref{condbn}.
\end{cor}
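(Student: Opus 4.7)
The plan is to reduce the statement to an application of Proposition \ref{propcausal}, which already gives asymptotic normality of the localized QMLE for time varying causal affine processes \eqref{causal2} under the conditions 1--4 and \eqref{conddPhi}. The first step is to write the time varying ARMA$(p,q)$-GARCH$(p',q')$ process in the causal affine form \eqref{causal2}. By the invertibility assumption embedded in $\Theta_{ARMAGARCH}^{(p,q,p',q')}$, the AR and MA polynomials can be inverted term by term: following Lemma 2.1 of \cite{bb2017}, I obtain an explicit expression of $X_t^{(n)}$ as $M_{\boldsymbol{\theta}_t^{(n)}}\big((X_{t-i}^{(n)})_{i\ge 1}\big)\xi_t + f_{\boldsymbol{\theta}_t^{(n)}}\big((X_{t-i}^{(n)})_{i\ge 1}\big)$ where both $f_{\boldsymbol{\theta}}$ and $M_{\boldsymbol{\theta}}$ (as well as their first and second derivatives in $\boldsymbol{\theta}$) satisfy the Lipschitz condition \eqref{lipsh} with coefficients $\beta_j(\cdot,\Theta)$ decreasing exponentially fast in $j$; the same derivation applies uniformly in $t,n$ because $\boldsymbol{\theta}_t^{(n)}$ lies in the fixed compact $\Theta\subset\Theta_{ARMAGARCH}^{(p,q,p',q')}$.

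Next I verify the four assumptions of Proposition \ref{propcausal}. Assumption 1 is immediate: $\|\xi_0\|_4<\infty$ holds by hypothesis, and the moment bound for $\Theta$ is built into the definition of $\Theta_{ARMAGARCH}^{(p,q,p',q')}$ via $\sum_j d_j+\|\xi_0\|_4^2\sum_i c_i<1$; combined with the exponential decay of $\beta_j(f,\{\boldsymbol{\theta}\})$ and $\beta_j(M,\{\boldsymbol{\theta}\})$, this yields the contraction bound $\sum_j(\beta_j(f_{\boldsymbol{\theta}},\{\boldsymbol{\theta}\})+\|\xi_0\|_4\,\beta_j(M_{\boldsymbol{\theta}},\{\boldsymbol{\theta}\}))<1$ for $\boldsymbol{\theta}\in\Theta$. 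Assumption 2 follows from $c_{0,t}^{(n)}\ge\underline c>0$, which gives a uniform lower bound $\underline M=\sqrt{\underline c}$. Assumption 4 \eqref{definite} holds because the law of $\xi_0$ is absolutely continuous, so that any linear combination of the partial derivatives of $f_{\boldsymbol{\theta}^*(u)}$ and $M_{\boldsymbol{\theta}^*(u)}$ that vanishes almost surely must, after projection onto the innovation direction, force all coefficients in front of the GARCH derivatives to vanish, and then the ARMA identifiability kills the remaining ones. The summability condition \eqref{conddPhi} is automatic since all relevant Lipschitz coefficients and those of $\partial_{\boldsymbol{\theta}}$ and $\partial^2_{\boldsymbol{\theta}^2}$ decrease geometrically.

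The delicate step is Assumption 3, the identifiability \eqref{ident}: $(f_{\boldsymbol{\theta}}=f_{\boldsymbol{\theta}'},M_{\boldsymbol{\theta}}=M_{\boldsymbol{\theta}'})\Rightarrow\boldsymbol{\theta}=\boldsymbol{\theta}'$. I would argue as follows. The equality $f_{\boldsymbol{\theta}}=f_{\boldsymbol{\theta}'}$, expressed via the AR/MA power series, gives the equality of the rational transfer functions $(1+\sum\psi_j z^j)/(1+\sum\phi_j z^j) = (1+\sum\psi'_j z^j)/(1+\sum\phi'_j z^j)$; the invertibility and causality conditions defining $\Theta_{ARMAGARCH}^{(p,q,p',q')}$ ensure that numerator and denominator have no common root on the closed unit disk, so the ARMA parameters coincide. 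The equality $M_{\boldsymbol{\theta}}=M_{\boldsymbol{\theta}'}$ then yields, squaring and using the ARMA invertibility already established, the equality of the GARCH$(p',q')$ volatility polynomials, whence by the classical GARCH identifiability argument (e.g.\ that recalled in the tvGARCH corollary above) one obtains $c_{i}=c'_{i}$ and $d_{j}=d'_{j}$.

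The hardest obstacle is checking in a completely rigorous way the non-degeneracy Assumption 4 and the $(f,M)$-identifiability Assumption 3, since both must be handled for the joint ARMA$\oplus$GARCH parameterization; the verification is essentially a two-stage reduction (first separating ARMA and GARCH blocks, then invoking their respective classical identifiabilities). Once these are in place, the central limit theorem \eqref{tlctheta} for any $u\in(0,1)$ under \eqref{condbn} follows directly from Proposition \ref{propcausal}.
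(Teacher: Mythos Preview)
Your proposal is correct and follows essentially the same route as the paper: the corollary is obtained by writing the tvARMA--GARCH process in the causal affine form via Lemma~2.1 of \cite{bb2017}, noting that the Lipschitz coefficients of $f_{\boldsymbol{\theta}}$, $M_{\boldsymbol{\theta}}$ and their first and second $\boldsymbol{\theta}$-derivatives decay exponentially, and then applying Proposition~\ref{propcausal}. The paper's argument is in fact just the short paragraph preceding the corollary; you supply considerably more detail on the verification of Assumptions~3 and~4 (identifiability and non-degeneracy), which the paper leaves implicit. One small caveat: your justification of Assumption~4 invokes absolute continuity of the law of $\xi_0$, which is required by Proposition~\ref{propcausal} but is not restated in the corollary's hypotheses---this is a tacit assumption the paper also makes (as in the earlier tvARMA and tvGARCH corollaries), so it is not a gap in your argument relative to the paper.
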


\subsection{Time varying LARCH$(\infty)$ processes and LS-contrast}   \label{LARCH}
Here we consider a LARCH$(\infty)$ process introduced by Robinson in \cite{r91} and studied intensively by Giraitis {\it et al.} in \cite{glrs}. The model is defined as
\begin{equation}\label{larch}
X_t=\xi_t \, \Big(a_0(\boldsymbol{\theta})+\sum_{j=1}^{\infty}a_j(\boldsymbol{\theta}) \, X_{t-j}\Big)\quad\mbox{for any $t\in \Z$},
\end{equation}
where $\boldsymbol{\theta}\in \R^d$ and assume $\| \xi_0\|_2=1$. Assume also that $j\in \N$, $\boldsymbol{\theta} \in \R^d \mapsto a_j(\boldsymbol{\theta}) \in \R$ are continuous functions and without lose of generality assume $a_0(\boldsymbol{\theta})\ge 0$ for any $\boldsymbol{\theta}\in \R^d$. Moreover, for ensuring the stationarity of $(X_t)$ and the existence of $\|X_t\|_r$ with $r\geq 1$, assume that for any $\boldsymbol{\theta}\in \Theta$,
\begin{equation}\label{statioLARCH}
\| \xi_0\|_r \, \sum_{j=1}^{\infty} |a_j(\boldsymbol{\theta})| <1.
\end{equation} 
Even if a LARCH$(\infty)$ process is an affine causal process, the Gaussian QML contrast can not be used for estimating $\boldsymbol{\theta}$. Indeed, the conditional variance of $X_t$ can not be bounded close to $0$ and this does not allow asymptotic results for such contrasts (see more details in Francq and Zako\"ian  \cite{FZ10}). Even if weighted least square estimators can also be defined (see \cite{FZ10}), we consider here the following ordinary LS contrast of square values: for $x \in \R^\infty$, define
\begin{equation}\label{phiLARCH}
\Phi_{LARCH}(x,\boldsymbol{\theta})=\Big (x^2_1-\big(a_0(\boldsymbol{\theta})+\sum_{j=1}^{\infty}a_j(\boldsymbol{\theta}) \, x_{j+1}\big)^2\Big )^2.
\end{equation}
If the stationary version $(\widetilde X_t(u))$ were observed, for any $\boldsymbol{\theta} \in \Theta$ the score associated to the LS-contrast is
\begin{multline*}
\E \big [\Phi_{LARCH}((\widetilde X_{1-k}(u))_{k\geq 0},\boldsymbol{\theta}) ~| ~{\cal F}_0 \big ]= \E \big [ |\xi_1|^4-1 \big ] \, \Big (a_0(\boldsymbol{\theta}^*)+\sum_{j=1}^{\infty}a_j(\boldsymbol{\theta}^*(u)) \, \widetilde X_{1-j}(u)\Big)^4 \\
 + \Big ( \big ( a_0(\boldsymbol{\theta}^*(u))+\sum_{j=1}^{\infty}a_j(\boldsymbol{\theta}^*(u)) \, \widetilde X_{1-j}(u) \big )^2-\big ( a_0(\boldsymbol{\theta})+\sum_{j=1}^{\infty}a_j(\boldsymbol{\theta}) \,\widetilde X_{1-j}(u) \big )^2 \Big )^2.
\end{multline*}
We notice that the first term at the right side of the last equality does not depend on $\boldsymbol{\theta}$. Then since $a_0(\cdot)$ is supposed to be non negative,   if we assume 
\begin{equation}\label{larchcond}
\Big (a_0(\boldsymbol{\theta})+\sum_{j=1}^{\infty}a_j(\boldsymbol{\theta}) \, X_{1-j} =a_0(\boldsymbol{\theta}')+\sum_{j=1}^{\infty}a_j(\boldsymbol{\theta}) \, X_{1-j} ~a.s.\Big ) ~\implies ~\boldsymbol{\theta}=\boldsymbol{\theta}',
\end{equation}
then $\E \big [\Phi_{LARCH}((X_{1-k})_{k\geq 0},\boldsymbol{\theta}) ~| ~{\cal F}_0 \big ]$ has a unique minimum  that is $\boldsymbol{\theta}^*$ and Assumption ($Co(\Phi_{LARCH},\Theta)$) holds. Moreover, after computations and use of H\"older Inequalities, if $r=4$,
\begin{multline*}
 \sup_{\theta \in \Theta} \big |\Phi_{LARCH}(U,\boldsymbol{\theta} ) -\Phi_{LARCH}(V,\boldsymbol{\theta} )\big |\\ \leq 
 \Big (U^2_1+V^2_1+\big(a_0(\boldsymbol{\theta})+\sum_{j=1}^{\infty}a_j(\boldsymbol{\theta}) \, U_{j+1}\big)^2+\big(a_0(\boldsymbol{\theta})+\sum_{j=1}^{\infty}a_j(\boldsymbol{\theta}) \, V_{j+1}\big)^2\Big )
\\
 \times  \Big (|U_1+V_1|\, |U_1-V_1|+\Big |2a_0(\boldsymbol{\theta})+\sum_{j=1}^{\infty}a_j(\boldsymbol{\theta}) \, (U_{j+1} +V_{j+1})\Big |\, \sum_{j=1}^{\infty}|a_j(\boldsymbol{\theta}) |\, |U_{j+1} -V_{j+1}| \Big ) 
\end{multline*}
Hence
\begin{multline*}
  %\hspace{-5.5cm} \Longrightarrow \quad 
  \E \big [ \sup_{\theta \in \Theta} \big |\Phi_L(U,\boldsymbol{\theta} ) -\Phi_L(V,\boldsymbol{\theta} )\big | \big ]  \leq g\big (\sup_{i \geq 1} \big \{ \big \|U_i\|_4 \vee \big \|V_i\|_4 \}\big ) \\
 \hspace{2cm}\times \Big ( \|U_1-V_1\|_4+ \sum_{j=2}^\infty \sup_{\boldsymbol{\theta} \in \Theta}|a_{j-1}(\boldsymbol{\theta})| \, \|U_j-V_j\|_4 \Big ),
\end{multline*}
and therefore $\Phi_{LARCH} \in \Lip_4(\Theta)$ with $\alpha_1(\Phi_{LARCH}, \Theta)=1 $ , and $\alpha_k(\Phi_{LARCH}, \Theta)= \sup_{\boldsymbol{\theta} \in \Theta}|a_{k-1}(\boldsymbol{\theta})| $, for $k\geq 2$ and $\sum_k  \alpha_k(\Phi_{LARCH}, \Theta)<\infty$, from \eqref{statioLARCH}.\\

We consider now the time varying LARCH($\infty$) process defined by:
\begin{equation}\label{tvLARCH}
X^{(n)}_t=\xi_t \, \Big ( a_0(\boldsymbol{\theta^{(n)}_t})+ \sum_{i=1}^\infty a_i(\boldsymbol{\theta^{(n)}_t})\, X^{(n)}_{t-i}\Big ), \qquad \mbox{ for any } t\in \Z, 
\end{equation}
with $\boldsymbol{\theta^{(n)}_t} \in \Theta$ a compact set of $\R^d$ and $X^{(n)}_t=0$ for $t \leq 0$. We also assume that $a_0(\cdot)$ is a non negative function. An application of Theorem \ref{theo1} implies that the localized LS estimator is uniformly consistent when $r=4$.\\
To assert the asymptotic normality, we assume $\|\xi_0\|_8<\infty$ and $\Theta$ is a bounded subset of the set 
\begin{equation}\label{thetaLARCH}
\Theta_{LARCH}=\Big \{ \boldsymbol{\theta} \in \R^d, ~\|\xi_0\|_8 \, \sum_{i=1} ^\infty |a_i(\boldsymbol{\theta}) | <\infty  \Big \}.
\end{equation}
Then, using classical computations and Hausdorff Inequalities (see the proof), we obtain the asymptotic behavior of the estimator:
\begin{prop}\label{propLARCH}
Assume that $\boldsymbol{\theta}\in \R ^d \mapsto a_j(\boldsymbol{\theta}) \in \R$ are ${\cal C}^2$ functions for any $j\in \R$, $a_0(\cdot)\geq 0$ and
\begin{enumerate}
\item $\|\xi_0\|_8<\infty$ where the probability distribution of $\xi_0$ is absolutely continuous with respect to the Lebesgue measure and $\Theta$ is a bounded set included in $\Theta_{LARCH}$;
\item For all $\boldsymbol{\theta}, \,\boldsymbol{\theta'} \in \Theta$,
 \begin{equation}\label{identLARCH}
(a_i(\boldsymbol{\theta})=a_i(\boldsymbol{\theta}'),~ \mbox{for all $i \in \N$})~\implies ~(\boldsymbol{\theta}=\boldsymbol{\theta'} );
\end{equation}
\item For all $\boldsymbol{\theta}, \,\boldsymbol{\theta'} \in \Theta$,
 \begin{equation}\label{sigmaLARCH}
(\partial _{\boldsymbol{\theta}}a_i(\boldsymbol{\theta})=\partial _{\boldsymbol{\theta}}a_i(\boldsymbol{\theta}'),~ \mbox{for all $i \in \N$})~\implies ~(\boldsymbol{\theta}=\boldsymbol{\theta'} ).
\end{equation}
\end{enumerate}
Let $(X^{(n)}_t)$ be a tvLARCH process defined following  \eqref{tvLARCH} where $\boldsymbol{\theta}_t^{(n)}$ satisfies Assumption  {\bf (LS($\rho$))}. 
Consider $\Phi=\Phi_{LARCH}$ as  in \eqref{phiLARCH}. If
\begin{equation}\label{conddPhiL}
\sum_{j=1}^\infty j \,\log j \, \sup_{\boldsymbol{\theta} \in \Theta} |a_j(\boldsymbol{\theta}|+ j \, \sup_{\boldsymbol{\theta} \in \Theta} \|\partial _{\boldsymbol{\theta}} a_j(\boldsymbol{\theta}\| +\sup_{\boldsymbol{\theta} \in \Theta} \|\partial^2 _{\boldsymbol{\theta}^2} a_j(\boldsymbol{\theta}\| <\infty,
\end{equation}
then $\widehat{\boldsymbol{\theta}}(u)$ is asymptotically normal as \eqref{tlctheta} for any $u\in (0,1)$ and $(h_n)$ satisfying \eqref{condbn}.
\end{prop}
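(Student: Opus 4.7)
The plan is to apply Theorem \ref{theo3} to the contrast $\Phi_{LARCH}$. The hypotheses of Theorem \ref{theo1} are essentially verified in the discussion preceding the proposition, so the main work is to establish the differentiability and moment conditions required for the central limit theorem.

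First I would verify the dynamic assumptions. The LARCH recursion $F_{\boldsymbol{\theta}}(x,\xi_0) = \xi_0\bigl(a_0(\boldsymbol{\theta}) + \sum_j a_j(\boldsymbol{\theta}) x_j\bigr)$ gives
\[ b_j^{(0)}(\Theta) \le \|\xi_0\|_8\,\sup_{\boldsymbol{\theta}\in\Theta}|a_j(\boldsymbol{\theta})|, \qquad b_j^{(1)}(\Theta) \le \|\xi_0\|_8\,\sup_{\boldsymbol{\theta}\in\Theta}\|\partial_{\boldsymbol{\theta}} a_j(\boldsymbol{\theta})\|, \]
so Assumption (A$_0$($\Theta$)) holds with $B_0(\Theta)<1$ (taking $p=8$) because $\Theta \subset \Theta_{LARCH}$. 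Assumption (A$_1$($\Theta$)) and the summability $\sum_j j\log j\, b_j^{(0)}(\Theta) < \infty$ follow from \eqref{conddPhiL}, and Lemma \ref{momm} then yields $\sup_{n,t}\|X_t^{(n)}\|_8 < \infty$.

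Next I would compute the derivatives of the contrast. Setting $R(x,\boldsymbol{\theta}) = a_0(\boldsymbol{\theta}) + \sum_{j\ge 1} a_j(\boldsymbol{\theta}) x_{j+1}$ and $Q(x,\boldsymbol{\theta}) = x_1^2 - R(x,\boldsymbol{\theta})^2$, one has $\Phi_{LARCH} = Q^2$ and
\[ \partial_{\boldsymbol{\theta}}\Phi_{LARCH} = -4\, R\, Q\,\partial_{\boldsymbol{\theta}} R, \]
while $\partial^2_{\boldsymbol{\theta}^2}\Phi_{LARCH}$ is an explicit polynomial of total degree $4$ in the coordinates of $x$. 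Repeating verbatim the H\"older-type argument that yielded $\Phi_{LARCH}\in\Lip_4(\Theta)$, with the additional factor $\partial_{\boldsymbol{\theta}} R$ handled via $\|X_t^{(n)}\|_8<\infty$, gives $\partial_{\boldsymbol{\theta}}\Phi_{LARCH}, \partial^2_{\boldsymbol{\theta}^2}\Phi_{LARCH} \in \Lip_4(\Theta)$ with coefficients dominated by a constant times $\sup_{\boldsymbol{\theta}\in\Theta}\bigl(|a_{s-1}(\boldsymbol{\theta})| + \|\partial_{\boldsymbol{\theta}} a_{s-1}(\boldsymbol{\theta})\| + \|\partial^2_{\boldsymbol{\theta}^2} a_{s-1}(\boldsymbol{\theta})\|\bigr)$. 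The summability condition \eqref{conddPhiL} then yields $\sum_s s\,\alpha_s(\partial_{\boldsymbol{\theta}}\Phi_{LARCH},\Theta) < \infty$, and the same degree bound, combined with $\|\xi_0\|_8<\infty$, controls $\E\|\partial_{\boldsymbol{\theta}}\Phi_{LARCH}((\widetilde X_{-k}(u))_{k\in\N},\boldsymbol{\theta}^*(u))\|^2$.

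The main obstacle is the positive definiteness of $\Sigma(\boldsymbol{\theta}^*(u))$ and $\Gamma(\boldsymbol{\theta}^*(u))$. Exploiting the LARCH identity $\widetilde X_t(u) = \xi_t R_t$, with $R_t = R((\widetilde X_{t-k}(u))_{k\ge 1},\boldsymbol{\theta}^*(u))$, one obtains $Q((\widetilde X_{1-k}(u))_{k\ge 0},\boldsymbol{\theta}^*(u)) = R_1^2(\xi_1^2-1)$, hence
\[ \partial_{\boldsymbol{\theta}}\Phi_{LARCH}\bigl((\widetilde X_{1-k}(u))_{k\ge 0},\boldsymbol{\theta}^*(u)\bigr) = -4\, R_1^3(\xi_1^2-1)\,\partial_{\boldsymbol{\theta}} R_1. \]
By independence of $\xi_1$ and ${\cal F}_0 = \sigma((\widetilde X_{-k}(u))_{k\ge 0})$,
\[ \Sigma(\boldsymbol{\theta}^*(u)) = 16\,\Bigl(\int_{\R}\! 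K^2(x)\,dx\Bigr)\,\E[(\xi_0^2-1)^2]\,\E\bigl[R_1^6\,\partial_{\boldsymbol{\theta}} R_1\,(\partial_{\boldsymbol{\theta}} R_1)^\top\bigr]. \]
If $\mu^\top\Sigma(\boldsymbol{\theta}^*(u))\mu = 0$ for some $\mu \in \R^d\setminus\{0\}$, then $R_1^3\,\mu^\top\partial_{\boldsymbol{\theta}} R_1 = 0$ a.s.; the absolute continuity of $\xi_0$ ensures $R_1 \ne 0$ on a set of positive probability, so $\mu^\top\partial_{\boldsymbol{\theta}} R_1 = 0$ a.s. An induction on the past, combined again with the absolute continuity of the innovations to ensure that the family $(\widetilde X_{1-j}(u))_{j\ge 1}$ is sufficiently non-degenerate, forces $\mu^\top\partial_{\boldsymbol{\theta}} a_j(\boldsymbol{\theta}^*(u)) = 0$ for every $j\ge 0$, contradicting \eqref{sigmaLARCH}. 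A parallel computation using \eqref{identLARCH} in place of \eqref{sigmaLARCH} delivers the positive definiteness of $\Gamma(\boldsymbol{\theta}^*(u))$, and Theorem \ref{theo3} then yields the announced central limit theorem.
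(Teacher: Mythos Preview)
Your proposal is correct and follows essentially the same route as the paper: verify the hypotheses of Theorem~\ref{theo3} by checking that $\Phi_{LARCH}$ and $\partial_{\boldsymbol{\theta}}\Phi_{LARCH}$ lie in $\Lip_4(\Theta)$ via H\"older/Minkowski on the polynomial expressions, use $\|\xi_0\|_8<\infty$ to bound $\E\|\partial_{\boldsymbol{\theta}}\Phi_{LARCH}\|^2$ through the factorisation $\partial_{\boldsymbol{\theta}}\Phi_{LARCH}=-4R_1^3(\xi_1^2-1)\partial_{\boldsymbol{\theta}}R_1$, and deduce positive definiteness of $\Sigma$ and $\Gamma$ from the identifiability conditions together with the absolute continuity of $\xi_0$. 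Your treatment of the positive-definiteness step is in fact more detailed than the paper's (which simply cites \eqref{sigmaLARCH} and \eqref{partialLARCH}); note only that the paper attributes the definiteness of both $\Sigma$ and $\Gamma$ to \eqref{sigmaLARCH}, whereas you invoke \eqref{identLARCH} for $\Gamma$ --- a minor bookkeeping difference that does not affect the argument.
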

To our knowledge, this result is new, even in its stationary $\boldsymbol{\theta}_t^{(n)}=\boldsymbol{\theta}^* \in \R^d$ version. The particular case of time varying GLARCH$(p,q)$ process, natural extension of stationary GLARCH$(p,q)$ processes (see for instance \cite{glrs}) is also interesting and straightforward:
\begin{cor}
If $\|\xi_0\|_8<\infty$ and the probability distribution of $\xi_0$ is absolutely continuous with respect to the Lebesgue measure and if $(X_t^{(n)})$ is a tvGLARCH$(p,q)$ process defined by
\begin{equation*}
X^{(n)}_t=\xi_t \,\sigma_t^{(n)} \quad 
\mbox{with}  \quad \sigma_t^{(n)}= c_{0,t}^{(n)}+ \sum_{i=1}^p c_{i,t}^{(n)}\, X^{(n)}_{t-i}+\sum_{j=1}^q d_{j,t}^{(n)}\, \sigma_{t-j}^{(n)}, \qquad \mbox{ for any } t \in \N^*, 
\end{equation*}
with $X^{(n)}_t=0$ for $t \leq 0$ and where $\boldsymbol{\theta}_t^{(n)}=(c_{0,t}^{(n)},\ldots,c_{i,t}^{(n)},d_{j,t}^{(n)},\ldots,d_{j,t}^{(n)}) \in \Theta$, with $\Theta$ a bounded set in $$\big \{ (c_0,c_1,\ldots,c_p,d_1,\ldots,d_q) \in [0,\infty)\times \R^{p+q},~\sum_{i=1}^q |d_i|+ \|\xi_0\|_8 \sum_{i=1}^p |c_i|<1 \big \} ,$$ which satisfies the assumption of local stationarity  {\bf (LS($\rho$))}. Then the central limit \eqref{tlctheta} holds for any $u\in (0,1)$ under condition \eqref{condbn}.
\end{cor}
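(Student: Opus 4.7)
The plan is to deduce this corollary from Proposition \ref{propLARCH} by checking that a tvGLARCH$(p,q)$ process is a particular case of a tvLARCH$(\infty)$ process whose coefficient functions $a_j(\boldsymbol{\theta})$ satisfy the required regularity, summability, and identifiability conditions. First I would unfold the volatility recursion: setting $D(\boldsymbol{\theta},z)=1-\sum_{j=1}^q d_j z^j$ and $C(\boldsymbol{\theta},z)=c_0+\sum_{i=1}^p c_i z^i$, the constraint $\sum_j |d_j|<1$ enforced by the bounded set $\Theta$ implies that $D(\boldsymbol{\theta},z)$ has no zeros in a closed disc of radius strictly larger than $1$, uniformly for $\boldsymbol{\theta}\in\Theta$. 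Consequently the formal power series expansion of $C(\boldsymbol{\theta},z)/D(\boldsymbol{\theta},z)$ defines coefficients $a_0(\boldsymbol{\theta})\ge 0$ and $a_j(\boldsymbol{\theta})$ for $j\ge 1$ such that $\sigma_t^{(n)}=a_0(\boldsymbol{\theta}_t^{(n)})+\sum_{j\ge 1}a_j(\boldsymbol{\theta}_t^{(n)})X_{t-j}^{(n)}$, hence the recursion for $X_t^{(n)}$ is exactly of the tvLARCH$(\infty)$ form \eqref{tvLARCH}.

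Next, by a standard Cauchy estimate applied to the analytic map $\boldsymbol{\theta}\mapsto C(\boldsymbol{\theta},\cdot)/D(\boldsymbol{\theta},\cdot)$, the coefficients $a_j(\boldsymbol{\theta})$, as well as $\partial_{\boldsymbol{\theta}}a_j(\boldsymbol{\theta})$ and $\partial^2_{\boldsymbol{\theta}^2}a_j(\boldsymbol{\theta})$, decrease at a geometric rate $r^j$ for some $r\in(0,1)$, uniformly in $\boldsymbol{\theta}\in\Theta$. This immediately yields the stationarity-type condition $\|\xi_0\|_8\sum_{j\ge 1}|a_j(\boldsymbol{\theta})|<\infty$ (so $\Theta\subset\Theta_{LARCH}$ defined in \eqref{thetaLARCH}) as well as the summability requirement \eqref{conddPhiL}, since series of the form $\sum j\log j\cdot r^j$ converge.

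I would then verify the two identifiability conditions. For \eqref{identLARCH}, if $a_j(\boldsymbol{\theta})=a_j(\boldsymbol{\theta}')$ for all $j$, then $C(\boldsymbol{\theta},z)/D(\boldsymbol{\theta},z)=C(\boldsymbol{\theta}',z)/D(\boldsymbol{\theta}',z)$ as formal power series; because the pair $(C,D)$ is coprime (by the constraints on $\Theta$, both polynomials are nonzero at $0$ and $D$ has no root in the unit disc, so no cancellation occurs with a common factor), one recovers $C(\boldsymbol{\theta},z)=C(\boldsymbol{\theta}',z)$ and $D(\boldsymbol{\theta},z)=D(\boldsymbol{\theta}',z)$, hence $\boldsymbol{\theta}=\boldsymbol{\theta}'$. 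Condition \eqref{sigmaLARCH} follows by the same argument applied to the analytic derivative, using the absolute continuity of the law of $\xi_0$ to rule out degenerate linear relations between the $X_{t-j}$. Finally, since $\boldsymbol{\theta}_t^{(n)}\in\Theta$ satisfies Assumption \textbf{(LS($\rho$))} and $\|\xi_0\|_8<\infty$, all hypotheses of Proposition \ref{propLARCH} are met, and \eqref{tlctheta} follows for any $u\in(0,1)$ whenever $(h_n)$ satisfies \eqref{condbn}.

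The main obstacle I anticipate is the identifiability step: one must carefully justify that the tvGLARCH parametrization has no ambiguity within the prescribed $\Theta$, since in general GARCH-type models admit minimal-order identifiability constraints (coprimeness of $C$ and $D$). In the present setting, the assumed bounds force $D(\boldsymbol{\theta},0)=1$ and $C(\boldsymbol{\theta},0)=c_0>0$, which together with the strict stability of $D$ give the required unique expansion; the same mechanism propagates to the derivatives, so once coprimeness is argued carefully, the remaining verifications are routine applications of the exponential decay estimates.
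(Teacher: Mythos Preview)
Your approach is essentially the same as the paper's: the paper's entire justification is the single remark that the corollary ``is due to the exponential decay of the sequences $(\alpha_s(\Phi_{LARCH},\Theta))_s$ and $(\alpha_s(\partial_{\boldsymbol{\theta}}\Phi_{LARCH},\Theta))_s$ in such a case (see \cite{glrs})'', i.e.\ exactly your strategy of writing the GLARCH$(p,q)$ as a LARCH$(\infty)$ with geometrically decaying $a_j(\boldsymbol{\theta})$ and invoking Proposition~\ref{propLARCH}.

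One remark: you are more careful than the paper on identifiability, and your concern is warranted. Your claim that $D(\boldsymbol{\theta},0)=1$, $C(\boldsymbol{\theta},0)=c_0>0$ and the stability of $D$ alone force coprimeness is not correct --- two polynomials can share a common root outside the unit disc while still satisfying these constraints, and then distinct $(C,D)$ pairs give the same $C/D$ expansion. The paper simply does not address this and implicitly assumes the usual minimal-order (coprime) parametrization; you should state that assumption rather than try to derive it from the given $\Theta$.
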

This result is due to the exponential decay of the sequences $(\alpha_s(\Phi_{LARCH}, \Theta))_s$ and $(\alpha_s(\partial _{\boldsymbol{\theta}} \Phi_{LARCH}, \Theta))_s$ in such as case (see \cite{glrs}). 

\subsection{Time varying integer valued processes and Poisson QMLE}   \label{Poisson}
Finally, we consider the integer valued process $(X_t)$ defined so that the conditional distribution of $X_t$ is a Poisson distribution with parameter $\lambda_{\boldsymbol{\theta}}\big ((X_{t-i})_{i\geq 1}\big )$, {\it i.e.}
\begin{equation}\label{PoissonGen}
X_t ~ | ~((X_{t-i})_{i\geq 1})~\egaleloi {\cal P} (\lambda_{\boldsymbol{\theta}}\big ((X_{t-i})_{i\geq 1}\big ) \big ), \quad \mbox{for any $t \in \Z$},
\end{equation}
where for any $\boldsymbol{\theta} \in \Theta$, $U\in \R^\infty \mapsto \lambda_{\boldsymbol{\theta}}(U ) \in [\underline \lambda,\infty)$, $\underline \lambda>0$,  is once again a Lipschitz function on $\Theta$ satisfying \eqref{lipsh} with Lipschitz coefficients $(\beta_i(\lambda_{\boldsymbol{\theta}},\Theta))_{i\geq 1}$ such that  $\sum_{i=1}^\infty \beta_i(\lambda_{\boldsymbol{\theta}},\Theta)<1$  (see for instance Doukhan and Kengne \cite{dk2015}). \\ 
Then, we consider as a contrast the opposite of the log-likelihood of the process, {\it i.e.}
\begin{equation}\label{phiPoisson}
\Phi_P(x,\theta)=-x_1 \,\log \big (\lambda_{\boldsymbol{\theta}}\big ((x_i)_{i\geq 2}\big)\big )+\lambda_{\boldsymbol{\theta}} \big ((x_i)_{i\geq 2}\big). 
\end{equation}
Then, after classical computations we obtain for $p= 2$:
\begin{multline*}
\E \big [\sup_{\boldsymbol{\theta}  \in \Theta} \big |\Phi_P(U,\boldsymbol{\theta} ) -\Phi_P(V,\boldsymbol{\theta} )\big | \big ] \\
\leq  C \Big ( \sup_{i \geq 1} \big \{ \big \|U_i\|_2 \vee \big \|V_i\|_2 \} \, \|U_1-V_1\|_2+(1+\|V_1\|_2^2 )^{1/2} \sum_{i=2}^\infty \beta_i(\lambda_{\boldsymbol{\theta}},\Theta)\, \|U_i-V_i\|_2 \Big ) \\ 
\leq g\big (\sup_{i \geq 1} \big \{ \big \|U_i\|_2 \vee \big \|V_i\|_2 \}\big ) \, \sum_{i=1}^\infty \alpha_i(\Phi, \Theta) \, \|U_i-V_i\|_2,
\end{multline*}
with $\alpha_1(\Phi_P, \Theta)=1$ and $\alpha_i(\Phi_P, \Theta)=\beta_{i-1}(\lambda_{\boldsymbol{\theta}},\Theta)$ for $i\geq 2$, inducing $\Phi_P\in \Lip_2(\Theta)$.\\

We extend the structural recursive equation \eqref{PoissonGen} and $\Phi_P$ to the time varying framework as follows. This example shows that our results   apply to integer valued local stationary processes, which is an original and interesting extension. Hence, we consider
\begin{equation}\label{tvPoisson}
X_t^{(n)} ~ | ~(X^{(n)}_{t-i})_{i\geq 1}~\egaleloi {\cal P}\Big (\lambda_{\boldsymbol{\theta}_t^{(n)}}\big ((X^{(n)}_{t-i})_{i\geq 1}\big ) \Big ), \quad \mbox{for any $1\leq t \leq n$, and all $n\in \N^*$},
\end{equation}
where $X_t^{(n)}=0$ for $t\leq 0$.  If $\lambda_{\boldsymbol{\theta}}(\cdot)$ satisfies the uniform Lipschitz property \eqref{lipsh} with Lipschitz coefficients $(\beta_i(\lambda,\Theta))_{i\geq 1}$, we define
\begin{equation}\label{thetap}
\Theta_{{\cal P}}=\Big \{\boldsymbol{\theta} \in \R^d,~\sum_{i=1}^\infty \beta_i(\lambda,\{ \boldsymbol{\theta}\})<1\Big \}.
\end{equation}
Then, using Theorem 2.1 of Doukhan {\it et al.} \cite{dft} and Lemma \ref{momm}, we deduce that if $\boldsymbol{\theta} \in \Theta_{{\cal P}}$ for any $1\leq t \leq n$ and $n\in \N^*$, then $\sup_{t,n} \|X_t^{(n)}\|_p <\infty$, for any $p\geq 1$. Using the Poisson QMLE defined by \eqref{phiPoisson}, we obtain the following asymptotic result:
\begin{prop}\label{propPoisson}
Let $(X^{(n)}_t)$ satisfy \eqref{causal2} where $\lambda_{\boldsymbol{\theta}}$,  $\partial_{\boldsymbol{\theta}}\lambda_{\boldsymbol{\theta} }$ and $\partial^2_{\boldsymbol{\theta}^2}\lambda_{\boldsymbol{\theta} }$ satisfy Lipschitz inequalities \eqref{lipsh} and under Assumption  {\bf (LS($\rho$))}. Assume also:
\begin{enumerate}
\item $\Theta$ is a bounded set included in $ \Theta_{{\cal P}}$;
\item  there exists $\underline \lambda>0$ such as $\lambda_{\boldsymbol{\theta} } \geq \underline \lambda$ for any $\boldsymbol{\theta} \in \Theta$;
\item For all $\boldsymbol{\theta}, \,\boldsymbol{\theta'} \in \Theta$,
 $\lambda_{\boldsymbol{\theta}}=\lambda_{\boldsymbol{\theta'}}$ implies $\boldsymbol{\theta}=\boldsymbol{\theta'} $;
\item For any $i=1,\ldots,d$, $\frac \partial {\partial {\boldsymbol{\theta}_i}}\, \lambda_{\boldsymbol{\theta}^*(u)}\big ((\widetilde X_{-k}(u))_{k\in \N} \big )\neq 0$ a.s.
\end{enumerate}
Consider $\Phi=\Phi_P$ as it was defined in \eqref{phiPoisson}. Then, if
\begin{equation}\label{conddPhiP}
\sum_{j=1}^\infty (j \,\log j) \, \beta_j(\lambda,\Theta)+  j \, \beta_j(\partial _{\boldsymbol{\theta}}  \lambda,\Theta) +\beta_j(\partial^2 _{\boldsymbol{\theta}^2}  \lambda,\Theta)  <\infty,
\end{equation}
for any $u\in (0,1)$ and under condition \eqref{condbn}.
\begin{equation}\label{tlcingarch}
 \sqrt{n\, h_n}\big ( \widehat{\boldsymbol{\theta}}(u)- \boldsymbol{\theta}^*(u) \big ) \limiteloin  {\cal N} \Big ( 0 , \\E \Big [\frac {\partial_{\boldsymbol{\theta}} \lambda_{\boldsymbol{\theta}^*(u)}\big ((\widetilde X_{-k}(u))_{k\in \N} \big )  {}^t \partial_{\boldsymbol{\theta} }\lambda_{\boldsymbol{\theta}^*(u)}\big ((\widetilde X_{-k}(u))_{k\in \N} \big )} { \lambda^{2}_{\boldsymbol{\theta}^*(u)}\big ((\widetilde X_{-k}(u))_{k\in \N} \big ) }\Big ] \Big ).
\end{equation}
\end{prop}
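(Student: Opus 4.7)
The plan is to verify the hypotheses of Theorem \ref{theo3} for the Poisson contrast $\Phi_P$ defined in \eqref{phiPoisson}, and then to identify the matrices $\Sigma(\boldsymbol{\theta}^*(u))$ and $\Gamma(\boldsymbol{\theta}^*(u))$ using the Poisson moment identities.

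First I cast the recursion \eqref{tvPoisson} into the setting of \eqref{chaineinfloc} through a thinning representation $X_t^{(n)}=N_t\big(\lambda_{\boldsymbol{\theta}_t^{(n)}}((X_{t-k}^{(n)})_{k\ge 1})\big)$, where the innovation $\xi_t$ encodes an independent unit-rate Poisson process on $[0,\infty)$, as in Doukhan \emph{et al.} \cite{dft}. For this coupling, $\E|N_t(\mu)-N_t(\mu')|=|\mu-\mu'|$, so the Lipschitz assumption on $\lambda_{\boldsymbol{\theta}}$ and on $\partial_{\boldsymbol{\theta}}\lambda_{\boldsymbol{\theta}}$ gives $b_j^{(0)}(\Theta)\le\beta_j(\lambda,\Theta)$ and $b_j^{(1)}(\Theta)\le\beta_j(\partial_{\boldsymbol{\theta}}\lambda,\Theta)$. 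Then \eqref{thetap} yields $B_0(\Theta)<1$, condition \eqref{conddPhiP} supplies $\sum_j j\log(j)\,b_j^{(0)}(\Theta)<\infty$ and assumption {\bf (A$_1$($\Theta$))}, and Lemma \ref{momm} combined with standard Poisson moment bounds produces $\sup_{t,n}\|X_t^{(n)}\|_p<\infty$ for every $p\ge 1$.

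Next I check the contrast hypotheses. The inclusion $\Phi_P\in\Lip_{2}(\Theta)$ with $\alpha_s(\Phi_P,\Theta)\le\beta_{s-1}(\lambda,\Theta)$ has already been derived just before the statement, so the first term in \eqref{conddPhiP} gives $\sum_s s\,\alpha_s(\Phi_P,\Theta)<\infty$. A direct differentiation of \eqref{phiPoisson} produces
\[
\partial_{\boldsymbol{\theta}}\Phi_P(x,\boldsymbol{\theta}) = \partial_{\boldsymbol{\theta}}\lambda_{\boldsymbol{\theta}}\big((x_i)_{i\ge 2}\big)\Big(1-\frac{x_1}{\lambda_{\boldsymbol{\theta}}((x_i)_{i\ge 2})}\Big),
\]
and the uniform lower bound $\lambda_{\boldsymbol{\theta}}\ge\underline\lambda$ makes it possible to control this quotient and its Lipschitz behaviour, establishing $\partial_{\boldsymbol{\theta}}\Phi_P\in\Lip_{p}(\Theta)$ for some $p\ge 2$ with $\alpha_s(\partial_{\boldsymbol{\theta}}\Phi_P,\Theta)\le C\big(\beta_{s-1}(\lambda,\Theta)+\beta_{s-1}(\partial_{\boldsymbol{\theta}}\lambda,\Theta)\big)$; summability $\sum_s s\,\alpha_s(\partial_{\boldsymbol{\theta}}\Phi_P,\Theta)<\infty$ then follows from the second term in \eqref{conddPhiP}.

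Condition $(Co(\Phi_P,\Theta))$ and the explicit covariance both come from the Poisson identities $\E[X_1\mid\mathcal F_0]=\mathrm{Var}(X_1\mid\mathcal F_0)=\lambda_{\boldsymbol{\theta}^*(u)}((\widetilde X_{-k}(u))_{k\in\N})$. They yield $\E[\partial_{\boldsymbol{\theta}}\Phi_P\mid\mathcal F_0]=0$ at $\boldsymbol{\theta}=\boldsymbol{\theta}^*(u)$, and convexity of $\lambda\mapsto-x_1\log\lambda+\lambda$ together with the identifiability assumption (3) makes $\boldsymbol{\theta}^*(u)$ the unique minimizer. A second-order computation then gives
\[
\Gamma(\boldsymbol{\theta}^*(u)) = \E\Big[\frac{\partial_{\boldsymbol{\theta}}\lambda_{\boldsymbol{\theta}^*(u)}\,{}^t\partial_{\boldsymbol{\theta}}\lambda_{\boldsymbol{\theta}^*(u)}}{\lambda_{\boldsymbol{\theta}^*(u)}}\Big],\qquad \Sigma(\boldsymbol{\theta}^*(u)) = \Big(\int_{\R}K^2(x)dx\Big)\,\Gamma(\boldsymbol{\theta}^*(u)),
\]
so that $\Gamma^{-1}\Sigma\Gamma^{-1}=\big(\int K^2\big)\,\Gamma^{-1}$. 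Positive-definiteness of $\Gamma$ follows from assumption (4) since $\mu^{\top}\Gamma(\boldsymbol{\theta}^*(u))\mu=\E\big[|\mu^{\top}\partial_{\boldsymbol{\theta}}\lambda_{\boldsymbol{\theta}^*(u)}|^2/\lambda_{\boldsymbol{\theta}^*(u)}\big]$ vanishes only if the linear combination of derivatives is a.s. zero. Applying Theorem \ref{theo3} delivers \eqref{tlcingarch}.

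The main technical obstacle will be the Lipschitz verification of $\partial_{\boldsymbol{\theta}}\Phi_P$: the ratio $x_1/\lambda$ couples an unbounded numerator with a parameter-dependent denominator, and closing the $\Lip_{p}$ estimate requires carefully combining the uniform lower bound $\lambda\ge\underline\lambda$ with the uniform $L^p$-moment control inherited from Lemma \ref{momm}. Once this is in hand, the verification of the remaining hypotheses of Theorem \ref{theo3} is routine and yields the announced central limit theorem.
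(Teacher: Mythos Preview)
The paper does not include an explicit proof of Proposition~\ref{propPoisson}; it is left implicit, in parallel with the detailed proofs given for Propositions~\ref{propcausal} and~\ref{propLARCH}. Your approach---casting the Poisson recursion into the framework \eqref{chaineinfloc} via the thinning representation of \cite{dft}, checking $\Phi_P,\partial_{\boldsymbol{\theta}}\Phi_P\in\Lip_p(\Theta)$, verifying ${\bf (Co}(\Phi_P,\Theta))$ from the Poisson conditional mean identity, and then invoking Theorem~\ref{theo3}---is precisely the template the paper follows for those two analogous propositions, so your proof is in line with the paper's intended argument.

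Two small points are worth flagging. First, your computed asymptotic covariance $\Gamma^{-1}\Sigma\,\Gamma^{-1}=\big(\int K^2\big)\,\Gamma^{-1}$ with $\Gamma=\E\big[\partial_{\boldsymbol{\theta}}\lambda\,{}^t\partial_{\boldsymbol{\theta}}\lambda/\lambda\big]$ does not literally coincide with the displayed matrix in \eqref{tlcingarch}; the latter has $\lambda^2$ in the denominator, no inverse, and no kernel constant, which looks like a misprint in the statement rather than an error in your derivation. Second, your positive-definiteness argument for $\Gamma$ actually requires that $\sum_j\mu_j\,\partial_{\theta_j}\lambda_{\boldsymbol{\theta}^*(u)}=0$ a.s.\ forces $\mu=0$, which is stronger than the coordinate-wise non-vanishing stated in assumption~(4); this mirrors the condition \eqref{definite} used in Proposition~\ref{propcausal} and is presumably what is meant.
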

This central limit theorem can notably be applied for:
\begin{itemize}
\item  Time varying integer-valued GARCH$(p,q)$ processes (tvINGARCH$(p,q)$ processes), where
\begin{equation}\label{inGARCH}
\lambda_{\boldsymbol{\theta}_t^{(n)}}\big ((X^{(n)}_{t-j})_{j\geq 1}\big ) =a^{(n)}_{0,t}+\sum_{i=1}^p a^{(n)}_{i,t} \, X^{(n)}_{t-i} +\sum_{i=1}^q b^{(n)}_{i,t}\,\lambda_{\boldsymbol{\theta}_t^{(n)}}\big ((X^{(n)}_{t-i-j})_{j\geq 1}\big )  , 
\end{equation}
\mbox{for any $1\leq t\leq n$ and $n\in \N^*$}, with $X^{(n)}_t=0$ for all $t\leq 0$. Here 
$$
\boldsymbol{\theta}_t^{(n)}=\big (a^{(n)}_{0,t},\ldots,a^{(n)}_{p,t},b^{(n)}_{1,t},\ldots,b^{(n)}_{q,t} \big ).
$$
Here, following \cite{dft}, it is possible to consider a sharper set of parameters than the one given by \eqref{thetap}; hence let $\Theta$  be a bounded subset of 
\begin{equation}\label{thetaINGARCH}
\Theta_{INGARCH}=\Big \{\boldsymbol{\theta}=\big (a_0,a_1,\ldots,a_p,b_1,\ldots,b_q\big ) \in \R^{p+q+1},~\sum_{i=1}^p a_i +\sum_{i=1}^q b_i <1\Big \}.
\end{equation}
In such a case, the Lipschitz coefficients $\beta_j(\lambda,\Theta)$, $\beta_j(\partial _{\boldsymbol{\theta}}  \lambda,\Theta)$ and  $\beta_j(\partial^2 _{\boldsymbol{\theta}^2}  \lambda,\Theta)$  exist and decrease exponentially fast (see \cite{dft}), and all the conditions are satisfied for obtaining \eqref{tlcingarch}.
\item   Integer valued threshold GARCH$(p,q)$ processes where, with  $\ell$ a positive fixed integer,
\begin{multline*}
\lambda_{\boldsymbol{\theta}_t^{(n)}}\big ((X^{(n)}_{t-j})_{j\geq 1}\big ) \\ =a^{(n)}_{0,t}+\sum_{i=1}^p a^{(n)}_{i,t} \,\lambda_{\boldsymbol{\theta}_t^{(n)}}\big ((X^{(n)}_{t-i-j})_{j\geq 1}\big )+\sum_{i=1}^q \big (b^{(n)}_{i,t}\, \max(X^{(n)}_{t-i}-\ell,0)-c^{(n)}_{i,t}\, \min(X^{(n)}_{t-i},\ell)\big ), 
\end{multline*}
\mbox{for any $1\leq t\leq n$ and $n\in \N^*$}, with $X^{(n)}_t=0$ for all $t\leq 0$. Here 
$$
\boldsymbol{\theta}_t^{(n)}=\big (a^{(n)}_{0,t},\ldots,a^{(n)}_{p,t},b^{(n)}_{1,t},\ldots,b^{(n)}_{q,t},c^{(n)}_{1,t},\ldots,c^{(n)}_{q,t} \big ).
$$
As previously, the Lipschitz coefficients $\beta_j(\lambda,\Theta)$, $\beta_j(\partial _{\boldsymbol{\theta}}  \lambda,\Theta)$ and  $\beta_j(\partial^2 _{\boldsymbol{\theta}^2}  \lambda,\Theta)$  exist and decrease exponentially fast (see \cite{dft}), and all the conditions are satisfied for obtaining \eqref{tlcingarch}.
\end{itemize}

\section{Numerical experiments} \label{simu}
In the sequel we are going to apply our kernel based estimator in several different cases of local stationary processes. \\
~\\
The window bandwidth $h_n$ is a tuning parameter that requires to be chosen.  In order to neglect the bias we chose $h_n=n^{-\lambda}$ with $\lambda=0.35$, inducing $n\,h^3_n \limiten 0$, which is the uniform consistency and the asymptotic normality condition required for $Lip_p$-contrast and ${\cal C}^\rho$ functions when $p>3/2$ and $\rho=1$. 
\subsection{Monte Carlo simulations}
Here we will consider three cases:
\begin{enumerate}
\item \underline{ An example of tvGARCH$(1,1)$.} Here, with the notation of equation  \eqref{TVGARCH}, assume:
$$
c_{0,t}^{(n)}= 1+0.5  \, \sin \big (5\, \frac t n \big ),\quad c_{1,t}^{(n)}=0.1 + 0.4  \cos^2 \big (4\, \frac t n \big )
\quad \mbox{and}\quad  d_{1,t}^{(n)}=0.1+ 0.4 \,  \frac t n,
$$ 
for any $1\leq t \leq n$ and $n\in \N^*$. Clearly, $c^*_{0}(u)= 1+0.5\, \sin (5\, u ),~c^*_{1}(u)=0.1+0.4  \,  \cos^2 (4\, u )$ and  $ d_1^*(u)=0.1+0.4\, u$. Moreover, we assume that $(\xi_t)$ is a sequence of i.i.d.r.v. following ${\cal N}(0,1)$ distribution. \\
We independently replicated $1000$ trajectories of such process, for $n=2000, ~5000$ and $10000$ and computed the Gaussian QMLE estimators  $\widehat c_{0}(u),~\widehat c_{1}(u)$ and  $ \widehat d_1(u)$ for $u=k/50$ with $k=1,\ldots,49$. Finally, we used the two well known kernels, the uniform kernel $U(x)=\frac 1 2 \, \1 _{x\in [-1,1]}$ and the Epanechnikov one $E(x)=\frac 3 4 \, (1-x^2)\, \1 _{x\in [-1,1]}$ and denote respectively $\widehat \theta^U(u)$ and $\widehat \theta^E(u)$.  \\
~\\
Table \ref{Table1} contains the results of these Monte Carlo experiments where we computed the square root of the mean integrated squared error (RSMISE). In Figure \ref{Fig_GARCHt} exhibits an example of particular trajectories of these estimators for $n=10000$, while  Figure \ref{Fig_GARCHm} we also present the average trajectories of $\widehat c_0^E(u)$, $\widehat c_1^E(u)$ and $\widehat d_1^E(u)$ when $n=5000$.\\
\begin{table*}
\centering
\begin{tabular}{l||c|c||c|c||c|c|}
&   \multicolumn{2}{c|}{$c_0$} & \multicolumn{2}{c|}{$c_1$} & \multicolumn{2}{c|}{$d_1$}  \\
\hline 
$n$ & $\widehat c^U_0$ &$\widehat c^{E}_0$ &$\widehat c^U_1$ &$\widehat c^{E}_1$ &$\widehat d^U_1$ &$\widehat d^{E}_1$  \\ 
\hline
$1000$ & 0.493&0.455 &0.126 &0.122 &0.230 &0.208 \\
$3000$ & 0.363&0.323 &0.081 &0.077 &0.167 &0.146 \\
$10000$ & 0.259&0.224 & 0.052&0.048 &0.118 &0.101 \\
\hline
\end{tabular}
\caption{Square root of the MISE for tvGARCH$(1,1)$ processes for $n=1000$, $3000$ and $10000$ computed from $1000$ independent replications.}
\label{Table1}
\end{table*}

%\begin{figure}  \hspace*{-1.5cm}
%  \begin{tabular}{ccc}     \subcaptionbox{A trajectory of $\widehat c_0^E$.\label{figc0t}}[0.4\linewidth]{\includegraphics[width=0.8\linewidth]{Figa0_10000ind}} &
%   \hspace*{-1.2cm} \subcaptionbox{A trajectory of $\widehat c_1^E$. \label{figc1t}}[0.4\linewidth]{\includegraphics[width=0.8\linewidth]{Figa1_10000ind}} &
%	\hspace*{-1.2cm}	\subcaptionbox{A trajectory of $\widehat d_1^E$. \label{figd1t}}[0.4\linewidth]{\includegraphics[width=0.8\linewidth]{Figb1_10000ind}} \\  \end{tabular}
%	\caption{Paths of functions $c_0$, $c_1$, $d_1$ (in black), and a path of $\widehat c_0^E$, $\widehat c_1^E$ and $\widehat d_1^E$ (in red) for $n=10000$\label{Fig_GARCHt}} \end{figure}

%\begin{figure}
%  \hspace*{-1.5cm}
%  \begin{tabular}{ccc}
%     \subcaptionbox{Mean of $\widehat c_0^E$.\label{figc0m}}[0.4\linewidth]{\includegraphics[width=0.8\linewidth]{Figa0_10000}} &
%   \hspace*{-1.2cm} \subcaptionbox{Mean of $\widehat c_1^E$. \label{figc1m}}[0.4\linewidth]{\includegraphics[width=0.8\linewidth]%{Figa1_10000}} &
%	\hspace*{-1.2cm}	\subcaptionbox{Mean of $\widehat d_1^E$. \label{figd1m}}[0.4\linewidth]{\includegraphics[width=0.8\linewidth]{Figb1_10000}} \\
%  \end{tabular}
%	\caption{Paths of functions $c_0$, $c_1$, $d_1$ (in black), and the mean trajectories  over $1000$ replications of $\widehat c_0^E$, $%\widehat c_1^E$ and $\widehat d_1^E$ (in red) for $n=5000$ 
%	\label{Fig_GARCHm}} 
%\end{figure}

%%%%%%%%%%%%

\begin{figure}
  \hspace*{-1.7cm}
  \begin{tabular}{lll}
     \subcaptionbox{Trajectory of $\widehat c_0^E$.\label{figc0t}}[0.2\linewidth]{\includegraphics[width=0.33\linewidth]{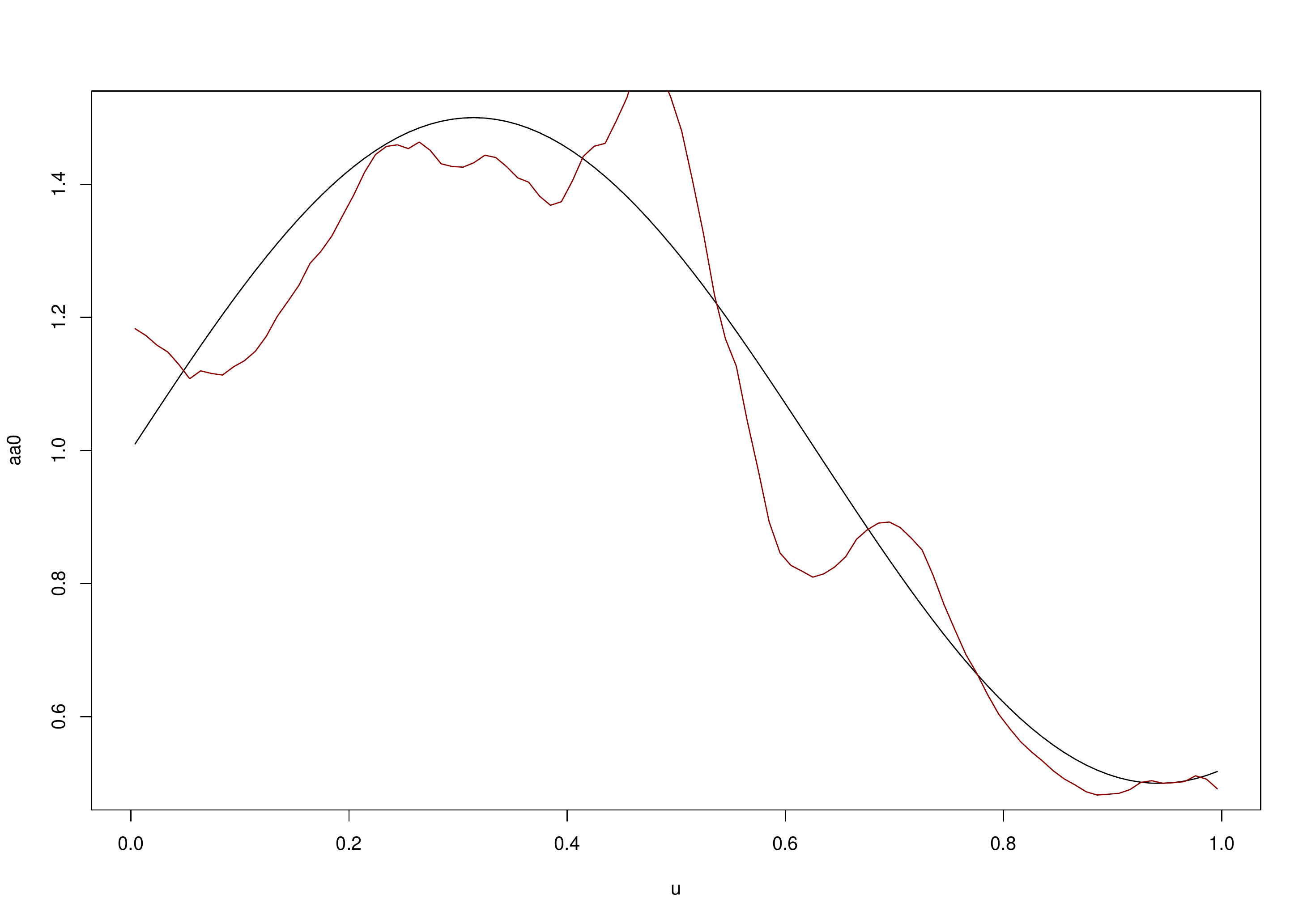}} &
   \hspace*{1.2cm} 
   \subcaptionbox{ Trajectory of $\widehat c_1^E$. \label{figc1t}}[0.2\linewidth]{\includegraphics[width=0.33\linewidth]{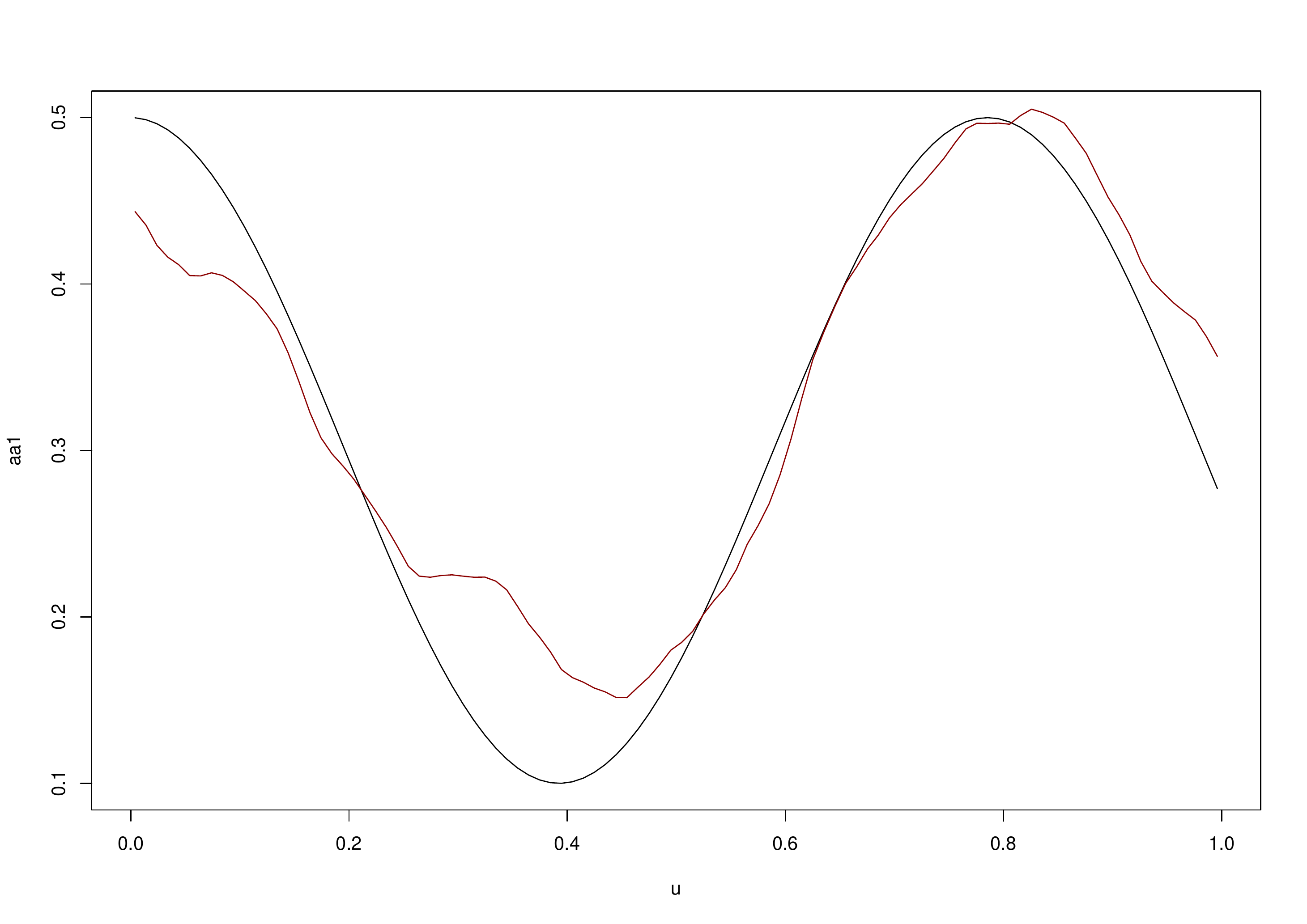}} &
	\hspace*{1.2 cm}	
	\subcaptionbox{ Trajectory of $\widehat d_1^E$. \label{figd1t}}[0.2\linewidth]{\includegraphics[width=0.33\linewidth]{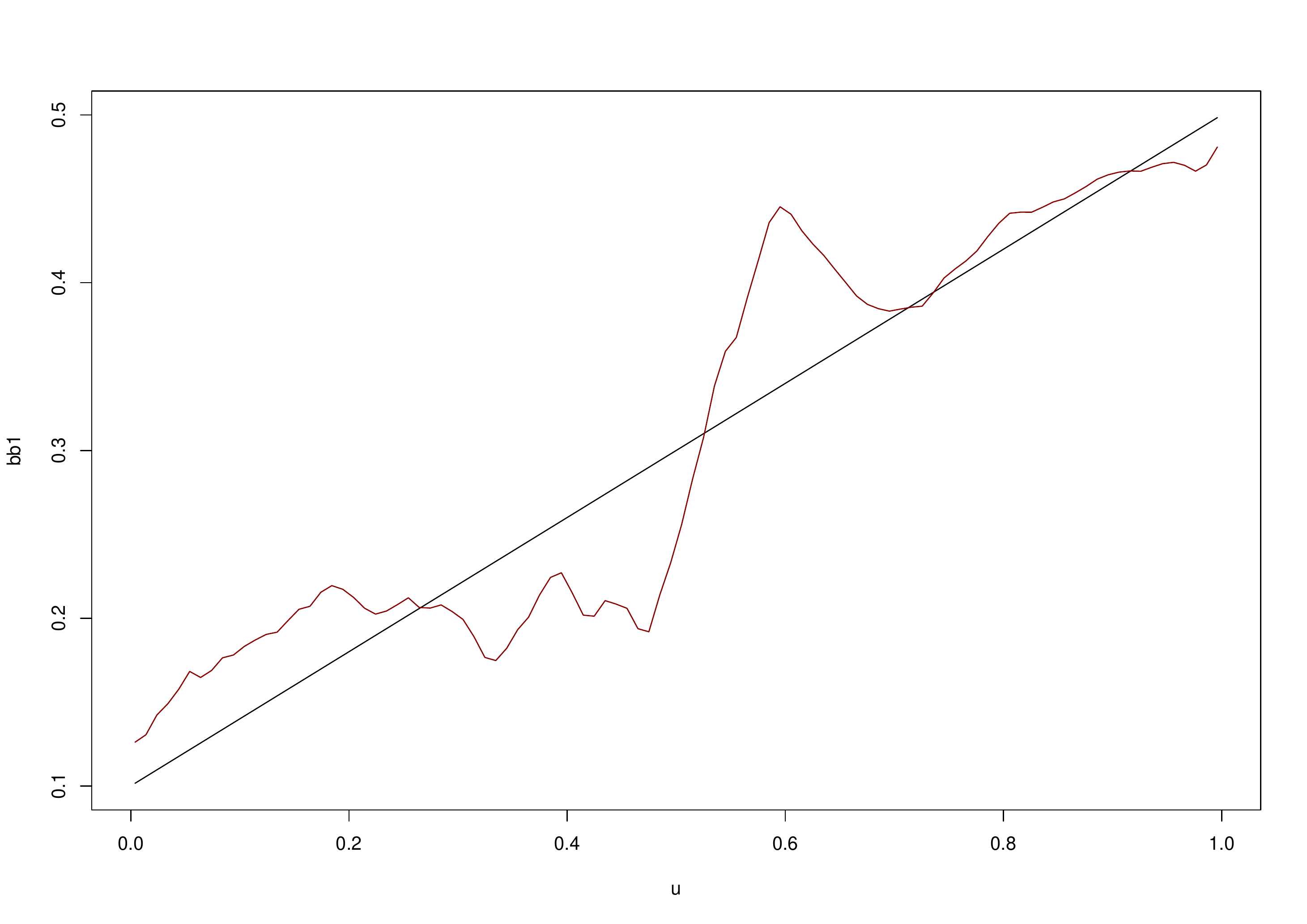}} \\
  \end{tabular}
	\caption{Paths of functions $c_0$, $c_1$, $d_1$ (in black), and a path of $\widehat c_0^E$, $\widehat c_1^E$ and $\widehat d_1^E$ (in red) for $n=10000$\label{Fig_GARCHt}} 
\end{figure}

\begin{figure}
  \hspace*{-1.7cm}
  \begin{tabular}{lll}
     \subcaptionbox{\small Mean of $\widehat c_0^E$.\label{figc0m}}[0.2\linewidth]{\includegraphics[width=0.33\linewidth]{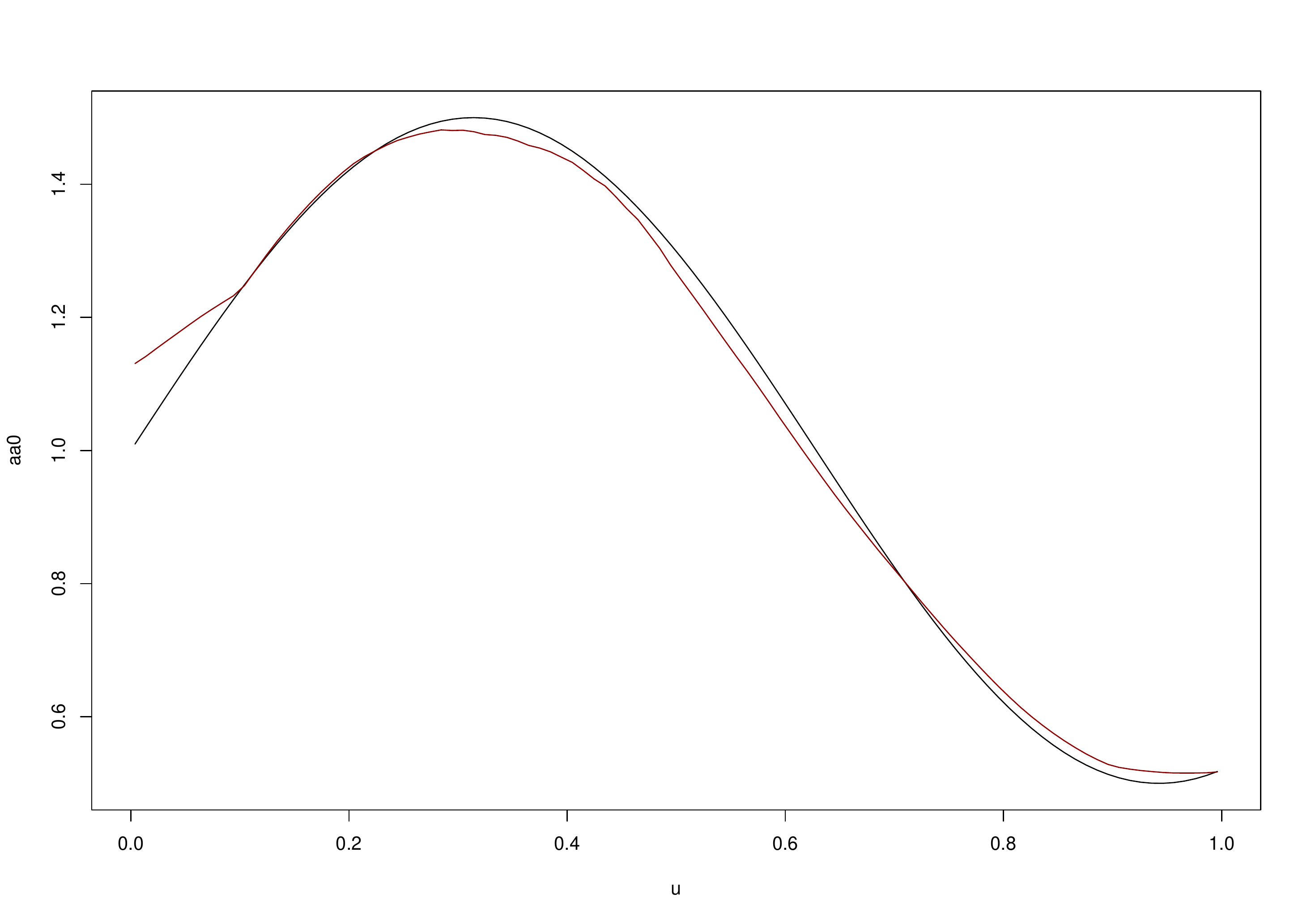}} &
   \hspace*{1.2cm} \subcaptionbox{\small Mean of $\widehat c_1^E$. \label{figc1m}}[0.25\linewidth]{\includegraphics[width=0.33\linewidth]{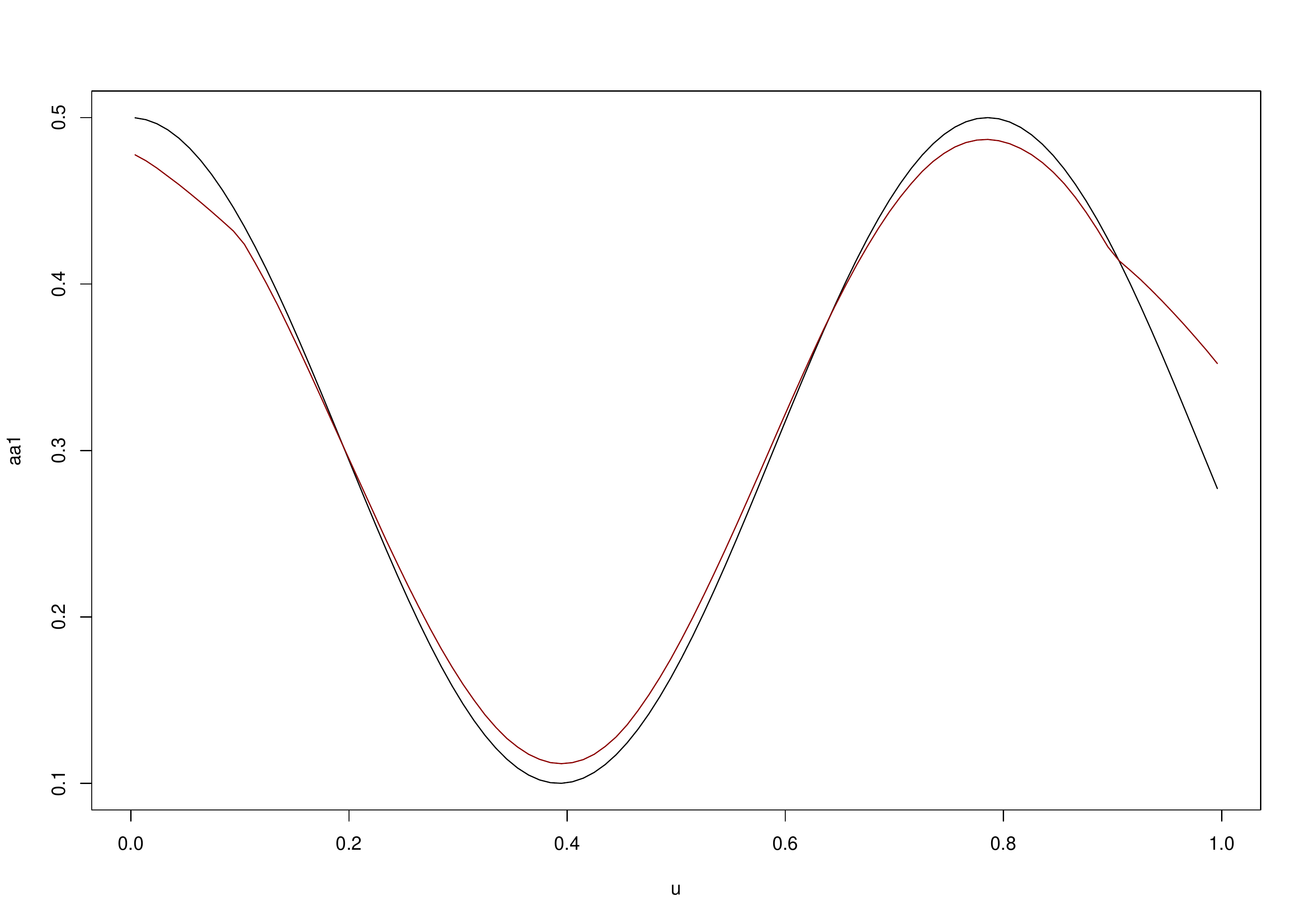}} &
	\hspace*{.5cm}
		\subcaptionbox{\small Mean of $\widehat d_1^E$. \label{figd1m}}[0.2\linewidth]{\includegraphics[width=0.33\linewidth]{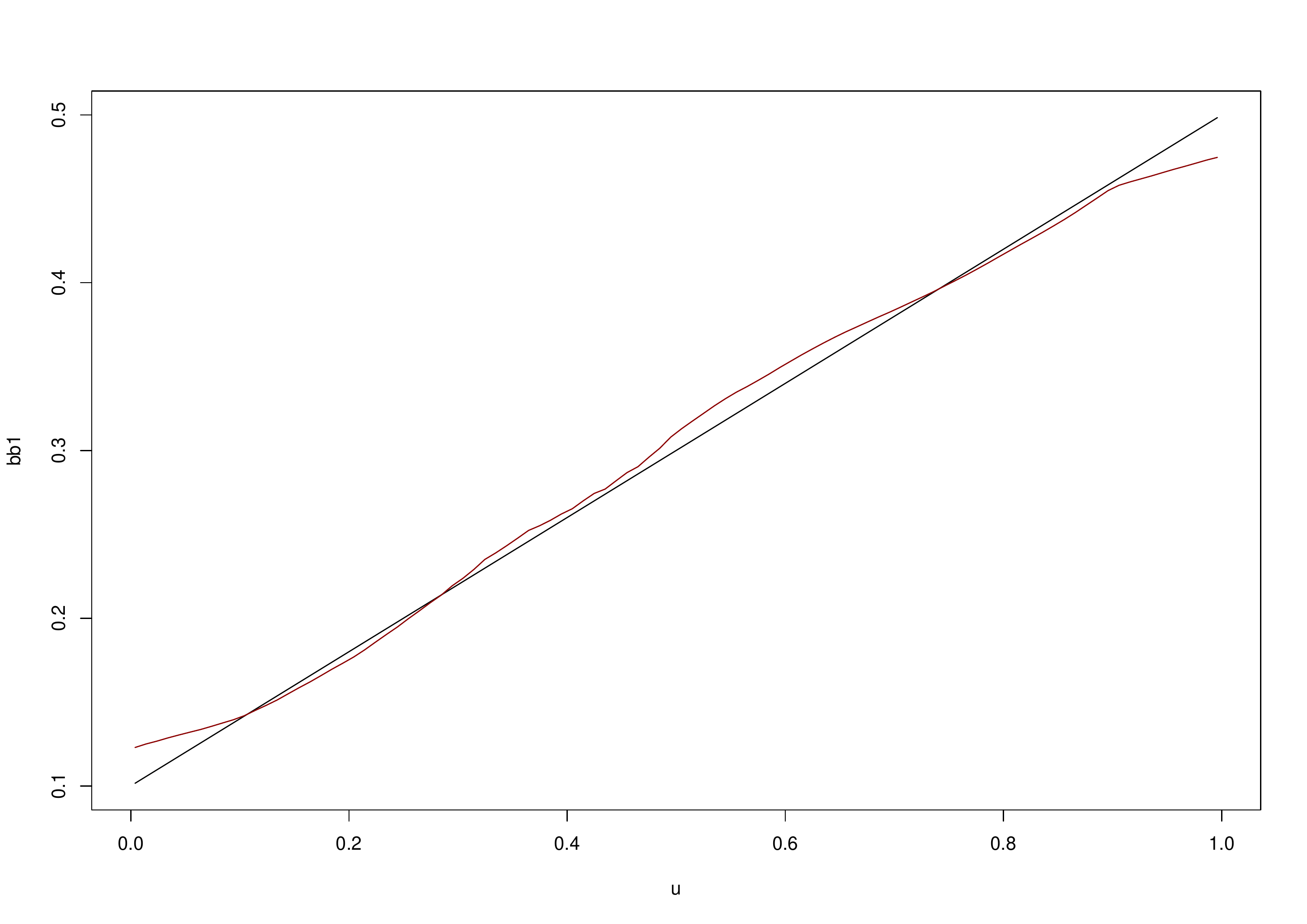}} \\
  \end{tabular}
	\caption{Paths of functions $c_0$, $c_1$, $d_1$ (in black), and the mean trajectories  over $1000$ replications of $\widehat c_0^E$, $\widehat c_1^E$ and $\widehat d_1^E$ (in red) for $n=5000$ 
	\label{Fig_GARCHm}} 
\end{figure}

\item \underline{A tvARCH$(\infty)$ example.} With the notation of equation  \eqref{tvARCH}, assume:
\begin{multline*}
\boldsymbol{\theta}=\big (c_0,c_1,p\big ),\quad a_0(\boldsymbol{\theta})=c_0 \quad \mbox{and} \quad a_j(\boldsymbol{\theta})=c_1 \, j^{-p}\quad\mbox{for $j\in \N^*$} \\
\mbox{with} \quad c_{0,t}^{(n)}=  1+0.5\, \sin \big (5\, \frac t n \big ), \quad c_{1,t}^{(n)}=0.1+0.5\, \cos^2\big (4 \, \frac j n \big ) \quad \mbox{and}\quad  p^{(n)}_t=2.1+ \frac j n.
\end{multline*}
for any $1\leq t \leq n$ and $n\in \N^*$. Therefore $c^*_{0}(u)= 1+0.5 \, \sin (5\, u ),~c^*_{1}(u)=0.1 +0.5 \, \cos^2 (4\, u )$ and  $p^*(u)=2.1 + u$. Moreover, we assume that $(\xi_t)$ is a sequence of i.i.d.r.v. following ${\cal U}([-\sqrt 3, \sqrt 3 ])$ (uniform) distribution. \\
As previously, we replicated $1000$ trajectories of such process (see for instance one trajectory in Figure \ref{Fig_ARCH}), for $n=2000, ~5000$ and $10000$ and computed the Gaussian QMLE estimators with Epanechnikov kernel $\widehat c^E_{0}(u),~\widehat c^E_{1}(u)$ and  $ \widehat p^E(u)$ for $u=k/50$ with $k=1,\ldots,49$.  \\
\begin{figure}
  \hspace*{0cm}
	\centering
  \includegraphics[height=4cm,width=12cm]{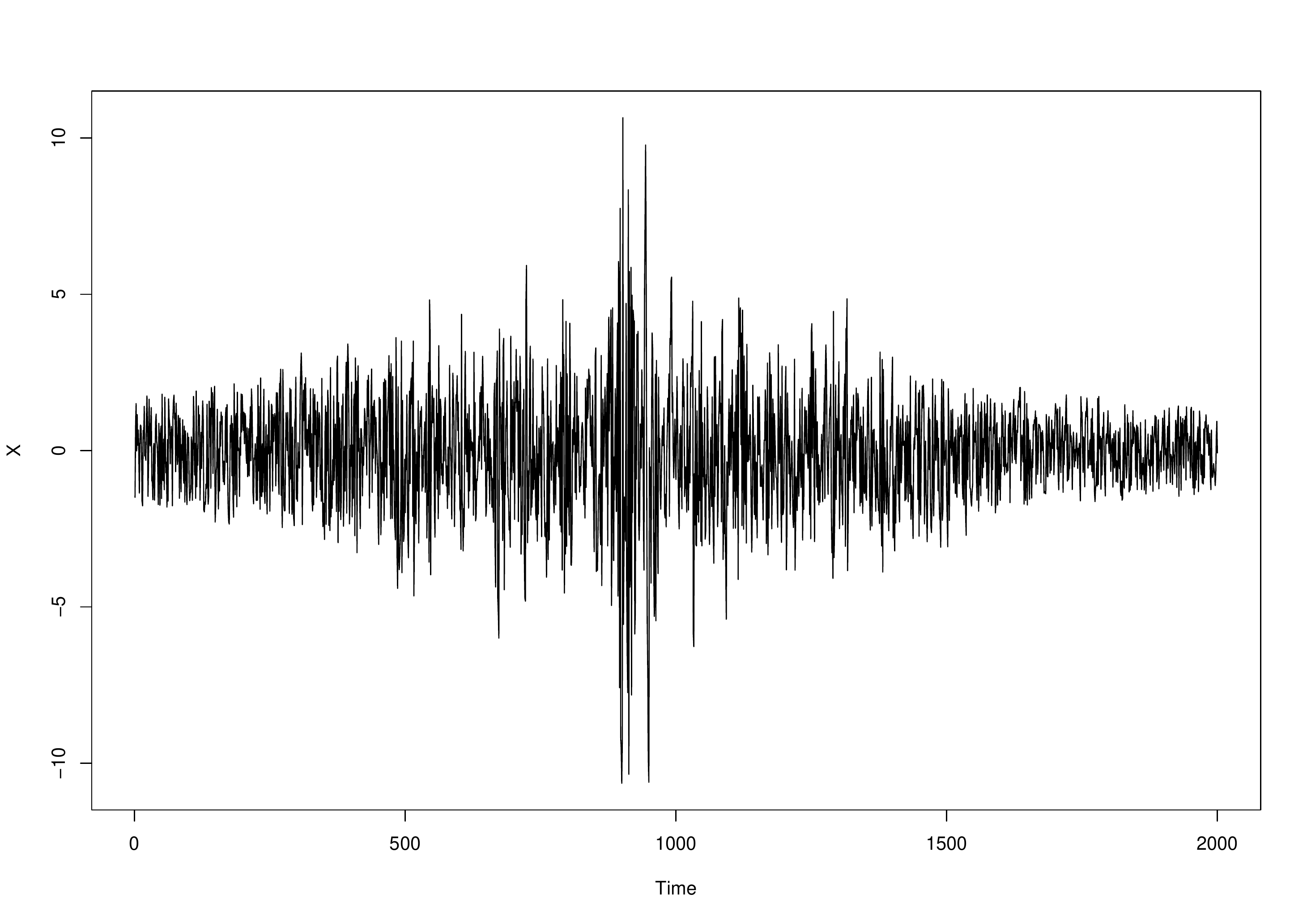}
	\caption{On trajectory of a tvARCH$(\infty$)-process for $n=2000$\label{Fig_ARCH}} 
\end{figure}
~\\
Table \ref{Table2} contains the results of these Monte-Carlo experiments where we computed the square root of mean integrated squared error (RSMISE).\\
\begin{center}
\begin{table*}
\centering
\begin{tabular}{l||c|c||c|c||c|c|} 
$n$ & $\widehat c^{U}_0$  & $\widehat c^{E}_0$ &$\widehat c^{U}_1$&$\widehat c^{E}_1$ &$\widehat p^{U}$&$\widehat p^{E}$   \\ 
\hline
$1000$ & 0.271 & 0.294 & 0.089 & 0.089& 0.789& 0.770 \\
$2000$ & 0.227& 0.261 & 0.066 & 0.069 &0.712 & 0.696 \\
$5000$ & 0.164& 0.187 & 0.047 & 0.049 &0.624 & 0.591  \\
$10000$ & 0.128 & 0.144 &0.037 & 0.039 & 0.555 & 0.517 \\
\hline
\end{tabular}
\caption{Square root of the MISE of $\widehat c^{U}_0$, $\widehat c^{E}_0$, $\widehat c^{U}_1$ , $\widehat c^{E}_1$, $\widehat p^{U}$  and $\widehat p^{E}$ for the tvARCH($\infty$) processes for $n=1000$, $2000$, $5000$ and $10000$ computed from $1000$ independent replications.}
\label{Table2}
\end{table*}
\end{center}
\item \underline{The example of one integer valued process.} \\ A tvINGARCH$(1,0)$-process (tvINARCH$(1)$) as  defined in \eqref{inGARCH}. Here we choose 
$$
a_{0,t}^{(n)}=1+0.5 \, \sin \big (5\, \frac t n \big )\quad \mbox{and}\quad  a_{1,t}^{(n)}=0.3 + 0.5  \,  \frac t n
$$
for any $1\leq t \leq n$ and $n\in \N^*$. Note that we only consider here the Poisson distribution case. Figure \ref{FigIN} exhibits a trajectory of such a process for $n=1000$. 
\begin{figure}
  \hspace*{0cm}
	\centering
  \includegraphics[height=4cm,width=12cm]{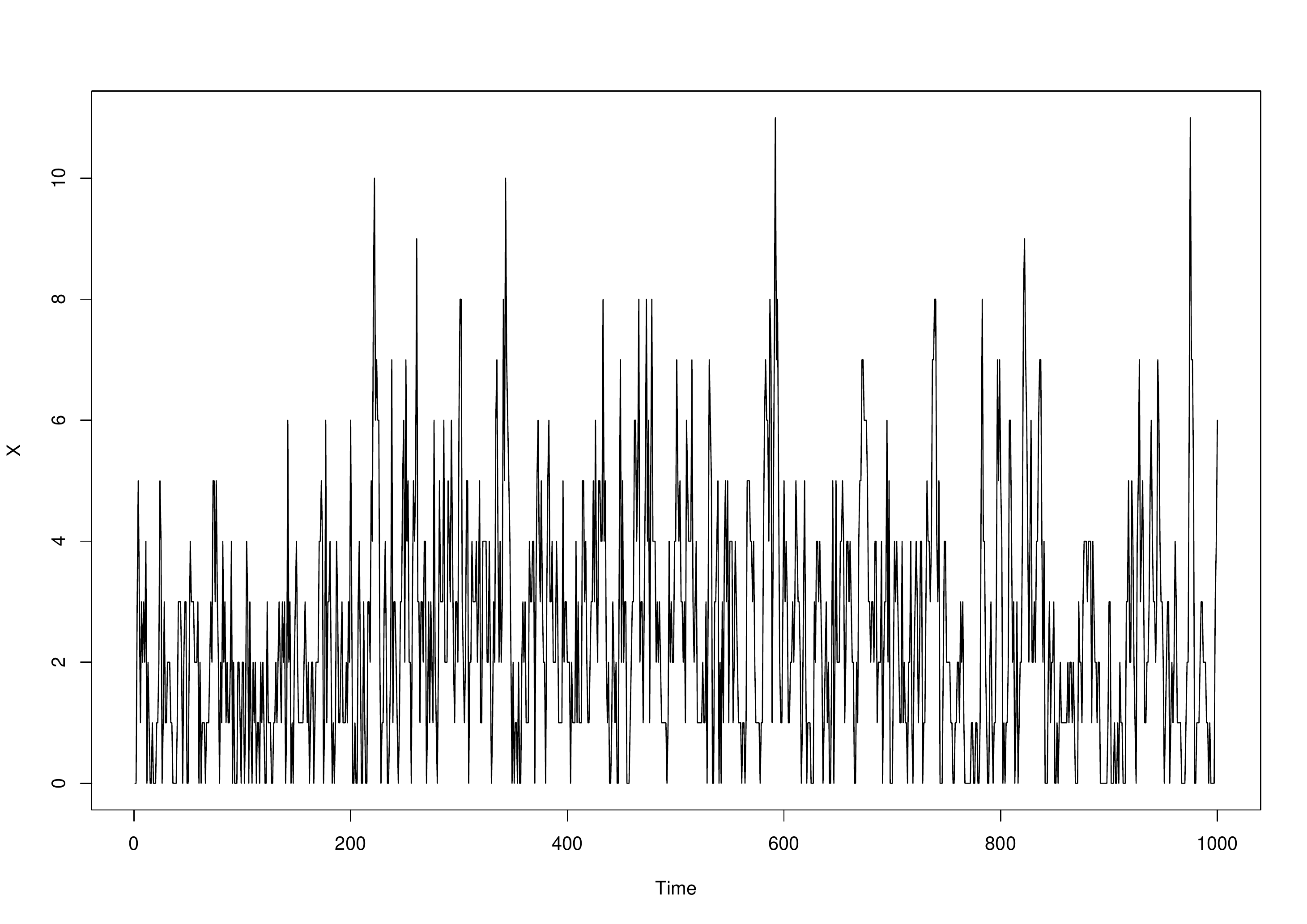}
	\caption{A trajectory of a tvINGARCH$(1,0)$-process for $n=1000$\label{FigIN}} 
\end{figure}
~\\
Using the same procedure than in the previous examples, Table \ref{Table3} contains the RSMISE of the estimators computed with Uniform and Epanechnikov kernels, $\widehat a^{U}_0$, $\widehat a^{E}_0$, $\widehat a^{U}_1$  and $\widehat a^{E}_1$.  \\
\begin{table*}
\centering
\begin{tabular}{l||c|c|| c | c|} 
$n$ & $\widehat a^{U}_0$ & $\widehat a^{E}_0$ &$\widehat a^{U}_1$ &$\widehat a^{E}_1$  \\ 
\hline
$1000$ & 0.144 & 0.135 & 0.051 & 0.058\\
$2000$ & 0.111 & 0.103&0.045 & 0.041 \\
$5000$ & 0.079 & 0.073&0.032 & 0.030 \\
$10000$ &0.061 & 0.056&0.025 & 0.022\\
\hline
\end{tabular}
\caption{Square root of the MISE of $\widehat a^{U}_0$, $\widehat a^{E}_0$, $\widehat a^U_1$ and $\widehat a^E_1$ for tvINGARCH($1,0$) processes for $n=1000$, $2000$, $5000$ and $10000$ computed from $1000$ independent replications.}
\label{Table3}
\end{table*}
\

\noindent
{\bf Conclusion:} We can globally conclude from these Monte Carlo experiments:
\begin{itemize}
\item The consistency of the estimators and their convergence rate are established;
\item The Epanechnikov's kernel is preferable to the uniform one.
\end{itemize}
\end{enumerate}
\subsection{Application to financial data}
We apply our local non-parametric estimator to a trajectory of financial data. More precisely, we consider the log-returns of the daily closing values of S\&P500 index between July 1999 and July 2019 (therefore $n=5031$, see also Figure \ref{FigSP} for the graph of this trajectory).  
\begin{figure}
  \hspace*{0cm}
	\centering
  \includegraphics[height=4cm,width=12cm]{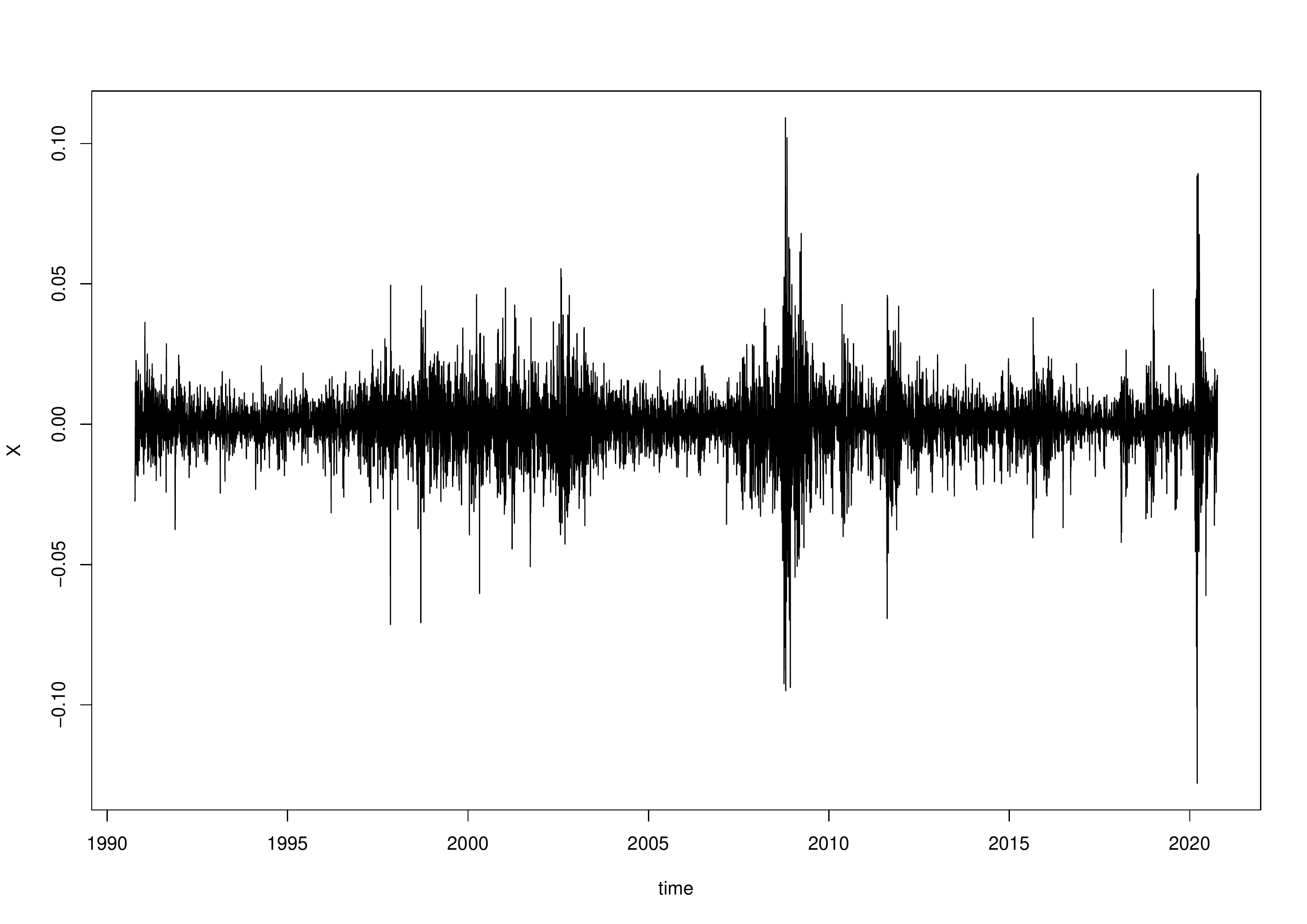}
	\caption{The log-returns of daily closing values of S\&P500 index between October 1990 and October 2020\label{FigSP}} 
\end{figure}
Many studies have shown that the GARCH$(1,1)$ process is a relevant model for this type of data (We refer to the monograph of Francq and Zako\"ian \cite{fz} for more details). As a consequence, we used a tvGARCH$(1,1)$ process (see \eqref{TVGARCH}) to take into account the changes in economic and financial conjectures over 20 years on such a model (think in particular of the September 2008 crisis and of the  spring 2020 COVID crisis). Figure \ref{FigSP33} exhibits the evolution of the three estimators computed with Uniform and Epanechnikov kernels, {\it i.e.} $\widehat c^U_0$, $\widehat c^U_1$, $\widehat d^U_1$, $\widehat c^E_0$, $\widehat c^E_1$, $\widehat d^E_1$ from Gaussian QMLE. \\
%\begin{figure}  \hspace*{-1.5cm}	\begin{center}
%  \begin{tabular}{cc}     \subcaptionbox{$\widehat c^U_0$ (red) and $\widehat c^E_0$ (black)}[0.4\linewidth]{\includegraphics[width=0.9\linewidth]{FigSPc0}} &   \hspace*{1.2cm} \subcaptionbox{$\widehat c^U_1$ (red) and $\widehat c^E_1$ (black)}[0.4\linewidth]{\includegraphics[width=0.9\linewidth]{FigSPc1}}  \\
%	\subcaptionbox{$\widehat d^U_1$ (red) and $\widehat d^E_1$ (black)}[0.4\linewidth]{\includegraphics[width=0.9\linewidth]{FigSPd1}} &	\hspace*{1.2cm} \subcaptionbox{$\widehat c^U_1+\widehat d^U_1$ (red) and $\widehat c^E_1+\widehat d^E_1$ (black)}[0.4\linewidth]{\includegraphics[width=0.9\linewidth]{FigSPc1d1}}  \end{tabular}
%	\end{center}	\caption{Estimators of  $c_0$, $c_1$, $d_1$ and $c_1+d_1$ for the log-returns of daily closing values of S\&P500 index from July 1999 to July 2019\label{FigSP33}} \end{figure}
%%%%%%
\begin{figure}
 	\begin{center}
  \begin{tabular}{ccc}
     \subcaptionbox{\textcolor{red}{$\widehat c^U_0$} and $\widehat c^E_0$ }[0.33\linewidth]{\includegraphics[width=0.42\linewidth]{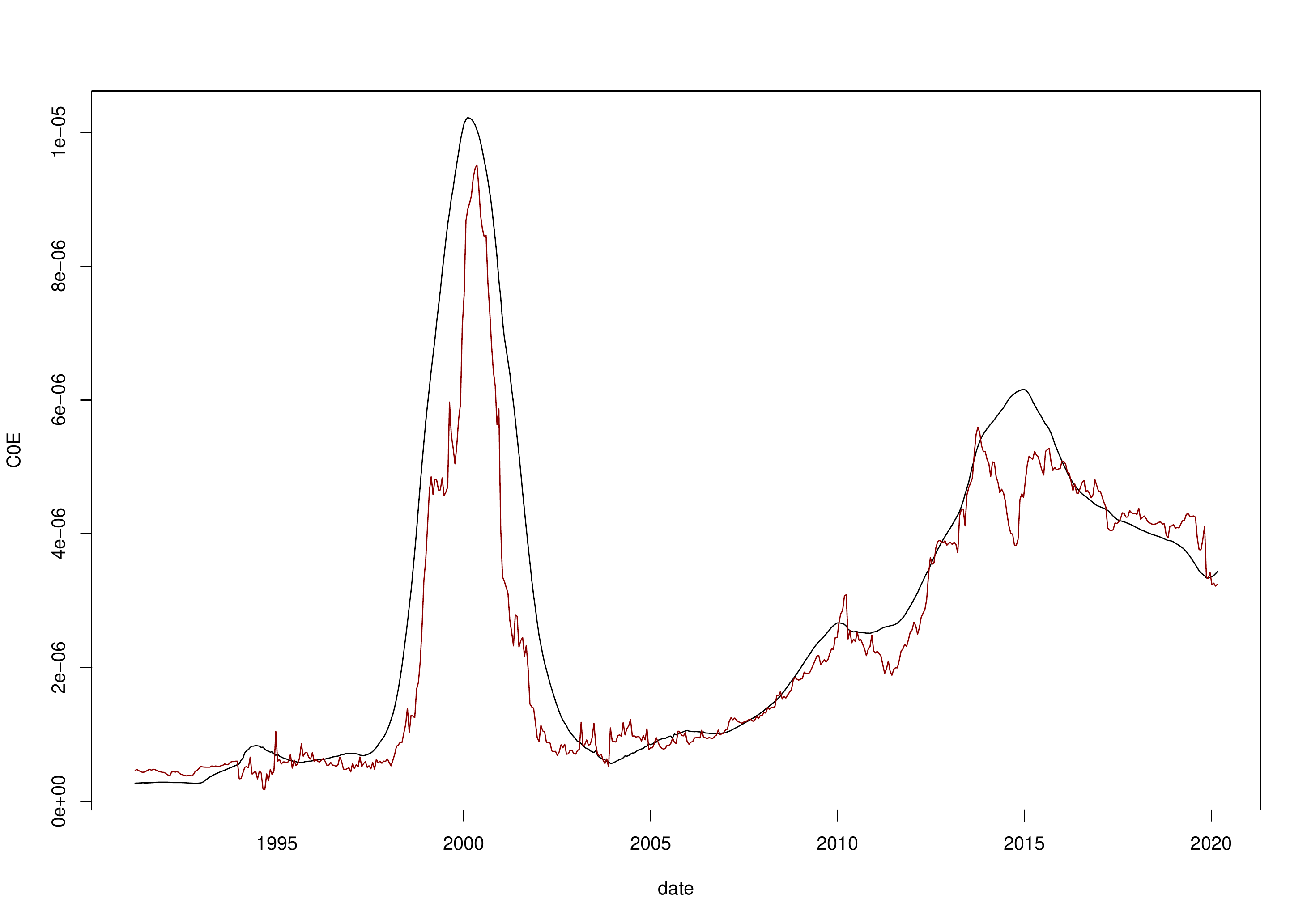}} &
   \hspace*{1.5cm}
    \subcaptionbox{\textcolor{red}{$\widehat c^U_1$} and $\widehat c^E_1$}[0.33\linewidth]{\includegraphics[width=0.42\linewidth]{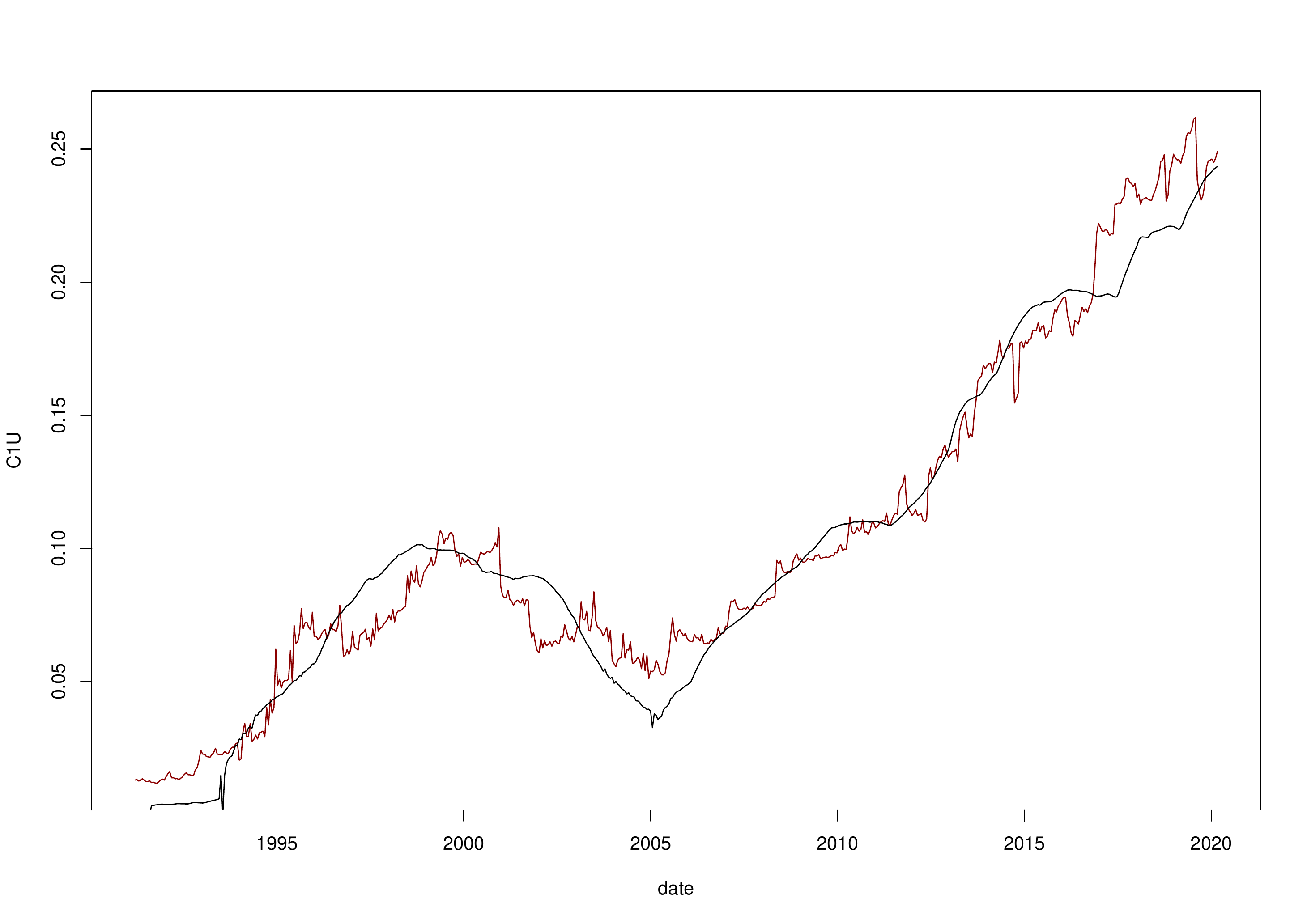}}  \\
	\subcaptionbox{\textcolor{red}{$\widehat d^U_1$} and $\widehat d^E_1$}[0.33\linewidth]{\includegraphics[width=0.42\linewidth]{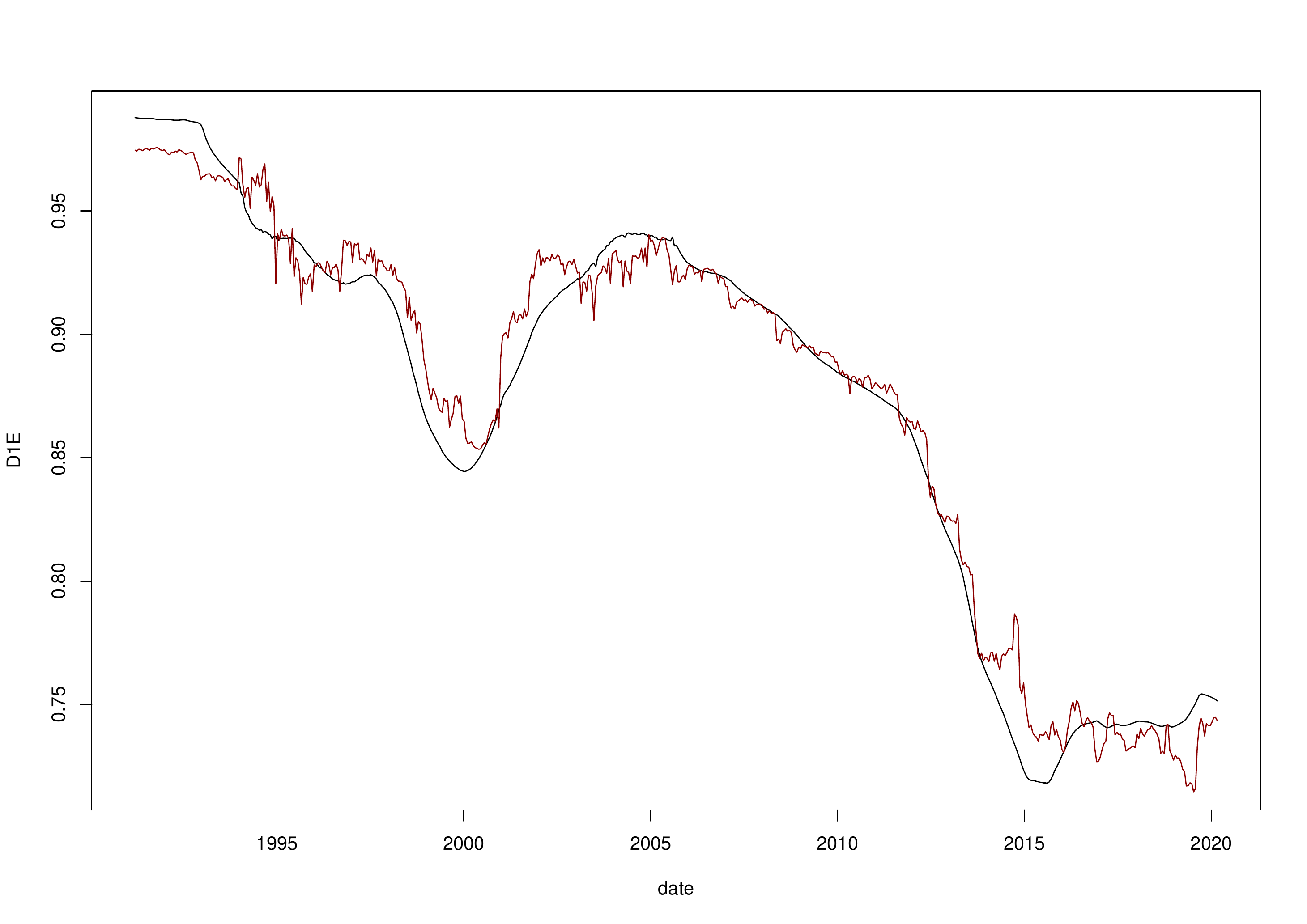}} &
	\hspace*{1.5cm}
	 \subcaptionbox{\textcolor{red}{$\widehat c^U_1+\widehat d^U_1$}
	 and $\widehat c^E_1+\widehat d^E_1$
	  }[0.3\linewidth]{\includegraphics[width=0.42\linewidth]{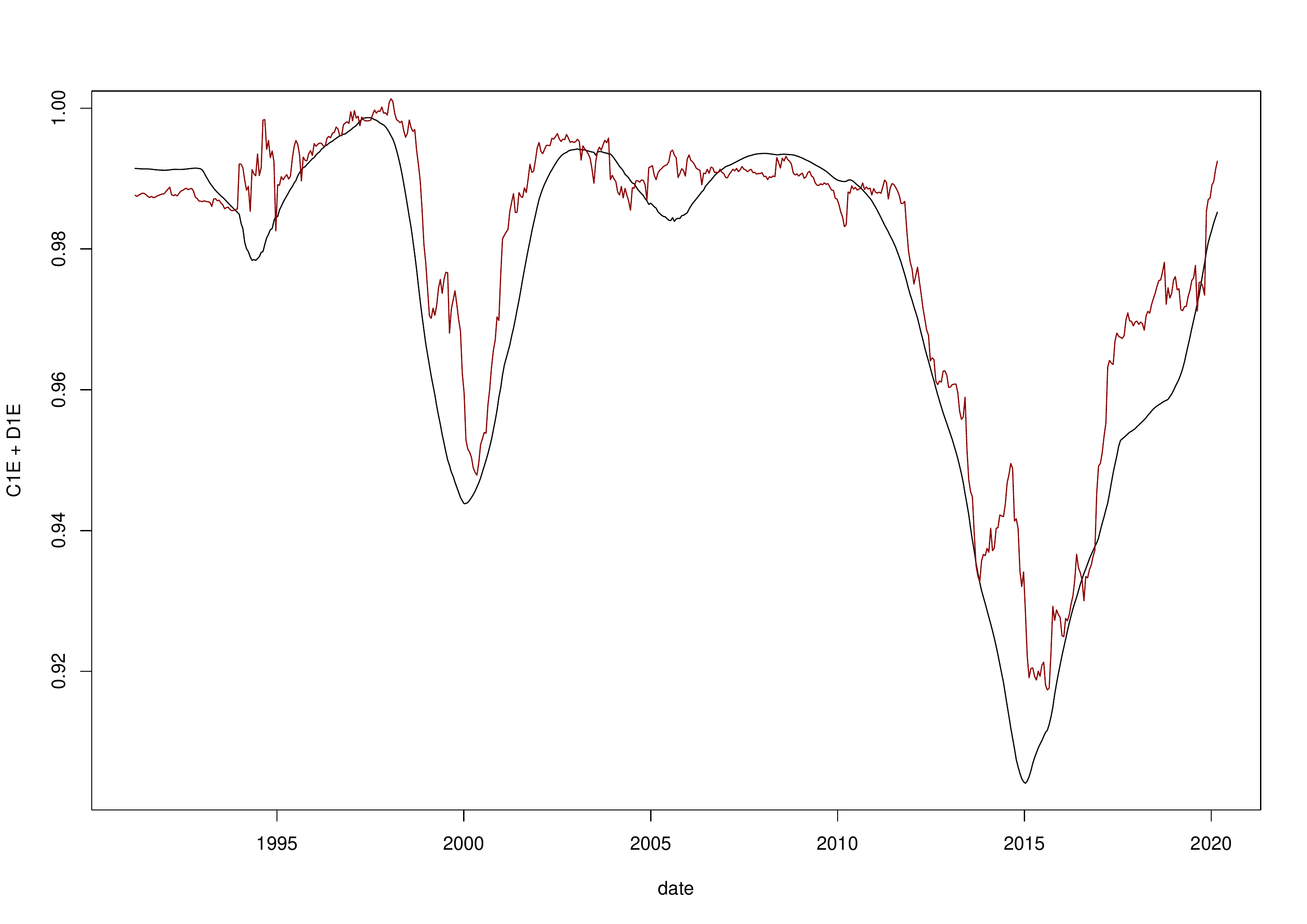}}
  \end{tabular}
	\end{center}
	\caption{Estimators of  $c_0$, $c_1$, $d_1$ and $c_1+d_1$ for the log-returns of daily closing values of S\&P500 index from October 1990 and October 2020\label{FigSP33}} 
\end{figure}

%%%%%
We draw the evolution of $c_1+d_1$ in order to get a visual indicator of the variability of the S\&P500 index. The larger $c_1+d_1$ the worst  the moment properties of the tvGARCH$(1,1)$. The variability may be seen as an indicator of instability of the financial markets and thus of the crisis. Indeed the maximum of the $c_1+d_1$ is achieved at the chore of the September 2008. More surprisingly, there is also a peak of variability as early as 2003. There the financial markets were renewing at their climate between 1998 and 2008 crisis. In order to distinguish between the two peaks of variability, one should observe that the curves of the coefficients $c_1$ and $d_1$ separately. Then we observe that 2003 corresponds to a higher value for the coefficient $d_1$ and 2008 to a higher value for the coefficient $c_1$. We note that $d_1$ is the coefficient of persistence in the volatility whereas $c_1$ transfers external shocks in the volatility. Finally, also surprisingly, the COVID crisis does not seem to change the last five years evolution of $c_1$ and $d_1$.

\section{Moments and coupling properties of non stationary infinite memory processes} \label{proofs1}
\subsection{Proof of the moments properties in Lemma \ref{momm}}
We start this section with the proof of Lemma \ref{momm} which follows similar arguments than in \cite{DW} and that we give here for completeness.
\begin{proof}[Proof of Lemma \ref{momm}]
Under the assumption {\bf(A$_0$($\Theta$))}, for any $n\in \N^*$ and $0\leq t \leq n$ we have
$$
\|X_t^{(n)}-F_{{\boldsymbol{\theta}}_t^{(n)}}(0; \xi_t)\|_p\le \sum_{s=1}^\infty b^{(0)}_s\|X_{t-s}^{(n)}\|_p\,.
$$
Thus from the triangle inequality we obtain
$$
\|X_t^{(n)}\|_p\le \sum_{s=1}^\infty b^{(0)}_s\|X_{t-s}^{(n)}\|_p+\sup_{{\boldsymbol{\theta}} \in \Theta}\|F_{\boldsymbol{\theta}}(0,\xi_0)\|_p.
$$
As a consequence  we get
\begin{equation*}\label{reccontr}
\|X_t^{(n)}\|_p\le \sum_{s=1}^\infty b^{(0)}_s\max_{j\leq  t-1}\|X_{j}^{(n)}\|_p+\sup_{\boldsymbol{\theta}}\|F_{\boldsymbol{\theta}}(0,\xi_0)\|_p\le  B_0(\Theta) \, \max_{j< t}\|X_{j}^{(n)}\|_p+C_0(\Theta).
\end{equation*}
With $M_t=\max_{j\le  t}\|X_{j}^{(n)}\|_p$, a recursion entails that $M_t\le B_0(\Theta) \, M_{t-1}+C_0(\Theta)$ where $0\leq B_0(\Theta)<1$, which implies with $M_0=0$ that for any $0\leq t \leq n$,  $$M_t\leq C_0(\Theta) \sum_{k=0}^tb^{(0)}_k \leq \frac {C_0(\Theta)} {1-B_0(\Theta)} <\infty$$ and this achieves the proof. We refer to \cite{DW} for more details.
\end{proof}
\subsection{Weak dependence properties of the stationary version}
Any stationary infinite memory process \eqref{chaineinf} admits  fruitful coupling properties. In this section we quantify them thanks to the  $\tau$-weak dependence properties introduced in \cite{dp}. The reader is deferred to the  lecture notes \cite{DDLLLP}  for complements and details on coupling, based on the Wasserstein distance between probabilities.
Conditional coupling for stationary time series is defined as follows.
\begin{Def}[\cite{dp}]\label{defwd}
Let $(\Omega,\mathcal{C}, \P)$ be a probability space,
$\mathcal{M}$ a $\sigma$-subalgebra of $\mathcal{C}$ and $Z$ a
random variable with values in $E$. Denote $\Lambda_1(E)$  the space of $1$-Lipschitz functions from $E$ to
$\R$. Assume that $\|Z\|_p<\infty$ and define the coupling coefficient $\tau^{(p)}$ as
\begin{equation*}
\tau^{(p)}(\mathcal{M},Z)=
\Big \|\sup_{f\in\Lambda_1(E)}\Big\{\Big|\int f(x)\P_{Z|\mathcal{M}}(dx)-\int
f(x)\P_{Z}(dx) \Big |\Big \}\Big \|_p.
\end{equation*}
\end{Def}
The dependence between the past of the sequence $(Z_t)_{t\in\Z}$ and its future $k$-tuples may be assessed by using the coupling coefficient $\tau^{(p)}$: Consider the norm
$\|x-y\|=\|x_1-y_1\|+\cdots+\|x_k-y_k\|$ on $E^k$,
set $\mathcal{M}_p=\sigma(Z_t,t\le p)$ and define
\begin{eqnarray*}
\tau_Z^{(p)}(r)&=&\sup_{k>0} \Big \{ \max_{1\le l\le k}
\frac1l\sup\Big\{\tau^{(p)}(\mathcal{M}_p,(Z_{j_1},\ldots,Z_{j_l}))\mbox{ with }p+r\le
j_1<\cdots <j_l\Big\} \Big \}.
\end{eqnarray*}
Finally, the time series $(Z_t)_{t\in\Z}$ is said to be {\it $\tau_Z^{(p)}$-weakly dependent} in case the sequence of coefficients $\tau_Z^{(p)}(r)$ tend to $0$ as $r$ tends to infinity.\\

The $\tau$-dependence coefficients of the stationary process $(\widetilde X_t(u))_{t\in \Z}$ are bounded above by using the following coupling scheme.  Hence, if  $(\xi^\circ_t)_{t\in \Z}$ is an independent replication of $(\xi_t)_{t\in \Z}$, define $(\widetilde X^\circ_{t}(u))_{t\in \Z}$ such as:
\begin{equation}\label{couple}
\widetilde X^\circ_{t}(u)=\left \{ \begin{array}{ll}
F_{\boldsymbol{\theta}^*(u)}\big ((\widetilde X^\circ_{t-k}(u))_{k\geq 1}, \xi^\circ_t\big ),& \mbox{for $t\leq 0$}; \\
F_{\boldsymbol{\theta}^*(u)}\big ((\widetilde X^\circ_{t-k}(u))_{k\geq 1}, \xi_t\big ),& \mbox{for $t> 0$}.
\end{array} \right . 
\end{equation}
Then for $s\geq 0$, we have the upper-bound
\begin{equation}\label{taucouple}
\tau_{\widetilde X(u)}^{(p)} (s) \leq \big \| \widetilde X_{s}(u)- \widetilde X^\circ_{s}(u)\big \|_p\,.
\end{equation}
In the following, we mimic the proof of Theorem 3.1 of \cite{DW} in order to get an $\L^p$-estimate uniform over $u\in [0,1]$ of the approximation of $\widetilde X_{s}(u)$ by $\widetilde X^\circ_{s}(u)$. We start by estimating the moments $ \|\sup_{u\in[0,1]} |\widetilde X_{s}(u)|\|_p$ in the following Lemma.
\begin{lem}\label{mommbis}
Let $\Theta \subset \R^d$ be such that {\bf(A$_0$($\Theta$))} holds with $B_0(\Theta)<1$ and assume that (LS$(\rho)$) also holds. Then the stationary version  $(\widetilde X_{t}(u))_{t\in \Z}$ solution of \eqref{eq:statu} satisfies
$$
\Big\|\sup_{u\in [0,1]}|\widetilde X_{t}(u)|\Big\|_p \leq \frac {C_0(\Theta)} {1-B_0(\Theta)}\,,\qquad t\in\Z\,.
$$
\end{lem}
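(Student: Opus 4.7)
The plan is to mimic the fixed-point argument of Lemma \ref{momm}, but applied to the single scalar $M:=\|\sup_{u\in[0,1]}|\widetilde X_t(u)|\|_p$. Since all $\widetilde X_t(u)$ are built from the same i.i.d.\ sequence $(\xi_t)_{t\in\Z}$, the joint process $(\widetilde X_t(u))_{t,u}$ is strictly stationary in $t$, so $M$ does not depend on $t$, and the lemma reduces to showing $M\leq C_0(\Theta)/(1-B_0(\Theta))$.

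From the recursion \eqref{eq:statu} and the triangle inequality, using $\boldsymbol{\theta}^*(u)\in\Theta$ for every $u$,
$$
\sup_u |\widetilde X_0(u)| \;\leq\; \sup_{\boldsymbol{\theta}\in\Theta} |F_{\boldsymbol{\theta}}(0,\xi_0)| \;+\; \sup_u \Bigl| F_{\boldsymbol{\theta}^*(u)}\bigl((\widetilde X_{-k}(u))_{k\geq 1},\xi_0\bigr) - F_{\boldsymbol{\theta}^*(u)}(0,\xi_0) \Bigr|.
$$
Taking $L^p$ norms, the first summand contributes exactly $C_0(\Theta)$ by the first bullet of (A$_0(\Theta)$). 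The crux is then to bound the $L^p$ norm of the second summand by $B_0(\Theta)\,M$, which would close the fixed-point inequality $M\leq C_0(\Theta)+B_0(\Theta)M$ and deliver the announced estimate since $B_0(\Theta)<1$.

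For that second summand I would exploit that $(\widetilde X_{-k}(u))_{u,k\geq 1}$ is $\sigma(\xi_s:s\leq -1)$-measurable, hence independent of $\xi_0$, and that for each $k\geq 1$ one has the pointwise domination $|\widetilde X_{-k}(u)|\leq Z_k:=\sup_v |\widetilde X_{-k}(v)|$, with $\|Z_k\|_p=M$ by shift-invariance. The sup over $u$ on the left is absorbed into $\sup_{\boldsymbol{\theta}\in\Theta}$; the Lipschitz inequality of (A$_0(\Theta)$) is then applied conditionally on $\sigma(\xi_s:s\leq -1)$, and Minkowski's inequality moves the conditional $L^p$ norm inside the series to yield $\sum_{k\geq 1} b_k^{(0)}(\Theta)\,\|Z_k\|_p = B_0(\Theta)\,M$.

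The hard part is precisely this last step: as written, (A$_0(\Theta)$) gives an $L^p$ Lipschitz estimate at a \emph{fixed} argument, while here the argument $(\widetilde X_{-k}(u))_k$ moves with $u$ and is only coordinate-wise dominated by $(Z_k)_k$. The cleanest way to bridge the gap is to read (A$_0$) in its implicit a.s.\ form, namely $\sup_{\boldsymbol{\theta}}|F_{\boldsymbol{\theta}}(x,\xi_0)-F_{\boldsymbol{\theta}}(y,\xi_0)|\leq \sum_j b_j(\xi_0)|x_j-y_j|$ a.s.\ for a random coefficient $b_j(\xi_0)$ with $\|b_j(\xi_0)\|_p=b_j^{(0)}(\Theta)$, which is the form that actually holds in every concrete model of Section \ref{Examples}. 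Granting this, the sup over $u$ passes through $b_j(\xi_0)$ to hit each $|\widetilde X_{-j}(u)|$ separately, producing the a.s.\ bound $\sum_j b_j(\xi_0)\,Z_j$; the independence of $b_j(\xi_0)$ from $Z_j$ then factorizes the $L^p$ norms and gives the claimed $B_0(\Theta)\,M$ bound, completing the fixed-point argument.
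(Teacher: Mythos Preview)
Your approach and the paper's are close in spirit---both are fixed-point arguments---but the paper works by Picard iteration in the Banach space $\L^p(\mathcal{C}([0,1],\R))$ rather than by a scalar inequality on $M$. That difference matters, and your argument as written has two genuine gaps.

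First, the inequality $M\le C_0(\Theta)+B_0(\Theta)\,M$ only yields the conclusion if you already know $M<\infty$; otherwise it is vacuous. You never establish this a priori. The paper sidesteps the issue by constructing the solution as a limit of Picard iterates $H^{(0)}\equiv 0$, $H^{(m+1)}=\Phi(H^{(m)})$ inside $\L^p(\mathcal{C}([0,1],\R))$: each iterate is automatically finite in norm, and the contraction gives both existence of the fixed point and the norm bound simultaneously. Your stationarity shortcut (``$M$ does not depend on $t$'') presupposes that the object $\sup_u|\widetilde X_t(u)|$ already lives in $\L^p$, which is exactly what is to be proved. Related to this, you also need $u\mapsto\widetilde X_t(u)$ to be continuous (or at least separable) for $\sup_{u\in[0,1]}|\widetilde X_t(u)|$ to be measurable; the paper gets this for free by working in the space of continuous random functions, but you do not address it.

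Second, your ``implicit a.s.\ form'' of {\bf(A$_0(\Theta)$)}, namely $\sup_{\boldsymbol\theta}|F_{\boldsymbol\theta}(x,\xi_0)-F_{\boldsymbol\theta}(y,\xi_0)|\le\sum_j b_j(\xi_0)|x_j-y_j|$ with $\|b_j(\xi_0)\|_p=b_j^{(0)}(\Theta)$, is strictly stronger than the stated hypothesis, which only controls the $\L^p$ norm at deterministic $x,y$. You are right that the concrete models of Section~\ref{Examples} satisfy the a.s.\ version, but it is not part of the lemma's assumptions and you cannot invoke it without adding it. The paper's iteration, by contrast, can be run so that at each step one conditions on $\sigma(\xi_s:s\le -1)$ and applies the stated $\L^p$ Lipschitz bound with the (then deterministic) iterate as argument---this is the content of the reference to \cite{DT}.
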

\begin{proof}[Proof of Lemma \ref{mommbis}]
We adapt the fixed point approach of \cite{DT} to our setting. We refer to \cite{DT} for details. We consider $\L^p(\mathcal C([0,1],\R))$ the Banach space of random continuous functions $H:\, [0,1]\to \R$ that admits finite $p$ moments equipped with the norm $H\mapsto \|\sup_{u\in[0,1]}|H_u|\|_p$. The underlying probability space is the one of the probability distribution of the iid sequence $(\xi_t)_{t\in\Z}$, i.e. $H_u$ is a measurable function of $(\xi_t)_{t\in\Z}$ such that $\E[\sup_{u\in[0,1]}|H_u((\xi_t)_{t\in\Z})|^p]<\infty$. We denote $L$ the lag operator on sequences $(x_t)_{t\in\Z}$ of $\R^\Z$ such that $L((x_t)_{t\in\Z})=(x_{t-1})_{t\in\Z}$. We denote $\Phi$ the function from $\L^p(\mathcal C([0,1],\R))$ such that
$$
\Phi(H)(u)=F_{\boldsymbol{\theta}^*(u)}((H_u\circ L^j)_{j\ge 0},\pi_0)\,,\qquad u\in[0,1]\,,
$$
where $\pi_0$ is the projection $\pi_0 \big ((x_t)_{t\in\Z}\big )=x_0$. That $u\mapsto
\Phi(H)(u)$ is continuous follows from the continuity of $\boldsymbol{\theta}\mapsto F_{\boldsymbol{\theta}}$ and $u\mapsto \boldsymbol{\theta}^*(u)$ under {\bf(A$_0$($\Theta$))} and (LS$(\rho)$). That $\sup_{u\in[0,1]}|\Phi(H)|(u)$ admits finite moments of order $p$ follows from similar arguments than in Lemma 1 of \cite{DT} under {\bf(A$_0$($\Theta$))} that holds uniformly in $u\in [0,1]$. One can apply the Picard fixed point theorem to $\Phi$ which is a contraction under $B_0(\Theta)<1$. We derive the existence of $\widetilde X_{t}(u)$ in the Banach space $\L^p(\mathcal C([0,1],\R))$ and the desired estimate on its norm.
\end{proof}
Notice that the same uniform estimate also holds on the coupling version $\widetilde X^\circ_{s}(u)$ so that one can consider the approximation, for any $s\in \N^*$ and any $r\in \N^*$. Now let us  set the uniform coupling $\tau$-coefficients as $\tau_{\widetilde X}^{(p)} (s)\equiv \big \| \sup_{u\in[0,1]}|\widetilde X_{s}(u)-  \widetilde X^\circ_{s}(u)| \big\|_p$, then,
\begin{align}
\tau_{\widetilde X}^{(p)} (s)&\leq \big \|  \sup_{u\in[0,1]}\big |F_{\boldsymbol{\theta}^*(u)}\big ((\widetilde X_{s-k}(u))_{k\geq 1}, \xi_t\big )- F_{\boldsymbol{\theta}^*(u)}\big ((\widetilde X^\circ_{s-k}(u))_{k\geq 1}, \xi^\circ_t\big )\big |\big  \|_p\nonumber\\
&\leq \big \|  \sup_{u\in[0,1]}\big  |F_{\boldsymbol{\theta}^*(u)}\big ((\widetilde X_{s-k}(u))_{k\geq 1}, \xi_s\big )- F_{\boldsymbol{\theta}^*(u)}\big ((\widetilde X^\circ_{s-k}(u))_{k\geq 1}, \xi^\circ_s\big )\big  |\big \|_p\nonumber\\
&\leq \sum_{k=1}^\infty b^{(0)}_t(\Theta) \big \|  \sup_{u\in[0,1]}\big  |\widetilde X_{s-k}(u)- \widetilde X^\circ_{s-k}(u)\big  |\big \|_p\nonumber\\
&\leq B_0(\Theta)\max_{s-r\le t\le s-1} \big \| \sup_{u\in[0,1]}\big  | \widetilde X_{t}(u)- \widetilde X^\circ_{t}(u)\big  |\big \|_p
\\ & + 2  \sum_{k=r+1}^\infty b^{(0)}_t(\Theta) \big \| \sup_{u\in[0,1]}| \widetilde X_0(u)|\big  \|_p\nonumber.
\end{align}
By a recursive argument we easily derive that $\max_{t\ge 0} \big \|\sup_{u\in[0,1]}\big | \widetilde X_{t}(u)- \widetilde X^\circ_{t}(u)\big |\big \|_p<\infty$. Then we extend  the bound so that 
\begin{multline}\label{recurcoupl}
\max_{t\ge s} \big \|\sup_{u\in[0,1]}| \widetilde X_{t}(u)- \widetilde X^\circ_{t}(u)|\big \|_p\\ \le
%\nonumber
 B_0(\Theta)\max_{ t \ge s-r } \big \|\sup_{u\in[0,1]}| \widetilde X_{t}(u)- \widetilde X^\circ_{t}(u)|\big \|_p + 2  \sum_{k=r+1}^\infty b^{(0)}_t(\Theta)\big  \|\sup_{u\in[0,1]}| \widetilde X_0(u)| \big \|_p\,.
\end{multline}
Given any $r\in \N^*$, a recursive argument on the sequence $\big (\max_{ t \ge k r } \big \|\sup_{u\in[0,1]}| \widetilde X_{t}(u)- \widetilde X^\circ_{t}(u)|\big \|_p \big )_{k\ge 0}$ based on \eqref{recurcoupl} together with the coupling properties of the $\tau$-dependent coefficient \eqref{taucouple} yields the following Lemma that is analog to Theorem 3.1 of \cite{DW}. We refer to \cite{DW} for more details.
\begin{lem}\label{momm2} 
Let $\Theta \subset \R^d$ such that {\bf(A$_0$($\Theta$))} holds with $B_0(\Theta)<1$ and assume that (LS$(\rho)$) also hold. Then the stationary version  $(\widetilde X_{t}(u))_{t\in \Z}$ is uniformly approximated by its coupling version $(\widetilde X^\circ_{t}(u))$ so that  we have
\begin{equation}\label{tauX}
\tau_{\widetilde X}^{(p)} (s)
   \le C \,\lambda_s
    \qquad\mbox{where}\quad \lambda_s=  \inf_{1\leq r\le s} \Big(B_0(\Theta)^{s/r}+\sum_{t= r+1}^\infty b^{(0)}_t(\Theta) \Big)\quad \mbox{for $s \ge 1$}.
\end{equation}
\end{lem}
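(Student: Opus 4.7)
The plan is to iterate the recursive bound \eqref{recurcoupl} on a geometric scale in $r$ blocks of size $r$, exactly as in the fixed-point argument of \cite{DW}. Fix $s \in \N^*$ and an arbitrary $r \in \{1,\ldots,s\}$, and introduce the sequence
$$
M_k \;=\; \max_{t \geq kr}\bigl\| \sup_{u\in[0,1]}\bigl|\widetilde X_t(u) - \widetilde X_t^\circ(u)\bigr| \bigr\|_p, \qquad k \geq 0.
$$
First I would check that $M_0 < \infty$: by the triangle inequality $M_0 \leq 2\,\bigl\|\sup_{u\in[0,1]}|\widetilde X_0(u)|\bigr\|_p$, which is controlled by Lemma \ref{mommbis} uniformly in $u$.

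Next, I would apply the recursive inequality \eqref{recurcoupl} at shift $kr$ (with the same fixed block length $r$), which yields
$$
M_k \;\leq\; B_0(\Theta)\, M_{k-1} \;+\; 2 D_r, \qquad D_r \;=\; \Bigl(\sum_{t=r+1}^\infty b_t^{(0)}(\Theta)\Bigr)\, \bigl\|\sup_{u\in[0,1]}|\widetilde X_0(u)|\bigr\|_p.
$$
Since $0 \leq B_0(\Theta)<1$, a straightforward induction (summing a geometric series) gives
$$
M_k \;\leq\; B_0(\Theta)^k M_0 \;+\; \frac{2 D_r}{1-B_0(\Theta)}.
$$
Choosing $k = \lfloor s/r \rfloor$ we have $kr \leq s$, hence $\bigl\|\sup_u |\widetilde X_s(u) - \widetilde X_s^\circ(u)|\bigr\|_p \leq M_k$. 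Combining with the coupling bound \eqref{taucouple} (which works uniformly in $u$ via the supremum), one obtains
$$
\tau_{\widetilde X}^{(p)}(s) \;\leq\; B_0(\Theta)^{\lfloor s/r\rfloor} M_0 + \frac{2}{1-B_0(\Theta)}\Bigl(\sum_{t=r+1}^\infty b_t^{(0)}(\Theta)\Bigr)\bigl\|\sup_u|\widetilde X_0(u)|\bigr\|_p.
$$
Using $B_0(\Theta)^{\lfloor s/r\rfloor} \leq B_0(\Theta)^{s/r - 1} = B_0(\Theta)^{-1} B_0(\Theta)^{s/r}$ and absorbing all the $\Theta$-dependent constants into a single $C = C(\Theta,p)$, the bound $\tau_{\widetilde X}^{(p)}(s) \leq C(B_0(\Theta)^{s/r} + \sum_{t=r+1}^\infty b_t^{(0)}(\Theta))$ holds for every admissible $r$, and taking the infimum over $r \in \{1,\ldots,s\}$ yields the claim.

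The only delicate point is to ensure the recursion \eqref{recurcoupl} can be applied with a common block length $r$ at each stage, and that the initial moment $M_0$ is finite; both issues are handled, respectively, by the uniformity in $u$ of assumption \textbf{(A$_0$($\Theta$))} that was already used to derive \eqref{recurcoupl}, and by the uniform $\L^p$-moment estimate of Lemma \ref{mommbis}. The rest is a routine geometric iteration mirroring the stationary proof of Theorem 3.1 in \cite{DW}.
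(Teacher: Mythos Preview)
Your proof is correct and follows exactly the approach the paper sketches: iterating the recursion \eqref{recurcoupl} on the sequence $M_k=\max_{t\ge kr}\|\sup_u|\widetilde X_t(u)-\widetilde X_t^\circ(u)|\|_p$, summing the resulting geometric series, evaluating at $k=\lfloor s/r\rfloor$, and then optimizing over $r$. The paper itself gives no more detail than this outline, deferring to Theorem~3.1 of \cite{DW}; you have written out precisely that argument. One cosmetic remark: your appeal to \eqref{taucouple} is unnecessary here, since in the paper the \emph{uniform} coefficient $\tau_{\widetilde X}^{(p)}(s)$ is \emph{defined} as $\|\sup_u|\widetilde X_s(u)-\widetilde X_s^\circ(u)|\|_p$, so the inequality $\tau_{\widetilde X}^{(p)}(s)\le M_{\lfloor s/r\rfloor}$ is immediate from $s\ge \lfloor s/r\rfloor r$.
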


Remark that the bound on the $\tau$-coefficients does not depend on $u\in (0,1)$. Notice also that:
\begin{equation}\label{taudet}
%\hspace{-3cm}
\begin{array}{cl} 
\mbox{$\bullet$ if} & \mbox{$b^{(0)}_t(\Theta) =O\big (t^{-\kappa}\big )$ with $\kappa>1$, then $\tau_{\widetilde X}^{(p)} (s)\leq \lambda_s =O\big (s^{1-\kappa} \log s\big )$;} \\
\mbox{$\bullet$ if} &\mbox{$b^{(0)}_t(\Theta) =O\big (r^{t}\big )$ with $0<r<1$, then $\tau_{\widetilde X}^{(p)} (s) \leq \lambda_s=O\big (e^{\sqrt{ s \, \log (r) \,\log(B_0(\Theta))}} \big )$.} 
\end{array}
\end{equation}

The SLLN in \cite{DW} is implied by the summability of the $\tau$-dependence coefficients. We will use the following Lemma on the $\tau$-dependence coefficients $\tau_{\widetilde X}^{(p)} (s)$, $s\ge 1$.
\begin{lem}\label{lem:lambda}
If $\sum_{t=2}^\infty t\log(t) b_t^{(0)}(\Theta)<\infty$ then $\sum_{s=1}^\infty \lambda_s<\infty$.
\end{lem}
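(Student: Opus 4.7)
The plan is to prove summability by choosing a near-optimal $r=r(s)$ in the infimum defining $\lambda_s$ and then verifying that both contributions are summable by a Fubini-type swap.

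Set $B:=B_0(\Theta)\in[0,1)$ and $b_t:=b_t^{(0)}(\Theta)$, so $\log B <0$. Fix any constant $c\in(0,|\log B|)$ (for instance $c=|\log B|/2$). For $s$ large enough that $c/\log s <1$, define $r(s):=\lceil c\,s/\log s\rceil$, which lies in $[1,s]$; for the finitely many small $s$ for which this fails, just take $r(s)=1$, contributing a finite quantity. Then
\[
\lambda_s \;\le\; B^{\,s/r(s)}+\sum_{t=r(s)+1}^\infty b_t,
\]
so it suffices to show both series $\sum_s B^{s/r(s)}$ and $\sum_s\sum_{t>r(s)}b_t$ are finite.

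For the first series, since $r(s)\le c\,s/\log s+1$, one has $s/r(s)\ge \log s/(c+\log s/s)$, hence for $s$ large,
\[
B^{s/r(s)} \;\le\; B^{(\log s)/(2c)} \;=\; s^{-|\log B|/(2c)}.
\]
By our choice $c<|\log B|$, the exponent $|\log B|/(2c)>1/2$; picking $c$ smaller if needed (e.g.\ $c<|\log B|/2$) makes the exponent strictly greater than $1$, so $\sum_s B^{s/r(s)}<\infty$.

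For the second series, interchange summation:
\[
\sum_{s\ge 1}\sum_{t>r(s)} b_t \;=\;\sum_{t\ge 2} b_t \cdot N(t),\qquad N(t):=\#\{s\ge 1:\,r(s)<t\}.
\]
Now $r(s)<t$ implies $c\,s/\log s<t$, and since $s\mapsto s/\log s$ is increasing for $s\ge e$, the set $\{s:r(s)<t\}$ is an initial interval whose largest element $s^*$ satisfies $c\,s^*/\log s^*\le t$, i.e.\ $s^*\le (t/c)\log s^*$. A standard bootstrap argument gives $s^*\le C\,t\log t$ for some constant $C=C(c)$ and all $t\ge 2$. Hence $N(t)\le C\,t\log t$, and
\[
\sum_{s\ge 1}\sum_{t>r(s)} b_t \;\le\; C\sum_{t\ge 2} t\log(t)\,b_t \;<\;\infty
\]
by the hypothesis. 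Combining the two bounds yields $\sum_{s\ge 1}\lambda_s<\infty$.

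The only mildly delicate step is the Fubini count $N(t)\le C\,t\log t$; everything else is bookkeeping around the choice of the constant $c$ (one must pick $c$ small enough that $s^{-|\log B|/c}$ is summable, but any such choice automatically makes the bootstrap estimate go through). No obstacle of substance is expected.
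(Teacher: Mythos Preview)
Your proof is correct and follows essentially the same approach as the paper: choose $r(s)$ proportional to $s/\log s$, bound the geometric term $B^{s/r(s)}$ by a summable power of $s$, and handle the tail by Fubini together with the estimate $\#\{s:r(s)<t\}=O(t\log t)$. The only cosmetic issue is that your first suggested example $c=|\log B|/2$ gives exponent exactly $1$ (not summable), but you immediately correct this by taking $c<|\log B|/2$, so there is no gap.
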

\begin{proof}
Choosing $r= \lfloor s/C \log(s)\rfloor$ for $s\ge 2$ and $C>0$  we have
$$
\lambda_s\le s^{-C\log (1/B_0(\Theta))} + \sum_{t= \lfloor s/C \log(s)\rfloor }^\infty b_t^{(0)}(\Theta).
$$
For $C>0$ sufficiently large and since $B_0(\Theta)<1$, we get $\sum_{s=1}^\infty s^{-C\log  (1/B_0(\Theta))}<\infty$. Moreover, for $s>e$ quote that if $t=s/C \log (s)$ we have  $s>Ct\log (t)$ and inverting of sums yields:
$$
\sum_{s=3}^\infty\sum_{t= \lfloor s/C \log(s)\rfloor }^\infty b_t^{(0)}(\Theta)\le C \sum_{t=1}^\infty t\log(t) b_t^{(0)}(\Theta)<\infty\,,
$$
and the desired result follows.
\end{proof}

\subsection{Coupling of local-stationary processes}

The $\tau$-dependence properties of the stationary process comes from the coupling scheme where we uniformly  approximate the stationary version $(\widetilde X_t(u))$ with a copy $(\widetilde X^*_t(u))$ that is independent of the past $(\widetilde X_t(u))_{t\le 0}$. The weak dependence notion will be used in order to get the uniform SLLN over functional of $(\widetilde X_t(u))$. The goal of this section is to extend such coupling approach  to non-stationary processes $(X_t^{(n)})$ in the sense that one approximates $(X_t^{(n)})$ in $\L^p$ with a certain coupled version. A useful remark is that we do not use the stationarity of the coupling process $(\widetilde X^*_t(u))$ for obtaining the recursion \eqref{recurcoupl}. Thus a similar coupling scheme can be extended to the local stationary process $( X_t^{(n)})$ but  locally only. In order to localize, we define $u\in [\varepsilon ,1-\varepsilon]$, $\varepsilon>0$ and   $n$ large enough the quantities
\begin{equation}\label{c}
i_n(u):=[n(u-ch_n)]\ge 1 \quad  \mbox {and}\quad j_n(u):=[n(u+ch_n)]\le n,
\end{equation}
where we recall that the compact support of the kernel $K$ is included in $[-c,c]$. 
\begin{Def}[Coupling process]
In the time-window $\{i_n(u),i_n(u)+1,\ldots,j_n(u)\}$ define the process $(X_{t}^*(u))_{i_n\leq t\le j_n}$ by
\begin{equation}
\label{eq:tangent}
 X_{t}^*(u)=\left\{\begin{array}{lll}
  X_t^{(n)},& t< i_n(u),\\
 F_{\boldsymbol{\theta}^*(u)}\big ((X_{t-k}^*(u))_{k\geq 1},\xi_t\big),&i_n(u) \leq t\le j_n(u). \end{array}
 \right.
 \end{equation}
 \end{Def}
Notice that for the ease of notation and as $n$ is fixed sufficiently large in this section, we suppress the dependence in $n$ on the coupling process. We first have to prove the existence of the process $( X_{t}^*(u))$. 
\begin{lem}\label{momm3} 
Let $\Theta \subset \R^d$ be such that {\bf(A$_0$($\Theta$))} holds with $B_0(\Theta)<1$ and assume that (LS$(\rho)$) also holds. Then, for any $u\in (0,1)$, there exists a.s. a unique coupling process $(X^*_{t}(u))_{t\in \Z}$ satisfying \eqref{eq:tangent} and there exists a positive constant $C^*>0$ such that 
$$
\big \| \! \! \!\sup_{u\in [\varepsilon ,1-\varepsilon]}\, \big |X^*_{ i_n(u)+s}(u) \big |\, \big \|_p\le  C^*\,n^{1/p}  \, \qquad\mbox{for all $~0\le  s\leq  2c\,nh_n$}.
$$ 
\end{lem}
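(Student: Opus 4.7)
The plan is to decompose the supremum over $u\in[\varepsilon,1-\varepsilon]$ into a maximum over the level sets $I_k:=\{u\in(0,1):\, i_n(u)=k\}$, and then conclude by a union bound which automatically produces the factor $n^{1/p}$.

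First I would establish existence and a uniform-in-$u$ moment bound on each $I_k$. Because $i_n(u)=[n(u-ch_n)]$ is piecewise constant in $u$, each $I_k$ is an interval of length at most $1/n$, hence at most $n+1$ such intervals meet $[\varepsilon,1-\varepsilon]$. On $I_k$ the boundary data $\{X_t^*(u)=X_t^{(n)}:\,t<k\}$ does not depend on $u$, so the recursion \eqref{eq:tangent} can be solved by the Picard fixed-point argument used in Lemma \ref{mommbis}, with the Banach space $\L^p(\mathcal{C}([0,1],\R))$ replaced by $\L^p(\mathcal{C}(I_k,\R))$. Assumption \textbf{(A$_0$($\Theta$))} together with $\boldsymbol{\theta}^*(u)\in\Theta$ (ensured by (LS($\rho$))) makes the induced operator a contraction of ratio $B_0(\Theta)<1$ uniformly in $u\in I_k$. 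Repeating the recursive inequality of Lemma \ref{momm} in this Banach space then yields, for every $t\ge k$,
$$
\bigl\|\sup_{u\in I_k}|X_t^*(u)|\bigr\|_p\;\le\;\frac{C_0(\Theta)}{1-B_0(\Theta)}\;=:\;M,
$$
with $M$ independent of $n$, $k$ and $t$.

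Second I would aggregate over $k$. For any fixed $s$ with $0\le s\le 2c\,n h_n$ and any $u\in I_k$ one has $i_n(u)+s=k+s$, hence
$$
\sup_{u\in[\varepsilon,1-\varepsilon]}\bigl|X_{i_n(u)+s}^*(u)\bigr|^p\;\le\;\sum_{k:\,I_k\cap[\varepsilon,1-\varepsilon]\neq\emptyset}\sup_{u\in I_k}\bigl|X_{k+s}^*(u)\bigr|^p.
$$
Taking expectations, applying the local bound of the first step, and using that at most $n+1$ indices $k$ contribute, gives
$$
\Bigl\|\sup_{u\in[\varepsilon,1-\varepsilon]}\bigl|X_{i_n(u)+s}^*(u)\bigr|\Bigr\|_p\;\le\;(n+1)^{1/p}M\;\le\;2^{1/p}M\,n^{1/p},
$$
which is the announced bound with $C^*=2^{1/p}C_0(\Theta)/(1-B_0(\Theta))$.

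The main obstacle is the first step. Pointwise in $u$ the $L^p$-bound is an immediate consequence of the Lemma \ref{momm} induction, exploiting the independence of $(X_{t-j}^*(u))_{j\ge 1}$ from $\xi_t$. Controlling the $L^p$-norm of the supremum over $u\in I_k$, however, genuinely requires the Banach-space fixed-point construction already deployed in Lemma \ref{mommbis}; its continuity hypotheses on $\boldsymbol{\theta}\mapsto F_{\boldsymbol{\theta}}$ and $u\mapsto\boldsymbol{\theta}^*(u)$ are supplied respectively by the standing regularity assumptions and by (LS($\rho$)), while the uniform contraction $B_0(\Theta)<1$ ensures that the constant $M$ obtained does not depend on the interval $I_k$, the time $t$, or $n$.
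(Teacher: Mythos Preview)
Your argument is correct, and the overall architecture---partition $[\varepsilon,1-\varepsilon]$ into the level sets $I_k=\{u:i_n(u)=k\}$ and extract the factor $n^{1/p}$ from the crude bound $\max_k\le(\sum_k|\cdot|^p)^{1/p}$---is exactly the one the paper uses. The genuine difference lies in how the supremum over a single level set is handled. The paper proceeds by chaining: it bounds $\max_k|X^*_{i_n(u_k)+s}(u_k)|$ over a grid of representatives $u_k$ via the pointwise estimate $\|X^*_t(u)\|_p\le C_0(\Theta)/(1-B_0(\Theta))$, and then controls the residual oscillation $\sup_{u,v:\,i_n(u)=i_n(v)}|X^*_t(u)-X^*_t(v)|$ by a separate recursion that invokes \textbf{(A$_1$($\Theta$))} through $\partial_{\boldsymbol\theta}F_{\boldsymbol\theta}$ and the bound $\|\boldsymbol\theta^*(u)-\boldsymbol\theta^*(v)\|\le K_{\boldsymbol\theta}n^{-\rho}$ on a level set. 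You instead bound $\|\sup_{u\in I_k}|X^*_t(u)|\|_p$ in one shot, running the Lemma~\ref{momm} recursion directly in $\L^p(\mathcal C(I_k,\R))$; the initial data $X_t^{(n)}$ are constant in $u$ on $I_k$ and already satisfy the bound $C_0(\Theta)/(1-B_0(\Theta))$, so the same constant propagates. This is cleaner and, taken at face value, dispenses with \textbf{(A$_1$($\Theta$))}, which the paper's proof uses even though the lemma's statement does not list it. The trade-off is that your route leans entirely on the contraction property of $H\mapsto F_{\boldsymbol\theta^*(u)}((H_u\circ L^j)_j,\pi_0)$ in the Banach space $\L^p(\mathcal C)$, which the paper asserts in Lemma~\ref{mommbis} but only sketches, referring to \cite{DT}; the paper's chaining avoids relying on that stronger Banach-space Lipschitz estimate at the cost of the extra differentiability assumption.
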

\begin{proof}[Proof of Lemma \ref{momm3}] We use a chaining argument adapted to our framework. We denote $(u_k)$ where $u_k= (k+1/2+ch_n)/n$ for $k \in U_n(\varepsilon)=\big \{[\varepsilon n -ch_n-1/2],\ldots, [(1-\varepsilon)n -ch_n-1/2] \big \}$ a grid of points of the segment $[\varepsilon,1-\varepsilon]$ and therefore $\big |U_n(\varepsilon) \big |\simeq (1-2 \varepsilon)n\leq n$. Moreover, for each $u\in  [\varepsilon,1-\varepsilon]$ there exists $u_k$ such that $|u-u_k|\le 1/(2n)$ and therefore $i_n(u)=i_n(u_k)$ with $i_n(u)$ defined in \eqref{c}. Then a chaining argument shows that 
\begin{equation}\label{chaine}
\sup_{u\in [\varepsilon ,1-\varepsilon]} \big |X^*_{ i_n(u)+s}(u)\big |\le \max_{k\in U_n(\varepsilon)} \big |X^*_{ i_n(u_k)+s}(u_k)\big |+ \sup_{u,v:\, i_n(u)=i_n(v)}\big | X^*_{ i_n(u)+s}(u)- X^*_{ i_n(v)+s}(v)\big |\,.
\end{equation}
Using the inequality $\max(|x|,|y|)\leq |x|+|y|$ and similar argument than in the proof of Lemma \ref{momm}, we get that 
\begin{align}
\nonumber \big \|  \max_{k\in U_n(\varepsilon)} \big |X^*_{ i_n(u_k)+s}(u_k) \big |\big \|_p&\le  \Big \|  \sum_{k\in U_n(\varepsilon)}\big |X^*_{ i_n(u_k)+s}(u_k)\big |\Big \|_p\\
\nonumber &\le  \Big( \sum_{k\in U_n(\varepsilon)} \big \|X^*_{ i_n(u_k)+s}(u_k)\big \|_p^p\Big)^{1/p}\\
\label{np} &\le n^{1/p}\,  \dfrac{C_0(\Theta)}{1-B_0(\Theta)},
\end{align}
since $\|X^*_{ i_n(u)+s}(u)\|_p\le C_0(\Theta)/(1-B_0(\Theta))$ for any $u\in[\varepsilon,1-\varepsilon]$ and $0\le  s\leq  2c\,nh_n$. 
Set
$ \delta_n= \Big \|\sup_{u,v:\, i_n(u)=i_n(v)} \big| X^*_{ i_n(u)+s}(u)- X^*_{ i_n(v)+s}(v)\big| \Big \|_p$. We derive from an application of the chaining argument,
\begin{eqnarray*}
 \delta_n&\leq& \Big \|\sup_{u,v:\, i_n(u)=i_n(v)}\, \big| F_{\theta^*(i_n(u)+s)} \big ( (X^*_{ i_n(u)+s-k}(u))_{k\geq 1},\xi_{ i_n(u)+s} \big )
\\ && \qquad \qquad\qquad \qquad\qquad \qquad 
 - F_{\theta^*(i_n(u)+s)} \big ( (X^*_{ i_n(v)+s-k}(v))_{k\geq 1},\xi_{ i_n(v)+s} \big ) \big |\Big \|_p \\
&  +&\Big \|\sup_{u,v:\, i_n(u)=i_n(v)}\,\big| F_{\theta^*(i_n(u)+s)} \big ( (X^*_{ i_n(v)+s-k}(v))_{k\geq 1},\xi_{ i_n(v)+s} \big )\\
&&\qquad \qquad\qquad \qquad\qquad \qquad - F_{\theta^*(i_n(v)+s)} \big ( (X^*_{ i_n(v)+s-k}(v))_{k\geq 1},\xi_{ i_n(v)+s} \big ) \big | \Big \|_p\\
& \leq&  \sum_{k=1}^\infty b_k^{(0)}(\Theta) \, \Big \|\sup_{u,v:\, i_n(u)=i_n(v)}\big|X^*_{{ i_n(u)+s}-k}(u)-X^*_{{ i_n(v)+s}-k}(v)\big|\, \Big \|_p\\
& +&\sup_{u,v:\, i_n(u)=i_n(v)} \big\|{\boldsymbol{\theta}}^*(u)-{\boldsymbol{\theta}}^*(v)\big\|\\
&&\qquad \times \Big (\sum_{k=1}^\infty b^{(1)}_k(\Theta) \,\Big \| \! \sup_{u\in[\varepsilon ,1-\varepsilon]}\big|X^*_{{ i_n(u)+s}-k}(u)\big| \, \Big \|_p+  \Big \| \sup_{{\boldsymbol{\theta}} \in \Theta} \big \|\partial_{\boldsymbol{\theta}}^1F_{\boldsymbol{\theta}}(0;\xi_0)\big \|\, \Big \|_p \Big )\\
& \le &B_0(\Theta) \, \Big \|\sup_{u,v: i_n(u)=i_n(v)}\big|X^*_{{ i_n(u)+s}-k}(u)-X^*_{{ i_n(v)+s}-k}(v)\big| \,\Big \|_p
\\ & 
+& K_{\boldsymbol{\theta}} \, n^{-\rho}  \Big \| \sup_{{\boldsymbol{\theta}} \in \Theta} \big \|\partial_{\boldsymbol{\theta}}^1F_{\boldsymbol{\theta}}(0;\xi_0)\big \|\, \Big \|_p\\
& %\quad
 +& K_{\boldsymbol{\theta}}  n^{-\rho}  B_1(\Theta) \Big(  \dfrac{C_0(\Theta)n^{1/p}}{1-B_0(\Theta)}
%\\ && \qquad
+\Big \|\sup_{u,v: i_n(u)=i_n(v)}\big|X^*_{{ i_n(u)+s}-k}(u)-X^*_{{ i_n(v)+s}-k}(v)\big|\,\Big \|_p\Big),
\end{eqnarray*}
from \eqref{convtheta}, \eqref{chaine} and \eqref{np}. Collecting all those bounds  for $n$ sufficiently large in order  that $n^{-\rho}$ is sufficiently small,  we get
\begin{multline*}
\Big \|\sup_{u,v:\, i_n(u)=i_n(v)} \big | X^*_{ i_n(u)+s}(u)- X^*_{ i_n(v)+s}(v) \big | \Big \|_p\\
\le\ \dfrac{\dfrac{K_{\boldsymbol{\theta}}B_1(\Theta)C_0(\Theta)}{1-B_0(\Theta)}\, n^{1/p-\rho}  + K_{\boldsymbol{\theta}} \, n^{-\rho}  \big \| \sup_{{\boldsymbol{\theta}} \in \Theta} |\partial_{\boldsymbol{\theta}}^1F_{\boldsymbol{\theta}}(0;\xi_0) |\big \|_p}{1-B_0(\Theta) -  K_{\boldsymbol{\theta}} \,  n^{-\rho}  B_1(\Theta)}\,.
\end{multline*}
Finally, applying again the chaining argument  we obtain
$$
\|   \max_{k\in U_n(\varepsilon)} \big |X^*_{ i_n(u_k)+s}(u_k)\big |\|_p \le  n^{1/p} \, \dfrac{C_0(\Theta)}{1-B_0(\Theta)} + O(n^{1/p-\rho})\,.
$$
\end{proof}
We point out that $( X_{t}^*(u))$ is not a copy of $(  X_t^{(n)})$ as it does not follow the same distribution. However this is a satisfactory non-stationary approximation of $(  X_t^{(n)})$ because we obtain the following coupling bound:
\begin{lem} \label{lem*}
Under  Assumptions ${\bf (A_0(\Theta))}$ with $B_0(\Theta)<1$, ${\bf (A_1(\Theta))}$ and ${\bf (LS(\rho))}$, with $(X_t^*(u))$ the coupling process defined in \eqref{eq:tangent}, there exists a positive constant $C'>0$ such that
\begin{equation}\label{couplebis}
\big \| \! \! \! \sup_{u\in[\varepsilon ,1-\varepsilon]}\big |X^{(n)}_{i_n(u)+s}-X_{i_n(u)+s}^*(u) \big | \, \big \|_p \leq C' \, h_n^\rho n^{1/p} \qquad\mbox{for all $0\le  s\leq  2c\,nh_n$}.
\end{equation}
\end{lem}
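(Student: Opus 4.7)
Set $D_t(u):=X^{(n)}_t-X^*_t(u)$. The plan is to combine a pointwise contraction recursion with the chaining argument already used in the proof of Lemma \ref{momm3}.

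First, I would establish a pointwise (in $u$) $\L^p$-bound. By construction $D_t(u)=0$ for $t<i_n(u)$, while for $i_n(u)\le t\le j_n(u)$, writing
$$D_t(u)=\bigl[F_{\boldsymbol{\theta}^{(n)}_t}((X^{(n)}_{t-k})_{k\ge1},\xi_t)-F_{\boldsymbol{\theta}^*(u)}((X^{(n)}_{t-k})_{k\ge1},\xi_t)\bigr]+\bigl[F_{\boldsymbol{\theta}^*(u)}((X^{(n)}_{t-k})_{k\ge1},\xi_t)-F_{\boldsymbol{\theta}^*(u)}((X^*_{t-k}(u))_{k\ge1},\xi_t)\bigr],$$
Assumption $\mathbf{(A_0(\Theta))}$ controls the second bracket by $\sum_k b^{(0)}_k(\Theta)\|D_{t-k}(u)\|_p$, while $\mathbf{(A_1(\Theta))}$, a mean value argument, Lemma \ref{momm} and $\mathbf{(LS(\rho))}$ bound the first bracket by $K_{\boldsymbol{\theta}}|u-t/n|^\rho\bigl(C_1(\Theta)+B_1(\Theta)C_0(\Theta)/(1-B_0(\Theta))\bigr)$. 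Since $t=i_n(u)+s$ with $0\le s\le 2cnh_n$ gives $|u-t/n|\le ch_n$, this drift is $O(h_n^\rho)$ uniformly in the relevant $t$. Iterating the resulting recursion and using $B_0(\Theta)<1$ together with the initial zero condition yields $\|D_{i_n(u)+s}(u)\|_p\le C_2 h_n^\rho$ for every fixed $u\in[\varepsilon,1-\varepsilon]$.

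Second, to pass to the supremum in $u$ I would repeat verbatim the discretization from Lemma \ref{momm3}: choose the grid $(u_k)_{k\in U_n(\varepsilon)}$ of spacing $1/n$, so that $i_n(u)=i_n(u_k)$ whenever $|u-u_k|\le 1/(2n)$ and $|U_n(\varepsilon)|\le n$. The chaining inequality
$$\sup_{u\in[\varepsilon,1-\varepsilon]}|D_t(u)|\le\max_{k\in U_n(\varepsilon)}|D_t(u_k)|+\sup_{u,v:\ i_n(u)=i_n(v)}|D_t(u)-D_t(v)|$$
reduces the problem to two terms. By the same $\ell^p$-to-$\ell^\infty$ trick as in \eqref{np}, $\|\max_k|D_t(u_k)|\|_p\le n^{1/p}\max_k\|D_t(u_k)\|_p\le C_3 h_n^\rho n^{1/p}$. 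For the second term $X^{(n)}_t$ drops out, so it equals $\sup_{u,v:i_n(u)=i_n(v)}|X^*_t(u)-X^*_t(v)|$, and the computation in Lemma \ref{momm3}---combining $\mathbf{(A_0(\Theta))}$, $\mathbf{(A_1(\Theta))}$ and the bound $\|\boldsymbol{\theta}^*(u)-\boldsymbol{\theta}^*(v)\|\le K_{\boldsymbol{\theta}}(2n)^{-\rho}$ on each grid cell---gives an $\L^p$-bound of order $n^{1/p-\rho}$, which is negligible compared to $h_n^\rho n^{1/p}$. Adding the two bounds yields \eqref{couplebis}.

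The main obstacle is the second chaining term: one must verify carefully that the $n^{-\rho}$ continuity of $u\mapsto\boldsymbol{\theta}^*(u)$ propagates through the infinite-memory recursion defining $X^*_t(u)$ without blowing up, since the coupling process itself depends on $u$ through its entire past on $[i_n(u),t]$. The key technical point is that only $\Theta(nh_n)$ recursive steps are involved (because both coupling processes agree with $X^{(n)}$ before time $i_n(u)$), so the contraction factor $B_0(\Theta)<1$ keeps the recursion bounded, and the fixed-point-type argument from Lemma \ref{momm3} applies mutatis mutandis.
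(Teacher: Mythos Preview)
Your proof is correct but the route differs from the paper's. The paper works with the supremum over $u$ from the start: it sets $\Delta^*_s=\sup_{u}|X^{(n)}_{i_n(u)+s}-X^*_{i_n(u)+s}(u)|$ and runs a single recursion directly on $\|\Delta^*_s\|_p$, placing the parameter-difference bracket at the $X^*$ values rather than at $X^{(n)}$ as you do. The drift term then involves $\|\sup_u|X^*_{i_n(u)+s-k}(u)|\|_p\le C^*n^{1/p}$ from Lemma~\ref{momm3}, which is exactly where the factor $n^{1/p}$ enters, and no separate chaining step is needed. Your swap of the two brackets lets the $(A_1)$ term be evaluated at $X^{(n)}$ and controlled by the pointwise moment bound of Lemma~\ref{momm}, yielding the sharper pointwise estimate $\|D_{i_n(u)+s}(u)\|_p\le C_2h_n^\rho$; the $n^{1/p}$ then reappears transparently as the discretization loss over the $O(n)$ grid points. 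This is arguably more informative---it shows that at each fixed $u$ the coupling error is genuinely $O(h_n^\rho)$---at the cost of an extra step and the mild proviso that the continuity term $O(n^{1/p-\rho})$ is absorbed by $h_n^\rho n^{1/p}$, which holds under the standing assumption $nh_n\to\infty$. One small quibble with your closing paragraph: the boundedness of the recursion for the continuity term comes purely from the contraction $B_0(\Theta)<1$ with bounded drift, not from the finite number $\Theta(nh_n)$ of steps; the latter is only relevant to guarantee the common initial past $X^*_t(u)=X^*_t(v)=X^{(n)}_t$ for $t<i_n(u)$.
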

\begin{proof}[Proof of Lemma \ref{lem*}]
 Define  $\Delta^*_s=0$ for any $s\le 0$ and for $0\le  s\leq  2c\,nh_n$ set the quantity   
$ \Delta^*_s=\sup_{u\in[\varepsilon ,1-\varepsilon]} \big | X^{(n)}_{i_n(u)+s}-X_{i_n(u)+s}^*(u) \big |$. For $0\le  s\leq  2c\,nh_n$ we decompose
 \begin{eqnarray*}\nonumber
\Delta^*_s
  &=&\sup_{u\in[\varepsilon ,1-\varepsilon]} \big | F_{\boldsymbol{\theta}^{(n)}_{ i_n(u)+s}}\big ((X^{(n)}_{i_n(u)+s-k})_{k\geq 1},\xi_{ i_n(u)+s}\big )\\
  && \qquad  \qquad  \qquad\qquad \qquad -F_{\boldsymbol{\theta}^*(u)}\big ((X^*_{ i_n(u)+s-k}(u))_{k\geq 1},\xi_{ i_n(u)+s}\big ) \big |\\
  \label{inegcontr}
 &\leq &\sup_{u\in[\varepsilon ,1-\varepsilon]} \big | F_{\boldsymbol{\theta}^{(n)}_{ i_n(u)+s}}\big ((X^{(n)}_{{ i_n(u)+s}-k})_{k\geq 1},\xi_{ i_n(u)+s}\big )\\
  && \qquad  \qquad  \qquad \qquad \qquad-F_{\boldsymbol{\theta}^{(n)}_{ i_n(u)+s}}\big ((X^*_{{ i_n(u)+s}-k}(u))_{k\geq 1},\xi_{ i_n(u)+s}\big ) \big | \\
& +&\sup_{u\in[\varepsilon ,1-\varepsilon]} \big | F_{\boldsymbol{\theta}^{(n)}_{ i_n(u)+s}}\big ((X^*_{{ i_n(u)+s}-k}(u))_{k\geq 1},\xi_{ i_n(u)+s}\big )\\
  && \qquad  \qquad\qquad \qquad  \qquad -F_{\boldsymbol{\theta}^*  (u)}((X^*_{{ i_n(u)+s}-k}(u))_{k\geq 1},\xi_{ i_n(u)+s}\big ) \big |.
 \end{eqnarray*}
Then we derive
  \begin{eqnarray*}
  \|\Delta^*_s\|_p&\le& \Big \|\sup_{u\in[\varepsilon ,1-\varepsilon]} \big | F_{\boldsymbol{\theta}^{(n)}_{ i_n(u)+s}}\big ((X^{(n)}_{t-k})_{k\geq 1},\xi_t\big )\\
  && \qquad  \qquad  \qquad \qquad \qquad-F_{\boldsymbol{\theta}^{(n)}_{ i_n(u)+s}}\big ((X^*_{{ i_n(u)+s}-k}(u))_{k\geq 1},\xi_{ i_n(u)+s}\big ) \big | \Big\|_p \\
	& +& \Big \|\sup_{u\in[\varepsilon ,1-\varepsilon]} \big | F_{\boldsymbol{\theta}^{(n)}_{ i_n(u)+s}}\big ((X^*_{{ i_n(u)+s}-k}(u))_{k\geq 1},\xi_t\big )\\
  && \qquad  \qquad  \qquad\qquad \qquad -F_{\boldsymbol{\theta}^*  (u)}((X^*_{{ i_n(u)+s}-k}(u))_{k\geq 1},\xi_{ i_n(u)+s}\big ) \big | \Big \|_p
\\
&\le&\Big \|\sum_{k=1}^\infty b_k^{(0)}(\Theta) \, \sup_{u\in[\varepsilon ,1-\varepsilon]} \big |X^{(n)}_{{ i_n(u)+s}-k}-X^*_{{ i_n(u)+s}-k}(u) \big | \Big \|_p  \\
& +\!&\!\!\!\!\sup_{u\in[\varepsilon ,1-\varepsilon]}\!\! \big\|{\boldsymbol{\theta}}^{(n)}_{ i_n(u)+s}\!-\!{\boldsymbol{\theta}}^*(u)\big\| 
%\\ && \times 
    \Big \| \sup_{{\boldsymbol{\theta}} \in \Theta} \sup_{u\in[\varepsilon ,1-\varepsilon]}\big \|  \partial_{\boldsymbol{\theta}}^1 F_{\boldsymbol{\theta}}((X^*_{{ i_n(u)+s}-k}(u))_{k\geq 1},\xi_{ i_n(u)+s}) \big \| \Big \|_p\\
 &\le&\Big \| \sum_{k=1}^\infty b_k^{(0)}(\Theta) \, \Delta^*_{s-k}\Big \|_p 
  +\sup_{u\in[\varepsilon ,1-\varepsilon]} \big\|{\boldsymbol{\theta}}^{(n)}_{ i_n(u)+s}-{\boldsymbol{\theta}}^*(u)\big\|
\\
 && \times \Big \|\sup_{u\in[\varepsilon ,1-\varepsilon]}\sum_{k=1}^\infty b^{(1)}_k(\Theta) \,\big |X^*_{{ i_n(u)+s}-k}(u) \big |+ \sup_{{\boldsymbol{\theta}} \in \Theta}  \big \| \partial_{\boldsymbol{\theta}}^1 F_{\boldsymbol{\theta}}(0;\xi_{ i_n(u)+s}) \big \| \Big \|_p\\
 &\le&\sum_{k=1}^\infty b_k^{(0)}(\Theta) \,  \big \|\Delta_{s-k}^* \big \|_p 
 %\\ && 
 +\sup_{u\in[\varepsilon ,1-\varepsilon]} \big\|{\boldsymbol{\theta}}^{(n)}_{ i_n(u)+s}-{\boldsymbol{\theta}}^*(u)\big\| 
\\
 && \times \Big (\sum_{k=1}^\infty b^{(1)}_k(\Theta) \,\Big \|  \!\sup_{u\in[\varepsilon ,1-\varepsilon]} \big |X^*_{{ i_n(u)+s}-k}(u) \big | \Big \|_p+  \Big \| \sup_{{\boldsymbol{\theta}} \in \Theta} \big \|\partial_{\boldsymbol{\theta}}^1F_{\boldsymbol{\theta}}(0;\xi_0) \big \|\Big \|_p \Big )
 \end{eqnarray*}
by using the Assumptions {\bf(A$_0$($\Theta$))} and {\bf(A$_1$($\Theta$))}. \\

Now define $ M_t^*= \max_{ s\le t} \big \|\Delta^*_s \big \|_p$. We derive from an application of Lemma \ref{momm3} that for any  $0\le  t\leq  2c\,nh_n$ it holds
$$
\big \|\Delta^*_t \big \|_p \leq B_0(\Theta) \, M_{t-1}^* +\sup_{u\in[\varepsilon ,1-\varepsilon]} \big\|{\boldsymbol{\theta}}^{(n)}_{ i_n(u)+t}-{\boldsymbol{\theta}}^*(u)\big\| \times \Big (C^*\, n^{1/p}+C_1(\Theta) \Big ).
$$
We have $\displaystyle \sup_{u\in[\varepsilon ,1-\varepsilon]} \big\|\boldsymbol{\theta}^{(n)}_{ i_n(u)+t}-\boldsymbol{\theta}^*(u)\big\| \leq c \, K_\theta \, h_n^\rho$ from condition \eqref{convtheta} of Assumption {\bf (LS($\rho$))}. 
As a consequence, for any $0\le  t\leq  2c\,nh_n$, we have
$$
M_{t}^*\leq B_0(\Theta) \, M_{t-1}^* +c \, K_\theta \,\Big (C^* n^{1/p}+C_1(\Theta) \Big )\,  {h_n^\rho}.
$$
By definition, $M_{0}^*=0$. Therefore we deduce for any $0\le  t\leq  2c\,nh_n$,
$$
M_{t}^*\leq \frac {c \, K_\theta}{1-B_0(\Theta)} \,\Big ( C^* n^{1/p}+C_1(\Theta) \Big )\,  {h_n^\rho}.
$$
This completes the proof of the Lemma \ref{lem*}. 
\end{proof}
Finally, the coupling process $(X^*_{t}(u))_{t\in \Z}$ is also used for estimating the approximation of $X_t^{(n)}$ with the stationary version $\widetilde X_t(u)$ for $t/n\simeq u$. 
\begin{lem} \label{lemtilde}
Under  Assumptions ${\bf (A_0(\Theta))}$ with $B_0(\Theta)<1$, ${\bf (A_1(\Theta))}$ and ${\bf (LS(\rho))}$ there exists a positive constant $C''>0$ such that
\begin{equation}\label{couple2}
\big \| \! \! \! \sup_{u\in[\varepsilon ,1-\varepsilon]}\big |X^{(n)}_{i_n(u)+s}-\widetilde X_{i_n(u)+s}(u) \big | \, \big \|_p \leq C'' \, n^{1/p} \big ( h_n^\rho +  \lambda_{s}\big ),\quad\mbox{for all }0\le  s\leq  2c\,nh_n.
\end{equation}
\end{lem}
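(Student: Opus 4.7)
The plan is to decompose the difference via triangle inequality through the coupling process $X^*(u)$ introduced in \eqref{eq:tangent}:
$$
X^{(n)}_{i_n(u)+s} - \widetilde X_{i_n(u)+s}(u) = \bigl(X^{(n)}_{i_n(u)+s} - X^*_{i_n(u)+s}(u)\bigr) + \bigl(X^*_{i_n(u)+s}(u) - \widetilde X_{i_n(u)+s}(u)\bigr).
$$
The first summand is controlled directly by Lemma \ref{lem*} and produces the $h_n^\rho n^{1/p}$ contribution in the announced bound; all the remaining work concerns the second summand, which I will call $R_s(u)$.

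For $R_s(u)$, I would introduce an auxiliary shifted coupling analogous to the one in \eqref{couple}. Setting $t_0:=i_n(u)$, define $\widetilde X^{\circ,t_0}_t(u)$ to obey the recursion driven by $F_{\boldsymbol{\theta}^*(u)}$ but fed by an independent replication $(\xi^\circ_t)$ for $t<t_0$ and by the original $(\xi_t)$ for $t\ge t_0$. By the time-stationarity of $(\widetilde X_t(u))_{t\in\Z}$, a shift of Lemma \ref{momm2} yields
$$
\bigl\|\widetilde X_{t_0+s}(u)-\widetilde X^{\circ,t_0}_{t_0+s}(u)\bigr\|_p \le C\lambda_s.
$$
It then remains to compare $X^*_{t_0+s}(u)$ with $\widetilde X^{\circ,t_0}_{t_0+s}(u)$. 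Both processes obey the same recursion with $F_{\boldsymbol{\theta}^*(u)}$ and share the innovations $(\xi_t)_{t\ge t_0}$; they differ only in their past, since $X^*_t(u)=X^{(n)}_t$ while $\widetilde X^{\circ,t_0}_t(u)$ is driven by $\xi^\circ_t$ for $t<t_0$. Applying \textbf{(A$_0$($\Theta$))} to their difference and splitting the Lipschitz sum $\sum_k b_k^{(0)}(\Theta)\|\cdot\|_p$ at an auxiliary truncation index $k=r\le s$, the indices $k\le r$ produce a recursive term involving the differences at the earlier times $t_0+s-k\ge t_0$, while those with $k>r$ are bounded via the uniform $L^p$-estimates of Lemmas \ref{momm} and \ref{mommbis} by $2C_0(\Theta)(1-B_0(\Theta))^{-1}\sum_{k>r}b_k^{(0)}(\Theta)$. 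Iterating the recursion exactly as in the derivation of Lemma \ref{momm2} and optimizing over $r$ gives a bound of order $\lambda_s$ for this piece as well.

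Finally, to bring $\sup_{u\in[\varepsilon,1-\varepsilon]}$ inside the $\|\cdot\|_p$-norm I would apply the chaining argument from the proof of Lemma \ref{momm3}: discretize $[\varepsilon,1-\varepsilon]$ by the grid $(u_k)_{k\in U_n(\varepsilon)}$ of step $1/n$ on which $i_n(\cdot)$ is piecewise constant, so that $|U_n(\varepsilon)|=O(n)$. The supremum is then bounded by the maximum over the grid---yielding the factor $n^{1/p}$ by a $p$-th power triangle inequality, exactly as in \eqref{np}---plus the oscillation inside each cell, which is of lower order thanks to \textbf{(A$_1$($\Theta$))} and the H\"older regularity of $\boldsymbol{\theta}^*(\cdot)$ granted by \textbf{(LS($\rho$))}. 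Combining everything gives the announced bound $C''\,n^{1/p}(h_n^\rho+\lambda_s)$. The main obstacle is the recursion step for the $X^*(u)$--$\widetilde X^{\circ,t_0}(u)$ comparison: one must handle a past that is non-stationary and only uniformly $L^p$-bounded (the values $X^{(n)}_t$ for $t<t_0$), using this boundedness alone to absorb the ``boundary'' contribution while the contraction $B_0(\Theta)<1$ damps the discrepancy forward in time and restores the $\lambda_s$ decay rate.
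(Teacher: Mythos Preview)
Your approach is essentially the same as the paper's, and correct. The key decomposition through $X^*(u)$ and the use of the contraction recursion mimicking \eqref{recurcoupl} is exactly what the paper does. However, your introduction of the auxiliary shifted coupling $\widetilde X^{\circ,t_0}_t(u)$ is superfluous: since both $X^*_t(u)$ and $\widetilde X_t(u)$ already obey the recursion $F_{\boldsymbol{\theta}^*(u)}(\cdot,\xi_t)$ with the \emph{same} innovations $\xi_t$ for all $t\ge i_n(u)$ and differ only in their past at $t<i_n(u)$, the paper compares them directly. The inequality \eqref{recurcoupl} then applies verbatim to $\bigl(\max_{t\ge kr}\|\sup_u|X^*_{i_n(u)+t}(u)-\widetilde X_{i_n(u)+t}(u)|\|_p\bigr)_{k\ge 0}$, with the sup over $u$ carried inside the recursion from the start; the $n^{1/p}$ factor enters once, through the crude bound on the initial discrepancy coming from Lemma \ref{momm3} and the analogous sup estimate on $\widetilde X$. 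Your extra intermediary merely splits this single bound into two $\lambda_s$-terms, and forces you to run the chaining argument separately at the end rather than threading the supremum through the contraction from the outset.
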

\begin{proof}[Proof of Lemma \ref{lemtilde}]
The approximation is  derived since the coupling process is a non stationary coupling version of $\widetilde X^\circ_t(u)$ defined in \eqref{couple}. Indeed, using $(X_t^*(u))$ the coupling process defined in \eqref{eq:tangent} and repeating the same arguments than above, we obtain  a similar recursive relation than \eqref{recurcoupl} on the sequence 
$$
\Big (\max_{ t \ge k r } \Big \|\sup_{u\in[\varepsilon ,1-\varepsilon]} \big| \widetilde X_{i_n(u)+t}(u)-  X^*_{i_n(u)+t}(u)\big |\Big \|_p \Big )_{k\ge 0}
$$ given any $r\in \N^*$, using Lemma \ref{momm3} and the estimates
\begin{multline*}
\Big \|
\sup_{u\in[\varepsilon ,1-\varepsilon]} \big | \widetilde X_{i_n(u)+s}(u)\big |  \Big\|_p\le \Big\|\max_{1\le t\le n} \sup_{u\in[\varepsilon ,1-\varepsilon]}\big | \widetilde X_{t}(u)\big |  \Big\|_p \\
\le \Big(\sum_{1\le t\le n} \Big\|\sup_{u\in[\varepsilon ,1-\varepsilon]}\big |  \widetilde X_{t}(u)\big | \Big\|_p^p\Big)^{\frac 1p}\le C\,n^{\frac1p}.
\end{multline*} 
We obtain
$$
\Big \|  \!\sup_{u\in[\varepsilon ,1-\varepsilon]} \big |X_{i_n(u)+s}^{*}(u)-\widetilde X_{i_n(u)+s}(u) \big | \Big \|_p \le C \, n^{1/p} \, \lambda_s\,,\qquad \mbox{for}\quad 0\leq s \leq 2c\,nh_n,
$$
with $\lambda_s$ defined in  \eqref{tauX}. Combining this result with \eqref{couple2} we bound the $\L^p$ norm of the approximation  of $X_s^{(n)}$ with the stationary version $\widetilde X_s(u)$, namely for for all $0\leq s \leq 2c\,nh_n$,
\begin{align*}
\Big \| \! &\sup_{u\in[\varepsilon ,1-\varepsilon]} \big |X^{(n)}_{i_n(u)+s}-\widetilde X_{i_n(u)+s}(u) \big | \Big \|_p
\\
&\le \Big \| \! \sup_{u\in[\varepsilon ,1-\varepsilon]} \big |X^{(n)}_{i_n(u)+s}-X_{i_n(u)+s}^*(u) \big | \Big \|_p
\nonumber  +\Big \| \! \sup_{u\in[\varepsilon ,1-\varepsilon]} \big |X_{i_n(u)+s}^*(u)-\widetilde X_{i_n+s}(u) \big | \Big \|_p\\
 &\le C\, n^{1/p} \big ( h_n^\rho +  \lambda_{s}\big ).\end{align*}
\end{proof}

\section{Proofs for the Section \ref{Mesti}}\label{proofs2}
\subsection{Some useful lemmas}
\begin{proof}[Proof of Lemma \ref{exiphi}]
For $\boldsymbol{\theta} \in \Theta$, $t \in \Z$ and $m \in \N$, define 
$$\phi_{t,m}=\Phi \big (X_t^{(n)},X_{t-1}^{(n)},\ldots, X_{t-m}^{(n)},0,0,\ldots;\boldsymbol{\theta} \big ).$$ 
As $\Phi \in \Lip_{p}(\Theta)$ the sequence $(\phi_{t,m})_{m\in \N}$ is a Cauchy sequence in $\L^q$ since for any $m_2> m_1$
\begin{eqnarray}
\big \| \phi_{t,m_2}-\phi_{t,m_1} \big \|_1 &\leq &g \big (\sup_{0\leq s\leq m_2} \big \{\| X_{t-s}^{(n)} \|_p \big \} \big ) \,\sum_{k=m_1+1}^{m_2}\alpha_k(h, \Theta) \, \| X_{t-k}^{(n)}  \|_p \\
& \leq & C \, \sum_{k=m_1+1}^{m_2}\alpha_k(h, \Theta)  
\end{eqnarray}
from Lemma \ref{momm}, since if $s<0$ then $X_{s}=0$, thus the corresponding supremum bound extends over each $s\le n$. \\
As $\sum_{k=1}^{\infty}\alpha_k(h, \Theta) <\infty$ we deduce that for any $\varepsilon >0$,   $ \sum_{k=m_1+1}^{m_2}\alpha_k(h, \Theta) \leq \varepsilon$ for $m_1$ and $m_2$  large enough. Using the completeness of $\L^1$ we deduce the consistency  of the sequence $(\phi_{t,m})_{m\in \N}$ and the existence in $\L^1$ of its limit $\Phi \big ((X_{t-k}^{(n)})_{k\geq 0},\boldsymbol{\theta} \big )$. \\
~\\
When $\boldsymbol{\theta} _t^{(n)}=\boldsymbol{\theta} ^*(u)$ for any $t,n$, we consider  $\Phi \big ((\widetilde X_{t-k}(u))_{k\in \N},\boldsymbol{\theta} \big )$ that also exists in $\L^1$. Moreover, as $(\widetilde X_{t-k}(u))_{k\in \N}$ is a stationary ergodic process, this is also the case for  $ \big (\Phi \big ((\widetilde X_{t-k}(u))_{k\in \N},\boldsymbol{\theta} \big )\big )_{t\in \Z}$ (see Corollary 2.1.3. in \cite{Stra05}). 
\end{proof}
\begin{lem} \label{TLCK}\
Let $i_n(u))$ and $j_n(u)$ defined in \eqref{c}. 
\begin{enumerate}
\item Let $Z(u)=(Z_t(u))_{t\in \N}$ be a centered stationary process on a Banach space $(\mathbb B, \|\cdot\|)$ for any $0\le u\le 1$. If $Z(u)$ is an ergodic process continuous with respect to $u$ and satisfying $\E[\sup_{0\le u\le 1}\|Z_0(u)\|]<\infty$ then   we have
\begin{equation}\label{LGNnoyau} 
\sup_{0< u< 1} \left\| \frac 1 {n\, h_n } \sum_{t=i_n(u)}^{j_n(u)}  Z_t(u)  \, K\Big(\frac{\frac tn-u}{h_n}\Big)\right\|  \limiteasn 0.
\end{equation}
\item Let $Z=(Z_t)_{t\in \N}$ be a centered stationary process on $\R^d$ such as $\E\big [ \| Z_0 \|^2 \big ]<\infty$ and $0<u<1$.
\\
If $\frac 1 {\sqrt n} \, \sum_{t=1}^{n} Z_t \limiteloin {\cal N}_d \big (0\, , \, \Sigma\big )$ with $\Sigma$ a positive definite symmetric matrix, then we have
\begin{equation}\label{TLCnoyau} 
 \frac 1 {\sqrt{n\, h_n} } \, \sum_{t=i_n(u)}^{j_n(u)} Z_t \, K\Big(\frac{\frac tn-u}{h_n}\Big)  \limiteloin {\cal N} \Big ( 0 \, , \,\Big (\int_\R K^2(x)\,dx\Big ) \, \Sigma \Big ).
\end{equation}
\end{enumerate}
\end{lem}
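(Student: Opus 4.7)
My plan is to exploit the piecewise differentiability of $K$ to reduce both parts to statements about ordinary partial sums $T_{k,m}(\cdot) := \sum_{t=k}^{m} Z_t(\cdot)$. Writing $K(x) = \int_{-c}^{x} K'(y)\,dy$ on $[-c,c]$, with $K'$ the piecewise derivative (assuming $K(\pm c)=0$ for concreteness; any boundary jumps contribute a finite number of step-kernel terms which are handled separately), and interchanging the order of summation and integration, one obtains
\begin{equation*}
\frac{1}{nh_n}\sum_{t=i_n(u)}^{j_n(u)} Z_t(u)\, K\Big(\tfrac{t/n-u}{h_n}\Big) = \int_{-c}^{c} K'(y)\,\frac{T_{\lceil nu+nh_ny\rceil,\, j_n(u)}(u)}{nh_n}\,dy,
\end{equation*}
and the analogous identity with $\sqrt{nh_n}$ in place of $nh_n$ for Part 2. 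Since $K'$ is integrable on $[-c,c]$, both claims reduce to controlling the scaled partial sums in the integrand uniformly in $u$ and in the truncation index.

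For Part 1, I would establish the uniform strong law $\sup_{u\in[\varepsilon,1-\varepsilon]} \|T_{k,m}(u)\|/(m-k+1) \to 0$ a.s., uniformly over sub-intervals $[k,m]\subset[i_n(u),j_n(u)]$. The key tool is Birkhoff's ergodic theorem applied to the integrable dominating stationary sequence $\zeta_t := \sup_u \|Z_t(u)\|$, which is integrable by hypothesis. Uniformity in $u$ follows from a chaining argument on a grid of $[\varepsilon,1-\varepsilon]$ that uses the continuity of $u\mapsto Z_t(u)$ together with the envelope $\zeta_0$ to bound oscillations; the integrability of $|K'|$ on $[-c,c]$ then propagates the uniform convergence inside the integral and yields \eqref{LGNnoyau}. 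The main obstacle is the uniform ergodic theorem over $u$: although it is standard, the chaining step must be carried out carefully so that the envelope provides sufficient modulus of continuity control.

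For Part 2, I would use a blocking argument combined with the Cram\'er--Wold device. Partition the window $[i_n(u), j_n(u)]$ into $L$ blocks of approximate length $m_n/L$ with $m_n\approx 2cnh_n$, and let $y_\ell$ denote the midpoint of block $\ell$ in kernel coordinates. Approximating $K((t/n-u)/h_n)$ by $K(y_\ell)$ on block $\ell$, the normalized sum reads approximately $\sqrt{2c/L}\,\sum_\ell K(y_\ell) W_{n,\ell}$, where $W_{n,\ell} := (L/m_n)^{1/2}\sum_{t\in\text{block }\ell} Z_t$. By stationarity, each $W_{n,\ell}$ has the same distribution as $\lfloor m_n/L\rfloor^{-1/2}\sum_{t=1}^{\lfloor m_n/L\rfloor} Z_t$, which converges to ${\cal N}(0,\Sigma)$ by the assumed CLT. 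Taking the $W_{n,\ell}$'s to be asymptotically independent across $\ell$ (this does not follow formally from the scalar CLT alone, but in the paper's framework it is supplied by the coupling/weak-dependence bounds already used to derive the stated CLT), Cram\'er--Wold yields joint normality with covariance $(2c/L)\sum_\ell K^2(y_\ell)\Sigma$. Letting $L\to\infty$ after $n\to\infty$, the Riemann sum converges to $\int_{-c}^{c} K^2(y)\,dy \cdot \Sigma$, yielding \eqref{TLCnoyau}. The main obstacle is precisely this joint asymptotic independence of block contributions, and properly controlling the error in replacing $K$ by its block-constant approximation (whose uniform error is of order $L^{-1}$ times the Lipschitz constant of $K$ between its kinks).
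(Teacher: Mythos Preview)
Your blocking strategy for both parts matches the paper's skeleton, but the execution differs in two places worth noting.

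\textbf{Part 1.} The paper does not use your integral representation via $K'$. Instead it partitions $[-c,c]$ into $\ell$ subintervals, freezes $K$ at the midpoint of each, and splits the kernel sum into a main term (a finite linear combination of $\ell$ block averages) plus a remainder controlled by $\|K'\|_\infty\, c/\ell$ times the average of the envelope $\sup_u\|Z_t(u)\|$. The key simplification is that the main term is handled by a \emph{single} application of the ergodic theorem to the process $(Z_t(\cdot))_t$ viewed as taking values in the Banach space $\mathcal C([0,1],\mathbb B)$; uniformity in $u$ comes for free from the norm, not from a separate chaining step. Your integral representation is workable, but it produces a continuum of partial sums indexed by $y\in[-c,c]$, and you then need the extra fact that $\sup_{s\le N}\|T_{1,s}(u)\|=o(N)$ uniformly in $u$, which is slightly more than the pointwise ergodic theorem on the envelope gives directly.

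\textbf{Part 2.} You correctly flag the asymptotic independence of the block sums as the crux, but the paper does \emph{not} appeal to coupling or weak-dependence bounds here. It simply computes, for distinct blocks $j\neq j'$,
\[
nh_n\,\mathrm{Cov}\big(S_{n,j}^{(\ell)},S_{n,j'}^{(\ell)}\big)=\frac{1}{nh_n}\sum_{t\in T_j}\sum_{t'\in T_{j'}}\E[Z_0Z_{t'-t}]
\]
and shows this is $o(1)$ using the (implicitly assumed, and automatic in the paper's martingale-difference applications) summability $\sum_{t}|\E[Z_0Z_t]|<\infty$. This is more elementary than your proposed route. For the passage from step kernels to general $K$, the paper gives an explicit second-moment bound $\E[\Delta_n^2]\le C(nh_n)^{-1}\ell^{-2}$ and invokes Billingsley's Theorem~3.2 to interchange the limits $n\to\infty$ and $\ell\to\infty$; your sketch of the approximation error is in the same spirit but would need this quantitative bound to be made rigorous.
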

\begin{proof}[Proof of Lemma \ref{TLCK}]
Let $\ell \in \N^*$, $[-c,c]$ be the compact support of $K$ and let $n$ be large enough such as $n(u-c\, h_n) \geq 1$ and $nu+c\,n h_n \leq n$.  Then,  for $j \in \{1,\ldots,\ell\}$, we denote $I^{(\ell)}_j=\big [-c+2 c \, \frac {j-1} \ell \, , \, -c+2 c \, \frac {j} \ell \big ]$ , $T^{(\ell)}_j=\Big \{ t \in \N, \, \frac{\frac tn-u}{h_n} \in I_j \Big \}$ 	and 
\begin{equation}\label{Snl}
S_n^{(\ell)}(u) = \frac 1 {n h_n} \, \sum_{t=i_n(u)}^{j_n(u)} Z_t(u)  \, K\Big(\frac{\frac tn-u}{h_n}\Big)\quad\mbox{and}\quad S^{(\ell)}_{n,j}(u) =\frac 1 {nh_n} \, \sum_{t\in T^{(\ell)}_j} Z_t(u)\,.
\end{equation}
Below, we will omit the reference to the dependence with respect to $u$ when no confusion will be possible.
 
1. We notice that $(Z_t(u))$ is a centered ergodic process on the Banach space $\L^1(\mathcal C([0,1],B))$. Thus for any fixed $\ell\in \N^*$ such that $\mbox{Card}(T^{(\ell)}_j)\simeq 2 c nh_n/ \ell \limiten \infty$ we apply the uniform ergodic theorem and since $\E[Z_0(u)]=0$ for any $0< u<1$ we obtain
\begin{equation}\label{convSl}
\sup_{0\le u\le 1}\|S^{(\ell)}_{n,j}(u)\| \limiteasn 0.
\end{equation}
Denote $t^{(\ell)}_j=-c+c \, \frac {2j-1} \ell$,  the midpoint of $T^{(\ell)}_j$, then we have
\begin{equation}\label{decoup}
S_n^{(\ell)}(u) = \sum_{j=1}^{\ell }  K\Big(\frac{\frac {t^{(\ell)}_j}n-u}{h_n}\Big)\, S^{(\ell)}_{n,j}(u) %\\ \nonumber&&
%\qquad \qquad
+ \sum_{j=1}^{\ell } \frac 1 {nh_n}\! \sum_{t\in T^{(\ell)}_j} Z_t(u)  \Big [ K\Big(\frac{\frac tn\!-\!u}{h_n}\Big)-K\Big(\frac{\frac {t^{(\ell)}_j}n\!-\!u}{h_n}\Big) \Big ].
\end{equation}
First, since  $K$ is a bounded function and from \eqref{convSl}, then for any $\ell \in \N^*$ we obtain
\begin{equation}\label{part1}
\sup_{0\le u\le 1}\Big\|\sum_{j=l}^{\ell }  K\Big(\frac{\frac {t^{(\ell)}_j}n-u}{h_n}\Big)\, S^{(\ell)}_{n,j}(u)\Big\|  \limiteasn 0.
\end{equation}
Second, since $K$ is a ${\cal C}^1$ function on $[-c,c]$ it holds  
$$
\sup_{0\le u\le 1}\ \max _{1\leq j \leq \ell} \ \sup _{t \in T^{(\ell)}_j} \Big | K\Big(\frac{\frac tn-u}{h_n}\Big)-K\Big(\frac{\frac {t^{(\ell)}_j}n-u}{h_n}\Big) \Big |\leq   \frac c \ell \|K'\|_\infty
$$ 
for any $\ell\in \N^*$ and any $n\in \N$. Then we obtain
\begin{multline*}
\sup_{0\le u\le 1} \Big \|\sum_{j=1}^{\ell } \frac 1 {n h_n} \sum_{t\in T^{(\ell)}_j} Z_t(u) \, \Big [ K\Big(\frac{\frac tn-u}{h_n}\Big)-K\Big(\frac{\frac {t^{(\ell)}_j}n-u}{h_n}\Big) \Big ] \Big \|\\ \leq \|K'\|_\infty \, \frac c \ell \cdot  \frac 1 {n h_n} \sum_{t=1}^{n} \sup_{0< u< 1}\|Z_t(u)\|. 
\end{multline*}
The ergodicity of $\big( \sup_{0<u< 1}\|Z_t(u)\|\big)_{t}$,  its stationarity  and $\E[\sup_{0< u< 1}\|Z_0(u) \|]<\infty$ together  yield
$$
\frac 1 {n  h_n} \sum_{t=i_n(u)}^{j_n(u)}  \sup_{0< u< 1}\|Z_t(u) \| \limiteasn \E\Big[\sup_{0< u< 1} \|Z_0(u) \|\Big]\,. 
$$ 
Thus, for any $\varepsilon>0$, there exists a.s. $(\ell_0,n_0)$ such as for any $\ell\geq \ell_0$ and $n\geq n_0$,  
\begin{equation}\label{part2}
\sup_{0< u< 1}\Big \|\sum_{j=1}^{\ell } \frac 1 {n  h_n} \sum_{t\in T^{(\ell)}_j} Z_t(u) \, \Big [ K\Big(\frac{\frac tn-u}{h_n}\Big)-K\Big(\frac{\frac {t^{(\ell)}_j}n-u}{h_n}\Big) \Big ] \Big \| \leq \varepsilon\,\qquad a.s.
\end{equation}
From \eqref{decoup}, \eqref{part1} and \eqref{part2}, we deduce \eqref{LGNnoyau}.\\
~\\
2. Consider first  $K= K_\ell$ the piecewise constant function $K_\ell(x)=\sum_{j=1}^\ell a_j \, \1 _{x \in I^{(\ell)}_j}$, and also assume first that $d=1$ with $\Sigma=\sigma^2>0$ (unidimensional case).% on $(I_j)_{1\leq j \leq \ell}$, {\it i.e.}   and
\begin{equation}\label{Sn}
S_n^{(\ell)}=\sum_{j=1}^\ell a_j \, S^{(\ell)}_{n,j},
\end{equation}
with $S^{(\ell)}_{n,j}$ defined in \eqref{Snl}. Using $\mbox{Card}(T^{(\ell)}_j)\sim 2 c nh_n/ \ell \limiten \infty$, for any $j\in \{1,\ldots,\ell\}$,
\begin{equation}\label{Snj}
\Big ( \frac \ell{2c n h_n} \Big )^{1/2} \sum_{t\in T^{(\ell)}_j} Z_t = \sqrt{\frac {n  h_n \ell}{2c}}\; S^{(\ell)}_{n,j} \limiteloin {\cal N} \Big ( 0 \, , \, \sigma^2\Big ),
\end{equation}
where $\sigma^2=\sum_{t\in \Z} \E [Z_0Z_t] $ is such as $0<\sum_{t\in \Z} \E [Z_0Z_t]<\infty$. 
Moreover, using the stationarity of $Z$, we have for any $j,j'\in \{1,\ldots,\ell\}$ such as $j\neq j'$,
\begin{eqnarray*}
n  h_n \, \cov \big (S^{(\ell)}_{n,j}\, , \, S^{(\ell)}_{n,j'} \big )&=&\frac 1 {n  h_n} \, \sum_{t\in T^{(\ell)}_j}\sum_{t'\in T^{(\ell)}_{j'}} \E [Z_tZ_{t'} ]
%\\ &=&
\,=\, \frac 1 {n  h_n} \, \sum_{t\in T^{(\ell)}_j}\sum_{t'\in T^{(\ell)}_{j'}} \E [Z_0Z_{t'-t} ]\\
\Longrightarrow \quad \Big |n  h_n \, \cov \big (S^{(\ell)}_{n,j}\, , \, S^{(\ell)}_{n,j'} \big ) \Big | & \leq & C \, \Big | \sum_{k>  T^{(\ell)}_j} \E [Z_0Z_{k} ]\Big | \limiten 0,
\end{eqnarray*}
as soon as $\ell=o(nh_n)$ since $\sum_{t\in \Z}\big | \E [Z_0Z_t]\big |<\infty$. Hence a central limit theorem holds for any linear combinations of $S^{(\ell)}_{n,j}$.  {E.g.}  % deduce 
%the following central limit theorem
 for $S^{(\ell)}_n$ defined in \eqref{Sn},
\begin{multline}
\sqrt{\frac {n  h_n \ell}{2c}}\, S^{(\ell)}_n \limiteloin  {\cal N} \Big ( 0 \, , \, \sum_{j=1}^\ell a_j^2 \, \sum_{t\in \Z} \E [Z_0Z_t] \Big ) \\
\Longrightarrow \quad \sqrt{n  h_n }\, S^{(\ell)}_n \limiteloin  {\cal N} \Big ( 0 \, ,  \,\sigma^2 \,\int_\R K_\ell^2(x)\, dx\Big ),
\end{multline}
since for piecewise kernel we have $\displaystyle\int_\R K_\ell^2(x)\, dx= \frac {2c}{\ell}\sum_{j=1}^\ell a_j^2$.

Consider now a general piecewise differentiable kernel $K$ and denote $K_\ell$ such as $K_\ell(x)=\sum_{j=1}^\ell a_j\, \1 _{x \in I_j}$ for $x\in \R$ with $a_j=K\big (-c+c \, \frac {2j-1} \ell \big )$ such that
\[
 \int_\R K_\ell^2(x)\, dx \, \limitel  \int_\R K^2(x)\, dx .
\]
The  result will follow from  Theorem 3.2 in  \cite{bill}. For this we check  that for any $\varepsilon>0$ 
\[
\lim_{\ell\to \infty}\lim\sup_{n\to \infty} \P\Big(\Big|S_n^{(\ell)}-S_n \Big|\ge \varepsilon/\sqrt{nh_n}\Big) =0,\quad  \mbox{  with }\quad S_n=\frac 1 {n  h_n} \, \sum_{t=1}^{n} Z_t \, K\Big(\frac{\frac tn-u}{h_n}\Big).
\]
 From Markov inequality 
\[
\P\Big(\Big|S_n^{(\ell)}-S_n \Big|\ge \varepsilon /\sqrt{nh_n}\Big) \le nh_n \,\dfrac{\E[\Delta_n^2]}{\varepsilon^2}
\]
with
\begin{eqnarray*}
\Delta_n&=&  \frac 1 {n  h_n} \, \sum_{t=i_n(u)}^{j_n(u)}Z_t \, K\Big(\frac{\frac tn-u}{h_n}\Big)- \sum_{j=1}^\ell a_j \, S^{(\ell)}_{n,j}   \\
&=& \frac 1 {nh_n} \, \sum_{j=1}^\ell \sum_{t \in T_j}\underbrace{ \Big ( K\Big(\frac{\frac tn-u}{h_n}\Big)-K\big (-c+2 c \, \frac {j-1} \ell \big )\Big )}_{a_{t,j}(\ell) }\,   Z_t \,.
\end{eqnarray*}
Thus
\begin{eqnarray*}
\E[\Delta_n^2]&=& \frac 1 {( nh_n)^2} \sum_{j=1}^\ell \sum_{t \in T_j}\sum_{j'=1}^\ell \sum_{t' \in T_j'} a_{t,j}(\ell) \, a_{t',j'}(\ell)  \E[ Z_tZ_{t'}]\\
&\le & \frac 1 { (nh_n)^2}\sum_{j=1}^\ell \sum_{t \in T_j} \sum_{j'=1}^\ell \sum_{t' \in T_j'} |a_{t,j}(\ell) || a_{t',j'}(\ell)|  |\E[ Z_tZ_{t'}]|\,.
\end{eqnarray*}
Since $K$ is Lipschitz continuous, there exists a constant $C>0$ such that
\[
|a_{t,j}(\ell) |\le 2 \, \frac {c\,C}\ell\,,\qquad \mbox{ for }\  i_n\le t\le j_n\ 1\le j\le \ell\,.
\]
Thus
$$
\E[\Delta_n^2]\le \frac{C^2}{(nh_n)^2} \left(\frac {2c}\ell\right)^2  \sum_{1\le t, t'\le n}|\E[ Z_tZ_{t'}]|\le 8 \,\frac{C^2}{nh_n}\left(\frac c\ell\right)^2\sum_{t\ge 0}\big |\E[ Z_tZ_{t'}]\big |\,.
$$
Then we obtain, again from Markov inequality that,  for any $\varepsilon>0$,
$$\displaystyle
\P\Big(\Big|S_n^{(\ell)}-S_n \Big|\ge \varepsilon /\sqrt{nh_n}\Big)\le  2C^2 \left(\frac c\ell\right)^2\sum_{t\ge 0}\big |\E[ Z_tZ_{t'}] \big |\limitel 0.
$$
Now the extension to $d>1$ is standard: consider  $r=(r_1,\ldots,r_d)^T\in \R^d$ and a linear combination  ${\cal Z}_t= r_1Z_t^{(1)}+\cdots+r_dZ_t^{(d)}$ where  $Z_t=\big (Z_t^{(1)},\ldots,Z_t^{(d)}\big)^T$ and apply the result obtained for $d=1$. Then the asymptotic covariance matrix $r^T\,\Sigma \, r>0$ appears to be positive definite and this implies the multidimensional central limit theorem.
\end{proof}

\subsection{Proofs of the main results}

We will prove \eqref{convtheta2} in Theorem \ref{theo1} only since the consistency is achieved directly by simple arguments. We need the following Lemma that is a strong law of large number on the contrast  as if the stationary versions were observed:

\begin{lem}\label{lem:statwlln}
Under the assumptions of Theorem \ref{theo1} we have
\begin{multline}\label{as}
\sup_{\varepsilon\le u\le 1-\varepsilon} \sup_{\boldsymbol{\theta} \in \Theta}  \Big | \frac  1{nh_n}\sum_{t=i_n(u)}^{j_n(u)} \Phi(\widetilde X_{t-k}(u))_{k\in \N},\boldsymbol{\theta})K\big(\frac{\frac tn-u}{h_n}\big) -\E \big [\Phi \big ((\widetilde X_{-k}(u))_{k\geq 0},\boldsymbol{\theta}\big ) \big ]   \Big | \\
\limiteasn 0.
\end{multline}
\end{lem}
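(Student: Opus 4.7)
My plan is to recast the claim as a direct application of part 1 of Lemma \ref{TLCK} in the Banach space $\mathbb{B}=\mathcal{C}(\Theta,\R)$ equipped with the uniform norm $\|\cdot\|_\Theta$. Define the centered $\mathbb{B}$-valued process
$$
Z_t(u)(\boldsymbol{\theta}) \;=\; \Phi\bigl((\widetilde X_{t-k}(u))_{k\in\N},\boldsymbol{\theta}\bigr) - \E\bigl[\Phi\bigl((\widetilde X_{-k}(u))_{k\in\N},\boldsymbol{\theta}\bigr)\bigr],\qquad \boldsymbol{\theta}\in\Theta.
$$
With this notation the quantity inside the outer absolute value in \eqref{as} is exactly the $\mathbb{B}$-norm of $(nh_n)^{-1}\sum_{t=i_n(u)}^{j_n(u)} Z_t(u)K((t/n-u)/h_n)$, so the statement reduces to checking the three hypotheses of Lemma \ref{TLCK}.1.

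First, stationarity and ergodicity in $t$ for each fixed $u\in[\varepsilon,1-\varepsilon]$ are inherited from the stationary version $(\widetilde X_t(u))_{t\in\Z}$, which is an ergodic process as already noted in the proof of Lemma \ref{exiphi}: $Z_t(u)$ is a fixed measurable functional of $(\xi_s)_{s\le t}$ (via the recursion \eqref{eq:statu}), so the classical transfer result yields ergodicity of $(Z_t(u))_t$ in $\mathbb{B}$. Second, continuity of $u\mapsto Z_0(u)$ in $\mathbb{B}$ follows from the fixed-point construction used in Lemma \ref{mommbis}: that argument shows $u\mapsto (\widetilde X_{-k}(u))_{k\in\N}$ is a.s. continuous in the space $\L^p(\mathcal{C}([0,1],\R))$. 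Composing with the Lipschitz functional $\Phi\in\Lip_p(\Theta)$ (seen as a map from $\L^p$-sequences into $\mathcal{C}(\Theta,\R)$), and then subtracting the continuous deterministic centering $u\mapsto \E[\Phi((\widetilde X_{-k}(u))_{k\in\N},\cdot)]$, yields the required continuity.

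The main obstacle is the third hypothesis, the integrability of $\sup_{u\in[\varepsilon,1-\varepsilon]}\|Z_0(u)\|_\Theta$. I would handle it by applying the $\Lip_p(\Theta)$ inequality of \eqref{LipL} with $U=(\widetilde X_{-k}(u))_{k\ge 1}$ and $V=0$, which after taking a supremum in $u$ (justified by the underlying pointwise Lipschitz structure that produces \eqref{LipL} in the examples, combined with the uniform moment estimate $\bigl\|\sup_{u}|\widetilde X_{-k}(u)|\bigr\|_p\le C_0(\Theta)/(1-B_0(\Theta))$ from Lemma \ref{mommbis}) gives
$$
\E\Big[\sup_{u}\sup_{\boldsymbol{\theta}\in\Theta}\bigl|\Phi\bigl((\widetilde X_{-k}(u))_{k\in\N},\boldsymbol{\theta}\bigr)\bigr|\Big]
\;\le\; \E\bigl[\sup_{\boldsymbol{\theta}}|\Phi(0,\boldsymbol{\theta})|\bigr] + g\!\left(\tfrac{C_0(\Theta)}{1-B_0(\Theta)}\right)\sum_{s\ge 1}\alpha_s(\Phi,\Theta)\cdot \tfrac{C_0(\Theta)}{1-B_0(\Theta)},
$$
which is finite under the assumption $\sum_s \alpha_s(\Phi,\Theta)<\infty$ of Theorem \ref{theo1}. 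The centering is a bounded deterministic continuous function of $u$ so it is also absorbed. Once all three hypotheses of Lemma \ref{TLCK}.1 are in hand, \eqref{as} follows at once. The remaining delicate point, which I would need to flesh out carefully, is the passage from the $\L^p$-Lipschitz inequality defining $\Lip_p(\Theta)$ to the pointwise control that allows $\sup_u$ to be moved inside the expectation; this is the only nontrivial technical step and is where all the preparatory material of Lemma \ref{mommbis} is genuinely used.
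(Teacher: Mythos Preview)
Your approach is essentially the paper's: both apply part~1 of Lemma~\ref{TLCK} to the centered process $Z_t(u)(\boldsymbol{\theta})=\Phi((\widetilde X_{t-k}(u))_{k\in\N},\boldsymbol{\theta})-\E[\Phi((\widetilde X_{-k}(u))_{k\in\N},\boldsymbol{\theta})]$, viewed as taking values in $\mathcal C(\Theta,\R)$ and indexed by $u$. Your discussion of the integrability condition $\E[\sup_{u}\sup_{\boldsymbol\theta}|Z_0(u)(\boldsymbol\theta)|]<\infty$ is in fact more careful than the paper's, which simply asserts it ``since $\Phi\in\Lip_p(\Theta)$'' and invokes a uniform ergodic theorem.

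There is, however, one genuine slip. You claim that the quantity inside the absolute value in \eqref{as} is \emph{exactly} the $\mathbb B$-norm of $(nh_n)^{-1}\sum_t Z_t(u)K((t/n-u)/h_n)$. This is not true: expanding $Z_t$ gives
\[
\frac1{nh_n}\sum_t Z_t(u)(\boldsymbol\theta)\,K\Big(\tfrac{t/n-u}{h_n}\Big)
=\frac1{nh_n}\sum_t \Phi(\cdots)K(\cdots)
-\E[\Phi(\cdots)]\cdot\frac1{nh_n}\sum_t K\Big(\tfrac{t/n-u}{h_n}\Big),
\]
and the last factor is a Riemann sum for $\int K=1$, not exactly $1$. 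The paper isolates this discrepancy as a separate term (called $I_2$) and bounds it by $C/(nh_n)$ using that $K$ is piecewise differentiable with compact support and that $\sup_{u}\sup_{\boldsymbol\theta}\bigl|\E[\Phi((\widetilde X_{-k}(u))_{k\in\N},\boldsymbol\theta)]\bigr|<\infty$. You need to insert this (easy) deterministic step; without it the reduction to Lemma~\ref{TLCK}.1 is incomplete.
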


\begin{proof}
The expression $I$ in \eqref{as}  tends to $0$ a.s., if is it the case for  $I_1$ and $I_2$ such that, 
\begin{eqnarray*}
%\nonumber  &\sup_{\varepsilon\le u\le 1-\varepsilon}\sup_{\boldsymbol{\theta} \in \Theta} \,\Big |  \frac  1{nh_n}\sum_{t=i_n(u)}^{j_n(u)} \Phi(\widetilde X_{t-k}(u))_{k\in \N},\boldsymbol{\theta})K\Big(\frac{\frac tn-u}{h_n}\Big) -\E \big [\Phi \big ((\widetilde X_{-k}(u))_{k \in \N},\boldsymbol{\theta}\big ) \big ] \Big |  \\  \nonumber
I_1\!\!&=&\!\!\!\!\sup_{\varepsilon\le u\le 1-\varepsilon} \sup_{\boldsymbol{\theta} \in \Theta} \, \Big | 
\frac  1{nh_n}\sum_{t=i_n(u)}^{j_n(u)}  \Big (\Phi(\widetilde X_{t-k}(u))_{k\in \N},\boldsymbol{\theta})-\E \big [\Phi \big ((\widetilde X_{-k}(u))_{k \in \N},\boldsymbol{\theta}\big ) \big ] \Big ) \, K\Big(\frac{\frac tn-u}{h_n}\Big) \Big |   \\
\nonumber
I_2\!\!&=&\!\!\!\!\sup_{\varepsilon\le u\le 1-\varepsilon}\sup_{\boldsymbol{\theta} \in \Theta} \,\Big |
\frac  1{nh_n}\sum_{t=i_n(u)}^{j_n(u)}   K\Big(\frac{\frac tn-u}{h_n}\Big) \, \E \big [\Phi \big ((\widetilde X_{-k}(u))_{k \in \N},\boldsymbol{\theta}\big )\big ]-\E \big [\Phi \big ((\widetilde X_{-k}(u))_{k \in \N},\boldsymbol{\theta}\big ) \big ] \Big |. %\\%\label{I1I2}=&  I_1 +I_2.
\end{eqnarray*}
1. We use the part 1. of Lemma \ref{TLCK}  to control $I_1$. For this we define $Z(\boldsymbol{\theta},u)=(Z_t(\boldsymbol{\theta},u))_{t\in \Z}$ with $Z_t(\boldsymbol{\theta},u)=\Phi \big ((\widetilde X_{t-k}(u))_{k\in \N},\boldsymbol{\theta})-\E \big [\Phi \big ((\widetilde X_{-k}(u))_{k \in \N},\boldsymbol{\theta}\big )\big ]$; this is a centered  ergodic stationary process on the Banach space of the continuous function over $\Theta\times [0,1]$ equipped with the uniform norm. Using $\E \big [ \sup_{(\boldsymbol{\theta},u) \in \Theta\times [0,1]} |Z_0(\boldsymbol{\theta},u)| \big ] <\infty$ since $\Phi \in \Lip_{p}(\Theta)$, with Theorem 2.2.1. in \cite{Stra05} we  apply the part 1. of Lemma \ref{TLCK}  to get
\begin{equation}\label{suplgn}
I_1  \limiteasn 0.
\end{equation}
 %\\
~\\
2. For the term $I_2$, notice that
\begin{eqnarray}
\nonumber I_2  &\leq & \sup_{\varepsilon\le u\le 1-\varepsilon}\ \sup_{\boldsymbol{\theta} \in \Theta} \, \Big | \Big (  1- \frac  1{nh_n}\sum_{t=i_n(u)}^{j_n(u)} K\Big(\frac{\frac tn-u}{h_n}\Big) \Big ) \,\E \big [\Phi \big ((\widetilde X_{-k}(u))_{k \in \N},\boldsymbol{\theta}\big ) \big ]  \Big | \\
\label{I2} &\leq & \, C \,\sup_{\varepsilon\le u\le 1-\varepsilon}\Big | 1- \frac  1{nh_n}\sum_{t=i_n(u)}^{j_n(u)} K\Big(\frac{\frac tn-u}{h_n}\Big) \Big | 
\leq \frac C{nh_n},
\end{eqnarray}
from the usual comparison of a Riemann sum and its integral: indeed $K$ is Lipschitz because it is  a piecewise differentiable function with a compact support.\\
As a consequence, the proof is complete from \eqref{suplgn} and \eqref{I2}.
\end{proof}

We also need the uniform approximation of the contrast with its stationary version stated in the next Proposition.
\begin{prop} \label{prop1}
Under the assumptions of Theorem \ref{theo1} with $(\widetilde X_{t}(u))_t$ denoting the stationary process defined in \eqref{eq:statu}, we obtain
\begin{equation}\label{P}
\sup_{u\in [\varepsilon,1-\varepsilon]}\  \sup _{\boldsymbol{\theta} \in \Theta} \Big | \frac 1{nh_n} \, \sum_{k=1}^n \Phi \big ((X^{(n)}_{k-t})_{t\geq 0},\boldsymbol{\theta} \big )  K\Big (\frac{\frac kn-u}{h_n}\Big)- \E \big [\Phi \big ((\widetilde X_{-k}(u))_{k\geq 0},\boldsymbol{\theta}\big ) \big ] \Big | \limiteproban 0.
\end{equation}
\end{prop}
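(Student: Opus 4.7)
The plan is to exploit the stationary version $(\widetilde X_t(u))_t$ as a bridge. Since the kernel $K$ has compact support $[-c,c]$, only indices $k\in\{i_n(u),\dots,j_n(u)\}$ contribute to the inner sum, and I decompose the quantity inside the $\sup$ in \eqref{P} as $A_n(u,\boldsymbol{\theta})+B_n(u,\boldsymbol{\theta})$, where
\begin{align*}
A_n(u,\boldsymbol{\theta})&=\frac{1}{nh_n}\sum_{k=i_n(u)}^{j_n(u)}\Bigl[\Phi\bigl((X^{(n)}_{k-t})_{t\in\N},\boldsymbol{\theta}\bigr)-\Phi\bigl((\widetilde X_{k-t}(u))_{t\in\N},\boldsymbol{\theta}\bigr)\Bigr]K\Big(\tfrac{k/n-u}{h_n}\Big),\\
B_n(u,\boldsymbol{\theta})&=\frac{1}{nh_n}\sum_{k=i_n(u)}^{j_n(u)}\Phi\bigl((\widetilde X_{k-t}(u))_{t\in\N},\boldsymbol{\theta}\bigr)K\Big(\tfrac{k/n-u}{h_n}\Big)-\E\bigl[\Phi\bigl((\widetilde X_{-k}(u))_{k\in\N},\boldsymbol{\theta}\bigr)\bigr].
\end{align*}
The term $B_n$ is precisely the object treated by Lemma \ref{lem:statwlln}, which already provides uniform almost sure convergence to zero over $(u,\boldsymbol{\theta})\in[\varepsilon,1-\varepsilon]\times\Theta$.

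It remains to handle $A_n$, for which I would apply Markov's inequality and show that $\E\sup_{u,\boldsymbol{\theta}}|A_n(u,\boldsymbol{\theta})|\to 0$. Using $|K|\le C_K$ and $\Phi\in\Lip_p(\Theta)$, I would bound $\sup_{\boldsymbol{\theta}}|A_n(u,\boldsymbol{\theta})|\le (C_K/nh_n)\sum_{k=i_n(u)}^{j_n(u)}R_{n,k}(u)$ where $R_{n,k}(u)=\sup_{\boldsymbol{\theta}}|\Phi((X^{(n)}_{k-t}),\boldsymbol{\theta})-\Phi((\widetilde X_{k-t}(u)),\boldsymbol{\theta})|$, and apply \eqref{LipL} to get
\[
\E R_{n,k}(u)\le g(M)\sum_{s\ge 0}\alpha_s(\Phi,\Theta)\,\bigl\|X^{(n)}_{k-s}-\widetilde X_{k-s}(u)\bigr\|_p,
\]
where $M$ is the uniform $L^p$ bound from Lemma \ref{momm} and its stationary analog. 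The pointwise-in-$u$ version of Lemma \ref{lemtilde} yields $\|X^{(n)}_{k-s}-\widetilde X_{k-s}(u)\|_p\le C(h_n^\rho+\lambda_{k-s-i_n(u)})$ when $k-s\ge i_n(u)$, and a trivial constant bound otherwise (whose total contribution is negligible thanks to $\sum_s s\,\alpha_s(\Phi,\Theta)<\infty$). Summing, this gives for each fixed $u$
\[
\E|A_n(u,\boldsymbol{\theta})|\le C\Bigl(h_n^\rho+\frac{1}{nh_n}\sum_{s\ge 0}\alpha_s(\Phi,\Theta)\sum_{m\ge 0}\lambda_m\Bigr),
\]
which vanishes thanks to $h_n\to 0$, $nh_n\to\infty$, $\sum_s\alpha_s(\Phi,\Theta)<\infty$, and $\sum_m\lambda_m<\infty$ by Lemma \ref{lem:lambda}.

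The main obstacle is upgrading the pointwise-in-$u$ control to a uniform-in-$u$ bound on $\E\sup_u|A_n(u,\boldsymbol{\theta})|$. Plugging in the sup-in-$u$ version of Lemma \ref{lemtilde} introduces a spurious $n^{1/p}$ factor that is not absorbed by the averaging $1/(nh_n)$ unless one is careful. The plan is therefore to discretize $u$ on a grid $\{u_1,\dots,u_N\}\subset[\varepsilon,1-\varepsilon]$ with $N\to\infty$ slowly enough that a union bound over grid points still vanishes (here the extra assumption $n^{1-1/p}h_n\to\infty$ of Theorem \ref{theo1} enters), and then to control the oscillation between consecutive grid points via the Lipschitz regularity of $K$, the Hölder continuity of $u\mapsto\boldsymbol{\theta}^*(u)$ from (LS$(\rho)$), and the induced modulus of continuity of $u\mapsto\widetilde X_t(u)$, mimicking the chaining argument developed in the proof of Lemma \ref{momm3}. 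Assembling the bounds on $B_n$ and $A_n$ then yields \eqref{P}.
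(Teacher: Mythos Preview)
Your decomposition $A_n+B_n$ and your handling of $B_n$ via Lemma \ref{lem:statwlln} match the paper exactly. The divergence is in how you treat $\sup_u|A_n|$.

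You call the $n^{1/p}$ factor coming from the uniform version of Lemma \ref{lemtilde} ``spurious'' and propose to avoid it by a grid/chaining argument. In fact that factor is not spurious at all: the paper simply plugs Lemma \ref{lemtilde} in directly, obtains
\[
\Big\|\sup_{u\in[\varepsilon,1-\varepsilon]}\sup_{\boldsymbol\theta\in\Theta}|A_n(u,\boldsymbol\theta)|\Big\|_1
\;\le\; C\,\frac{n^{1/p}}{nh_n}\Big(\sum_{k\ge 1}\lambda_k+\sum_{s\ge 1}s\,\alpha_s(\Phi,\Theta)\Big)
=\frac{C}{n^{1-1/p}h_n},
\]
and this tends to $0$ precisely under the hypothesis $n^{1-1/p}h_n\to\infty$ of Theorem \ref{theo1}. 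So the $n^{1/p}$ loss \emph{is} absorbed by the averaging, with no further care required; the chaining has already been done once and for all inside Lemma \ref{lemtilde} (via Lemma \ref{momm3}), and there is no need to repeat it at the level of $A_n$.

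Your proposed route (pointwise bound plus a second discretization in $u$) could presumably be made to work, but it duplicates effort and leaves you with an oscillation term $|A_n(u)-A_n(u_j)|$ that is awkward to control, since it involves comparing two different stationary versions $\widetilde X_t(u)$ and $\widetilde X_t(u_j)$ as well as two different kernel windows. The paper's direct approach is both shorter and cleaner: accept the $n^{1/p}$ price of uniformity up front and let the bandwidth condition pay for it.
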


\begin{proof}[Proof of Proposition \ref{prop1}]
Since $\Phi \in \Lip_{p}(\Theta)$ with  $p \geq 1$, we have
\begin{multline*}
\Big \|\sup_{u\in [\varepsilon,1-\varepsilon]}\  \sup_{\boldsymbol{\theta} \in \Theta}  \,  \frac  1{nh_n} \, \sum_{t=i_n(u)}^{j_n(u)} \big (\Phi\big ((X^{(n)}_{t-k}))_{k\geq 0}, \boldsymbol{\theta}\big )-\Phi \big ((\widetilde X_{t-k}(u))_{k\geq 0},\boldsymbol{\theta}\big ) \big )\, K\Big (\frac{\frac tn-u}{h_n}\Big ) \Big \|_1 \\
\le\ \frac C{nh_n} \, \sum_{t=0}^{2c\, nh_n}  \Big |K\Big (\frac{\frac {i_n(u)+t}n-u}{h_n}\Big )\Big | \,  g\big (\sup_{s \le j_n(u)} \big \{\|X^{(n)}_{ s} \|_p \vee \|\widetilde X_{ s}(u) \|_p \big \} \big ) \, \\
\times \sum_{s=1}^\infty \alpha_s(\Phi, \Theta)
 \big \|\sup_{u\in [\varepsilon,1-\varepsilon]} \big |X^{(n)}_{{i_n(u)+t}+1-s}-\widetilde X_{{i_n(u)+t}+1-s}(u) \big |\,\big \|_p.
\end{multline*}
From Assumption ${\bf (A_0(\Theta))}$ and $B_0(\Theta)\!<\!1$, % implies
$$
%\displaystyle
 \Big\|\sup_{u\in [\varepsilon,1-\varepsilon]}|X^{(n)}_{i_n(u)+j}-\widetilde X_{i_n(u)+j}(u) | \Big\|_p \leq  C  n^{1/p},
$$ 
for $j\leq  0$ using similar arguments as in the proof Lemma  \ref{lemtilde}. Moreover with ${\bf (A_1(\Theta))}$ and ${\bf (LS(\rho))}$, we apply \ref{lemtilde} in order to get
$$
\displaystyle \Big\|\sup_{u\in [\varepsilon,1-\varepsilon]}|X^{(n)}_{i_n(u)+j}-\widetilde X_{i_n(u)+j}(u) |\, \Big \|_p \leq n^{1/p} \big ( h_n^\rho +  \lambda_{j}\big ).
$$ 
for $j\geq   1$. Therefore,
\begin{multline}
\label{ineg1}
\Big \| \sup_{u\in [\varepsilon,1-\varepsilon]}\; \sup_{\boldsymbol{\theta} \in \Theta}  \,  \frac  1{nh_n} \sum_{t=0}^{2c\, nh_n}\big (\Phi\big ((X^{(n)}_{t-k}))_{k\geq 0}, \boldsymbol{\theta}\big )-\Phi \big ((\widetilde X_{t-k}(u))_{k\geq 0},\boldsymbol{\theta}\big ) \big ) K\Big (\frac{\frac tn-u}{h_n}\Big ) \Big \|_1 \\
 \le  \frac C {nh_n} \, \sum_{t=0}^{2c\, nh_n}C_K \, C^*  \, \Big ( \sum_{s=1}^t \alpha_s(\Phi, \Theta) \, C\, n^{1/p} \big ( h_n^\rho +  \lambda_{t+1-s}\big )+\sum_{s=t+1}^t \alpha_s(\Phi, \Theta) \, C \, n^{1/p} \Big )\,.
\end{multline}
Then we deduce
\begin{align}
\nonumber
 \Big \|\sup_{u\in [\varepsilon,1-\varepsilon]}&  \sup_{\boldsymbol{\theta} \in \Theta}  \,  \frac  1{nh_n} \,\sum_{t=0}^{2c\, nh_n}\big (\Phi\big ((X^{(n)}_{t-k}))_{k\geq 0}, \boldsymbol{\theta}\big )-\Phi \big ((\widetilde X_{t-k}(u))_{k\geq 0},\boldsymbol{\theta}\big ) \big )\, K\Big (\frac{\frac tn-u}{h_n}\Big ) \Big \|_1    \\
\nonumber 
& \le \frac  { C \, n^{1/p}}{nh_n} \, \sum_{t=i_n(u)}^{j_n(u)} \Big (\sum_{s=1}^{t-i_n}   \,  \alpha_s(\Phi, \Theta) \,\big ( \lambda_{t-s-i_n}+h_n^\rho\big ) +  \,  \sum_{s=t-i_n+1}^\infty \alpha_s(\Phi, \Theta)  \Big ) \\
\nonumber 
& \le \frac  { C \, n^{1/p}}{nh_n} \, \Big (    \sum_{k=1}^{j_n-i_n} \lambda_k \sum_{i=1}^{k} \alpha_i(\Phi, \Theta) +    h_n^\rho \! \sum_{k=1}^{j_n-i_n}\!\!\!  \sum_{i=1}^{k} \alpha_i(\Phi, \Theta)+\!\! \sum_{i=1}^{\infty} i  \alpha_i(\Phi, \Theta)  \Big )\\
 % &&\hspace{-11cm} \le \frac  {C}{nh_n} \, \Big (\sum_{\ell=1}^{j_n-i_n}  \sum_{s=1}^{\ell} \phi_s \, \inf_{r\le \ell -s } \Big \{ B_0(\Theta)^{(\ell-s)/r}+\sum_{t= r}^\infty b^{(0)}_t(\Theta) \Big \} +   \sum_{i=1}^{\infty} i \, \phi_i  \Big ).
\nonumber 
& \le \frac  {C  n^{1/p}}{nh_n} \, \Big (   \sum_{k=1}^{\infty} \lambda_k \sum_{i=1}^{\infty} \alpha_i(\Phi, \Theta) + \big (    h_n^\rho +1 \big ) \sum_{i=1}^{\infty} i  \alpha_i(\Phi, \Theta)  \Big )\\
\label{ineg2}   & \le C \,   \frac  { n^{1/p}}{nh_n}.
\end{align}
Here we made  use of  the assumption $%\displaystyle
\sum_{s=1}^\infty s \alpha_s(\Phi, \Theta) <\infty$ and of the fact that $%\displaystyle 
\sum_{k=1}^{\infty} \lambda_k <\infty$.  This last bound
%$\displaystyle \sum_{k=1}^{\infty} \lambda_k <\infty$ 
 holds if $%\displaystyle
  \sum_{t=1}^\infty t\log(t) b_t(\Theta)<\infty$ and  follows from Lemma \ref{lem:lambda}. Finally, using \eqref{ineg1}, \eqref{ineg2} and the almost sure convergence \eqref{as} we obtain the weak consistency result  \eqref{P}.
\end{proof}
\begin{proof}[Proof of Theorem \ref{theo1}]
From the  Assumption {\bf(Co}($\Phi,\Theta$), we have $$\displaystyle \boldsymbol{\theta}^*(u)=\mbox{Arg}\! \min_{\! \! \! \!\boldsymbol{\theta}\in \Theta}\E \big [\Phi \big ((\widetilde X_{-t}(u))_{t\geq 0},\boldsymbol{\theta}\big ) \big ].$$
The uniform weak law of large numbers implies  the uniform convergence, and we need: % this is enough to prove t
$$
\sup_{u\in [\varepsilon,1-\varepsilon]}\;\sup _{\boldsymbol{\theta} \in \Theta} \Big | \frac 1{nh_n} \, \sum_{k=1}^n \Phi \big ((X^{(n)}_{k-t})_{t\geq 0},\boldsymbol{\theta} \big ) \, K\Big (\frac{\frac kn-u}{h_n}\Big)- \E \big [\Phi \big ((\widetilde X_{-k}(u))_{k\geq 0},\boldsymbol{\theta}\big ) \big ] \Big | \limiteproban 0\,,
$$
see the discussion in the Appendix of \cite{DRW}. From an application of the approximation in Proposition \ref{prop1}, this uniform weak law of large number follows from the uniform weak law of large number on the stochastic version of the contrast, namely
$$
\sup_{u\in [\varepsilon,1-\varepsilon]}\sup _{\boldsymbol{\theta} \in \Theta} \Big | \frac 1{nh_n} \, \sum_{k=1}^n \Phi \big ((\widetilde X_{k-t}(u))_{t\geq 0},\boldsymbol{\theta} \big ) \, K\Big (\frac{\frac kn-u}{h_n}\Big)- \E \big [\Phi \big ((\widetilde X_{-k}(u))_{k\geq 0},\boldsymbol{\theta}\big ) \big ] \Big | \limiteproban 0\,.
$$
From usual arguments, see for instance \cite{bill}, this uniform version of the weak law of large number obtained in Lemma \ref{lem:statwlln} will follow from the equicontinuity of the family
$$
\left(\sup _{\boldsymbol{\theta} \in \Theta} \Big |\frac 1{nh_n} \, \sum_{k=1}^n \Phi \big ((\widetilde X_{k-t}(u))_{t\geq 0},\boldsymbol{\theta} \big ) \, K\Big (\frac{\frac kn-u}{h_n}\Big)- \E \big [\Phi \big ((\widetilde X_{-t}(u))_{t\geq 0},\boldsymbol{\theta}\big ) \big ] \Big|\right)_{u\in [\varepsilon, 1-\varepsilon]}\,.
$$
This holds from Markov inequality as $\Phi\in \Lip_p(\Theta)$ and from the relation
\begin{equation}\label{equic}
\|\widetilde X_{t}(u)-\widetilde X_{t}(u')\|_p\le  \frac {\big\|{\boldsymbol{\theta}}^*(u')-{\boldsymbol{\theta}}^*(u)\big\|  }{1-B_0(\Theta)} \,\Big ( \frac { B_1(\Theta) \,C_0(\Theta)}{1-B_0(\Theta)}+C_1(\Theta) \Big )\,,
\end{equation}
as ${\boldsymbol{\theta}}^*$ is equicontinuous.
Indeed, under {\bf A$_k(\Theta)$}, $k=1,2$, we have
\begin{align*}
\|\widetilde X_{t}(u)-\widetilde X_{t}(u')\|_p\le& \|F_{\boldsymbol{\theta}^*(u)}\big ((\widetilde X_{t-k}(u))_{k\geq 1},\xi_t\big )-F_{\boldsymbol{\theta}^*(u')}\big ((\widetilde X_{t-k}(u'))_{k\geq 1},\xi_t\big )\|_p\\
\le&\; \|F_{\boldsymbol{\theta}^*(u)}\big ((\widetilde X_{t-k}(u))_{k\geq 1},\xi_t\big )-F_{\boldsymbol{\theta}^*(u)}\big ((\widetilde X_{t-k}(u'))_{k\geq 1},\xi_t\big )\|_p\\
&\;\;+  \|F_{\boldsymbol{\theta}^*(u)}\big ((\widetilde X_{t-k}(u'))_{k\geq 1},\xi_t\big )-F_{\boldsymbol{\theta}^*(u)}\big ((\widetilde X_{t-k}(u'))_{k\geq 1},\xi_t\big )\|_p\\
 \le&\;\sum_{k=1}^\infty b_k^{(0)}(\Theta) \, \|\widetilde X_{t-k}(u')-\widetilde X_{t-k}(u)\|_p\\
 &\quad +\big\|{\boldsymbol{\theta}}^*(u')-{\boldsymbol{\theta}}^*(u)\big\| \times \sup_{{\boldsymbol{\theta}} \in \Theta}  \big \| \partial_{\boldsymbol{\theta}}^1 F_{\boldsymbol{\theta}}(\widetilde X_{t-1}(u),\widetilde X_{t-2}(u),\ldots,\xi_t) \big \|_p\,.
\end{align*}
We upper-bound 
\begin{multline*}
 \sup_{{\boldsymbol{\theta}} \in \Theta}  \big \| \partial_{\boldsymbol{\theta}}^1 F_{\boldsymbol{\theta}}(\widetilde X_{t-1}(u),\widetilde X_{t-2}(u),\ldots,\xi_t) \big \|_p
 \\
 \le \sum_{k=1}^\infty b^{(1)}_k(\Theta) \,\|\widetilde X_{t-k}(u)\|_p+\sup_{{\boldsymbol{\theta}} \in \Theta}  \big \| \partial_{\boldsymbol{\theta}}^1 F_{\boldsymbol{\theta}}(0,0,\ldots,\xi_t) \big \|_p\,. 
\end{multline*}
By a similar argument than in the proof of Lemma \ref{lem*}, we deduce that \eqref{equic} holds.
\end{proof}
Now we are  in position  to prove Theorem \ref{theo3}. 
\begin{proof}[Proof of Theorem \ref{theo3}]
We follow the usual proof of asymptotic normality of a M-estimator. This will follow from the  3 forthcoming  steps:%\\
~\\
$\bullet$ I/\ We establish that the family $\partial_{\boldsymbol{\theta}} H_t\big (\boldsymbol{\theta}^*(u)\big )=\Big (\frac {\partial}{\partial \theta_i} H_t \big (\boldsymbol{\theta}^*(u)\big )  \Big )_{1\leq i \leq d}$ for  $i_n\leq t \leq j_n$ satisfies a multidimensional central limit theorem, where we denote $H_t\big (\boldsymbol{\theta}\big )= \Phi \big ((\widetilde X_{t-k}(u))_{k\in \N},\boldsymbol{\theta}\big )$. We notice first that
$\displaystyle
\partial _{\boldsymbol{\theta}}\E \big [\Phi \big ((\widetilde X_{t-k}(u))_{k\in \N},\boldsymbol{\theta}^*(u)\big )~| ~{\cal F}_0 \big ]=0
$ as $\boldsymbol{\theta}^*(u)$ is the unique minimizer of $\E \big [\Phi \big ((\widetilde X_{t-k}(u))_{k\in \N},\boldsymbol{\theta}^*(u)\big )~| ~{\cal F}_0 \big ]$ over the open set $\stackrel{o}{\Theta}$. \\
The function $\boldsymbol{\theta} \in \Theta \mapsto \E \big [\Phi \big ((\widetilde X_{t-k}(u))_{k\in \N},\boldsymbol{\theta}^*(u)\big )~| ~{\cal F}_0 \big ]$ is differentiable under the condition $\big \| \partial _{\boldsymbol{\theta}} \Phi \big \|\in \Lip_{p}(\Theta)$. Thus $ \partial_{\boldsymbol{\theta}} H_t\big (\boldsymbol{\theta}^*(u)\big )$ constitutes a differences of martingale sequence. 
%Moreover, as $\partial_\theta \Phi\in \Lip_{p}(\Theta)$ with $p\ge 1$ we immediately get that $ \partial_{\boldsymbol{\theta}} H_t\big (\boldsymbol{\theta}^*(u)\big )$  is square integrable.  Applying Corollary \ref{corTLCK} 2. we  thus obtain a multidimensional central limit theorem
We also have $\E\big[\big \| \partial _{\boldsymbol{\theta}} \Phi \big \|^2\big]< \infty$ and we can apply the CLT for differences of martingale sequences (See for instance \cite{bill}). As a consequence we can apply the point 2. of Lemma \ref{TLCK} and  obtain the multidimensional central limit theorem
\begin{multline}\label{tlcmulti}
 \frac 1 {\sqrt{n  h_n} } \, \sum_{t=i_n(u)}^{j_n(u)}  \partial_{\boldsymbol{\theta}} \Phi \big ((\widetilde X_{t-k}(u))_{k\in \N},\boldsymbol{\theta}^*(u)\big )\, K\Big(\frac{\frac tn-u}{h_n}\Big) 
 \limiteloin {\cal N} \Big ( 0 \, , \, \Sigma\big (\boldsymbol{\theta}^*(u)\big )\Big )
 \end{multline}% with 
 \begin{multline*}
\mbox{with }\quad \Sigma\big (\boldsymbol{\theta}^*(u)\big )=  \int_\R K^2(x)dx  \\
\times   \sum_{t\in \Z}\Big(\cov  \big [\frac {\partial}{\partial \theta_i} \Phi \big ((\widetilde X_{-k}(u))_{k\in \N},\boldsymbol{\theta}^*(u)\big ) \, , \, \frac {\partial}{\partial \theta_j} \Phi \big ((\widetilde X_{t-k}(u))_{k\in \N},\boldsymbol{\theta}^*(u)\big ) \big ] \Big )_{1\leq i,j \leq d}.
\end{multline*}
$\bullet$ II/\ We use a Taylor-Lagrange expansion for establishing
\begin{multline}\label{taylor}
 \frac 1 {\sqrt{n  h_n} } \, \sum_{t=i_n(u)}^{j_n(u)}  \partial_{\boldsymbol{\theta}} \Phi \big ((\widetilde X_{t-k}(u))_{k\in \N},\widehat {\boldsymbol{\theta}}(u)\big )\, K\Big(\frac{\frac tn-u}{h_n}\Big) \\
 \hspace*{-2cm}=\frac 1 {\sqrt{n  h_n} } \, \sum_{t=i_n(u)}^{j_n(u)}  \partial_{\boldsymbol{\theta}} \Phi \big ((\widetilde X_{t-k}(u))_{k\in \N},\boldsymbol{\theta}^*(u)\big )\, K\Big(\frac{\frac tn-u}{h_n}\Big) \\
+ \sqrt{n  h_n} \cdot \frac 1 {n  h_n } \, \, \sum_{t=i_n(u)}^{j_n(u)}  \partial^2_{\boldsymbol{\theta}^2} \Phi \big ((\widetilde X_{t-k}(u))_{k\in \N},\overline {\boldsymbol{\theta}}(u) \big )  K\Big(\frac{\frac tn-u}{h_n} \Big )\big ( \widehat{\boldsymbol{\theta}}(u)- \boldsymbol{\theta}^*(u) \big ),
\end{multline}
   where $\overline {\boldsymbol{\theta}}(u)$ belongs to the segment with extremities $\boldsymbol{\theta}^*(u) $ and $ \widehat{\boldsymbol{\theta}}(u)$. From Theorem \ref{theo1}, we have $\overline {\boldsymbol{\theta}} (u)\limiteproban \boldsymbol{\theta}^*(u)$. Moreover, since $\E \big [ \big \| \partial^2 _{\boldsymbol{\theta}^2} \Phi\big ((\widetilde X_{t-k}(u))_{k\in \N},\boldsymbol{\theta} \big )\big \|\big ]<\infty$ for any $\boldsymbol{\theta} \in \Theta$ and $\boldsymbol{\theta}\in \Theta \mapsto \partial^2 _{\boldsymbol{\theta}^2} \Phi\big ((\widetilde X_{t-k}(u))_{k\in \N},\boldsymbol{\theta}\big )$ is uniformly continuous because $\Theta$ is a bounded set included in $\R^d$, we can apply Lemma \ref{TLCK} and then:
\begin{multline}\label{cond2}\!\!
\frac 1 {nh_n } \!\! \sum_{t=i_n(u)}^{j_n(u)}\!\! \big (\!  \partial^2_{\boldsymbol{\theta}^2} \Phi  ((\widetilde X_{t\!-\!k}(u))_{k\!\in\! \N},\!\overline {\boldsymbol{\theta}}(u)  )\!\! -\!\!   \E \big [ \! \partial^2_{\boldsymbol{\theta}^2} \Phi  ((\widetilde X_{t\!-\!k}(u))_{k\!\in\! \N},\overline {\boldsymbol{\theta}}(u)  ) \big ]\big )\!% \\ \times
 K\big(\frac{\frac tn\!-\!u}{h_n}\big)\! \limiteproban\! 0 .
\end{multline}
Thus we get, with $\Gamma(\boldsymbol{\theta}^*(u))=\E \big [  \partial^2_{\boldsymbol{\theta}^2} \Phi \big ((\widetilde X_{t-k}(u))_{k\in \N},\boldsymbol{\theta}^* (u)\big ) \big ]$,
\begin{multline*}
\frac 1 {n  h_n } \, \, \sum_{t=i_n(u)}^{j_n(u)}  \partial^2_{\boldsymbol{\theta}^2} \Phi \big ((\widetilde X_{t-k}(u))_{k\in \N},\overline {\boldsymbol{\theta}}(u)\big )\, K\Big(\frac{\frac tn-u}{h_n}\Big)
  \limiteproban \Gamma(\boldsymbol{\theta}^*(u)).
\end{multline*}
Moreover, since $\widehat{\boldsymbol{\theta}}(u)$ minimizes the contrast function we have
\begin{equation}\label{deriv0}
 \frac  1{nh_n} \, \sum_{t=i_n(u)}^{j_n(u)}  \partial_{\boldsymbol{\theta}}  \Phi \big ((X^{(n)}_{t-k}(u))_{k\in \N},\widehat{\boldsymbol{\theta}} (u)\big )\, K\Big(\frac{\frac tn-u}{h_n}\Big)  =0.
\end{equation}
Using the assumptions on the Lipschitz coefficients of $\partial_{\boldsymbol{\theta}} \Phi$, the same inequalities  as \eqref{ineg1} and \eqref{ineg2} in the proof of Proposition \ref{prop1} lead for a convenient constant $C>0$ to:
\begin{eqnarray*}
\Big \|  \frac  1{nh_n}  \sum_{t=i_n(u)}^{j_n(u)}   \big (\partial_{\boldsymbol{\theta}} \Phi \big (( X^{(n)}_{t-k}(u))_{k\in \N},\widehat{\boldsymbol{\theta}} (u) \big ) -\partial_{\boldsymbol{\theta}}\Phi \big ((X_{t-k}^*(u))_{k\geq 0},\widehat{\boldsymbol{\theta}} (u)\big ) \big )K\big(\frac{\frac tn-u}{h_n}\big) \big \|_1& \leq & C  {h^\rho_n},\\ 
\Big \| \frac  1{nh_n}  \sum_{t=i_n(u)}^{j_n(u)}   \big (\partial_{\boldsymbol{\theta}}\Phi \big ((X_{t-k}^*(u))_{k\geq 0},\widehat{\boldsymbol{\theta}} (u)\big ) -\partial_{\boldsymbol{\theta}} \Phi \big ((\widetilde X_{t-k}(u))_{k\in \N},\widehat{\boldsymbol{\theta}} (u) \big ) \big )K\big(\frac{\frac tn-u}{h_n}\big) \big \|_1& \leq & \frac  {C }{nh_n}.
\end{eqnarray*}
As a consequence we deduce that:
\begin{multline} \label{mincon}
\Big \|\frac 1 {\sqrt{n  h_n} } \, \sum_{t=i_n(u)}^{j_n(u)}  \partial_{\boldsymbol{\theta}} \Phi \big ((\widetilde X_{t-k}(u))_{k\in \N},\widehat {\boldsymbol{\theta}}(u)\big )\, K\Big(\frac{\frac tn-u}{h_n}\Big)  \Big \|_1 \leq C \,\Big (\frac 1 {\sqrt{nh_n}} + h^\rho_n \sqrt{nh_n} \Big )\\
\Longrightarrow ~\frac 1 {\sqrt{n  h_n} } \, \sum_{t=i_n(u)}^{j_n(u)}  \partial_{\boldsymbol{\theta}} \Phi \big ((\widetilde X_{t-k}(u))_{k\in \N},\widehat {\boldsymbol{\theta}}(u)\big )\, K\Big(\frac{\frac tn-u}{h_n}\Big)  \limiteproban 0,
\end{multline}
by using  \eqref{condbn}.  
Finally, from \eqref{taylor}, using \eqref{cond2}, \eqref{mincon}, Slutsky Lemma and \eqref{tlcmulti}, we deduce:
$$
 \sqrt{n  h_n}\;\;\Gamma(\boldsymbol{\theta}^*(u)) \big ( \widehat{\boldsymbol{\theta}}(u)- \boldsymbol{\theta}^*(u) \big ) \limiteloin {\cal N} \Big ( 0 \, , \, \Sigma\big (\boldsymbol{\theta}^*(u)\big )\Big ),
$$
and this leads to Theorem \ref{theo3}.
\end{proof}

\begin{proof}[Proof of Proposition \ref{propLARCH}]
We already proved in Section \ref{LARCH} that $\Phi_{LARCH} \in \Lip_4(\Theta)$ as well as Assumption $\mbox{\bf Co}(\Phi_{LARCH},\Theta)$ when condition \eqref{identLARCH} holds. %\\
We assumed that $\boldsymbol{\theta} \in \Theta \mapsto a_i(\boldsymbol{\theta})$ are ${\cal C}^2 (\Theta)$ functions for any $i\in \N$. 
Thus in order to
 check the conditions of Theorem \ref{theo3}, we first have to prove that $\partial _{\boldsymbol{\theta}} \Phi_{LARCH} \in \Lip_4(\Theta)$. 
Indeed we use the estimates
\begin{eqnarray*}
&& \big \|\partial_{\boldsymbol{\theta}} \Phi_{LARCH}(U,\boldsymbol{\theta})  -\partial_{\boldsymbol{\theta}} \Phi_{LARCH}(V,\boldsymbol{\theta}) \big \|  \\ 
&&  \leq  8 \, \Big ( |U_1|+|V_1|+ 2a_0(\boldsymbol{\theta}) + \sum_{i=1}^\infty |a_i(\boldsymbol{\theta}) | 
 \big ( |U_{i+1}|+|V_{i+1}| \big ) \Big ) ^2\\
 &&\quad \times \Big (\big \|\partial_{\boldsymbol{\theta}} a_0(\boldsymbol{\theta})\big \| + \sum_{i=1}^\infty\big \|\partial_{\boldsymbol{\theta}}a_i(\boldsymbol{\theta})\big \| \,|U_{i+1}|\Big )% \\ && 
 \Big ( |U_1-V_1|+  \sum_{i=1}^\infty |a_i(\boldsymbol{\theta}) | \,  |U_{i+1}-V_{i+1}|\Big ) \\
&&  +4  \Big ( |U_1|+|V_1|+ 2a_0(\boldsymbol{\theta}) + \sum_{i=1}^\infty |a_i(\boldsymbol{\theta}) | \, \big ( |U_{i+1}|\!+\!|V_{i+1}| \big ) \Big ) ^3 \,\sum_{i=1}^\infty\big \|\partial_{\boldsymbol{\theta}}a_i(\boldsymbol{\theta})\big \| \,|U_{i+1}\!-\!V_{i+1}|\,.
\end{eqnarray*}
Therefore, using H\"older and Minkowski Inequalities, we obtain
\begin{multline*}
\E \big [ \sup_{\boldsymbol{\theta}\in \Theta} \big \|\partial_{\boldsymbol{\theta}} \Phi_{LARCH}(U,\boldsymbol{\theta})  -\partial_{\boldsymbol{\theta}} \Phi_{LARCH}(V,\boldsymbol{\theta}) \big \| \big ] \\ 
 \leq g\big (\sup_{i \geq 1} \big \{ \big \|U_i\|_4 \vee \big \|V_i\|_4 \}\big ) \, \Big ( |U_1-V_1|+  \sum_{i=1}^\infty \big (  \sup_{\boldsymbol{\theta}\in \Theta} |a_i(\boldsymbol{\theta}) |+  \sup_{\boldsymbol{\theta}\in \Theta}\big \|\partial_{\boldsymbol{\theta}}a_i(\boldsymbol{\theta})\big \|  \big )\, \big  \|U_{i+1}-V_{i+1}\big \|_4\Big ).
\end{multline*}
This inequality implies that $\partial _{\boldsymbol{\theta}} \Phi_{LARCH} \in \Lip_4(\Theta)$ under the assumptions of Proposition \ref{propLARCH}. \\
~\\
We also have to establish that under conditions of Proposition \ref{propLARCH}, 
$$\E \big [\big \| \partial _{\boldsymbol{\theta}} \Phi_{LARCH}\big ((\widetilde X_k(u))_{k\leq 0}, \boldsymbol{\theta}^*(u) \big )\big \|^2 \big ]<\infty$$ and 
$\E \big [\big \| \partial^2 _{\boldsymbol{\theta}^2} \Phi_{LARCH}\big ((\widetilde X_{-k}(u))_{k\in \N}, \boldsymbol{\theta}^*(u) \big )\big \| \big ]<\infty$. 
Indeed we have
\begin{eqnarray}
\nonumber
\partial _{\boldsymbol{\theta} } \Phi_{LARCH}\big ((\widetilde X_k(u))_{k\leq 0}, \boldsymbol{\theta}^*(u) \big )
&=&-4 
 (\xi_0^2-1  )
 \Big ( a_0(\boldsymbol{\theta}^*(u) )+\sum_{i=1}^\infty a_i(\boldsymbol{\theta}^*(u))\,\widetilde X_{-i}(u) ) \Big ) ^3 
\\
\label{partialLARCH}
&&\qquad \times \Big ( \partial_{\boldsymbol{\theta}} a_0(\boldsymbol{\theta}^*(u) )
+\sum_{i=1}^\infty \partial_{\boldsymbol{\theta}}  a_i(\boldsymbol{\theta}^*(u))\,\widetilde X_{-i}(u) ) \Big ) \,.
\end{eqnarray}
Therefore, using H\"older and Minkowski Inequalities and independence of $\xi_0$ and $(\widetilde X_k(u))_{k\leq -1}$ together with
${\cal E}_u\equiv \E \Big [ \big \| \partial _{\boldsymbol{\theta}} \Phi_{LARCH}\big ((\widetilde X_k(u))_{k\leq 0}, \boldsymbol{\theta}^*(u) \big )  \big \|^2 \Big ]$ we derive that
\begin{multline*}
{\cal E}_u  \leq  16 \, \E \big [ \big (\xi_0^2-1 \big )^2\big ] \, \Big (\E \Big [\Big ( a_0(\boldsymbol{\theta}^*(u) )+\sum_{i=1}^\infty a_i(\boldsymbol{\theta}^*(u))\,\widetilde X_{-i}(u) ) \Big ) ^8 \Big ] \Big) ^{3/4} \\
 \times \Big (\E \Big [\big \| \partial_{\boldsymbol{\theta}} a_0(\boldsymbol{\theta}^*(u) )+\sum_{i=1}^\infty \partial_{\boldsymbol{\theta}}  a_i(\boldsymbol{\theta}^*(u))\,\widetilde X_{-i}(u) \big \| ^8 \Big ] \Big) ^{1/4} \\
 \leq  C \, \Big ( \sup _{\boldsymbol{\theta}\in \Theta} | a_0(\boldsymbol{\theta} )|+\| \widetilde X_{0}(u)\|_8 \, \sum_{i=1}^\infty \sup _{\boldsymbol{\theta}\in \Theta}  |a_i(\boldsymbol{\theta})| \Big ) ^6  \\
 \times \Big ( \sup _{\boldsymbol{\theta}\in \Theta} \| \partial_{\boldsymbol{\theta}} a_0(\boldsymbol{\theta} )\|+\| \widetilde X_{0}(u)\|_8 \, \sum_{i=1}^\infty \sup _{\boldsymbol{\theta}\in \Theta}  \|\partial_{\boldsymbol{\theta}} a_i(\boldsymbol{\theta})\| \Big ) ^2\,.
\end{multline*}
Thus we obtain $\E \Big [ \big \| \partial _{\boldsymbol{\theta}} \Phi_{LARCH}\big ((\widetilde X_k(u))_{k\leq 0}, \boldsymbol{\theta}^*(u) \big )  \big \|^2 \Big ] <\infty$ with $r=8$ under suitable conditions on $(a_j)_j$. %\\
%After computations, we also obtain 
The expression for the second derivatives is also derived
\begin{multline*}
\partial^2 _{\boldsymbol{\theta}^2} \Phi_{LARCH}\big ((\widetilde X_k(u))_{k\leq 0}, \boldsymbol{\theta}^*(u) \big )=-4 \Big ( a_0(\boldsymbol{\theta}^*(u) )+\sum_{i=1}^\infty a_i(\boldsymbol{\theta}^*(u))\,\widetilde X_{-i}(u) ) \Big ) ^2  \\
\times\! \Big \{
 (\xi_0^2\!-\!3  ) \big (\partial_{\boldsymbol{\theta}} a_0(\boldsymbol{\theta}^*\!(u) )\!+\!\sum_{i=1}^\infty\! \partial_{\boldsymbol{\theta}}  a_i(\boldsymbol{\theta}^*(u))\widetilde X_{-i}(u) ) \big ) 
%\\ \qquad \qquad \qquad \qquad
 \big (\partial_{\boldsymbol{\theta}} a_0(\boldsymbol{\theta}^*(u) )\!+\!\sum_{i=1}^\infty\! \partial_{\boldsymbol{\theta}}  a_i(\boldsymbol{\theta}^*\!
(u))\widetilde X_{-i}(u) ) \big )' \\
+ \big ( a_0(\boldsymbol{\theta}^*(u) )\!+\!\sum_{i=1}^\infty \!a_i(\boldsymbol{\theta}^*(u))\widetilde X_{-i}(u) ) \big ) \big (\partial^2_{\boldsymbol{\theta}^2} a_0(\boldsymbol{\theta}^*\!(u) )+\sum_{i=1}^\infty \partial^2_{\boldsymbol{\theta}^2}  a_i(\boldsymbol{\theta}^*(u))\widetilde X_{-i}(u) ) \big ) \Big \}.
\end{multline*}
As a consequence,  similar arguments as previously entail
$$
\E \Big [ \big \| \partial^2 _{\boldsymbol{\theta}^2} \Phi_{LARCH}\big ((\widetilde X_k(u))_{k\leq 0}, \boldsymbol{\theta}^*(u) \big ) \big \| \Big ]<\infty
$$ from Hausdorff and Minkowski inequalities.  %\\
Finally we checked the conditions of Theorem \ref{theo3} since the asymptotic covariance matrix $\Sigma\big (\boldsymbol{\theta}^*(u)\big )$ and $\Gamma(\boldsymbol{\theta}^*(u))$ are positive definite matrix from \eqref{sigmaLARCH} using \eqref{partialLARCH}. 
\end{proof}

\begin{proof}[Proof of Proposition \ref{propcausal}]
We proved in Section \ref{subExamples} that $\alpha_k(\Phi_G, \Theta)= b_k^{(0)}(\Theta) $ and $\Phi_G \in \Lip_3(\Theta)$ when $f_{\boldsymbol{\theta}}$ and  $M_{\boldsymbol{\theta}}$ satisfy Lipschitz inequalities \eqref{lipsh}. But we also have:
\begin{multline*}
\partial_{\boldsymbol{\theta}} \Phi_G(x,\boldsymbol{\theta}) =  \frac {\partial_{\boldsymbol{\theta}}M_{\boldsymbol{\theta} }\cdot \big ((x_k)_{k\geq 2}  \big )}{M_{\boldsymbol{\theta} }\big ((x_k)_{k\geq 2}  \big )} + 2 \, \partial_{\boldsymbol{\theta}}f_{\boldsymbol{\theta} } \big ((x_k)_{k\geq 2}  \big ) \cdot  \frac {f_{\boldsymbol{\theta} } \big ((x_k)_{k\geq 2}  \big )-x_1}{M^2_{\boldsymbol{\theta} }\big ((x_k)_{k\geq 2}  \big )}\\ -2 \, \partial_{\boldsymbol{\theta}}M_{\boldsymbol{\theta} } \big ((x_k)_{k\geq 2}  \big )\cdot  \frac {\big (f_{\boldsymbol{\theta} } \big ((x_k)_{k\geq 2}  \big )-x_1\big )^2}{M^3_{\boldsymbol{\theta} }\big ((x_k)_{k\geq 2}  \big )}.
\end{multline*}
After computations and using $M_{\boldsymbol{\theta} } \geq \underline M$ as well as H\"older Inequalities, we obtain
\begin{multline*}
\E \big [ \sup_{\boldsymbol{\theta}\in \Theta} \big \|\partial_{\boldsymbol{\theta}} \Phi_G(U,\boldsymbol{\theta})  -\partial_{\boldsymbol{\theta}} \Phi_G(V,\boldsymbol{\theta}) \big \| \big ] 
 \leq  \frac 1 {\underline M} \,  \big \|\partial_{\boldsymbol{\theta}}M_{\boldsymbol{\theta} } \big ((U_k)_{k\geq 2}  \big )-\partial_{\boldsymbol{\theta}}M_{\boldsymbol{\theta} } \big ((V_k)_{k\geq 2}  \big ) \big \|_1
\\
+ \frac { \| \partial_{\boldsymbol{\theta}}M_{\boldsymbol{\theta} } \big ((V_k)_{k\geq 2}  \big )  \|_2}{\underline M}  
\big \|M_{\boldsymbol{\theta} } \big ((U_k)_{k\geq 2}  \big )-M_{\boldsymbol{\theta} } \big ((V_k)_{k\geq 2}  \big ) \big \|_2  \\
+ 2 \Big ( \frac { \| f_{\boldsymbol{\theta} } \big ((U_k)_{k\geq 2}  \big )-U_1   \|_2}{\underline M^2} \,  
\big \|\partial_{\boldsymbol{\theta}}f_{\boldsymbol{\theta} } \big ((U_k)_{k\geq 2}  \big )-\partial_{\boldsymbol{\theta}}f_{\boldsymbol{\theta} } \big ((V_k)_{k\geq 2}  \big ) \big \|_2 \\
+ 2 \cdot  \frac { \| f_{\boldsymbol{\theta} } \big ((V_k)_{k\geq 2}  \big )-V_1   \|_3\| \partial_{\boldsymbol{\theta}}f_{\boldsymbol{\theta} } \big ((V_k)_{k\geq 2}  \big ) \|_3}{\underline M^3} \,  
\big \|M_{\boldsymbol{\theta} } \big ((U_k)_{k\geq 2}  \big )-M_{\boldsymbol{\theta} } \big ((V_k)_{k\geq 2}  \big ) \big \|_3 \\
 +\frac { \| \partial_{\boldsymbol{\theta}}f_{\boldsymbol{\theta} } \big ((V_k)_{k\geq 2}  \big ) \|_2}{\underline M^2} \,   \big ( \big \|f_{\boldsymbol{\theta} } \big ((U_k)_{k\geq 2}  \big )-f_{\boldsymbol{\theta} } \big ((V_k)_{k\geq 2}  \big ) \big \|_2 + \|U_1-V_1\|_2 \big )  \Big )\\
+ 2 \Big (\frac { \| \partial_{\boldsymbol{\theta}}M_{\boldsymbol{\theta} } \big ((V_k)_{k\geq 2}  \big ) \|_3\, \|f_{\boldsymbol{\theta} } \big ((U_k)_{k\geq 2}  \big )+f_{\boldsymbol{\theta} } \big ((V_k)_{k\geq 2}  \big )-U_1 -V_1\|_3}{\underline M^3}  
\\
\times  \big ( \big \|f_{\boldsymbol{\theta} } 
\big ((U_k)_{k\geq 2}  \big )-f_{\boldsymbol{\theta} } \big ((V_k)_{k\geq 2}  \big ) \big \|_3 + \|U_1-V_1\|_3 \big ) \\
  +3 \cdot \frac { \| \partial_{\boldsymbol{\theta}}M_{\boldsymbol{\theta} } \big ((V_k)_{k\geq 2}  \big ) \|_4\, \|f_{\boldsymbol{\theta} } \big ((V_k)_{k\geq 2}  \big ) -V_1\|^2_4}{\underline M^4} \,   \big ( \big \|M_{\boldsymbol{\theta} } \big ((U_k)_{k\geq 2}  \big )-M_{\boldsymbol{\theta} } \big ((V_k)_{k\geq 2}  \big ) \big \|_4  \big ) \\
 + \frac { \| f_{\boldsymbol{\theta} } \big ((U_k)_{k\geq 2}  \big )-U_1   \|^2_3}{\underline M^3} \,  
\big \|\partial_{\boldsymbol{\theta}}M_{\boldsymbol{\theta} } \big ((U_k)_{k\geq 2}  \big )-\partial_{\boldsymbol{\theta}}M_{\boldsymbol{\theta} } \big ((V_k)_{k\geq 2}  \big ) \big \|_3  \Big ).
\end{multline*}
Thus\begin{multline*}
\E \big [ \sup_{\boldsymbol{\theta}\in \Theta} \big \|\partial_{\boldsymbol{\theta}} \Phi_G(U,\boldsymbol{\theta})  -\partial_{\boldsymbol{\theta}} \Phi_G(V,\boldsymbol{\theta}) \big \| \big ] 
 \  \leq \ g\big (\sup_{i \geq 1} \big \{ \big \|U_i\|_4 \vee \big \|V_i\|_4 \}\big )
\\
 \times \Big (  \big \|\partial_{\boldsymbol{\theta}}M_{\boldsymbol{\theta} } \big ((U_k)_{k\geq 2}  \big )-\partial_{\boldsymbol{\theta}}M_{\boldsymbol{\theta} } \big ((V_k)_{k\geq 2}  \big ) \big \|_4+\big \|\partial_{\boldsymbol{\theta}}f_{\boldsymbol{\theta} } \big ((U_k)_{k\geq 2}  \big )-\partial_{\boldsymbol{\theta}}f_{\boldsymbol{\theta} } \big ((V_k)_{k\geq 2}  \big ) \big \|_4  \\
 + \big \|M_{\boldsymbol{\theta} } \big ((U_k)_{k\geq 2}  \big )-M_{\boldsymbol{\theta} } \big ((V_k)_{k\geq 2}  \big ) \big \|_4+\big \|f_{\boldsymbol{\theta} } \big ((U_k)_{k\geq 2}  \big )-f_{\boldsymbol{\theta} } \big ((V_k)_{k\geq 2}  \big ) \big \|_4 + \| U_1-V_1 \|_4\Big ) 
\end{multline*}
from Jensen inequality and since we assume that $f_{\boldsymbol{\theta}}$, $M_{\boldsymbol{\theta}}$, $\partial_{\boldsymbol{\theta}}f_{\boldsymbol{\theta} }$ and $\partial_{\boldsymbol{\theta}}M_{\boldsymbol{\theta} }$ satisfy Lipschitz inequalities \eqref{lipsh}. As a consequence we derive
\begin{multline*}
\E \big [ \sup_{\boldsymbol{\theta}\in \Theta} \big \|\partial_{\boldsymbol{\theta}} \Phi_G(U,\boldsymbol{\theta})  -\partial_{\boldsymbol{\theta}} \Phi_G(V,\boldsymbol{\theta}) \big \|\; \big ]  \leq \; g\big (\sup_{i \geq 1} \big \{ \big \|U_i\|_4 \vee \big \|V_i\|_4 \}\big )\qquad\qquad \\ 
\times \Big (\|U_1-V_1\|_4 \!+\!\sum_{i=2}^\infty \big ( \beta_i(f,\Theta)\!+\!\beta_i(M,\Theta)+\beta_i(\partial_{\boldsymbol{\theta}}f ,\Theta)\!+\!\beta_i(\partial_{\boldsymbol{\theta}}M ,\Theta)\big ) \|U_i-V_i\|_4 \Big ),
\end{multline*}
 therefore $\partial_{\boldsymbol{\theta}} \Phi_G \in \Lip_4(\Theta)$. %\\
From these computations and with the inequality \eqref{lipab} we also deduce that condition \eqref{conddPhi} implies $B_0(\Theta)<1$,  and $\sum_{t=2}^\infty t\log(t) b_t(\Theta)<\infty$ follows from  $\sum_{s\ge 0}s \, \alpha_s(\Phi, \Theta)<\infty$,  required in Theorems \ref{theo1} and   %Theorem
 \ref{theo3}.
Similar calculations also entail $$\E\Big [\big \|\partial^2 _{\boldsymbol{\theta}^2} \Phi \big ((\widetilde X_{-k}(u))_{k\in \N},\boldsymbol{\theta}\big ) \big \|^2\Big ]< \infty ,\quad \mbox{ for any }\quad \boldsymbol{\theta}\in \Theta,$$ since $p=4$ and $\partial^2_{\boldsymbol{\theta}^2}f_{\boldsymbol{\theta} }$ and $\partial^2_{\boldsymbol{\theta}^2}M_{\boldsymbol{\theta} }$ satisfy Lipschitz inequalities \eqref{lipsh}. %\\
We also have 
\begin{multline*}
\E \big [\big \| \partial _{\boldsymbol{\theta}} \Phi\big ((\widetilde X_k(u))_{k\leq 0}, \boldsymbol{\theta}^*(u) \big )\big \|^2 \big ]\leq 
\frac {12}{(1 \vee \underline{M})^2} \,  \Big ( \big \| \partial_{\boldsymbol{\theta}}M_{\boldsymbol{\theta}^*(u) }((\widetilde X_k(u))_{k\leq -1})\big \|_2^2 \\+  \big \| \partial_{\boldsymbol{\theta}}f_{\boldsymbol{\theta}^*(u) }((\widetilde X_k(u))_{k\leq -1})\big \|_2^2\,  \big \| \xi_0 \big \|_2^2 
+ \big \| \partial_{\boldsymbol{\theta}}M_{\boldsymbol{\theta}^*(u) }((\widetilde X_k(u))_{k\leq -1})\big \|_2^2 \,  \big \| \xi_0 \big \|_4^4\Big ),
\end{multline*}  
since $\widetilde X_0(u)-f_{\boldsymbol{\theta}^*(u) }((\widetilde X_k(u))_{k\leq -1})=M_{\boldsymbol{\theta}^*(u) }((\widetilde X_k(u))_{k\leq -1}) \, \xi_0$, with $M_{\boldsymbol{\theta}^*\!(u) }((\widetilde X_k(u))_{k\leq -1})$ and $\xi_0$  which are independent. Therefore, we obtain
\[
\E \big [\big \| \partial _{\boldsymbol{\theta}} \Phi\big ((\widetilde X_k(u))_{k\leq 0}, \boldsymbol{\theta}^*\!(u) \big )\big \|^2 \big ] <\infty
\]
since $p=4$. Finally \eqref{definite} ensures that asymptotic covariance matrix $\Sigma\big (\boldsymbol{\theta}^*\!(u)\big )$ and $\Gamma(\boldsymbol{\theta}^*(u))$ are positive definite matrix (see \cite{BW}) and \eqref{ident}  implies the existence and the uniqueness of $ \boldsymbol{\theta}^*\!(u)$ as the minimum of $\boldsymbol{\theta} \in \Theta \mapsto \E \big [\Phi \big ((\widetilde X_k(u))_{k\leq 1}),\boldsymbol{\theta} \big ) ~| ~{\cal F}_0\big ]$ defined in \eqref{min} (see also \cite{BW}). This ends the checking of the conditions of Theorem \ref{theo3}.
\end{proof}
\paragraph*{Aknowledgments} The authors wish to thank Rainer Dahlhaus, Thomas Mikosch and  Lionel Truquet for many inspiring discussions. 
\\
The work of the second author was funded by CY Initiative of Excellence
(grant "Investissements d'Avenir"ANR- 16-IDEX-0008),
Project "EcoDep" PSI-AAP2020-0000000013.
\bibliographystyle{imsart-number} % Style BST file (imsart-number.bst or imsart-nameyear.bst)

\bibliography{bib}
%\small
%\end{document}

\end{document}